\newcommand{\Label}[1]{\label{#1}}
\theoremstyle{definition}
\newtheorem{DEF}{Definition}[section]
\theoremstyle{plain}
\newtheorem{COR}[DEF]{Corollary}
\newtheorem{LEM}[DEF]{Lemma}
\newtheorem{PRP}[DEF]{Proposition}
\newtheorem{THM}[DEF]{Theorem}
\theoremstyle{remark}
\newtheorem{EXA}[DEF]{Example}
\newtheorem{REM}[DEF]{Remark}
\newtheorem{NOR}[DEF]{}
\numberwithin{equation}{section}
\newenvironment{Aufl}{\begin{enumerate}
  
  }{\end{enumerate}}
\newcommand{\Ad}[1]{\emph{Ad #1:}}
\newenvironment{Diagramm}{\begin{displaymath}}{\end{displaymath}}
\newcommand{\Name}[1]{#1}
\newcommand{\FRem}[1]{\empty}
\newcommand{\TEMP}[1]{\empty}
\newcommand{\Ann}[1]{\ensuremath{\mathrm{Ann}\left( #1\right)}}
\newcommand{\Aut}[1]{\ensuremath{\mathrm{Aut}\left( #1\right)}}
\newcommand{\C}{\ensuremath{\mathbb{C}}}
\newcommand{\Cast}{\ensuremath{\mathrm{C}^*}}
\newcommand{\Cont}[2][{}]{\ensuremath{C_{#1}\left( #2\right)}}
\newcommand{\fdg}{\,\colon\;\, }
\newcommand{\Fock}[1]{\ensuremath{\mathcal{F}\left( #1\right)}}
\newcommand{\Hm}[1]{\mathcal{H}({#1})}
\newcommand{\HM}[2][\mathcal{H}]{\ensuremath{{#1}_{#2}}}
\newcommand{\hu}{\ensuremath{\mathsf{h}}}
\newcommand{\id}[1]{\ensuremath{\mathrm{id}_{#1}}}
\newcommand{\indlim}[1]{\text{indlim}\left( #1 \right)}
\newcommand{\K}{\ensuremath{\mathbb{K}}}
\newcommand{\ke}{\ensuremath{\mathsf{k}}}
\newcommand{\KOp}[1]{\ensuremath{\K\left( #1\right)}}
\newcommand{\LOp}[1]{\mathcal{L}\left( #1\right)}
\newcommand{\latF}[1]{\ensuremath{\mathbb{F}\left( #1 \right)}}
\newcommand{\latI}[1]{\ensuremath{\mathbb{I}\left( #1 \right)}}
\newcommand{\latO}[1]{\ensuremath{\mathbb{O}\left( #1 \right)}}
\newcommand{\Mult}[1]{\ensuremath{\mathcal{M}\left( #1\right)}}
\newcommand{\N}{\ensuremath{\mathbb{N}}}
\newcommand{\Norm}[1]{\ensuremath{\mathcal{N}\left( #1\right)}}
\newcommand{\OO}[1]{\ensuremath{\mathcal{O}_{#1}}}
\newcommand{\OOO}[1]{\ensuremath{\mathcal{O}\left( #1\right)}}
\newcommand{\Prim}[1]{\mathrm{Prim}\left(#1\right)}
\newcommand{\Prime}[1]{\mathrm{prime}\left(#1\right)}
\newcommand{\R}{\ensuremath{\mathbb{R}}}
\newcommand{\SP}[2]{\ensuremath{\left<{#1},{#2}\right>}}
\newcommand{\Span}[1]{\mathrm{span}\left( #1\right)}
\newcommand{\T}[1]{\ensuremath{\mathrm{T}_{#1}}}
\newcommand{\TT}{\ensuremath{\mathcal{T}}}
\newcommand{\TTT}[1]{\ensuremath{\TT \left(#1\right)}}
\newcommand{\wcl}[1]{\overline{#1}^{\,\mathrm{w}}}
\newcommand{\Z}{\ensuremath{\mathbb{Z}}}
\begin{document}
\title{The inverse problem for primitive ideal spaces}
\author{Hergen Harnisch and Eberhard Kirchberg}
\date{July 10, 2005}
%
%
%
%
%
\maketitle
\begin{abstract}
A pure topological characterization 
of primitive ideal spaces of separable nuclear 
\Cast-algebras is given.
We show that a \T{0}-space 
$X$ is a primitive ideal space of a
separable nuclear \Cast-algebra $A$ if and only if $X$ is
point-complete (cf.~Definition~\ref{D:MscTopSpc}),
second countable and there is a
continuous pseudo-open and pseudo-epimorphic map 
(Definition\ \ref{D:pseudo})
from a locally compact Polish space $P$ into $X$.
We use this pseudo-open map to construct a \Name{Hilbert} 
bi-module $\mathcal{H}$
over $\Cont[0]{P,\K}$ such that 
$X$ is isomorphic to the primitive ideal
space of the \Name{Cuntz--Pimsner} algebra 
$\mathcal{O}_{\mathcal{H}}$
generated by $\mathcal{H}$. Moreover, our
$\mathcal{O}_{\mathcal{H}}$ 
is $\mathrm{KK}(X;.,.)$--equivalent to $\Cont[0]{P}$
(with action of $X$ on $\Cont[0]{P}$ given be the natural
map from $\latO{X}$ into
$\latO{P}\cong \latI{\Cont[0]{P}}$, and in the sense of
\cite[sec.~4]{Kir00}).
Our construction becomes almost functorial in 
$X$ if we tensor $\mathcal{O}_{\mathcal{H}}$ with
the \Name{Cuntz} algebra 
$\mathcal{O}_2$.
\end{abstract}

\tableofcontents
\section{Introduction and main results}
\FRem{size}
Recall that for every separable \Cast-algebra  
$A$ its space of
primitive ideals $\Prim{A}$ with the Jacobson $\hu\ke$-topology
is a second countable,
locally quasi-compact $\T{0}$-space which is a continuous and
open image of the Polish space $P(A)$ of pure states on $A$. 
There is a well-known lattice iso\-mor\-phism from the lattice 
$\latI{A}$
of closed ideals of $A$ onto the lattice $\latO{\Prim{A}}$
of open subsets of $\Prim{A}$ given by $I\in\latI{A}\mapsto
U_I:= \{ J\in\Prim{A}\fdg I\not\subset J \}$ 
(the support of $I$
in $\Prim{A}$).
The correspondence maps closed linear spans (respectively 
intersections) of
families of closed ideals of $A$ to unions 
(respectively interiors of 
intersections) of the corresponding open subsets of $\Prim{A}$.
We will use this iso\-mor\-phism and its properties
frequently without mentioning it. 

We call a point-complete (Def.\ \ref{D:MscTopSpc}), 
second countable, 
locally quasi-compact
$\T{0}$-space a \emph{Dini space}, because its structure is
determined completely by its Dini functions 
(cf.\ Subsection \ref{ssec:2.1}, 
Section \ref{sec:7} and \cite{DiniEK1}
for our notations on $\T{0}$-spaces).
For example, 
$\Prim{A}$ is point-complete for separable $A$, because
it is the continuous and open image of $P(A)$.
Point-complete second countable $\T{0}$-spaces with
linearly ordered lattice of 
open subsets are examples of Dini spaces.
One has moreover that
every Dini space is the image of a Polish space by an open and
continuous map (cf.\ \cite{DiniEK3}). 
It is an unsolved problem whether 
\emph{all} Dini spaces are primitive ideal
spaces of separable \Cast-algebras or not.  
Here we limit ourselves to the
characterization of the primitive ideal spaces of separable
\emph{nuclear} \Cast-algebras.  
We get a complete description in pure
topological terms with help of a result of \cite{KR03}, 
see below.

We suppose that the reader is
not very familiar
with our topological terminology for 
$\T{0}$-spaces and its maps.
We explain some of our later used topological ideas 
with help of 
open continuous epi\-mor\-phisms 
(instead of the later used pseudo-open 
pseudo-epi\-mor\-phisms).

Consider for example a $\T{0}$-space $X$ that is 
the image of a
Polish space $P$ under a continuous and open map $\pi$  
(as it happens in the case of a Dini space $X$):

One can consider the map $\Psi(U):=\pi^{-1}U$ from 
the lattice $\latO{X}$ of open subsets $U$ of $X$ 
into the lattice of open subsets of $P$.
Then $\Psi$ is a lattice mono\-mor\-phism from
$\latO{X}$ into $\latO{P}$ with $\Psi(X)=P$ and 
$\Psi(\emptyset)=\emptyset$
that preserves l.u.b.\ and g.l.b., 
i.e.~satisfies the following properties (I)--(IV).

\begin{DEF}\Label{D:l-g-preserv}
A map $\Psi$ from the lattice $\latO{X}$ of open subsets of a 
$\T{0}$-space into the lattice 
$\latO{P}$ of a $\T{0}$-space $P$
is a 
\emph{l.u.b.- and g.l.b.-pre\-serving lattice mono\-mor\-phism} 
if it has the following properties (I)--(IV):
\begin{Aufl}
\item[(I)] 
  $\Psi^{-1}(P)=\{X\}$ and 
  $\Psi(\emptyset)=\emptyset$.
\item[(II)] 
        
$\Psi \left(\left(\bigcap_\alpha U_\alpha\right)^\circ \right) 
 =\left( \bigcap_\alpha \Psi\left(U_\alpha\right)\right)^\circ$ 
        for every family of open
  subsets $U_\alpha\subset X$. 
        (Here $Z^\circ$ denotes the interior 
  of a subset $Z$ of $X$ respectively $P$.)
\item[(III)] 
        $\Psi\left(\left(\bigcup_\alpha U_\alpha\right)\right) 
  =\bigcup_\alpha \Psi\left(U_\alpha\right)$ 
        for every family of open
  subsets $U_\alpha\subset X$.
\item[(IV)] $\Psi\left(U_1\right)=\Psi\left(U_2\right)\,$ 
        implies $\,U_1=U_2\,\,$ 
  for all $\,\, U_1, U_2\in\latO{X}$.
\end{Aufl}
\end{DEF}
(I)--(IV) means equivalently that 
$\Psi\colon \latO{X}\to \latO{P}$ 
defines a lattice iso\-mor\-phism of
the lattice 
$\latO{X}$ of open subsets of $X$ onto a sub-lattice 
$\mathcal{Z}$ of
$\latO{P}$ that contains
$P$ and $\emptyset$ and is closed under 
l.u.b.\ (unions) and g.l.b.\
(interiors of intersections).

\begin{DEF}\Label{D:regular}
A \Cast-sub\-al\-ge\-bra $C$ of a 
\Cast-algebra $E$ is called 
\emph{regular}
in $E$, if 
\begin{Aufl}
\item[(i) ] $C$ separates the closed ideals of $E$ 
  (i.e.\ $I\cap C=J\cap C$ implies $I=J$ for every 
$I,J\in\latI{E}$),
\item[(ii)] $(I+J)\cap C=(I\cap C)+(J\cap C)$ 
        holds for every 
  $I,J\in\latI{E}$.
\end{Aufl}
\end{DEF}

If $C\subset E$ is regular in $E$, then 
the map 
$\Psi\colon \latO{\Prim{E}} \to \latO{\Prim{C}}$ 
that corresponds 
to the map $J\mapsto C\cap J$
from $\latI{E}\cong \latO{\Prim{E}}$ into
$\latI{C}\cong\latO{\Prim{C}}$
satisfies (I)--(IV) of Definition \ref{D:l-g-preserv}
(cf.~Lemma \ref{L:reg-abel-psi}).

\smallskip

In the case where $X$ is Hausdorff, (I)--(IV) of
Definition \ref{D:l-g-preserv} conversely
imply that $\pi$ is an open epimorphism. But
since in our case $X$ is only a $\T{0}$-space, the properties 
(I)--(IV) do not imply
that $\pi$ is an \emph{open} map onto $X$ 
(see Example \ref{Ex:[0,1]lsc}).  
In fact, in the case of a point-complete
$\T{0}$-space $X$ one has that for maps 
$\Psi\colon \latO{X}\to\latO{P}$  with properties (I)--(IV)
there is a unique map 
$\pi\colon P\to X$ with 
$\Psi(U)=\pi^{-1}U$ for open $U\subset X$
and this (continuous) map $\pi$ is
pseudo-open and pseudo-epimorphic in the
sense of the following definition 
(cf.\ Proposition \ref{P:map-Psi-pi}).
%
\begin{DEF}\Label{D:pseudo}
Suppose that $X$ and $P$ are $\T{0}$-spaces.  We define the
\emph{pseudo-graph} $R_\pi$ of a continuous map 
$\pi\colon P\to X$ by
\begin{equation}
R_\pi:= \left\{ (p,q)\in P\times P\fdg 
\pi(q)\in \overline{ \{ \pi(p) \} }\right\}.
\end{equation}
A subset $Z$ of $P$ is $R_\pi$-invariant\footnote{
$R_\pi$ is a partial order on $P$. Being $R_\pi$-invariant
is the same as ``downward closed'' or ``lower invariant''
in lattice-theoretic sense 
(where $p\le_R q\;\Longleftrightarrow \;(p,q)\in R$).}
if $q\in Z$ and
$(p,q)\in R_\pi$ imply $p\in Z$.
We call the map $\pi\colon P\to X$
\begin{Aufl}
\item \emph{pseudo-open} if the natural map 
        $(p,q)\in R_\pi \mapsto p\in P$ 
  is an open map from the space $R_\pi$ into $P$, and
  the image $\pi(V)$ of every $R_\pi$-invariant 
  open subset $V$ of $P$ is an open subset of $\pi(P)$,
\item \emph{pseudo-epimorphic} 
  if $\pi(P)\cap F$ is dense in $F$ for every closed subset 
        $F$ of $X\,$
  (i.e.\ if $U\setminus V$ contains a point of
  $\pi(P)$ for every pair $V\subset U$ of open subsets of 
        $X$ with
  $V\neq U$).
\end{Aufl}
\end{DEF}
$R_\pi$ is the ordinary
equivalence relation on $P$ defined by $\pi$ if $X$ is 
a $\T{1}$-space. 
Note that pseudo-open (respectively pseudo-epimorphic) maps 
$\pi\colon P\to X$ are open (respectively epimorphic) 
if $X$ is
a $\T{1}$-space. 
In general, $R_\pi$ is not a closed subset of $P\times P$
(even if $P$ is Polish).

\medskip

We are now in position to state our main result:
\begin{THM}\Label{T:main}
Suppose that $X$ is a \emph{point-complete} 
\T{0}-space which has a faithful map $\Psi$ from $\latO{X}$
into the open sets $\latO{P}$ of a 
locally compact Polish space $P$ 
with the properties (I)--(IV) of 
Definition \ref{D:l-g-preserv}.
Then there exists a separable inductive limit $E$
of type~I \Cast-algebras and an auto\-mor\-phism 
$\sigma$ of $E$ such that $X$ is homeomorphic
to 
the primitive ideal space of the 
\Cast-algebra crossed product
$E\rtimes_\sigma\Z$.
Moreover, $E$ and $\sigma$ can be chosen such that:
\begin{Aufl}
\item $E\rtimes_\sigma\Z$ is isomorphic to a 
\Name{Cuntz--Pimsner} algebra 
$\OOO{\Hm{A,h}}$ of a 
\Name{Hilbert} $A$-bi-module $\Hm{A,h}$ given
by 
a non-degenerate *-homo\-mor\-phism  
$h\colon A\to \Mult{A}$
with $h(A)\cap A=\{ 0\}$
and  $A\cong \Cont[0]{P}\otimes\K$.
\item The closed ideals of $E \rtimes_\sigma \Z$ are in 
1-1-correspondence with the 
$\sigma$-invariant closed ideals of $E$.
\item 
$A\cong \Cont[0]{P}\otimes\K$ is a 
regular \Cast-sub\-al\-ge\-bra of 
$E\rtimes_\sigma\Z$, is contained in $E$, and $E$ is the
smallest $\sigma$-invariant \Cast-subalgebra of $E$
containing $A$.
The map $\Psi$ is induced by
\[ J\in\latI{E\rtimes_\sigma\Z}\mapsto 
J\cap A\in\latI{A}\cong\latO{P} \]
via the
homeomorphism of $\Prim{E\rtimes_\sigma\Z}$ with $X$. 
\end{Aufl}
\end{THM}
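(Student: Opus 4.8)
The plan is to realize $E\rtimes_\sigma\Z$ as the \Name{Cuntz--Pimsner} algebra of an explicitly built \Name{Hilbert} bi-module and then to read off its primitive ideal space from the ideal lattice. First I would apply Proposition~\ref{P:map-Psi-pi} to replace the lattice map $\Psi$ by the associated continuous map $\pi\colon P\to X$ with $\Psi(U)=\pi^{-1}U$; by that proposition $\pi$ is pseudo-open and pseudo-epimorphic (Definition~\ref{D:pseudo}), so the entire combinatorics of $X$ is now carried by the pseudo-graph $R_\pi\subset P\times P$. Put $A:=\Cont[0]{P}\otimes\K$, so that $\Prim{A}\cong P$ and $\latI{A}\cong\latO{P}$. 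In these terms the task reduces to manufacturing a non-degenerate homomorphism $h\colon A\to\Mult{A}$ with $h(A)\cap A=\{0\}$, designed so that the shift $\sigma$ it induces has the feature that every $\sigma$-invariant ideal meets $A$ in an $R_\pi$-invariant open subset of $P$.

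Second, I would form the \Name{Hilbert} $A$-bi-module $\Hm{A,h}$ (the space $A$ with right multiplication, left action $a\cdot\xi:=h(a)\xi$, and inner product $\SP{\xi}{\eta}:=\xi^*\eta$) and pass to $\OOO{\Hm{A,h}}$. Because $\Hm{A,h}$ is a full \emph{invertible} \Name{Hilbert} bi-module, the gauge $\mathbb{T}$-action identifies $\OOO{\Hm{A,h}}$ with $E\rtimes_\sigma\Z$, where $E$ is the fixed-point algebra and $\sigma$ is conjugation by the generating unitary multiplier; this already gives~(i). The fixed-point algebra is the inductive limit $E=\varinjlim_n\KOp{\Hm{A,h}^{\otimes n}}$ with connecting maps $T\mapsto T\otimes\id{\Hm{A,h}}$ coming from $h$, and since each $\KOp{\Hm{A,h}^{\otimes n}}\cong A=\Cont[0]{P}\otimes\K$ is type~I, $E$ is a separable inductive limit of type~I algebras. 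The degree-$0$ stage is the copy of $A\subset E$ required in~(iii), $\sigma^n(A)$ is the degree-$n$ stage, and $h(A)\cap A=\{0\}$ forces $\sigma(A)\cap A=\{0\}$; thus $E$ is generated by $\{\sigma^n(A):n\in\Z\}$, i.e.\ is the smallest $\sigma$-invariant \Cast-subalgebra containing $A$, which is the remaining structural content of~(iii).

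Third comes the ideal computation. The relation $h(A)\cap A=\{0\}$ propagates through the system to show that $\sigma$ and all its nonzero powers are properly outer (equivalently, the \Name{Connes} spectrum is full); the standard ideal theory for crossed products by $\Z$ then yields that every closed ideal of $E\rtimes_\sigma\Z$ is generated by its intersection with $E$ and that this intersection is $\sigma$-invariant, which is the bijection~(ii). I would next verify that $A\subset E$ is regular in the sense of Definition~\ref{D:regular}: both separation of ideals and the additivity $(I+J)\cap A=(I\cap A)+(J\cap A)$ follow from the stagewise structure of $E$. By Lemma~\ref{L:reg-abel-psi} the resulting map $J\mapsto J\cap A\in\latI{A}\cong\latO{P}$ then satisfies (I)--(IV) of Definition~\ref{D:l-g-preserv}. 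The decisive point is to identify its range as exactly $\Psi(\latO{X})=\{\pi^{-1}U:U\in\latO{X}\}$ and to see that the map \emph{is} $\Psi$. Here pseudo-openness of $\pi$---openness of the first projection $R_\pi\to P$ together with openness of $\pi(V)$ in $\pi(P)$ for every $R_\pi$-invariant open $V$---is precisely what lets the (a~priori only $R_\pi$-invariant) open set $J\cap A$ descend to an open subset of $X$, while pseudo-epimorphicity guarantees that distinct $U$ give distinct ideals.

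Finally, since $E\rtimes_\sigma\Z$ is separable, its primitive ideal space is point-complete, as recalled in the introduction, and $X$ is point-complete by hypothesis; between point-complete \T{0}-spaces an isomorphism of open-set lattices is induced by a unique homeomorphism, so the lattice identification $\latO{X}\cong\latI{E\rtimes_\sigma\Z}\cong\latO{\Prim{E\rtimes_\sigma\Z}}$ gives the desired homeomorphism $X\cong\Prim{E\rtimes_\sigma\Z}$ carrying $\Psi$ to $J\mapsto J\cap A$. The main obstacle is not the crossed-product formalism but the construction of $h$ and the proof that the $\sigma$-invariant ideals of $E$ correspond exactly to $\Psi(\latO{X})$: because $R_\pi$ need not be closed in $P\times P$ and $X$ is only a \T{0}-space, the pointwise arguments available when $X$ is \Name{Hausdorff} (where (I)--(IV) already force $\pi$ to be open) or when $\latO{X}$ is linearly ordered break down, and the whole weight of the theorem rests on converting pseudo-openness of $\pi$ into exactness of the inductive system defining $E$.
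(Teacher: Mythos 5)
Your overall architecture matches the paper's (build $h\colon A\to\Mult{A}$ from $\Psi$, pass to $\OOO{\Hm{A,h}}$, identify it with a crossed product, compute the ideal lattice, and finish with point-completeness via Corollary \ref{C:lat-iso}), but two of your key steps do not work as stated. First, $\Hm{A,h}$ is \emph{not} an invertible bi-module: invertibility would require the left action to land in $\KOp{A_A}=A$, whereas here $h(A)\cap A=\{0\}$, so the bi-module is as far from invertible as possible. Consequently the gauge-fixed-point algebra is \emph{not} $\varinjlim_n\KOp{\Hm{A,h}^{\otimes n}}$ with connecting maps $T\mapsto T\otimes\id{}$ --- under the identification $\Hm{A,h}^{\otimes n}\cong A_A$ that map sends $\theta_{x,y}$ to left multiplication by $h(xy^*)\in\Mult{A}$, which meets the next copy of $A$ only in $0$. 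The correct core is the inductive limit of the \emph{sums} $A+h(A)+\cdots+h^n(A)$ (the paper's $C$ and $D_{-\infty,1}$), each a type~I extension with factors $\cong A$, and the identification $\OOO{\Hm{A,h}}\cong[D]_\sigma\rtimes_\sigma\Z$ is not a formal gauge-action fact but uses stability of $A$ together with Brown's theorem (Proposition \ref{P:Pims-alg}, Lemma \ref{L:Brown}, Corollary \ref{C:stable-isomorphism}).

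Second, and more seriously, your derivation of (ii) from proper outerness is incomplete. To get that \emph{every} closed ideal $I$ of $E\rtimes_\sigma\Z$ equals $(I\cap E)\rtimes_\sigma\Z$ one needs the ideal-intersection property for every quotient $(E/K)\rtimes\Z$ by a $\sigma$-invariant ideal $K$, and the general-position property $h(A)\cap A=\{0\}$ does \emph{not} automatically pass to quotients: if $h(a)+b\in\Mult{A,J}$ for some $a\notin J$, $b\in A$, then in $A/J$ the induced map satisfies $h_1(a_1)=-b_1\in A/J$, and the quotient system fails $(\beta)$ of Definition \ref{D:sigm-mod} (the paper's $\ell_\infty(\Z)$ example with the half-line projection shows the conclusion then genuinely fails). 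What rescues the theorem is the cancellation identity $h(J)=h(A)\cap(A+\Mult{A,J})$ for all $J$ with $h(J)A\subset J$, which holds only because $h$ is built unitarily equivalent to its infinite repeat $\delta_\infty\circ h$ (Remark \ref{R:H0-inv-ideal}, Lemma \ref{L:invIdeal}(iv), Proposition \ref{P:ideal}); your proposal never invokes this, and together with the entirely deferred construction of $h$ (the operator convex cone of $\Psi$-equivariant c.p.\ maps, Michael's selection theorem, Stinespring dilation and the infinite repeat) this is where the actual content of the theorem lies.
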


%

%
\FRem{''$E$'' OK?}
%

A full hereditary \Cast-sub\-al\-ge\-bra $C$ of $E$ is 
always regular in $E$, because $J\mapsto C\cap J$
is a lattice isomorphism from $\latI{E}$ onto $\latI{C}$.
Thus, by Theorem \ref{T:main}(iii), $\Cont[0]{P}$ is
isomorphic to a regular Abelian \Cast-subalgebra of
$E\rtimes\Z\cong A_X$.
In \cite{KR03} it has been shown that for every 
separable \emph{nuclear}
\Cast-algebra $B$ there is an Abelian regular
\Cast-sub\-al\-ge\-bra $C$
of $B\otimes\OO{2}$ with (at most one-dimensional) maximal
ideal space $P:=\Prim{C}$, (cf.~\cite[thm.~6.11]{KR03}). 

Note that
$\Prim{B}\cong\Prim{B\otimes\OO{2}}\cong
\Prim{B\otimes\OO{2}\otimes\K}$.
Thus, in conjunction with our solution of the inverse problem,
Theorem \ref{T:main}(iii), and by
Proposition \ref{P:map-Psi-pi}
the following characterization of primitive ideal
spaces of separable nuclear \Cast-algebras:

\begin{COR} \Label{C:char-prim-nuc}
A $\T{0}$ space 
$X$ is isomorphic to the primitive ideal space of a
separable nuclear \Cast-algebra, if and only if, 
\begin{Aufl}
\item $X$ is point-complete (cf.~Definition 
\ref{D:MscTopSpc})\footnote{``point-complete'' 
is called
``spectral'' in \cite{HoffKeim} and
``sober'' by others}
\item there exists a locally compact Polish space $P$ 
        and a pseudo-open and 
  pseudo-epimorphic continuous map 
        $\pi\colon P\to X$ in the
  sense of Definition \ref{D:pseudo}.
\end{Aufl}
\end{COR}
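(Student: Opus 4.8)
The plan is to derive both implications of the corollary from Theorem~\ref{T:main}, the cited result \cite[thm.~6.11]{KR03}, and the correspondence between maps $\Psi$ satisfying (I)--(IV) and pseudo-open pseudo-epimorphic maps recorded in Proposition~\ref{P:map-Psi-pi}. Since this is a corollary, essentially all of the mathematical content has already been established; the work is to assemble the pieces and track the relevant identifications.

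For necessity, suppose $X\cong\Prim{B}$ with $B$ separable and nuclear. Point-completeness (i) is already observed in the introduction: $\Prim{B}$ is the continuous and open image of the Polish space $P(B)$ of pure states, and such images are point-complete. For (ii) I would pass to $B\otimes\OO{2}$, using $\Prim{B}\cong\Prim{B\otimes\OO{2}}$, and invoke \cite[thm.~6.11]{KR03} to obtain an Abelian regular \Cast-subalgebra $C$ of $B\otimes\OO{2}$ with $P:=\Prim{C}$ at most one-dimensional. Because $B\otimes\OO{2}$ is separable, so is $C$, whence its spectrum $P$ is second countable; being Abelian, $C$ has locally compact Hausdorff spectrum, and a second-countable locally compact Hausdorff space is a locally compact Polish space. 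Regularity of $C$ yields, via Lemma~\ref{L:reg-abel-psi}, a faithful map $\Psi\colon\latO{\Prim{B\otimes\OO{2}}}\to\latO{P}$ with properties (I)--(IV). Transporting $\Psi$ along the homeomorphism $X\cong\Prim{B\otimes\OO{2}}$ and applying Proposition~\ref{P:map-Psi-pi} to the point-complete space $X$ then produces the required pseudo-open pseudo-epimorphic continuous map $\pi\colon P\to X$.

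For sufficiency, assume (i) and (ii). Proposition~\ref{P:map-Psi-pi} converts the pseudo-open pseudo-epimorphic map $\pi\colon P\to X$ into a faithful map $\Psi\colon\latO{X}\to\latO{P}$ satisfying (I)--(IV). Theorem~\ref{T:main} then supplies a separable \Cast-algebra $B:=E\rtimes_\sigma\Z$ with $\Prim{B}\cong X$, where $E$ is an inductive limit of type~I algebras. It remains to verify that $B$ is nuclear: type~I algebras are nuclear, nuclearity is preserved under inductive limits, and crossed products by the amenable group $\Z$ preserve nuclearity, so $E\rtimes_\sigma\Z$ is a separable nuclear \Cast-algebra with primitive ideal space homeomorphic to $X$.

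The substance of the statement is carried entirely by Theorem~\ref{T:main} and \cite{KR03}, so I anticipate no genuine obstacle. The only points requiring care are topological bookkeeping—confirming that $\Prim{C}$ is actually a locally compact Polish space and that the transport of $\Psi$ across the homeomorphism $\Prim{B}\cong\Prim{B\otimes\OO{2}}$ is compatible with the hypothesis of point-completeness needed by Proposition~\ref{P:map-Psi-pi}—together with the routine check that the crossed product $E\rtimes_\sigma\Z$ inherits nuclearity.
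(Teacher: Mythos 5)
Your proposal is correct and follows essentially the same route as the paper: the paper derives the corollary from exactly these ingredients, namely \cite[thm.~6.11]{KR03} together with Lemma~\ref{L:reg-abel-psi} and the identification $\Prim{B}\cong\Prim{B\otimes\OO{2}}$ for necessity, and Proposition~\ref{P:map-Psi-pi} plus Theorem~\ref{T:main} (with nuclearity of $E\rtimes_\sigma\Z$ coming from type~I building blocks, inductive limits and crossed products by the amenable group $\Z$, as noted in the remark following Corollary~\ref{C:stable-isomorphism}) for sufficiency. Your additional bookkeeping (local compactness and Polishness of $\Prim{C}$, point-completeness of $X$ via the open image of the pure state space) matches the facts the paper records in the introduction and appendix.
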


By a result of the second named author every 
iso\-mor\-phism $\kappa$ from the 
primitive ideal space $\Prim{B}$ 
onto the primitive ideal space $\Prim{A}$ 
of separable nuclear \Cast-algebras $A$ and $B$ 
can be realized by an iso\-mor\-phism $\varphi$ from 
$A\otimes\OO{2}\otimes \K$ 
onto $B\otimes\OO{2}\otimes\K$ with $\varphi(\kappa(J))=J$
for $J\in \Prim{B\otimes\OO{2}\otimes\K}\cong \Prim{B}$.
Moreover, $\varphi$ with this property is uniquely 
determined up to
unitary homotopy by unitaries in the multiplier algebra of 
$B\otimes\OO{2}\otimes\K$ (cf.\ 
Definition \ref{D:unitary-htpy} and \cite[cor.~L]{K.book}).
In particular, 
{\it there is up to isomorphisms exactly one separable
stable nuclear \Cast-algebra $A$ with $\Prim{A}\cong X$ and
$A\otimes \OO{2}\cong A$}.

\smallskip

More generally:  Suppose that $A$ and $B$ are stable
separable \Cast-algebras, where $A$ is exact and
$B$ is strongly purely infinite (in the sense of
\cite[def.\ 5.1]{KirRor2}, e.g.\ that 
$B\otimes\OO{\infty}\cong B$
or even $B\otimes\OO{2}\cong B$).
Then
every map $\Psi$ from $\latO{\Prim{B}}$ to $\latO{\Prim{A}}$
that satisfies the properties 
\begin{Aufl}
\item[(I)  ] 
  $\Psi^{-1}(\Prim{A})=\{\Prim{B}\}$ and 
  $\Psi(\emptyset)=\emptyset$,
\item[(II) ] 
$
\Psi\left( (\bigcap_\alpha U_\alpha )^\circ \right) 
  = 
  \left(\bigcap_\alpha \Psi\left(U_\alpha\right)\right)^\circ
$ 
  for every family $\{U_\alpha\}$ of open
  subsets of $\Prim{B}$, 
\item[(III$_0$)] 
        $\Psi \left(\left(\bigcup_\alpha U_\alpha \right)\right) 
  =\bigcup_\alpha \Psi\left(U_\alpha\right)$ for every 
  \emph{upward directed} net of open
  subsets $U_\alpha\subset \Prim{B}$,
\end{Aufl}
can be realized by a \emph{non-degenerate} 
nuclear *-mono\-mor\-phism $h$ from 
$A\otimes\OO{2}\otimes\K$ into $B\otimes\OO{2}\otimes\K$
with $\Psi(J)=h^{-1}(h(A)\cap J)$ for 
$J\in \latI{B\otimes\OO{2}\otimes \K}
\cong \latO{\Prim{B}}$, and  $h$
is uniquely determined by this property up to unitary homotopy
(cf.~\cite[thm.\ K]{K.book} and 
Definition \ref{D:unitary-htpy}).
(Note here that (III$_0$) does not imply 
$\Psi(U)\cup \Psi(V)=\Psi(U\cup V)$.)

This implies that our construction 
of a separable nuclear algebra $A$
with given primitive ideal space $X$ is 
(up to unitary homotopy) 
a contravariant
functor on
the maps $\Psi\colon \latO{\Prim{B}}\to\latO{\Prim{A}}$
with (I), (II) and
(III$_0$) if we tensor with $\OO{2}\otimes \K$.

\bigskip

Theorem \ref{T:main} also implies  that 
a separable \Cast-algebra $B$ has a primitive ideal space 
$X=\Prim{B}$ which is the continuous pseudo-open and 
pseudo-epimorphic image 
of a locally compact Polish space $P$ 
if and only if $B\otimes\OO{2}$
contains a regular Abelian 
\Cast-sub\-al\-ge\-bra in the sense of 
Definition \ref{D:regular} (see Corollary 
\ref{C:exist.regular}).

\smallskip

Since $h(A)\cap A=\{0\}$ for our Hilbert bi-module
$\Hm{A,h}$, we get from \cite[cor.~3.14]{Pi97}
that $\mathcal{T}(\Hm{A,h})\cong \OOO{\Hm{A,h}}$ 
and then (by \cite[thm.~4.3]{Pi97}) that the natural
embedding $A\hookrightarrow\OOO{\Hm{A,h}}$ induces
a KK-equivalence between $\OOO{\Hm{A,h}}$ and $A$.
Thus $E\rtimes_\sigma\Z\cong\OOO{\Hm{A,h}}$ is
KK-equivalent to $\Cont[0]{P}$, which may give
sometimes 
different nuclear \Cast-algebras that have the same primitive
ideal space $X$ but have not the same K-groups.
The natural embedding from $A$ into $B:= \OOO{\Hm{A,h}}$ 
is  $\Psi$-equivariant with respect  
to the natural actions $\Psi_A$ and $\Psi_B$ of
$\latO{X}\cong \latI{B}$ on $A$ and $B$, because 
it is compatible with the natural action of $\latO{X}$ on
$\Hm{A,h}$, $\LOp{\Hm{A,h}}$, 
$\Fock{\Hm{A,h}}$ and $\TTT{\Hm{A,h}}\cong B$.

Therefore
it defines also an element of the group $\mathrm{KK}(X; A,B)$. 
A closer look to the proof of \cite[thm.~4.4]{Pi97} 
shows that the Kasparov $B$-$A$-module 
$(\Fock{\Hm{A,h}}\oplus \Fock{\Hm{A,h}}, \pi_0\oplus \pi_1,T)$
of \cite[def.~4.3]{Pi97}
(which defines the KK-inverse 
$\beta\in \mathrm{KK}(B,A)$ of $[A\hookrightarrow B]$) 
carries in a natural way an action of $\latO{X}$ 
such that the $B$-$A$-module becomes $\Psi$-equivariant
and defines therefore
an element of $\mathrm{KK}(X;B,A)$. The homotopy constructed in
in the proof of \cite[thm.~4.4]{Pi97} 
turns out to be also $\Psi$-equivariant
and can be used to prove that \textit{the natural inclusion 
$A\hookrightarrow \OOO{\Hm{A,h}}$ defines a 
$\textrm{KK}(X;.,.)$--equivalence between $A$ and
$\OOO{\Hm{A,h}}$.}

We can replace $P$ by $\R_+\times P$ in the proof of 
Theorem \ref{T:main}.
Then the $E\rtimes_\sigma\Z$ becomes KK-trivial. It becomes
even $\mathrm{KK}(X;.,.)$-trivial, and thus absorbs
$\OO{2}$ tensorially by \cite[chp.~1,cor.~N]{K.book} and the following 
observation.

The *-monomorphism $h\colon A\to \Mult{A}$ in our special
construction of $\Hm{A,h}$ (from $\Psi_A$
and  $A=\Cont[0]{P}\otimes\K$) is unitarily equivalent  
to its infinite repeat $\delta_\infty\circ h$. 
Then
$\Hm{A,h}$ is isomorphic to 
$\Hm{A,h}\otimes \Hm{\K,\delta_\infty}$.
One can show that the latter
implies that $\OOO{\Hm{A,h}}$ is isomorphic
to $\OOO{\Hm{A,h}}\otimes \OO{\infty}$. In particular,
our algebras $E\rtimes \Z$ in Theorem \ref{T:main} 
are strongly purely infinite.\footnote{Details on
strong pure infiniteness of $\OOO{\Hm{A,h}}$
and on the $\mathrm{KK}(X;.,.)$--equivalence of
$A$ and $\OOO{\Hm{A,h}}$ will be published elsewhere.}

In general $\OOO{\Hm{A,h}}$ need \emph{not} be purely infinite
if one works with the weaker assumptions on $\Hm{A,h}$
of Corollaries 
\ref{C:char-[D]sigma-by-h}--\ref{C:O(H(A,h))-simple},
as R{\o}rdam's example of a stably infinite nuclear separable
stable simple \Cast-algebra shows  (which is
KK-equivalent to $C(S^2\times S^2\times\cdots)$ by \cite{Pi97}
and is \emph{not} purely infinite, cf.\ \cite{Ror}).

We recall in the Appendix some needed basic facts on
$\T{0}$-spaces, pseudo-open maps, Hilbert \Cast-modules
and crossed products by $\Z$.  We give there elementary
proofs for the needed results.

\section{Realization of ideal-lattice mor\-phisms}
\subsection{\Name{Hilbert} bi-modules and cones of c.p.~maps}
\begin{DEF}\Label{D:matr-conv-cone}
Suppose that $A$ and $B$ are \Cast-algebras, and
let $CP(A,B)$ denote the cone of completely
positive maps from $A$ into $B$.
A subset $\mathcal{C} $ of $CP(A,B)$ is an
\emph{operator convex cone} of c.p.~maps
if $\mathcal{C}$ has the following 
properties (i)--(iii).
\begin{Aufl}
\item $\mathcal{C}$ is a cone.
\item If $V \in \mathcal{C}$ and $b \in B$, then
  the map $a \mapsto b^*V(a)b$ belongs to $\mathcal{C}$.
\item 
        $V_{r,\,c}\colon 
        a\in A\mapsto c^*V\otimes \id{n}(r^*ar)c$
        is in $\mathcal{C} $ for every $V\in \mathcal{C} $, 
        every row-matrix $r\in M_{1,n}(A)$ 
        and every column-matrix $c\in M_{n,1}(B)$.
\end{Aufl}
$\mathcal{C}$ is \emph{full} if the linear span of 
$\{ V(a)\fdg V\in \mathcal{C}, a\in A \}$
is dense in $B$, and $\mathcal{C}$ is \emph{separating}
if $V(a)=0$ for all $V\in \mathcal{C}$ implies $a=0$.

It is easy to see that the point-norm closure of 
an operator convex cone is again
operator convex.

Let $S$ be a subset of $CP(A,B)$. We denote by
$K(S)$ the smallest subcone of $CP(A,B)$
which is invariant under the operations
in (ii) and (iii), and by $\mathcal{C}(S)$ the point-norm
closure of $K(S)$
(i.e.\  the closure of $K(S)$ in $\LOp{A,B}$ 
w.r.t.~the strong operator
topology).  Then $K(S)$ and $\mathcal{C}(S)$
are operator convex cones of completely
positive maps. We say that $S\subset \mathcal{C}$ 
\emph{generates} the 
(point-norm closed) operator convex cone $\mathcal{C}$
if $K(S)$ is dense in $\mathcal{C}$.
$\mathcal{C}$ is \emph{countably generated}
if a countable subset $S$ of $\mathcal{C}$ generates 
$\mathcal{C}$.
\end{DEF}
Note that
$$
V_{r,\,c}(a)= 
\sum_{i=1}^n \sum_{j=1}^n c_i^*V(r_i^*ar_j)c_j\,
$$
for
$(r_1,\ldots,r_n)=r\in M_{1,n}(A)$
and $(c_1,\ldots, c_n)=c^t\in M_{1,n}(B)$.
\begin{REM}\Label{R:cone-from-bi-module} 
If $E$ is a \Name{Hilbert} $B$-module
and $h\colon A\to \LOp{E}$ a *-homo\-mor\-phism,
then the \Name{Hilbert} $B$-module sum
$E_\infty :=E\oplus E\oplus \cdots\,$ with right 
$A$-module
structure given by 
$h_\infty\colon A \to \LOp{E_\infty}\,$, 
$h_\infty(a):=h(a)\oplus h(a)\oplus \cdots\,$,
has the property that  the set $C$ of c.p.~maps 
$V_e\colon A\to B$ with $V_e(a):=\SP{e}{h_\infty(a)(e)}$
is an operator convex cone.

\end{REM}

\begin{LEM}\Label{L:bi-module-from-cone}
Suppose that $\mathcal{C}\subset CP(A,B)$ is a 
point-norm closed operator convex cone.
Then there is a \Name{Hilbert} $B$-module
$E$ and a non-degenerate *-homomorphism 
$h\colon A\to \LOp{E}$ such that
$\mathcal{C}$ is the point-norm closure of
the maps $V_e\colon A\to B$ for
$e\in E$, where $V_e(a):= \SP{e}{h(a)e}$.

If $A$ is separable and $\mathcal{C}$ 
is countably generated,
one can manage that, in addition, 
$E$ is countably generated as \Name{Hilbert}
$B$-module.
\end{LEM}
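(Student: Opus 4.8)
The plan is to build $E$ and $h$ by a \Name{Kasparov}--\Name{Stinespring} (KSGNS) construction carried out for every map in a generating family, and then to pass to an infinite \Name{Hilbert}-module direct sum so that the resulting family $\{V_e\}$ is forced to be closed under the cone operations. Concretely, fix a set $S$ with $\overline{K(S)}=\mathcal{C}$ (e.g.\ $S=\mathcal{C}$; when $\mathcal{C}$ is countably generated, take $S$ countable). For each $V\in S$ equip the algebraic tensor product $A\odot B$ with the $B$-valued semi-inner product $\SP{a\otimes b}{a'\otimes b'}_V:=b^*V(a^*a')b'$, divide out the null space and complete to obtain a \Name{Hilbert} $B$-module $E_V$ on which $a'\otimes b\mapsto(aa')\otimes b$ induces a *-homomorphism $h_V\colon A\to\LOp{E_V}$ (boundedness is the usual KSGNS estimate, using complete positivity of $V$). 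Finally set $E:=\big(\bigoplus_{V\in S}E_V\big)^{\oplus\infty}$ and $h:=\big(\bigoplus_V h_V\big)^{\oplus\infty}$, so that $E\cong E\oplus E$.

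The heart of the argument is a dictionary between axioms (ii), (iii) of Definition \ref{D:matr-conv-cone} and operations on vectors: for $e\in E$ and $b\in B$ one has $V_{eb}=b^*V_e(\cdot)b$, and for a row $r=(r_1,\dots,r_n)\in M_{1,n}(A)$ and a column $c=(c_1,\dots,c_n)^t\in M_{n,1}(B)$ a direct computation using $h(r_i^*ar_j)=h(r_i)^*h(a)h(r_j)$ gives $V_f=V_{r,c}$ for $f:=\sum_j h(r_j)e\,c_j\in E$. Moreover $tV_e=V_{\sqrt{t}\,e}$ and, because $E\cong E\oplus E$, $V_e+V_{e'}=V_{e\oplus e'}$; hence $\{V_e\fdg e\in E\}$ and its point-norm closure are operator convex cones. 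First I would check $\{V_e\}\subset\mathcal{C}$: for an algebraic vector $e$ supported in $E_V$, say $e=\sum_k a_k\otimes b_k$, one has $V_e(x)=\sum_{k,l}b_k^*V(a_k^*xa_l)b_l=V_{r,c}(x)$ with $r=(a_k)$ and $c=(b_k)^t$, so $V_e\in K(S)$; a general $e$ is an $E$-norm limit of such vectors with $\|V_e(a)-V_{e_n}(a)\|\to0$ uniformly on bounded sets, whence $V_e\in\overline{K(S)}=\mathcal{C}$. For the reverse inclusion it suffices, since $\overline{\{V_e\}}$ is an operator convex cone, to realise each $V\in S$ inside it: with approximate units $(e_\mu)$ of $A$ and $(u_\lambda)$ of $B$, the vectors $\xi_{\mu,\lambda}:=[e_\mu\otimes u_\lambda]\in E_V$ satisfy $V_{\xi_{\mu,\lambda}}(x)=u_\lambda^*V(e_\mu^*xe_\mu)u_\lambda\to V(x)$ in norm, so $V\in\overline{\{V_e\}}$ and therefore $\mathcal{C}=\overline{K(S)}\subset\overline{\{V_e\}}$.

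Non-degeneracy of $h$ follows summand-wise: $h_V(A)(A\odot B)=\Span{(aa')\otimes b}$ is dense because $A^2$ is dense in $A$, and a direct sum of non-degenerate homomorphisms is non-degenerate. For the addendum, take $S$ countable and a countable dense set $\{a_k\}\subset A$; since $a_k\to a$ forces $[a_k\otimes b]\to[a\otimes b]$, each $E_V$ is generated by $\{[a_k\otimes b]\fdg k\in\N,\ b\in B\}$, and a countable approximate unit $(u_m)$ of $B$ (available as $B$ is $\sigma$-unital in the relevant applications) reduces this to the countable family $\{[a_k\otimes u_m]\fdg k,m\in\N\}$, because $[a_k\otimes u_m]\,b=[a_k\otimes u_m b]\to[a_k\otimes b]$; a countable direct sum of countably generated modules is again countably generated, giving the claim for $E$. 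The main obstacle is making the two inclusions meet exactly: one must verify that axioms (ii) and (iii) are genuinely \emph{equivalent} to the two vector manipulations above (this is where complete positivity and the matrix amplification enter) and one must secure closure of $\{V_e\}$ under \emph{addition} — which is precisely why the infinite repeat $E\cong E\oplus E$ is built in, mirroring Remark \ref{R:cone-from-bi-module}, since a single summand $E_V$ produces only a single block $V_{r,c}$ rather than arbitrary sums. The non-unital bookkeeping (approximate units for the reverse inclusion, a countable approximate unit of $B$ for countable generation) is routine but must be tracked carefully.
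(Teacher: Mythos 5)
Your main construction is the paper's own proof: the paper, too, applies the KSGNS construction to each member of a generating set (that is exactly Lemma \ref{L:bi-mod-from-cp-map}, proved there with the same semi-inner product $b_1^*T(a_1^*a_2)b_2$ on $A\odot B$), forms the \Name{Hilbert} $B$-module sum $F:=\bigoplus_{T\in\mathcal{S}}E^T$ with diagonal left action $g$, and then replaces $(F,g)$ by the infinite repeat $(F_\infty,g_\infty)$, citing Remark \ref{R:cone-from-bi-module} for precisely the point you isolate, namely that the repeat makes $\{V_e\fdg e\in E\}$ closed under addition, so that its point-norm closure is an operator convex cone and the two inclusions $\{V_e\}\subset\mathcal{C}$ and $S\subset\overline{\{V_e\}}$ suffice. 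Your explicit verifications of these inclusions are what the paper compresses into a single sentence; the only wording to tighten is that a general $e\in E$ is a limit of finite sums of vectors each supported in one summand, so $V_e$ is a point-norm limit of finite sums of maps in $K(S)$, which still lie in $K(S)$ because cones are additively closed.

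There is, however, one genuine gap, in the countable-generation addendum. The lemma assumes only that $A$ is separable and $\mathcal{C}$ is countably generated; it does \emph{not} assume that $B$ is $\sigma$-unital, so your appeal to a countable approximate unit $(u_m)$ of $B$ (which your own parenthesis concedes) proves a weaker statement than the one claimed. The paper's argument inside Lemma \ref{L:bi-mod-from-cp-map} avoids this hypothesis: every inner product in $E_V$ has the form $b^*V(a^*a')b'$, and $V(A)$ generates a \emph{separable} \Cast-subalgebra $C\subset B$ (because $A$ is separable), which therefore contains a strictly positive element $c$. Since $(c^{1/m})_m$ is an approximate unit for $C$ and $c^{1/m}\in\overline{cC}$, one gets for $x:=V(a_k^*a_k)^{1/2}\in C$ that $xb\in\overline{xcB}$ for every $b\in B$; consequently the countable family $\left\{[a_k\otimes c]\fdg k\in\N\right\}$ already generates $E_V$ as a \Name{Hilbert} $B$-module, with no condition on $B$ at all. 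Substituting this strictly-positive-element argument for your approximate-unit step makes your proof valid in the stated generality.
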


\begin{proof} Let $\mathcal{S}$ a subset of $\mathcal{C}$ 
that generates $\mathcal{C}$, and let $E^T$ and
$h^T\colon A\to \LOp{E^T}$ as in
Lemma \ref{L:bi-mod-from-cp-map} for $T\in \mathcal{S}$.
Define $F$ as the \Name{Hilbert} $B$-module sum
$$F:= \bigoplus _{T\in \mathcal{S}} E^T$$
with right $A$-module structure given by the 
non-degenerate *-homomorphism
$$g\colon a\in A\mapsto 
\bigoplus _{T\in \mathcal{S}} h^T(a).$$
The point-norm closure of the set
of c.p.~maps $V_e(a):=\SP{e}{g(a)e}$ 
contains $\mathcal{S}$, and is contained in $\mathcal{C}$
by Lemma \ref{L:bi-mod-from-cp-map} and Definition
\ref{D:matr-conv-cone}.
Thus, by Remark \ref{R:cone-from-bi-module},
$E:=F_\infty =F\oplus F\oplus \cdots\,$
and the non-degenerate *-homomorphism
$h(a):=g_\infty(a)=g(a)\oplus g(a)\oplus\cdots\,$
are as desired.

If $A$ is separable and $\mathcal{S}$ is countable,
then $E$ is countably generated as a \Name{Hilbert}
$B$-module, because then $E^T$ is countably
generated for $T\in \mathcal{S}$ by Lemma 
\ref{L:bi-mod-from-cp-map}.
\end{proof}
%
\begin{REM}\Label{R:delta-infty}
A  \Cast-algebra $B$ is stable  if and only if
there is a sequence of isometries 
$s_1,s_2,\ldots \in \Mult{B}$ such 
that $\sum s_ns_n^*$ converges strictly to $1$. 
(Then automatically  $s_j^*s_k=\delta_{j,k}1$.) 

The \emph{infinite repeat} 
is (up to unitary equivalence) 
the unital endo\-mor\-phism given by
$$
\delta_\infty\colon d\in\Mult{B}\mapsto
\sum s_ids_i^*\in\Mult{B}.
$$ 
It is not hard to check that
\begin{Aufl} 
\item if $t_1,t_2,\dots$ 
        is a second sequence of isometries with
  $\sum t_it_i^*$ converging strictly to $1$, then 
  $\delta_\infty$ and 
        $\delta_\infty'\colon d\mapsto\sum t_idt_i^*$
  are unitarily equivalent
  by the unitary $U:=\sum s_nt_n^*\in\Mult{B}$,
\item $\delta_\infty\circ\delta_\infty$ is 
        unitarily equivalent to $\delta_\infty$.
\item Also note that $\delta_\infty(\Mult{B})\cap B=\{0\}$,
  because $\delta_\infty(b)\in B$ implies
  $b=s_n^*\delta_\infty(b)s_n\to 0$ for
  $n\to\infty$.
\end{Aufl}
Cf.\ \cite[rem.~5.1.2,~lem.~5.1.3]{K.book} for details.
\end{REM}

\begin{REM}\Label{R:A-cong-HA} 
$\HM{A}$ and $A$ are isomorphic as
\Name{Hilbert} $A$-modules if and only if $A$ is stable.\\
Indeed: 
$\LOp{\HM{A}}\cong \Mult{\K \left(\HM{A}\right)}$ 
always contains a sequence of isometries 
$s_1,s_2,\ldots$
such that $\sum _n s_ns_n^*$ strictly converges to
$1$ in $\Mult{\K\left(\HM{A}\right)}$, because 
$\HM{A}\cong \HM{A}\oplus \HM{A}\oplus \cdots$.
An \Name{Hilbert} $A$-module iso\-mor\-phism from  
$\HM{A}$ 
onto $A$ defines an iso\-mor\-phism from 
$\K\left(\HM{A}\right)$ 
onto $\K \left(A_A\right)\cong A$.

By Remark \ref{R:delta-infty}, $A$ is stable
if and only if there is a sequence of isometries 
$s_1,s_2,\ldots\in \Mult{A}$ with $\sum s_ns_n^*$ 
strictly convergent
to $1$.  

The maps
$a\in A\mapsto \left(s_1^*a,s_2^*a, \ldots\right)$
and $(a_1,a_2,\ldots)\in \HM{A}\mapsto \sum_n s_na_n$
preserve the $A$-valued sesqui\-linear forms, 
are right $A$-module maps, and are inverse
to each other.
\end{REM}

\begin{REM}\Label{R:H-mod-for-stable+sep}
For a separable and stable \Cast-algebra $B$ every 
countably generated
\Name{Hilbert} B-module $E$ is isomorphic 
to $pB$ for a projection $p\in\Mult{B}$ with
$B$-valued product $\SP{a}{b}:=a^*b$.

Thus: If $A$ is separable, then every
non-degenerate left $A$-module structure on
$E$ is given (up to isometric
bi-module isomorphisms)
by a non-degenerate *-homo\-mor\-phism
$h\colon A\to \Mult{pBp}\cong p\Mult{B}p$ and $E:=pB$.
(Use Remark \ref{R:A-cong-HA} and Examples 
\ref{Ex:H-module}(iv) and \ref{Ex:M(A)}.)
%
\end{REM}

\begin{REM}\Label{R:full-corner}
Suppose that $B$ is stable and $\sigma$-unital and that
$D$ is a corner of $B$ (i.e.\ there is a projection 
$p=p^*\in \Mult{B}$ with $pBp=D$).
Then:\\ 
{\it $D$ is stable and full (i.e.\ the span of $BDB$ is
dense in $B$), if and only if, there is an isometry
$v\in \Mult{B}$ with $vv^*=p$.}

Indeed, by \cite[Lemma 2.5]{Brown} 
there are 
the Murray--von-Neumann equivalences 
$1\otimes e_{1,1}\sim 1\otimes 1\sim p\otimes 1\sim 
p\otimes e_{1,1}$ in $\Mult{B\otimes \K}$.
Note here that
$pbp$ is a strictly positive element of $pBp$
if $b\in B_+$ is a strictly positive element
of $B$.
\end{REM}

\begin{PRP}\Label{P:h-from-cone}
Suppose that $A$ and $B$ are stable and separable
\Cast-al\-ge\-bras, and that
$\mathcal{C}$ is a point-norm closed
full operator convex cone of completely
positive maps from $A$ into $B$ in the sense
of Definition \ref{D:matr-conv-cone}. 

Then there
is a non-degenerate *-homo\-mor\-phism
$h\colon A\to \Mult{B}$ with:
\begin{Aufl}
\item $h$ is unitarily equivalent to 
$\delta_\infty \circ h$,
\item the c.p.~maps 
$V_b\colon a\mapsto b^*h(a)b$ are in 
$\mathcal{C}$ for every $b\in B$, and
\item for every $V\in \mathcal{C}$ there is a sequence
$b_n\in B$ such that $\| b_n\| ^2\leq \| V\|$
and $\lim _n b_n^*h(a)b_n=V(a)$ for all $a\in A$.
\end{Aufl}
\end{PRP}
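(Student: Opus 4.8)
The plan is to build $h$ out of the Hilbert bimodule produced by Lemma~\ref{L:bi-module-from-cone}, and then force the infinite-repeat property (i) by hand using stability.

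First I would apply Lemma~\ref{L:bi-module-from-cone} to the given point-norm closed operator convex cone $\mathcal{C}$. Since $A$ and $B$ are separable (and $\mathcal{C}$ is full, hence in particular generated by a suitable subset which we may take countable by separability of $CP(A,B)$ in point-norm), the lemma yields a countably generated Hilbert $B$-module $E$ together with a non-degenerate $*$-homomorphism $h_0\colon A\to\LOp{E}$ such that $\mathcal{C}$ is the point-norm closure of the maps $V_e(a):=\SP{e}{h_0(a)e}$, $e\in E$. This already gives properties (ii) and (iii) in embryonic form, but with values in $\LOp{E}$ rather than $\Mult{B}$, and with vectors $e\in E$ rather than elements $b\in B$. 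The next step is to pass from $E$ to $B$ itself. Because $B$ is separable and stable, Remark~\ref{R:H-mod-for-stable+sep} lets me realize $E$ as $pB$ for a projection $p\in\Mult{B}$, with $h_0$ becoming a non-degenerate $*$-homomorphism into $\Mult{pBp}\cong p\Mult{B}p$. Under this identification the vector $e\in pB$ is literally an element $b=e\in B$ with $pe=e$, and $\SP{e}{h_0(a)e}=e^*h_0(a)e=b^*h_0(a)b$, so the cone is now described by maps of the exact shape $V_b(a)=b^*h_0(a)b$ as required.

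The key remaining issue is that $h_0$ takes values in the corner $p\Mult{B}p$, not in all of $\Mult{B}$, and that we must arrange unitary equivalence with $\delta_\infty\circ h$. Here fullness of $\mathcal{C}$ enters decisively: I would argue that fullness of the cone forces the corner $pBp$ to be a full hereditary subalgebra of $B$, so that $pBp$ is full and stable in $B$. By Remark~\ref{R:full-corner} there is then an isometry $v\in\Mult{B}$ with $vv^*=p$. Conjugating by $v$, the map $h:=\mathrm{Ad}(v^*)\circ h_0$, i.e.\ $h(a):=v^*h_0(a)v$, is a non-degenerate $*$-homomorphism from $A$ into $\Mult{B}$, and the reparametrization $b\mapsto vb$ carries the family $\{V_b\}$ onto itself, preserving the description of $\mathcal{C}$. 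Finally I would replace $h$ by its infinite repeat: using the isometries from stability of $B$ and Remark~\ref{R:delta-infty}, the map $\delta_\infty\circ h$ is again a non-degenerate $*$-homomorphism, and the operator-convex-cone axioms (ii),(iii) of Definition~\ref{D:matr-conv-cone} guarantee that the cone generated by $\{a\mapsto b^*(\delta_\infty\circ h)(a)b\}$ is exactly $\mathcal{C}$ again, since $s_i^*h(\cdot)s_j$-type compressions and amplifications already lie in $\mathcal{C}$. Then I replace $h$ by a version unitarily equivalent to $\delta_\infty\circ h$, which by Remark~\ref{R:delta-infty}(ii) satisfies (i).

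The main obstacle I anticipate is the \emph{norm bound} $\|b_n\|^2\le\|V\|$ in (iii): producing a sequence $b_n\in B$ with $b_n^*h(a)b_n\to V(a)$ is straightforward from the point-norm density of $\{V_b\}$ in $\mathcal{C}$, but getting the norms of the approximants controlled by $\|V\|^{1/2}$ requires care. I expect to obtain this by a standard rescaling/polar-decomposition argument inside the corner: given that $V=\lim_n b_n^*h(a)b_n$ with $\|V_{b_n}\|\to\|V\|$, one uses that $\|V_b\|=\|b^*b\|\cdot(\text{normalized})$ together with functional calculus on $b_n^*b_n\in\Mult{B}$ to truncate each $b_n$ so that its norm square is at most $\|V\|+\varepsilon_n$ with $\varepsilon_n\to0$, without disturbing the limit; the infinite-repeat structure from (i) provides the room needed to absorb these truncations. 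Verifying that this truncation does not destroy convergence is the delicate point, and is where I would spend the most effort.
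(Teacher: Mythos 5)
Your proposal follows the paper's proof almost step for step (Lemma~\ref{L:bi-module-from-cone}, then Remark~\ref{R:H-mod-for-stable+sep} to realize $E\cong pB$, then Remark~\ref{R:full-corner} to move from the corner to all of $B$, then the infinite repeat), but there is one genuine gap: you assert that fullness of $\mathcal{C}$ forces $pBp$ to be ``full \emph{and stable}'' in $B$. Fullness of the corner does follow from fullness of the cone, but stability does not: a full hereditary subalgebra of a stable algebra need not be stable (take $B=\K(\ell_2)$ and $p$ a rank-one projection, so that $pBp\cong\C$). Remark~\ref{R:full-corner} needs \emph{both} fullness and stability of $pBp$ to produce the isometry $v\in\Mult{B}$ with $vv^*=p$, so without stability your conjugation step does not go through. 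Note also that your argument never uses the hypothesis that $A$ is stable --- that is exactly the missing ingredient. The paper obtains stability of $pBp$ by taking isometries $t_1,t_2,\ldots\in\Mult{A}$ with $\sum_n t_nt_n^*$ strictly convergent to $1$ (these exist because $A$ is stable) and pushing them forward through the unital, strictly continuous extension $\Mult{h_2}\colon\Mult{A}\to\Mult{pBp}$ of the non-degenerate homomorphism $h_2$; the images are isometries in $\Mult{pBp}$ whose range projections sum strictly to $1$, so $pBp$ is stable by Remark~\ref{R:delta-infty}.

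The rest of your outline is sound and coincides with the paper's argument: the reparametrization $b\mapsto vb$ does carry the family $\{V_b\}$ onto itself; replacing $h_3$ by $\delta_\infty\circ h_3$ preserves (ii) and (iii) by Remark~\ref{R:cone-from-bi-module}, since the bimodule given by $B_B$ and $\delta_\infty\circ h_3$ is isomorphic to the one given by $\HM{B}$ and $(h_3)_\infty$; and (i) follows because $\delta_\infty\circ\delta_\infty$ is unitarily equivalent to $\delta_\infty$. Your concern about the norm bound $\|b_n\|^2\le\|V\|$ in (iii) is legitimate; the truncation via functional calculus in $b_n^*b_n$ that you sketch is the standard way to settle it, and the paper leaves this point implicit.
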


\begin{proof} $\mathcal{C}$ is separable in the 
point-norm topology, because $A$ and $B$ are separable.
Thus $\mathcal{C}$ is countably generated.
By Lemma \ref{L:bi-module-from-cone}, 
there is a countably generated
\Name{Hilbert} $B$-module $E$ and a
*-homo\-mor\-phism $h_1 \colon A \to \LOp{E}$
such that $\mathcal{C}$ is the point-norm
closure of the set of maps $V_e$ with $e\in E$,
and $h_1(A)E$ is dense in $E$.
 
By Remark \ref{R:H-mod-for-stable+sep}, there 
are a projection $p\in \Mult{B}$
and an isometric $B$-module isomorphism $\iota$
from $E$ onto $pB$.
Then $\iota(e)^*\iota(f)=\SP{e}{f}$ and
 $h_2(a):=\iota\circ h_1(a)\circ \iota ^{-1}$
defines a *-homo\-mor\-phism
from $A$ into $\LOp{pB}\cong \Mult{pBp}\cong p\Mult{B}p$
with $h_2(A)pBp=pBp$. 
Thus, $h_2$ uniquely extends to a unital and
strictly continuous *-homo\-mor\-phism $\Mult{h_2}$
from $\Mult{A}$ onto $\Mult{pBp}$.
Since $A$ is stable, there is a sequence 
$t_1,t_2,\ldots$
of isometries in $\Mult{A}$ such that $\sum _n t_nt_n^*$
converges strictly to $1$. The same happens with the
isometries $s_n:=\Mult{h_2}$ in $\Mult{pBp}$.
It follows that
$pBp$ is a stable corner of $B$.

The fullness of $\mathcal{C}$ and the
properties of $E$ and $h_1$ imply that 
every $c\in B$ is in the closed linear span of
the elements $b^*ph_2(a)pb$ with 
$a\in A$ and $b\in B$.
It yields that $pBp$ is also a full corner of $B$.

By Remark \ref{R:full-corner} there is an isometry
$v\in \Mult{B}$ with $vv^*=p$.

Let $h_3(a):=v^*h_2(a)v$, then
$h_3\colon A\to \Mult{B}$ is a non-degenerate
*-homo\-mor\-phism
and satisfies (ii) and (iii) 
(with $h_3$ in place of $h$).

By Remark \ref{R:A-cong-HA},
the bi-module given by $\HM[B]{B}$ and
$\delta_\infty\circ h_3 \colon A\to \Mult{B}$ 
is isomorphic  to the bi-module given by $\HM{B}$
and
$(h_3)_\infty\colon a\mapsto
h_3(a)\oplus h_3(a)\oplus \cdots\, \in \LOp{\HM{B}}$.
Thus 
$h:= \delta_\infty\circ h_3\colon A\to \Mult{B}$ 
still satisfies (ii) and (iii) by 
Remark \ref{R:cone-from-bi-module}.

Since $\delta_\infty\circ \delta_\infty$ 
is unitarily equivalent to
$\delta_\infty$ (cf.\ Remark \ref{R:delta-infty}), 
we get that $h:=\delta_\infty\circ h_3$ satisfies
(i)--(iii).
\end{proof}

\begin{DEF}\Label{D:Mult}
$\Mult{B}$ 
denotes the multiplier 
\Cast-algebra of a \Cast-algebra $B$.
For any closed ideal $J$ of  $B$,
$\Mult{B,J}$ means the set 
$$\Mult{B,J}:= 
\{ t\in\Mult{B}\fdg tB\subset J \} \subset\Mult{B}$$
of relative multipliers which multiply $B$ into $J$.
\end{DEF}
\FRem{cf.\ last chap./ next needed? }
Observe that $\Mult{B,J}$ 
is a strictly closed ideal of $\Mult{B}$
and that $J=\Mult{B,J}\cdot B=\Mult{B,J}\cap B$.
Moreover $\Mult{B,J}$ is the kernel of the natural 
strictly continuous
*-homo\-mor\-phism $\Mult{\pi_J}$
from $\Mult{B}$ into $\Mult{B/J}$.

\begin{REM}\Label{R:Psi-from-cp-maps}
The *-homomorphism $h\colon A\to \Mult{B}$
of Proposition \ref{P:h-from-cone}
defines a map $\Psi _h\colon \latI{B}\to \latI{A}$
by $\Psi _h(J):=h^{-1}(h(A)\cap \Mult{B,J})$.
Then $\Psi _h (B) =A$ and $\Psi _h(\{ 0\})=\ker(h)$,
and,
by (i) and (ii), 
$$
\Psi_h(J)_+ =
\{ 
a\in A_+\,\colon \,\, 
V(a)\in J \quad \textrm{for all}\;\, V\in \mathcal{C}
\}
.$$
It follows that 
$\Psi_h(\bigcap_\alpha J_\alpha)=
\bigcap_\alpha \Psi_h(J_\alpha)$
for every family $\{ J_\alpha \}$ of closed
ideals of $B$. I.e.~$\Psi _h$ satisfies property 
(II) of Definition \ref{D:l-g-preserv} 
if translated to $\latO{\Prim{B}}\cong \latI{B}$
and $\latO{\Prim{A}}\cong \latI{A}$.
Further, $h$ is faithful if and only if
$\mathcal{C}$ is separating.
\end{REM}

\begin{DEF}\Label{D:unitary-htpy}
Let $h_j\colon A\to\Mult{B}$, $j=1,2$, 
*-homo\-mor\-phisms from $A$
into the multiplier algebra $\Mult{B}$ of $B$.
\FRem{def. unitary equiv. needed? right place?}

We call $h_1$ and $h_2$ \emph{unitarily homotopic} 
if there is a 
norm-continuous map 
$t\mapsto U(t)$ from the non-negative real numbers
$\R_+$ into the unitaries in $\Mult{B}$, such that, for
$t\in\R_+$ and $a\in A$,
\begin{eqnarray*}
 &&U(t)^*h_1(a)U(t)-h_2(a)\in B\\
\text{and} &&
 h_2(a)=\lim_{t\to\infty} U(t)^*h_1(a)U(t).
\end{eqnarray*}
The limit is taken in the norm of $\Mult{B}$.
\end{DEF}

\begin{REM}\Label{R:unitary-homotopy}
$h\colon A\to \Mult{B}$ with (i)--(iii) of
Proposition \ref{P:h-from-cone} is unique
up to unitary homotopy 
(cf.\ Definition \ref{D:unitary-htpy} and 
\cite[cor.~5.1.6]{K.book}).
\end{REM}

\subsection{Construction of 
$A$ and $h\colon A\to \Mult{A}$
from $\Psi$}
\begin{LEM}\Label{L:Brown}
Suppose that $D$ is a 
$\sigma$-unital hereditary \Cast-sub\-al\-ge\-bras
of a stable $\sigma$-unital \Cast-algebra $B$ such that 
$D$ is  full in $B$ (i.e.~
$\Span{BDB}$ is dense in $B$).
Then 
there
is an iso\-mor\-phism $\varphi$ from $D\otimes \K$ onto $B$
such that $\varphi \left((D\cap I)\otimes \K\right)=I$
for all $I\in \latI{B}$.

If, in addition, $D$ is stable, then there is an 
iso\-mor\-phism
$\psi$ from $D$ onto $B$ with $\psi(D\cap I)=I$ for all 
$I\in \latI{B}$.
\end{LEM}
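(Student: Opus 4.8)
The plan is to read this off as the ideal-equivariant form of the stable isomorphism theorem of \Name{Brown} \cite{Brown}, where the real work is to make the ideal correspondence explicit. First I would form the right \Name{Hilbert} $B$-module $X:=\overline{DB}$ with inner product $\SP{x}{y}:=x^*y$ and the obvious right $B$-action. Since $D$ is $\sigma$-unital, a strictly positive $d\in D$ satisfies $\overline{dB}=\overline{DB}=X$, so $X$ is singly generated; fullness of $D$ yields $\overline{\SP{X}{X}}=\overline{BD^2B}=\overline{BDB}=B$, so $X$ is a full module; and heredity of $D$ gives $\KOp{X}=\overline{XX^*}=\overline{DBD}=D$. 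Thus $X$ is a $D$--$B$ imprimitivity bimodule, and I would pass to its linking algebra $L$, in which $D$ and $B$ appear as complementary full corners $D=pLp$, $B=(1-p)L(1-p)$ for a projection $p\in\Mult{L}$; note that $L$ is $\sigma$-unital.

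To produce the isomorphism I would stabilize: $L\otimes\K$ is stable and $\sigma$-unital, and $D\otimes\K=(p\otimes1)(L\otimes\K)(p\otimes1)$ as well as $((1-p)\otimes1)(L\otimes\K)((1-p)\otimes1)=B\otimes\K$ are full stable corners of it. By Remark \ref{R:full-corner} there are isometries in $\Mult{L\otimes\K}$ implementing $D\otimes\K\cong L\otimes\K\cong B\otimes\K$, and since $B$ is stable, $B\otimes\K\cong B$. Composing yields $\varphi\colon D\otimes\K\to B$.

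For equivariance I would identify the \Name{Rieffel} correspondence attached to $X$. For $I\in\latI{B}$ the induced ideal of $D=\KOp{X}$ is $\overline{XIX^*}=\overline{DID}$, and for hereditary $D$ one has $\overline{DID}=D\cap I$ (the inclusion $\subseteq$ is clear; $D\cap I=\overline{(D\cap I)^3}\subseteq\overline{DID}$ gives $\supseteq$, using $\overline{BIB}=I$). The ideals of $L\otimes\K$ are exactly the $K\otimes\K$ for $K\in\latI{L}$, and the \Name{Rieffel} matching identifies $K$ with a pair of corresponding ideals $K\cap D$ of $D$ and $K\cap B$ of $B$, where $K\cap D=D\cap I$ when $K\cap B=I$; the corner isometries transport these to one another, so $\varphi$ carries $(D\cap I)\otimes\K$ onto $I\otimes\K$, and the stabilization $B\otimes\K\cong B$ sends $I\otimes\K$ to $I$. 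Hence $\varphi((D\cap I)\otimes\K)=I$ for every $I\in\latI{B}$. For the second statement, if $D$ is also stable then by Remark \ref{R:A-cong-HA} a \Name{Hilbert}-module isomorphism $\HM{D}\cong D$ induces an isomorphism $\theta\colon D\to D\otimes\K$ with $\theta(J)=J\otimes\K$ for every $J\in\latI{D}$ (being right $D$-linear it sends the ideal submodule of $\HM{D}$ attached to $J$ onto $J\subset D$); then $\psi:=\varphi\circ\theta$ satisfies $\psi(D\cap I)=I$.

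I expect the main obstacle to be the equivariance rather than the bare isomorphism: one must check not merely that the various identifications are lattice isomorphisms of ideals, but that they transport each ideal to its designated correspondent. Concretely, the crux is the two verifications $\overline{DID}=D\cap I$ (heredity plus fullness) and that the corner isometries from Remark \ref{R:full-corner}, together with $B\otimes\K\cong B$, intertwine the induced-ideal correspondence of the bimodule $X$ with the literal intersection $I\mapsto D\cap I$. Everything else is the standard packaging of \Name{Brown}'s theorem through the linking algebra.
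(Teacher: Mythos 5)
Your proposal is correct, but it takes a genuinely different route from the paper. The paper never leaves $B$: it fixes isometries $s_1,s_2,\ldots\in\Mult{B}$ with $\sum s_ns_n^*\to 1$ strictly, builds the explicit copy $\tau\colon D\otimes\K\to D_0:=\overline{eBe}$ with $e:=\sum 2^{-n}s_nds_n^*$ and $\tau\left((D\cap I)\otimes\K\right)=D_0\cap I$ (thereby reducing the general case to the stable one, the opposite order from yours), and then, for stable $D$, forms $d_1:=s_1bs_1^*$, $d_2:=s_2ds_2^*$, $d_3:=d_1+d_2$, so that $D_1=s_1Bs_1^*$ and $D_2=s_2Ds_2^*$ are orthogonal full stable corners of the hereditary subalgebra $D_3$ generated by $d_3$; Remark \ref{R:full-corner} applied in $\Mult{D_3}$ gives isometries $t_1,t_2$ with $D_j=t_jD_3t_j^*$, and the explicit map $\psi(a)=s_1^*v(s_2as_2^*)v^*s_1$ with $v=t_1t_2^*$ does the job, the ideal-equivariance being read off from formulas like $s_2(D\cap I)s_2^*=D_2\cap I$. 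You instead package the same input (Remark \ref{R:full-corner}, i.e.\ Brown's Lemma 2.5) through the imprimitivity bimodule $X=\overline{DB}$, its linking algebra, stabilization, and the Rieffel correspondence. Your approach buys a cleaner, once-and-for-all treatment of the ideal correspondence -- your observation that the induced ideal $\overline{XIX^*}=\overline{DID}$ equals $D\cap I$ for hereditary $D$ is exactly the right pivot, and the corner isometries automatically intertwine ideals since $v^*(K\cap q(L\otimes\K)q)v=K$ for any ideal $K$. The cost is importing the linking-algebra and Rieffel machinery, which the paper's self-contained computation avoids; the paper's version also keeps every intermediate object inside $B$ itself, which is convenient for the later applications where $D=\overline{CBC}$ sits concretely in $B$.
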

(Special case of the
$\Psi$-equivariant version 
of the stable iso\-mor\-phism theorem of Brown \cite{Brown},
cf.~\cite[cor.~5.2.6]{K.book}). 
\begin{proof} One can modify the
proof in \cite{Brown} as follows:
We take a sequence of isometries $s_1,s_2,\ldots\in \Mult{B}$
with $\sum s_ns_n^*$ strictly convergent to $1$, and strictly
positive contractions  $b\in B_+$  and $d\in D_+$.

Let $e:=\sum _n 2^{-n}s_nds_n^*$
and let $e_{j,k}$ denote  matrix units of $\K$. 
It is easy to check that
$$\sum _{j,k} a_{j,k} \otimes  e_{j,k} \mapsto
\sum _{j,k} s_ja_{j,k}s_k^*$$
extends to a *-mono\-mor\-phism $\tau$
from $D\otimes \K$ onto the hereditary 
\Cast-sub\-al\-ge\-bra
$D_0:=\overline{eBe}$ of $B$ and satisfies  
$\tau\left((D\cap I)\otimes \K\right)=D_0\cap I$ 
for closed ideals
$I$ of $B$.  In particular, $D_0$ is full in $B$ if and
only if $D$ is full in $B$. Thus it suffices to find
an iso\-mor\-phism $\psi_0$ from the stable $D_0$ onto $B$ with
$\psi_0\left(D_0\cap I\right)=I$ for $I\in \latI{B}$,
because then $\varphi:=\psi_0\circ \tau$ is as desired.

Suppose now that $D$ is a stable full 
and $\sigma$-unital hereditary \Cast-sub\-al\-ge\-bra
of $B$ (i.e.\ $DBD=D$, $D\cong D\otimes \K$, 
$\Span{BDB}$ is dense 
in $B$ and $D_+$ contains a strictly positive element $d$).
Let $d_1:= s_1bs_1^*$, 
$d_2:=s_2ds_2^*$, $d_3:=d_1+d_2$, and
let $D_k$ be the hereditary 
\Cast-sub\-al\-ge\-bra
of $B$ generated by $d_k$ ($k=1,2,3$).
Then $D_k$ is a stable $\sigma$-unital full hereditary 
\Cast-sub\-al\-ge\-bra 
of $B$ for $k=1,2,3$, $D_1=s_1Bs_1*$ and $D_2=s_2Ds_2^*$.
Moreover, $D_1$ and $D_2$ are orthogonal
corners of $D_3$ such that $D_1+D_2$ contains the strictly
positive element $d_3$ of $D_3$. Thus $D_1$ and
$D_2$ are $\sigma$-unital, stable and full corners of $D_3$.
By Remark \ref{R:full-corner}, there are isometries
$t_1,t_2\in \Mult{D_3}$ with 
$D_j=t_jt_j^*D_3t_jt_j^*=t_jD_3t_j^*$ for $j=1,2$.
Let $v:=t_1t_2^*\in \Mult{D_3}$ and 
$\psi(a):= s_1^*v(s_2as_2^*)v^*s_1$ for $a\in D$.
$\psi$ is an isomorphism from $D$ onto $B$
and satisfies $\psi(D\cap I)=I$, because
$s_2(D\cap I)s_2^*=D_2\cap I$, $s_1^*(I\cap D_1)s_1=I$,
$D_k\cap I=D_k\cap(D_3\cap I)$.
\end{proof}

\begin{DEF}\Label{D:r-nuc}
Let $\Psi\colon \latI{B}\to \latI{A}$ an order preserving map.
A completely positive map $V\colon A\to B$ is 
\emph{$\Psi$-equivariant} if 
$V\left(\Psi(J)\right)\subset J$ for every $J\in \latI{B}$.  
$V$ is \emph{$\Psi$-residually nuclear} if 
$V$ is $\Psi$-equivariant and
the induced maps
$[V]_J\colon A/\Psi(J)\to B/J$ are nuclear for every 
$J\in \latI{B}$.
\end{DEF}
Clearly, every $\Psi$-equivariant
c.p.~map $V\colon A\to B$ is 
$\Psi$-residually nuclear
if $A$ or $B$ is nuclear. 
But for non-nuclear $A$ and $B$
the nuclearity of c.p.~maps with 
$V\left(\Psi(J)\right)\subset J$ does not
imply the nuclearity of $[V]_J$ in general (cf.~\cite[sec.~5.3]{K.book}).

\begin{PRP}\Label{P:H0}
Suppose that $A$ and $B$ are separable stable \Cast-algebras
such that 
$B\otimes\OO{2}$ contains an Abelian regular \Cast-subalgebra
$C$ (cf.\ Definition  \ref{D:regular}).  Then every map 
$$\Psi\colon \latI{B}\cong\latO{\Prim{B}}
\to\latI{A}\cong\latO{\Prim{A}}$$
 with
properties (I) and (II) of Definition \ref{D:l-g-preserv} 
can be realized 
by a *-mono\-mor\-phism $h\colon A\hookrightarrow \Mult{B}$ 
with the following properties:
\begin{Aufl}
\item $\, h\, $ is non-degenerate, i.e.\ $h(A)B$ is dense
  in $B$.
\item $\, h\, $ is unitarily equivalent to its infinite repeat 
  $\delta_\infty\circ h$.
\item $\Psi(J)=h^{-1} \left( h(A)\cap \Mult{B,J}\right)$
  for every $J\in\latI{B}$.
\item For every $b\in B$ the completely positive map
  $T_b\colon a\in A\mapsto b^*h(a)b\in B$ 
        is $\Psi$-residually nuclear.
\end{Aufl}
\end{PRP}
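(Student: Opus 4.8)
The plan is to produce $\Psi$ from a suitable operator convex cone and then read off $h$ from Proposition~\ref{P:h-from-cone} and Remark~\ref{R:Psi-from-cp-maps}. Let $\mathcal{C}\subset CP(A,B)$ be the set of \emph{all} $\Psi$-residually nuclear completely positive maps (Definition~\ref{D:r-nuc}). First I would verify that $\mathcal{C}$ is a point-norm closed operator convex cone in the sense of Definition~\ref{D:matr-conv-cone}. $\Psi$-equivariance is stable under sums, under $V\mapsto b^*V(\cdot)b$ and under the amplified operation $V\mapsto V_{r,c}$, since each of these carries the ideal $\Psi(J)$ into $J$; residual nuclearity is preserved because the corresponding induced maps on the quotients $A/\Psi(J)\to B/J$ are obtained from the nuclear maps $[V]_J$ by composition with completely positive maps and amplifications. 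Finally, a point-norm limit of $\Psi$-residually nuclear maps is again one, because the ideals $J$ are closed and the nuclear maps into $B/J$ form a point-norm closed set; hence $\mathcal{C}$ is already point-norm closed.

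Granting the two key facts that (a) $\mathcal{C}$ is full and (b) $\mathcal{C}$ induces $\Psi$, i.e.\ $\Psi(J)_+=\{a\in A_+\fdg V(a)\in J \text{ for all } V\in\mathcal{C}\}$ for every $J\in\latI{B}$, the proposition follows formally. Proposition~\ref{P:h-from-cone} applied to the full cone $\mathcal{C}$ gives a non-degenerate $h\colon A\to\Mult{B}$ (property~(i)) unitarily equivalent to $\delta_\infty\circ h$ (property~(ii)), and by parts~(ii),(iii) of that proposition the cone associated with $h$ is exactly $\mathcal{C}$. Thus for $b\in B$ the map $T_b\colon a\mapsto b^*h(a)b$ lies in $\mathcal{C}$ and is $\Psi$-residually nuclear, which is~(iv). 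Remark~\ref{R:Psi-from-cp-maps} then gives $\Psi_h(J)_+=\{a\in A_+\fdg V(a)\in J \text{ for all } V\in\mathcal{C}\}=\Psi(J)_+$, so $\Psi_h=\Psi$ as both are closed ideals, which is~(iii); and $\ker h=\Psi_h(\{0\})=\Psi(\{0\})=\{0\}$ by property~(I), so $h$ is a monomorphism.

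The heart of the matter is manufacturing enough $\Psi$-residually nuclear maps from the regular Abelian subalgebra. Put $P:=\Prim{C}$, a second countable locally compact Hausdorff (hence locally compact Polish) space with $C\cong\Cont[0]{P}$, and identify $\latI{B}\cong\latI{B\otimes\OO{2}}$ via $J\mapsto J\otimes\OO{2}$. Regularity makes $\Phi\colon J\mapsto (J\otimes\OO{2})\cap C$ a map $\latI{B}\to\latI{C}\cong\latO{P}$ with properties~(I)--(IV) (the remark after Definition~\ref{D:regular} and Lemma~\ref{L:reg-abel-psi}); in particular $C$ is full in $B\otimes\OO{2}$, and by Proposition~\ref{P:map-Psi-pi} the map $\Phi$ comes from a continuous pseudo-open pseudo-epimorphic $\rho\colon P\to\Prim{B}$. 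I would then consider maps $V=\mu\circ\phi$, where $\phi\colon A\to C\cong\Cont[0]{P}$ is completely positive with $\phi(\Psi(J))\subset\Phi(J)$ for all $J$, and $\mu$ is the restriction to $C$ of a slice map $\id{B}\otimes\omega\colon B\otimes\OO{2}\to B$ for a state $\omega$ of $\OO{2}$. Such a $V$ is $\Psi$-equivariant, since $\Phi(J)=C\cap(J\otimes\OO{2})$ and $\mu$ carries $J\otimes\OO{2}$ into $J$; and as $V$ factors through the Abelian algebra $\Cont[0]{P}$ (and $[V]_J$ through a quotient of it), $V$ is automatically $\Psi$-residually nuclear, hence in $\mathcal{C}$. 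Here a completely positive $\phi$ with $\phi(\Psi(J))\subset\Phi(J)$ amounts to a weak*-continuous field $p\mapsto f_p$ of positive functionals on $A$ with $f_p$ vanishing on $\Psi(\rho(p))$.

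It remains to see that these $V$ exhaust $\mathcal{C}$ enough to give~(a) and~(b). Fullness should follow from fullness of $C$ in $B\otimes\OO{2}$ together with the freedom to take fields $p\mapsto f_p$ with ranges dense in $C$ and suitable slicing states. For the inclusion $\Psi_{\mathcal{C}}(J)\subset\Psi(J)$ (which contains the separating property as the case $J=\{0\}$): given $a\in A_+$ with $a\notin\Psi(J)$, property~(II) and the fact that $J$ is the intersection of the primitive ideals above it yield a primitive $\tilde Q\supset J$ with $a\notin\Psi(\tilde Q)$; using pseudo-epimorphicity of $\rho$ I would find, possibly after approximation, a point $p\in P$ with $J\subset\rho(p)$ and $a\notin\Psi(\rho(p))$, choose $f_p$ a state of $A/\Psi(\rho(p))$ with $f_p(a)\neq 0$, extend it to an admissible continuous field, and pick $\omega$ detecting the resulting element, producing $V\in\mathcal{C}$ with $V(a)\notin J$. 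The main obstacle is exactly this last step: assembling a single globally weak*-continuous, admissible field $p\mapsto f_p$ (so that $f_p$ kills $\Psi(\rho(p))$ for \emph{all} $p$ at once) that still separates a prescribed $a$ modulo a prescribed $J$, and matching the slice on $\OO{2}$ so the element survives in $B/J$. This is where second countability of $P$, separability of $A$, and the pseudo-open/pseudo-epimorphic geometry of $\rho$ enter essentially, and where the bookkeeping of passing between $B$ and $B\otimes\OO{2}$ without losing residual nuclearity over $B$ must be handled with care.
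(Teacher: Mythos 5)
Your overall architecture is the same as the paper's: build a point-norm closed, full, separating operator convex cone of $\Psi$-equivariant (residually nuclear) c.p.\ maps, feed it to Proposition \ref{P:h-from-cone}, and read off (i)--(iv) via Remark \ref{R:Psi-from-cp-maps}. That formal part of your argument is sound, as is the observation that fullness and property (iii) both reduce to the single separation statement $\Psi(J)_+=\{a\in A_+\fdg V(a)\in J \ \forall V\in\mathcal{C}\}$ (fullness then follows from $\Psi^{-1}(A)=\{B\}$ applied to the closed span $J_0$ of $\{V(a)\}$).

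But the step you yourself flag as ``the main obstacle'' is precisely the mathematical content of the proposition, and you do not carry it out: given $J\in\latI{B}$ and $a\notin\Psi(J)$, you must produce a \emph{globally} weak*-continuous admissible field $p\mapsto f_p$ of quasi-states with $f_p$ vanishing on $\Psi(Y\setminus\{p\})$ for every $p$ and $f_q(a)\neq 0$ at a prescribed point $q$. Saying you would ``extend it to an admissible continuous field'' is exactly the assertion to be proved. The paper does this in Lemma \ref{L:exist-Psi-eqiv.-cp-map}: one shows that property (II) of $\Psi$ is \emph{equivalent} to openness of the projection $(p,\xi)\mapsto p$ on $R=\{(p,\xi)\fdg \xi\in K_p\}$ — via Lemma \ref{L:lsc-relation}, which reduces the openness to the extreme points $\partial\mathcal{Q}(A)=\{0\}\cup P(A)$ and uses that $P(A)\to\Prim{A}$ is open — and then invokes Michael's selection theorem on the resulting lower semi-continuous family $p\mapsto K_p$ of split faces to obtain a continuous selection through the prescribed pure state $\chi\in K_q$ with $\chi(a)\neq0$. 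Without this lemma neither the separating property nor fullness is established, so the proof is incomplete at its core. A secondary, fixable issue is bookkeeping: the paper first transfers $\Psi$ to a map on $\latO{Y}$ for $Y=\Prim{C}$ using the $\Psi$-equivariant Brown isomorphism (Lemma \ref{L:Brown}) and the pseudo-left-inverse $\Phi_1$ of $\Psi_1$ (Lemma \ref{L:Phi}), solves the problem for $B_1\cong\Cont[0]{Y,\K}$ where every $\Psi$-equivariant map is automatically residually nuclear, and only then passes to $\Mult{B_1}\subset\Mult{B}$, with residual nuclearity over $B$ coming from the factorization of $V_b$ through the nuclear quotient $B_1/(B_1\cap J)$; your slice-map variant would need the analogous care, and your field condition should be stated as $f_p$ vanishing on $\Psi(Y\setminus\{p\})$ (the g.l.b.\ over all relevant $J$), which is where property (II) enters.
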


\begin{proof}
First we consider the case where $B= \Cont[0]{Y,\K}$
for a locally compact Polish space $Y$. 
The set $\mathcal{C}$
of $\Psi$-equivariant c.p.~maps $T$ from $A$ to $B$
is closed in point-norm, and is operator convex
in the sense of Definition \ref{D:matr-conv-cone}.
Every $T\in \mathcal{C}$ is $\Psi$-residually nuclear,
because $\Cont[0]{Y,\K}$ is nuclear.

If we use the correspondence between $\latI{B}$
and $\latO{Y}$, then Lemma \ref{L:exist-Psi-eqiv.-cp-map}
tells us that for every $J\in \latI{B}$
and every $a\not\in \Psi(J)$ there is $T\in \mathcal{C}$
with $T(a)\not\in J$. Thus, 
$$
\Psi(J) =\{ a\in A\fdg 
V(a)\in J\quad\textrm{for all}\; V\in \mathcal{C} \}
.$$
This implies $\Psi(J_0)=A$ for the closed linear span 
$J_0$ of 
$\{ T(a)\fdg  T\in \mathcal{C},\, a\in A\}$.
Since $\Psi^{-1}(A)=\{ B\}$ 
(by property (II) of $\Psi$), 
it follows $J_0=B$, i.e.~$\mathcal{C}$ is full.

By Proposition \ref{P:h-from-cone} 
there is a non-degenerate
*-homo\-mor\-phism $h\colon A\to \Mult{B}$ with
properties (i)--(iv).
Since our $\mathcal{C}$ is separating for $A_+\setminus\{0\}$,
it follows that $h$ is faithful by 
Remark \ref{R:Psi-from-cp-maps}. 

\smallskip

The case of general $B$ and Abelian regular $C\subset B$
reduces to the above special case as follows:

Let $B$ a separable stable \Cast-algebra
and $C\subset B$ an Abelian regular \Cast-subalgebra.
In particular, the hereditary \Cast-subalgebra
$D:=\overline{CBC}$ of $B$ is full. Thus, by Lemma
\ref{L:Brown} there is an isomorphism
$\varphi$ from $D\otimes \K$ onto $B$ with
$\varphi \left((D\cap J)\otimes \K\right)=J$
for $J\in \latI{B}$.

It holds
$C\cong \Cont[0]{Y}$ 
for the locally compact space
$Y:=\Prim{C}$ (by Gelfand transformation), 
and the restriction of $\varphi$
to $C\otimes \K$ defines a non-degenerate
*-mono\-mor\-phism $\eta$ from 
$\Cont[0]{Y,\K}\cong C\otimes \K\subset D\otimes \K$ 
into $B$. 
Let $B_1:=\eta(\Cont[0]{Y,\K})=\varphi(C\otimes \K)$. 
Then
$$
B_1\cap J
=\varphi((C\cap J)\otimes \K)
=\eta (\Cont[0]{U,\K})
$$
for $J\in \latI{B}$ and the support $U$ of
$C\cap J$ in $Y=\Prim{C}$.

Let $\Psi_1\colon \latO{\Prim{B}}\to \latO{Y}$ the 
lattice monomorphism (with properties (I)--(IV)
of Definition \ref{D:l-g-preserv}) that is 
induced by $I\mapsto \eta^{-1}(I\cap \varphi (B_1))$
(cf.\ Lemma \ref{L:reg-abel-psi}).
We use the right-inverse $\Phi_1$ of $\Psi_1$ as
considered in
Definition \ref{D:pseudo-inv} and Lemma \ref{L:Phi},
and  define a map
$\Phi_2\colon \latO{Y}\to \latI{B}$ by
$\Phi_1(U):=\ke (\Prim{B}\setminus \Phi_1(U))$.
Then $\Phi_1$ satisfies property (I) and (II)
of Definition \ref{D:l-g-preserv} by Lemma
\ref{L:Phi}. 
Further $\Phi_2(U)=J$ for the
support $U$ of $I:=\eta^{-1}(B_1\cap J)$,
because $C\cap J$ and  $I\subset \Cont[0]{Y,\K}$
have the same support in $Y=\Prim{C}$
and $\Psi_1(V)=U$ for the support 
$V\in \latO{\Prim{B}}$ of $J$.

Let $\Psi_3(U):= \Psi\left(\Phi_2(U)\right)$ for 
$U\in \latO{Y}$. $\Psi_3$ satisfies properties (I)
and (II),
because $\Psi$ and $\Psi_2$ satisfy (I) and (II), and
$\Psi_3(U)=\Psi(J)$ for $J\in \latI{B}$ and for
the support $U$ of $\eta^{-1}(B_1\cap J)$.
{}For $I\in \latI{B_1}$
let $\Psi_4(I):= \Psi_3(U)$ for the support 
$U\in \latO{Y}$
of $\eta^{-1}(I)\in \Cont[0]{Y,K}$.
Then $\Psi_4(B_1\cap J)=\Psi(J)$ for 
$J\in \latI{B}$. 

By the above considered special case, there
is a *-mono\-mor\-phism $h\colon A\to \Mult{B_1}$
with (i)--(iv) 
(for $(B_1, \Psi_4)$ in place of $(B, \Psi)$).

The nuclear \Cast-sub\-al\-ge\-bra $B_1\subset B$ 
satisfies
$\Span{B_1B}$ dense in $B$. Thus, $\Mult{B_1}$ is
(in a natural way)
a strictly closed \Cast-subalgebra of $\Mult{B}$.

Then $h\colon A\to \Mult{B}$ 
is a monomorphism and satisfies (i)--(iii):

$h(A)B=h(A)B_1B=B_1B=B$, $h$ is unitarily equivalent
to $\delta_\infty \circ h$  by 
Remark \ref{R:delta-infty}(ii), and
$h(A)\cap \Mult{B,J}=h(\Psi(J))$ because 
$$\Mult{B,J}\cap \Mult{B_1}=\Mult{B_1,B_1\cap J}$$
and $\Psi_4(B_1\cap J)=\Psi(J)$.

\smallskip

$V_b\colon a\mapsto b^*h(a)b$
is a $\Psi$-equivariant c.p.\ map from
$A$ into $B$ by (iii).
$[V_b]_J\colon A/\Psi(J)\to B/J$ factorizes over the
nuclear \Cast-algebra $B_1/(B_1\cap J)$,
because 
$$
[V_b]_J(a+\Psi(J))
= \pi_J(d)^* [V_c]_I(a+\Psi(J))\pi(d)
$$
for $I:=B_1\cap J$,
$c\in B_1$ and $d\in B$ with $cd=b$.
Thus
$V_b$ is also residually nuclear for every $b\in B$, 
i.e.\ $h$ satisfies also (iv) for $\Psi$ and $B$.
\end{proof}

\begin{REM}\Label{R:unique}
$\, h\, $ has the property that
a completely positive map $V$ from $A$ to $B$ 
is $\Psi$-residually nuclear (cf.~Definition \ref{D:r-nuc})
if and only if 
$V$ can be approximated in point-norm by completely positive
maps $W_b$  for a suitable $b\in B$, 
cf.\ \cite[chp.~3]{K.book}.
It follows that 
the map 
$\, h\, $
of Proposition \ref{P:H0} is determined by 
(i)--(iv) up to unitary homotopy, 
cf. Remark \ref{R:unitary-homotopy}. 
\end{REM}

\begin{REM}\Label{R:iso-of-prim}
By \cite[chp.~1,~cor.~L]{K.book}, 
every iso\-mor\-phism
$\alpha$ from the primitive ideal space $\Prim{A}$ 
of a separable nuclear 
\Cast-algebra $A$ onto the primitive ideal space 
$\Prim{B}$ of a
separable nuclear \Cast-algebra $B$ is induced by an 
*-iso\-mor\-phism
$\varphi$ from $A\otimes\OO{2}\otimes\K$ onto 
$B\otimes\OO{2}\otimes\K$.
\end{REM}

\begin{COR}\Label{C:H0-from-Omega}
Let $P$ be a locally compact Polish space and 
$A:=\Cont[0]{P,\K}$.  
Suppose that
$\Omega$ is a sublattice of 
$\,\latO{P}\cong \latI{A}$ which is closed
under l.u.b.\ and g.l.b.\ and contains 
$\emptyset\sim \{0\}$ and 
$P\sim A$.
Then there exists a *-mono\-mor\-phism 
$h\colon A\to\Mult{A}$
with the following properties:
\begin{Aufl}
\item $\, h\, $ is non-degenerate and faithful,
\item $\, h\, $ is unitarily equivalent to its infinite repeat 
  $\delta_\infty\circ h$,
\item if $J\in\latI{A}$ satisfies $h(J)A\subset J$ then
  $h(A)\cap \Mult{A,J}=h(J)$,
\item the support of  $J\in\latI{A}$ is in 
        $\Omega$ if and only if
 $h(J)A\subset J$
\end{Aufl}
\end{COR}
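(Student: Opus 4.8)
The goal is to deduce Corollary~\ref{C:H0-from-Omega} from Proposition~\ref{P:H0} by choosing the target algebra, the ideal lattice, and the map $\Psi$ correctly. Since here $B=A=\Cont[0]{P,\K}$ is itself Abelian-times-$\K$, the regular Abelian subalgebra required by Proposition~\ref{P:H0} is at hand: inside $B\otimes\OO{2}$ the subalgebra $\Cont[0]{P}\otimes 1$ (tensored with a fixed rank-one projection in $\K$) is Abelian, has $\Prim{} \cong P$, and is regular in the sense of Definition~\ref{D:regular}, because the correspondence $J\mapsto J\cap C$ matches the lattice isomorphism $\latI{A}\cong\latO{P}$ and intersection with $C$ preserves both sums and intersections of ideals. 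Thus the hypotheses of Proposition~\ref{P:H0} are met once we produce a suitable $\Psi$.

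The plan is to define $\Psi\colon\latI{A}\to\latI{A}$ using the sublattice $\Omega$. The idea is that $\Omega$, being a sublattice of $\latO{P}\cong\latI{A}$ that is closed under l.u.b.\ and g.l.b.\ and contains $\emptyset$ and $P$, should serve as the image lattice, and $\Psi$ should be essentially a projection onto $\Omega$. Concretely I would let $\Psi(J)$ be the largest ideal $K\in\latI{A}$ whose support lies in $\Omega$ and satisfies $K\subset J$ — equivalently, under the identification with open sets, $\Psi(U)$ is the g.l.b.\ (interior of intersection) of all members of $\Omega$ containing $U$, or rather the l.u.b.\ of all members of $\Omega$ contained in $U$. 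The properties (I) and (II) of Definition~\ref{D:l-g-preserv} that Proposition~\ref{P:H0} demands then follow from the closure of $\Omega$ under the lattice operations: property~(I) is immediate since $P,\emptyset\in\Omega$ and $\Psi^{-1}(A)=\{A\}$ because $P$ is the top of $\Omega$, while property~(II), the commutation with interiors of intersections, is exactly the g.l.b.-preservation built into the definition of $\Psi$ together with the g.l.b.-closure of $\Omega$.

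Applying Proposition~\ref{P:H0} to this $(\Psi,B)=(\Psi,A)$ yields a non-degenerate *-monomorphism $h\colon A\to\Mult{A}$ satisfying (i)--(iv) there, and I would translate these into the four asserted properties of the Corollary. Non-degeneracy and faithfulness (Corollary~(i)) come from Proposition~\ref{P:H0}(i) together with the separating/faithful clause of Remark~\ref{R:Psi-from-cp-maps}, using that the chosen $\Psi$ has $\Psi(\{0\})=\{0\}$. The infinite-repeat property (Corollary~(ii)) is verbatim Proposition~\ref{P:H0}(ii). For (iii) and (iv), the key observation is that, by construction, $J$ has support in $\Omega$ precisely when $J$ is a fixed point of $\Psi$, i.e.\ $\Psi(J)=J$; and the condition $h(J)A\subset J$ should be shown equivalent to $\Psi(J)=J$ via the realization formula $\Psi(J)=h^{-1}(h(A)\cap\Mult{A,J})$ of Proposition~\ref{P:H0}(iii). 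Once that equivalence is established, property~(iv) of the Corollary is immediate, and (iii) follows by substituting $J=\Psi(J)$ into the realization formula.

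\medskip
The step I expect to be the main obstacle is verifying the equivalence ``support of $J$ lies in $\Omega$ $\iff$ $h(J)A\subset J$,'' which is the bridge between the purely lattice-theoretic datum $\Omega$ and the analytic datum $h$. The forward direction is the cleaner half: if the support of $J$ is in $\Omega$ then $\Psi(J)=J$, and since $h$ is non-degenerate one gets $h(J)A\subset h(\Psi(J))B\subset J$ from the equivariance encoded in Proposition~\ref{P:H0}(iii). The reverse direction is more delicate: from $h(J)A\subset J$ one must conclude $J\subset\Mult{A,J}$ in a way that forces $\Psi(J)\supset J$, hence $\Psi(J)=J$ and thus support in $\Omega$; this requires carefully unwinding the relative-multiplier description $\Mult{B,J}=\{t:tB\subset J\}$ of Definition~\ref{D:Mult} and checking that $h(J)A\subset J$ is exactly the statement $h(J)\subset\Mult{A,J}$, whence $J\subset h^{-1}(h(A)\cap\Mult{A,J})=\Psi(J)$. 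I would handle this by working with positive elements and a strictly positive/approximate-unit argument, invoking $J=\Mult{A,J}\cdot B=\Mult{A,J}\cap B$ from the remark after Definition~\ref{D:Mult}.
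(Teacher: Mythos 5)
Your proposal is correct and follows essentially the same route as the paper: the paper also defines $\Psi$ as the idempotent retraction of $\latO{P}$ onto $\Omega$ (via Remark \ref{R:sublattice}, which realizes exactly your ``l.u.b.\ of all members of $\Omega$ contained in $U$'' and supplies properties (I)--(II)), applies Proposition \ref{P:H0}, and then derives (iii)--(iv) from the equivalence $h(J)A\subset J \Leftrightarrow J\subset\Psi(J) \Leftrightarrow \Psi(J)=J \Leftrightarrow U_J\in\Omega$, precisely the bridge you identified as the key step.
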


\FRem{$\mathcal{Z}$ better, or $\Omega$ good}

\begin{proof} 
By Remark \ref{R:sublattice} there is a map 
$\Psi\colon \latO{P} \to \latO{P}$ 
that satisfies properties (I) and (II) of
Definition \ref{D:l-g-preserv}, $\Psi\circ \Psi=\Psi$, 
$\Psi(V)\subset V$ for $V\in \latO{P}$ and
$\Psi(V)=V$ if and only if $V\in \Omega$. 

Recall that $P=\Prim{A}$,
$\ke (P\setminus U)=\Cont[0]{U,\K}\in \latI{A}$ for
$U\in \latO{P}$, and 
$J=\Cont[0]{U_J,\K}$ for the support $U_J\in \latO{P}$
and $J\in\latI{A}$.
By Proposition \ref{P:H0} there exists a non-degenerate 
*-mono\-mor\-phism
$h\colon A\hookrightarrow \Mult{A}$ such that 
$$\Cont[0]{\Psi(V),\K}
=h^{-1}\left(h(A)\cap \Mult{A,\Cont[0]{V,\K}}\right)$$
for open subsets $V$ of $P$ and 
$\delta_\infty\circ h$ unitarily
equivalent to $\, h\,$.
Thus, for $J=\Cont[0]{U_J,\K}$ holds $h(J)A\subset J$, 
i.e.~$J\subset h^{-1}\left(h(A)\cap \Mult{A,J}\right)$, 
if and
only if $U_J\subset \Psi\left(U_J\right)$. The latter
implies $U_J=\Psi\left(U_J\right)$, 
i.e.~$h(J)=h(A)\cap \Mult{A,J}$.
Hence, $\, h\, $ satisfies (i)-(iv).
\end{proof}

\begin{REM}\Label{R:Omega-from-H0}
Suppose that $A$ is \Cast-algebra and $h\colon A\to\Mult{A}$
a non-de\-gener\-ate *-mono\-mor\-phism 
(i.e.~$h^{-1}(0)=\{ 0\}$ 
and $h(A)A=A$).
Consider the set $\Omega$ of supports of closed ideals $J$ 
with the property $h(J)A\subset J$.
Then $\Omega$ is a sub-lattice of $\latO{\Prim{A}}$ that 
contains $\emptyset$ and $\Prim{A}$ 
and is closed under l.u.b.\ (=unions) and
g.l.b.\ (=interiors of intersections).

Indeed, $h\left( J_\alpha \right) A\subset J_\alpha$ implies
$h \left( \bigcap_\alpha J_\alpha \right) A
\subset \bigcap_\alpha J_\alpha$, 
i.e.~$\Omega$ is closed under kernels of intersections
(g.l.b.). Similarly,
$$h \left( \overline{\sum_\alpha 
J_\alpha} \right) A\subset\overline{\sum_\alpha J_\alpha}\,,$$
 i.e.\
that $\Omega$ is also closed under unions (l.u.b.).
\end{REM}

\begin{REM}\Label{R:H0-inv-ideal}
The assumption of Corollary \ref{C:H0-from-Omega} 
implies, that
for closed ideals $J$ of $A$ with $h(J)A\subset J$ holds
\begin{equation} \label{Eq:H0-inv-ideal}
h(J)=h(A)\cap\Mult{A,J}=
h(A)\cap \left( \Mult{A,J}+A \right) , 
\end{equation}
i.e.\ if $a,b\in A$ and 
$\left( h(a)+b\right) A\subset J$ then $a,b\in J$.

Indeed, if $U\in\Mult{A}$ is a unitary with $U^*h(a)U
=\delta_\infty \left( h(a)\right) $ and  
$c:=U^*bU\in A$, then
$ \left(\delta_\infty\left( h(a)\right) + c\right)A\subset J$.
For elements $e\in A$ we get $h(a)e+s_j^*cs_je\in J$ and
$\lim_{j\to\infty} \left\| cs_j \right\|=0$ because 
$\sum s_js_j^*$ converges strictly to $1$ in $\Mult{A}$.
It follows that $h(a)\in \Mult{A,J}$, thus $a\in J$ by
(iii).

\smallskip

In a similar way it holds for $d\in \Mult{A}$, that 
$\delta_\infty (d) \in \left(\Mult{A,J}+A\right)$, if and
only if, $d\in \Mult{A,J}$, if and only if, 
$\delta_\infty (d)\in \Mult{A,J}$.
\end{REM}

\FRem{notation $E$, $F$ in following}

\begin{REM}\Label{R:sigma-from-H0}
Let $A$ be a \Cast-algebra and $\, h\, $ a non-degenerate
*-mono\-mor\-phism from $A$ into $\Mult{A}$ such that
$h(A)\cap A=\{0\}$.  
Then $\, h\, $ uniquely extends to a faithful unital
strictly continuous endo\-mor\-phism
of the multiplier algebra $\Mult{A}$, 
which we denote also by $\, h\, $.

If $A$ is a type I \Cast-algebra then clearly the closure
$E$ of
$$A+h(A)+h^2(A)+h^3(A)+\dots $$
is a type I \Cast-algebra, that has a decomposition series with
intermediate factors isomorphic to $A$.
$\, h\, $ defines a non-degenerate endo\-mor\-phism of $E$.
If we take the inductive limit $F=\indlim{h\colon E\to E}$ 
then
there is a natural iso\-mor\-phism $\sigma$ of $F$ such that
(under canonical identification) $\sigma^{-1}(a)=h(a)$ for
$a\in A\subset E\subset F$.  Let $D=h(A)\subset E\subset F$.
$D$ satisfies $D\sigma(D)\subset\sigma(D)$, that
$\sigma(D)$ is an essential ideal of $D+\sigma(D)$, and
$D\cap\sigma(D)=\{0\}$.
Then $E$ is naturally isomorphic to $D_{-\infty,1}$, cf.\ 
Remark \ref{R:type-I-endo}.

We give an explicit description 
of the corresponding embeddings and
the iso\-mor\-phism 
$\sigma$ by natural embedding of $F$ into sequence
spaces modulo zero sequences:

Let 
$B:=\ell_\infty\left(\Mult{A}\right)/c_0\left(\Mult{A}\right)$, 
$\sigma\in\Aut{B}$ induced by the forward shift on 
$\ell_\infty\left(\Mult{A}\right)$,
$$ 
\sigma \left( ( m_1,m_2,\dots ) 
         +c_0\left(\Mult{A}\right)\right) 
                =
   ( 0,m_1,m_2,\dots ) +c_0\left(\Mult{A}\right).
$$
The inductive limit 
$\indlim{h\colon \Mult{A}\to \Mult{A}}\supset F\supset E$ 
embeds canonically into $B$:
\begin{Diagramm}
\xymatrix{
\Mult{A} \ar[r]^{h} & \Mult{A}\ar[r]^{h} 
& \Mult{A}\ar[r]^{h} & \dots\ar@{.>}[r] & B \\
A \ar[ur]_{{h}} \ar[r] & A+h(A)\ar[ur]_{{h}}
\ar[r] & \dots&\ar@{.>}[r] & E \ar@{^{(}->}[u]
}
\end{Diagramm}
Then $E$ is embedded in $B$ and $\sigma^{-1}(c)$ equals 
$h(c)$
for $c\in E$, and $F$ is the smallest $\sigma$-invariant
\Cast-sub\-al\-ge\-bra of $B$ containing $D:=h(A)\subset E$.
We shall see below that the crossed product $F\rtimes_\sigma\Z$
is isomorphic to the
\Name{Cuntz--Pimsner} algebra of the
\Name{Hilbert} bi-module $\Hm{A,h}$ given by 
$h\colon A\to\Mult{A}$
(cf.\ (i) of Theorem \ref{T:main}).
\end{REM}

\begin{REM}
If $h\colon A\to\Mult{A}$ is as in Remark \ref{R:sigma-from-H0}
and every closed ideal $J$ of $A$ with 
$h(J)A\subset J$ satisfies Equation (\ref{Eq:H0-inv-ideal}) 
then the lattice of closed ideals of
$E\rtimes_\sigma\Z$ is naturally isomorphic to the sub-lattice 
$\Omega$
of closed ideals $J$ of $A$ with $h(J)A\subset J$
(cf.\ (ii) of Theorem \ref{T:main}).
\end{REM}

\section{\Name{Cuntz--Pimsner} algebras}

\subsection{\Name{Fock} bi-module and \Name{Toeplitz} algebra}
In \cite{Pi97} \Name{Pimsner} has defined two algebras, 
namely the 
\Name{Toeplitz} algebra $\TTT{E}$ and the 
\Name{Cuntz--Pimsner} algebra
$\OOO{E}$ of a \Name{Hilbert} $A$-bi-module $\HM[E]{}$.
Those algebras are 
defined as a \Cast-subalgebra and a \Cast-sub-quotient of 
the adjoint-able bounded operators over the (generalized)
\emph{\Name{Fock} space} $\Fock{E}$ which is the 
\Name{Hilbert} $A$-module
$$
\Fock{E}:=
\bigoplus_{n=1}^\infty E^{(\otimes_A)n}, 
$$
where
$$
E^{(\otimes_A)0}:=A_A\,,\;\,  E^{(\otimes_A)1}:=E\,,\;\; 
\text{and}\;\; 
E^{(\otimes_A)n}:=
\underbrace{E\otimes_A E \otimes_A \dots \otimes_A E
}_{n\;\, \textrm{factors}}.
$$
%
{}For the precise Definition of $E^{(\otimes_A)n}$
see Remark \ref{R:L-e}, where
also the maps $\eta_n\colon A\to \LOp{E^{(\otimes_A)n}}$
and left $A$-module structures 
$a\cdot e:=\eta_n(h(a))e$ on $E^{(\otimes_A)n}$
are defined.
(Here $E^{(\otimes_A)0}=A_A$ is as in Example
\ref{Ex:H-module}(i) and the left multiplication
is defined by $a\cdot e:=ae$.)

The 
{}For $e\in E$, let $T_e$ denote the operator
\begin{eqnarray}\Label{Eq:T-e}
T_e \left( a,f_1,f_2,\ldots \right) 
:=
\left(0,ea,e\otimes_A f_1,e\otimes_A f_2,\ldots \right)\,,
\end{eqnarray}
where $a\in A$ and $f_n\in E^{(\otimes_A)n}$
with $\sum _n \SP{f_n}{f_n}$ convergent in $A$.
Then, by Remark \ref{R:L-e}(iii),  
$\| T_e\| \leq \| e\|$ and $T_e$ is adjoint-able
with 
adjoint $\left(T_e\right)^*$ given by
$$
\left(T_e\right)^*
\left(a,e_1,e_2\otimes_A f_1, e_3\otimes_A f_2, \ldots\right)=
\left(
\SP{e}{e_1}, \SP{e}{e_2}\cdot f_1,\SP{e}{e_3}\cdot f_2,\ldots 
\right)\,,
$$
where $e_n\in E$, $f_n\in E^{(\otimes_A)n}$
with 
$\sum _n 
\SP{e_{n+1}\otimes _A f_n}{e_{n+1}\otimes _A f_n}\in A$. 

\begin{DEF}\Label{D:Toep/J(E)}
The (generalized) \emph{\Name{Toeplitz} algebra} is the 
\Cast-sub\-al\-ge\-bra 
$\TTT{E}$ of $\LOp{\Fock{E}}$ generated by operators 
$T_e$ for all $e\in E$. 

Let $p_k\left(a,f_1,\ldots , f_k,f_{k+1}\ldots\right):=
\left(a,f_1,\ldots,f_k,0,0,\ldots\right)$
for $k\in \N$. $p_k$ is a self-adjoint projection in 
$\LOp{\Fock{E}}$ and  
$p_k \leq  p_{k+1}$. 

$\mathcal{J}(E)$ denotes the hereditary 
\Cast-sub\-al\-ge\-bra
of $\LOp{\Fock{E}}$ generated by $\{ p_1,p_2,\ldots \}$,
i.e.~the closure of $\bigcup_k p_k\LOp{\Fock{E}}p_k$.
\end{DEF}

\begin{REM}\Label{R:Fock}
$\mathcal{J}(E)$ is essential in $\LOp{\Fock{E}}$,
because the (left) $\mathcal{J}(E)$-module $E$
is (obviously) non-degenerate.
Thus, the (two-sided) normalizer algebra 
$\Norm{\mathcal{J}(E)}$
\FRem{definition below in \ref{D:Norm/Ann}}
of $\mathcal{J}(E)$ in $\LOp{\Fock{E}}$
is in a natural manner a unital \Cast-sub\-al\-ge\-bra
of the multiplier algebra $\Mult{\mathcal{J}(E)}$ of
$\mathcal{J}(E)$. 

Since $p_{k+1}T_ep_k=T_ep_k$ and $p_kT_e=p_kT_ep_{k-1}$,
$T_e$ is in the normalizer algebra of $\mathcal{J}(E)$.
Thus the \Name{Toeplitz} algebra 
$\TTT{E}\subset\LOp{\Fock{E}}$
is naturally contained in  $\Mult{\mathcal{J}(E)}$.
\end{REM}
\begin{DEF}\Label{D:CP-alg}
The \emph{\Name{Cuntz--Pimsner} algebra} 
$\OOO{E}$ is defined as the image of
the \Name{Toeplitz} algebra $\TTT{E}$ in the corona algebra 
$\Mult{\mathcal{J}(E)}/\mathcal{J}(E)$
of  $\mathcal{J}(E)$.
\end{DEF}

\begin{REM}
The natural
*-mono\-mor\-phism from $\Norm{\mathcal{J}(E)}$
to the multiplier algebra $\Mult{\mathcal{J}(E)}$ 
is an \emph{iso\-mor\-phism} from 
$\Norm{\mathcal{J}(E)}$
\emph{onto} $\Mult{\mathcal{J}(E)}$,  
because, if a bounded net 
$\left\{ S_\alpha \right\}\subset \mathcal{J}(E)$ 
converges strictly to an element $S$ of 
$\Mult{\mathcal{J}(E)}$,
then the net $\left\{ S_\alpha \right\}$ converges in 
$\LOp{\Fock{E}}$
strongly to a multiplier of $\mathcal{J}(E)$ (cf.\ the remark
following Definition \ref{D:LOp/K}). 
\end{REM}

\subsection{The \Name{Cuntz--Pimsner} algebra 
$\OOO{\Hm{A,h}}$}
We give another description of the 
\Name{Cuntz--Pimsner} algebra $\OOO{\Hm{A,h}}$
of a bi-module 
$\Hm{A,h}$ corresponding to a \emph{non-degenerate}
*-homomorphism $h\colon A\to\Mult{A}$ (cf.\ Definition 
\ref{D:H-bi-module}, compare also Examples 
\ref{Ex:tensor}, \ref{Ex:M(A)} and \ref{Ex:H-module}).

$h$ defines a full \Name{Hilbert} $A$-bi-module $\Hm{A,h}$
with right \Name{Hilbert} $A$-module $E:=A_A$
considered in  Examples \ref{Ex:H-module}(i), 
and left $A$-module structure given by 
$h\colon A\to \Mult{A}=\LOp{E}$ (cf.\ \ref{Ex:M(A)}).
The unique extension of $h$ to a strictly continuous 
unital *-homomorphism from $\Mult{A}$ into $\Mult{A}$
will also be denoted by $h$ (to keep notation
simple). Then the restrictions of
$h^n$ to $A$ are non-degenerate *-homomorphisms
$h^n\colon A\to \Mult{A}$.

\begin{REM}\Label{R:h-non-deg}
By Remark \ref{R:tensor},
there are isomorphisms $I_n$ from the
the $n$-fold tensor products $E^{(\otimes A)n}$
of the \Name{Hilbert} $A$-bi-module $E$ 
given by the non-degenerate
*-homo\-mor\-phism $h\colon A\to \Mult{A}$
onto the \Name{Hilbert} $A$-bi-module given by
$h^n\colon A\to \Mult{A}$.
Under this identifications $e\otimes_A f$
becomes $h^n(e)f$ for $e\in A\cong E$ and
$f\in A\cong E^{(\otimes A)n}$. 
(Recall here that $E^{(\otimes A)0}$ is 
$A_A$ with left multiplication
given by $h^0\colon A\to \Mult{A}$ where $h^0(a)b:=ab$.)

Hence, 
$\Fock{E}$ is isomorphic to $\HM{A}$ 
by an iso\-mor\-phism such that
$T_e$ becomes 
$T_e \left(a_0,a_1,a_2,\ldots\right):=
\left(0,ea_0, h(e)a_1, h^2(e)a_2, \ldots\right)$
for $(a_0,a_1,\ldots)$ in $\HM{A}$. 

\FRem{give precise cross-ref's below!!}

Since $\HM{A}$ is nothing else than 
$\left(A\otimes \K\right)\left(1\otimes e_{0,0}\right)$ 
for the minimal projection 
$e_{0,0}\in \K= \K \left(\ell_2\{ 0,1,2,\ldots \}\right)$,
there is a natural iso\-mor\-phism from
$\Mult{A\otimes \K}$ onto 
$\LOp{\HM{A}}\cong \LOp{\Fock{E}}$ such that
the hereditary \Cast-sub\-al\-ge\-bra
$\Mult{A}\otimes \K\subset \Mult{A\otimes \K}$ maps onto
$\mathcal{J}(E)$ (cf.\ the remark
following Definition \ref{D:LOp/K}).
Clearly $\ell_\infty\left(\Mult{A}\right)\subset 
\Norm{\Mult{A}\otimes \K} \cong \Mult{\Mult{A}\otimes \K}$.
Let, for $a\in \Mult{A}$,
$$ 
h^\infty (a) := \left(a,h(a),h^2(a),h^3(a),\dots \right)
\in \ell_\infty\left(\Mult{A}\right)\subset \LOp{\HM{A}}\,, 
$$
and let $\TT\in \LOp{\HM{A}}$ denote the forward shift
$\TT (a_0,a_1,\ldots):=(0,a_0, a_1, \ldots)$, i.e.\
$\TT=1\otimes\TT_0$ where $\TT_0$ is a 
Toeplitz operator (forward
shift) on $\ell_2(\N)$. Then 
$\TT$ is in 
$\Norm{\Mult{A}\otimes \K}\cong \Mult{\Mult{A}\otimes \K}$, 
and
$T_e$ decomposes
as 
$T_e=\TT h^\infty (e)$ for $e\in A$.
Note that 
$U:=\TT + \left(\Mult{A}\otimes \K\right)$  is a unitary in
the stable corona 
$Q^s\left(\Mult{A}\right):=
\Mult{\Mult{A}\otimes \K}/\left(\Mult{A}\otimes \K\right)$
of $\Mult{A}\otimes \K$, that
$$h_{1,\infty}\colon a\in \Mult{A} \mapsto  
h^\infty(a)+\left(\Mult{A}\otimes \K\right)
\in Q^s\left(\Mult{A}\right)$$
is a *-homo\-mor\-phism from $\Mult{A}$ into the corona 
$Q^s(\Mult{A})$ of 
$\Mult{A}\otimes \K$.

It follows that \emph{the \Name{Cuntz--Pimsner} algebra  
$\OOO{\Hm{A,h}}$ is the
\Cast-sub\-al\-ge\-bra $\Cast (Uh_{1,\infty}(A))$ of
$Q^s\left(\Mult{A}\right)$
which is generated by the elements
$Uh_{1,\infty}(e)$ for $e\in A$}.

Recall that the \Name{Toeplitz} operator $\TT$ is related to
the forward shift on 
$\ell_\infty \left(\Mult{A}\right)$ as follows:
Let 
$B:=
\ell_\infty \left(\Mult{A}\right)/c_0\left(\Mult{A}\right)
\subset Q^s\left(\Mult{A}\right)$ 
and $\sigma$ denote the auto\-mor\-phism of $B$
induced by the forward shift
$(a_0,a_1,\ldots)\to (0,a_0,a_1,\ldots)$ on 
$\ell_\infty\left(\Mult{A}\right)$.
Then $UbU^*=\sigma (b)$ for $b\in B$ by 
Lemma \ref{L:B-rtimes-Z}.

On the other hand, $h_{1,\infty}\left(h(a)\right)=
\sigma^{-1}\left(h_{1,\infty}(a)\right)$
for $a\in \Mult{A}$.
Thus,
$h_{1,\infty}\left(h(a)b\right)=
\sigma^{-1}\left(h_{1,\infty}(a)\right)h_{1,\infty}(b)\,$,
\,\,and, hence,
$$
h_{1,\infty}\left(h(a)b\right)=
U^{-1}h_{1,\infty}(a)Uh_{1,\infty}(b)\quad
for \;\,a,b\in \Mult{A}.
$$
\end{REM}
%
%
\begin{PRP}\Label{P:Pims-alg}
Suppose that $h\colon A\to \Mult{A}$ is a non-degenerate
*-homo\-mor\-phism. Let 
$$B:= \ell_\infty \left(\Mult{A}\right) /
c_0 \left(\Mult{A}\right)\subset Q^s\left(\Mult{A}\right)$$
and $\sigma\in \Aut{B}$ induced by the forward shift on
on $\ell_\infty \left(\Mult{A}\right)$.

There are a *-homo\-mor\-phism 
$\varphi\colon A\to B$
and a unitary $U\in Q^s\left(\Mult{A}\right)$ 
such that 
\begin{Aufl}
\item $\sigma(b)=Ub\,U^*$ for $b\in B$,
\item $ \varphi \left( h(a)b \right) =
U^*\varphi(a)U \varphi(b)$ for all
$a,b\in A$, and
\item
$U\varphi(A)$ generates the Cuntz--Pimsner
algebra $\OOO{\Hm{A,h}}$
as \Cast-sub\-al\-ge\-bra of $Q^s\left(\Mult{A}\right)$,
in particular $\varphi(A)\subset\OOO{\Hm{A,h}}$.
\end{Aufl}

Let $D:=\varphi(A)\subset B$ and let $[D]_\sigma$ 
denote the smallest
$\sigma$-invariant \Cast-sub\-al\-ge\-bra of $B$
containing $D$. Then
the \Cast-subalgebra $E$ of 
$Q^s\left(\Mult{A}\right)$ that is generated by
$\bigcup _{k\in \N} U^{-k}D$ 
is naturally isomorphic to
$[D]_\sigma\rtimes _\sigma \Z$, and
$\OOO{\Hm{A,h}}$ is 
the full hereditary \Cast-sub\-al\-ge\-bra of 
$E$ generated
by $D$.

$[D]_\sigma\rtimes _\sigma \Z$ and 
$\OOO{\Hm{A,h}}$ are isomorphic if $A$ is
stable and contains a strictly positive element.

$\varphi$ is a mono\-mor\-phism if 
$h$ is moreover faithful.
\end{PRP}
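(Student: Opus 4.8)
The plan is to take for $\varphi$ and $U$ precisely the data constructed in Remark~\ref{R:h-non-deg}: put $\varphi:=h_{1,\infty}|_A\colon A\to B$ and $U:=\TT+(\Mult{A}\otimes\K)\in Q^s(\Mult{A})$. With this choice (i)--(iii) are essentially already recorded there. Indeed, (i) is the relation $UbU^*=\sigma(b)$ for $b\in B$ (which rests on Lemma~\ref{L:B-rtimes-Z}); (ii) is the displayed identity $h_{1,\infty}(h(a)b)=U^{-1}h_{1,\infty}(a)U\,h_{1,\infty}(b)$ restricted to $a,b\in A$ (note $h(a)b\in A$, so its left-hand side equals $\varphi(h(a)b)$ and $U^{-1}=U^*$); and (iii) holds because $\OOO{\Hm{A,h}}=\Cast(U\varphi(A))$ by Remark~\ref{R:h-non-deg}, while $\varphi(A)\subset\OOO{\Hm{A,h}}$ follows from $\varphi(e^*f)=(U\varphi(e))^*(U\varphi(f))$ together with the density of $\mathrm{span}\{e^*f\}$ in $A$.

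Next I would build the crossed-product isomorphism. As $U$ implements $\sigma$ on $B$ and $[D]_\sigma\subseteq B$ is $\sigma$-invariant, the covariant pair $(\iota,U)$, with $\iota\colon[D]_\sigma\hookrightarrow Q^s(\Mult{A})$, yields a $*$-homomorphism $\rho\colon[D]_\sigma\rtimes_\sigma\Z\to Q^s(\Mult{A})$ with image $\overline{\mathrm{span}}\{bU^p:b\in[D]_\sigma,\ p\in\Z\}$, and the task is to identify this image with $E$. One inclusion is immediate, since each generator $U^{-k}\varphi(a)=\sigma^{-k}(\varphi(a))U^{-k}$ lies in $[D]_\sigma U^{-k}$. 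For the reverse inclusion I would show, in order, that $DU^p\subseteq E$ for all $p\in\Z$ — for $p\geq0$ these are the adjoint generators $\varphi(a^*)U^p$, and for $p=-m<0$ they come from products $(\varphi(x^*)U^{k})(U^{-k-m}\varphi(y))=\varphi(x^{*}h^{m}(y))U^{-m}$ of an adjoint generator and a generator, whose coefficients $\varphi(x^*h^m(y))$ are dense in $D$ because $\overline{Ah^m(A)}=A$ — that consequently $U^pD=(DU^{-p})^*\subseteq E$ and hence $\sigma^k(D)\subseteq E$ via $(U^k\varphi(a))(\varphi(b)U^{-k})=\sigma^k(\varphi(ab))$, so that $[D]_\sigma\subseteq E$; and finally that $\sigma^k(D)U^p\subseteq E$ via $(U^k\varphi(a))(\varphi(b)U^{p-k})=\sigma^k(\varphi(ab))U^p$, giving $\overline{\mathrm{span}}\{[D]_\sigma U^p\}\subseteq E$. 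The non-degeneracy relations $\overline{h^k(A)A}=\overline{Ah^k(A)}=A$ for all $k\geq0$ (obtained by induction from the non-degeneracy of $h$) are exactly what force these coefficients to fill out $D$; I expect this bookkeeping — in particular recovering the positive $\sigma$-shifts and all powers of $U$ out of the one-sided generating set $\bigcup_kU^{-k}D$ — to be the main obstacle.

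Faithfulness of $\rho$ I would import from the ambient crossed product: since $[D]_\sigma$ is a $\sigma$-invariant \Cast-subalgebra of $B$ and $\Z$ is amenable, the inclusion induces an embedding $[D]_\sigma\rtimes_\sigma\Z\hookrightarrow B\rtimes_\sigma\Z$; composing with the faithful representation of $B\rtimes_\sigma\Z$ inside $Q^s(\Mult{A})$ provided by Lemma~\ref{L:B-rtimes-Z} (sending the implementing unitary to $U$) gives a faithful map agreeing with $\rho$ on generators. Hence $\rho$ is faithful and $E\cong[D]_\sigma\rtimes_\sigma\Z$, the asserted natural isomorphism.

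For the last assertions I would compute (using $U\varphi(a)=\sigma(\varphi(a))U$ and the products above) that both $\OOO{\Hm{A,h}}=\Cast(U\varphi(A))$ and the hereditary subalgebra $\overline{DED}$ of $E$ coincide with $\overline{\mathrm{span}}\{\sigma^n(D)U^p:n\geq\max(0,p)\}$; thus $\OOO{\Hm{A,h}}$ is the hereditary \Cast-subalgebra of $E$ generated by $D$. It is full, since the closed ideal generated by $D$ contains every $(U^k\varphi(a))\varphi(b)(\varphi(c)U^{-k})=\sigma^k(\varphi(abc))$, hence all $\sigma^k(D)$, hence $[D]_\sigma$ and all of $E$. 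When $A$ is stable with a strictly positive element, $D$, $\OOO{\Hm{A,h}}$ and $E$ are stable and $\sigma$-unital, so Brown's theorem in the form of Lemma~\ref{L:Brown} upgrades the full hereditary inclusion $\OOO{\Hm{A,h}}\subseteq E$ to an isomorphism $[D]_\sigma\rtimes_\sigma\Z\cong\OOO{\Hm{A,h}}$. Finally, if $h$ is faithful, its unital extension to $\Mult{A}$ is injective, hence isometric, so $\|h^n(a)\|=\|a\|$ for every $n$; therefore $\varphi(a)=0$ (i.e.\ $h^n(a)\to0$) forces $a=0$, and $\varphi$ is a monomorphism.
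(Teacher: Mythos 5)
Your proposal is correct and follows essentially the same route as the paper's proof: both take $\varphi=h_{1,\infty}|_A$ and $U=\TT+\Mult{A}\otimes\K$ from Remark \ref{R:h-non-deg}, derive the product relations for $U^iDU^j$ from $\sigma^{-k}(D)D=D$, identify $E$ with $[D]_\sigma\rtimes_\sigma\Z$ via Lemma \ref{L:B-rtimes-Z} and the injectivity of the induced map on crossed products, recognize $\OOO{\Hm{A,h}}=\overline{DED}$ as the full hereditary subalgebra generated by $D$, and invoke Lemma \ref{L:Brown} in the stable $\sigma$-unital case. The only difference is cosmetic bookkeeping (you trace individual products where the paper records the single formula (\ref{Eq:UD})), so no further comment is needed.
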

\begin{proof} Let $U\in Q^s\left(\Mult{A}\right)$, 
$\sigma\in \Aut{B}$,
$\varphi:=h_{1,\infty}$ and $D:=\varphi(A)$, 
where $h_{1,\infty}$
and $\sigma$ are
as in Remark \ref{R:h-non-deg}.
 
Then, by Remark \ref{R:h-non-deg},
$D\subset B\subset Q^s\left(\Mult{A}\right)$,
$U$ is a unitary in $Q^s\left(\Mult{A}\right)$ with 
$UbU^{-1}=\sigma(b)$
for $b\in B$,
and $\varphi^{-1}(D)D=D$, because 
$h_{1,\infty}\left(h(a)b\right)=
\sigma^{-1}\left(h_{1,\infty}(a)\right)h_{1,\infty}(b)$
and $h(A)A=A$. 

$\sigma^{-k}(D)D=D$, because 
$\sigma^{-j}(D)\sigma^{1-j}(D)=\sigma^{1-j}(D)$
for $1\leq j\leq k$.
It means $U^{-k}DU^kD=D$, $DU^kD=U^kD$ and 
$DU^{-k}D=DU^{-k}$ for $k\in \N$. 
It shows for $i,j,m,n\in \Z$:
\begin{equation}\Label{Eq:UD} 
U^iDU^jU^mDU^n =
\left\{ {\begin{array}{r@{\quad \text{if}\;\;}l} 

U^{i+(j+m)}DU^n 
&
j+m\ge 0,
\\
U^iDU^{n+(j+m)}
& 
j+m<0. 
\end{array}
}\right.
\end{equation}

Thus, the sum $C:=\sum _{i,j\in \Z} U^iDU^j$ 
is a *-sub\-al\-ge\-bra of 
$Q^s\left(\Mult{A}\right)$, 
and $U$ is in the two-sided
normalizer $\Norm{C}$ of $C$. 
It follows $U\in \Norm{\overline{C}}$.
\FRem{$\Norm{C}$ later defined}

$C$ is *-algebraically generated by 
$\bigcup _{k\in \N} U^{-k}D$,
because the \Cast-algebra generated by 
$U^{-1}D, U^{-2}D, \ldots$
contains also $D=(U^*D)^*(U^*D)$, $DU^{-k}D=DU^{-k}$ and 
$U^kD=\left(DU^{-k}\right)^*$ for 
$k\in \N$, and finally,
$\left(U^i D\right)\left(U^{-j}D\right)^*
= U^iDU^j$ for $i,j\in \Z$.

$\sum _{j\in \Z} \sigma^{j}(D)$
is a *-sub\-al\-ge\-bra of $B\cap C$, because 
$U^kDU^{-k}=\sigma^k(D)$
and $\sigma^j(D)\sigma^k(D)=U^jDU^{k-j}DU^k
=\sigma^{\max(j,k)}(D)$ by (\ref{Eq:UD}).
Let $D_{-\infty,\infty}$ denote its closure. 
Then $D_{-\infty,\infty}$
is the smallest $\sigma$-invariant 
\Cast-sub\-al\-ge\-bra of $B$ that
contains $D$, i.e.~$D_{\infty,\infty}=[D]_\sigma$. 
$\overline{C}$  is the \Cast-sub\-al\-ge\-bra of 
$Q^s\left(\Mult{A}\right)$
generated by $UD_{-\infty,\infty}
=D_{-\infty,\infty}U\,$,
because $C$ is also generated by 
$\bigcup_{k\in \N} \sigma^{k}(D)U$ for $k\in\N$ (note 
$U^{-k}D=
\sigma^k(D)U\cdots \sigma^2(D)U\sigma(D)U$).

Since $\Cast (B,U)\subset Q^s\left(\Mult{A}\right)$ 
is naturally isomorphic
to $B\rtimes _\sigma Z$ (by Lemma \ref{L:B-rtimes-Z}),
$\overline{C}$ is naturally isomorphic to 
$[D]_\sigma\rtimes_\sigma \Z$
(by Remark \ref{Rcross-prod}(iii)).

$G:=\OOO{\Hm{A,h}}$ is the 
\Cast-sub\-al\-ge\-bra of 
$Q^s\left(\Mult{A}\right)$
that is
generated by $UD$, cf.~Remark \ref{R:h-non-deg}.
It follows 
$D=(UD)^*UD\subset G$, and
$U^nD\subset G$ by induction, because
$U^{n+1}D=U^nDUD$.
Hence, $U^mDU^{-n}\subset G$ for $m,n\ge 0$.
The sum $C_1:=\sum _{m,n\ge 0} U^mDU^{-n}$ is identical
to the *-sub\-al\-ge\-bra 
of $C$ given  by 
$DCD)$, because
$DU^iDU^jD=U^mDU^{-n}$ with 
$m,n\ge 0$ for $i,j\in \Z$ by Equation (\ref{Eq:UD}).
Thus, $\overline{C_1}$ is the hereditary 
\Cast-sub\-al\-ge\-bra of
$\overline{C}$ generated by $D$, and 
$G=\overline{C_1}$.

The *-ideal of $C$ generated by $D\subset C_1$ is 
is obviously identical with $C$.
Thus $G$ is the full hereditary \Cast-sub\-al\-ge\-bra of 
$\overline{C}$ generated by $D$. 

$\left\| h_{1,\infty} (a)\right\| =
\lim _{n\to\infty} \left\| h^n(a)\right\|$
for $a\in \Mult{A}$. 
$h^n\colon \Mult{A}\to \Mult{A}$
is faithful for every $n\in \N$ if $h\colon A\to \Mult{A}$
is faithful. Thus, 
$\left\| \varphi (a)\right\| =\left\|h^n(a)\right\|
=\left\| a\right\|$ 
if $h$ is faithful (in addition).

$E=\overline{C}$ and $\OOO{\Hm{A,h}}=DED$ contain strictly
positive elements if $A$ contains 
a strictly positive  element $a$, e.g.\
$\sum _{n\in \N} 2^{-n}\sigma^{-n}(\varphi(a))$
respectively $\varphi(a)$. 

$E$ and $\OOO{\Hm{A,h}}$ are stable 
if $A$ is stable,
because then $D=\varphi(A)$ is stable,
$\OOO{\Hm{A,h}}=DED$ is stable (by 
Remark \ref{R:delta-infty}), and 
$E$ is stable as the inductive
limit of the stable hereditary \Cast-sub\-al\-ge\-bras 
$U^{-n}DEDU^n$
($n=1,2,\ldots$), cf.\ \cite{HjelRor}.

Thus $\OOO{\Hm{A,h}}$ is isomorphic to
$[D]_\sigma\rtimes_\sigma \Z$ by Lemma \ref{L:Brown}
if $A$ is stable and contains a strictly positive element.
\end{proof}

\begin{COR}\Label{C:case-A-sepa}
$$
\OOO{\Hm{A,h}}\otimes \K \cong 
\left([D]_\sigma \rtimes_\sigma \Z \right)\otimes \K
$$
if $A$ is separable and $h\colon A\to \Mult{A}$ is
non-degenerate. Where $D:=h_{1,\infty}(A)$.
\end{COR}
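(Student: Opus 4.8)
The plan is to reduce the general separable case to Brown's stable isomorphism theorem, recorded here as Lemma \ref{L:Brown}, by stabilizing with $\K$. Proposition \ref{P:Pims-alg} has already done the structural work: it identifies $\OOO{\Hm{A,h}}$ with the full hereditary \Cast-subalgebra $G$ of $E$ generated by $D=h_{1,\infty}(A)$, where $E$ is naturally isomorphic to $[D]_\sigma\rtimes_\sigma\Z$. So the only remaining task is to promote this ``full hereditary subalgebra'' relation to an honest isomorphism after tensoring with $\K$.

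First I would extract the $\sigma$-unitality that separability of $A$ forces. A strictly positive element $a\in A$ produces, exactly as in the proof of Proposition \ref{P:Pims-alg}, a strictly positive element $\varphi(a)$ of $G$ and a strictly positive element $\sum_{n\in\N}2^{-n}\sigma^{-n}(\varphi(a))$ of $E$; hence both $G$ and $E$ are $\sigma$-unital. Tensoring with $\K$ then makes $E\otimes\K$ stable and $\sigma$-unital, and makes $G\otimes\K$ a stable, $\sigma$-unital, full, hereditary \Cast-subalgebra of $E\otimes\K$, since heredity and fullness of $G$ in $E$ pass verbatim to the stabilized pair. At that point the second assertion of Lemma \ref{L:Brown}, applied with $G\otimes\K$ in the role of $D$ and $E\otimes\K$ in the role of $B$, delivers an isomorphism $G\otimes\K\cong E\otimes\K$, which is precisely
$$\OOO{\Hm{A,h}}\otimes\K\cong\left([D]_\sigma\rtimes_\sigma\Z\right)\otimes\K.$$

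The main, and indeed essentially the only, obstacle is checking the hypotheses of Lemma \ref{L:Brown} after stabilization, and among these the $\sigma$-unitality is the crucial one: it is exactly where separability of $A$ enters, for without a strictly positive element neither $G$ nor $E$ need be $\sigma$-unital and Brown's theorem is unavailable. Fullness and heredity require no new argument, as they were already established in Proposition \ref{P:Pims-alg} before stabilization and are preserved under $\otimes\K$; note also that the ideal-preserving refinement in Lemma \ref{L:Brown} is not needed here, only the bare existence of the isomorphism.
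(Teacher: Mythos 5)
Your proposal is correct and follows the same route as the paper: both arguments take the full hereditary inclusion $\OOO{\Hm{A,h}}\subset E\cong [D]_\sigma\rtimes_\sigma\Z$ already established in Proposition \ref{P:Pims-alg}, pass to the stabilized pair, and invoke Lemma \ref{L:Brown}. The only cosmetic difference is that the paper secures $\sigma$-unitality by simply noting that both algebras are separable, whereas you exhibit the strictly positive elements $\varphi(a)$ and $\sum_n 2^{-n}\sigma^{-n}(\varphi(a))$ explicitly, as in the proof of Proposition \ref{P:Pims-alg}.
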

\begin{proof} By the proof of Proposition \ref{P:Pims-alg}, 
$\OOO{\Hm{A,h}}\otimes \K$
is a full hereditary \Cast-subalgebra of 
$\left([D]_\sigma \rtimes_\sigma \Z \right)\otimes \K$,
and both algebras are separable.
The algebras are isomorphic by Lemma \ref{L:Brown}.
\end{proof}

\section{Ideals of certain crossed products}
\Label{sec:ideal-of-cross}
Throughout this section we assume that 
$\sigma$ is an auto\-mor\-phism
of a \Cast-algebra $B$ and that 
$D$ is a \Cast-sub\-al\-ge\-bra of $B$.
 
We call a \Cast-sub\-al\-ge\-bra $E\subset B$
\emph{$\sigma$-invariant} if $\sigma(E)=E$.
In this case we will denote the restriction of 
$\sigma$ to $E$ again
by $\sigma$. We also write $\sigma$ 
for the second
adjoint $\sigma^{**}\in \mathrm{Aut}(B^ {**})$ of $\sigma$. 
(It can not cause confusions by
Proposition \ref{P:a4.15}.)

\begin{DEF}\Label{D:Norm/Ann}
For a \Cast-subalgebra  $A$ of a \Cast-algebra $B$
the \emph{normalizer} of $A$ in $B$ is defined by
$$\Norm{B,A}:= \{ b\in B\fdg bA+Ab\subset A \},$$
\FRem{Normalizer to other place?}
and the \emph{annihilator} of $A$ in $B$ by
$$\Ann{B,A}:= \{ b\in B\fdg bA=0=Ab \}\,.$$
We write also $\Norm{A}$ (respectively $\Ann{A}$)
instead of $\Norm{B,A}$ (respectively $\Ann{B,A}$)
if no confusion can arise.
\end{DEF}
%
%
%
\subsection{Identification of some crossed product}
\Label{ssec:id-of-crossed}
\begin{DEF}\Label{D:sigm-mod}
Let $\sigma\in \Aut{B}$ and $D\subset B$ a 
\Cast-sub\-al\-ge\-bra.
We say that
$D$ is \emph{$\sigma$-modular} if $D$ has
following properties ($\alpha$), ($\beta$) and ($\gamma$).
\begin{itemize}
\item[($\alpha$)] $D\sigma(D)=\sigma(D)$,
\item[($\beta$)]  $\Ann{D+\sigma(D),\sigma(D)}=\{0\}$,
\item[($\gamma$)] $D\cap\sigma(D)=\{0\}$. 
\end{itemize}

{}For $D\subset B$, let
$[D]_\sigma$ denote the smallest 
$\sigma$-invariant \Cast-sub\-al\-ge\-bra
of $B$ containing $D$. 
\end{DEF}

\begin{REM}\Label{R:a4.1}\mbox{ }
%
(i)
Property ($\alpha$) means equivalently that
$\sigma(D)$ is the closure of $\Span{D\sigma(D)}$, 
i.e.~$D\sigma(D)\subset \sigma(D)$ 
and every approximate unit of $D$
is also an (outer) approximate unit for $\sigma(D)$. 
(Special case of the 
Cohen factorization theorem, cf.\ \cite{Cohen}, 
\cite[thm.\ I,~\S 11.10]{BoDu}.) 

\noindent
(ii)
Note that ($\alpha$) implies $D\sigma^k(D)=\sigma^k(D)$
for $k\in\N$ by induction, because
$D\sigma^{k+1}(D)=D\sigma^k(D\sigma(D))=
(D\sigma^k(D))\sigma^{k+1}(D)$.
(Below we show that $[D]_\sigma$ is the
closure $D_{-\infty,\infty}$
of $\sum _{k\in \Z} \sigma^k(D)$ if 
$D\sigma^k(D)\subset \sigma^k(D)$
for $k\in \N$.)

\noindent
(iii)
Under assumption ($\alpha$) the assumption ($\beta$) means
that $\sigma(D)$ is an essential ideal of $D+\sigma(D)$.

\noindent
(iv)
Property ($\gamma$)  equivalently means
that 
$a+\sigma(b)=0$ implies $a=b=0$ for $a,b\in D$.
Thus, under assumption ($\gamma$), the algebra $D$
satisfies
($\beta$) if and only if 
$a,b\in D$ and $\left(a+\sigma(b)\right)\sigma(D)=\{ 0\}$ 
together imply that  $a=0$.
\end{REM}

In this section we prove the following propositions.
\begin{PRP}\Label{P:a4.15}
Suppose that $D \subset B$ and $\sigma\in\Aut{B}$ satisfy
properties ($\alpha$) and ($\gamma$) of 
Definition \ref{D:sigm-mod},
that $D_1\subset B_1$ and $\sigma_1\in\Aut{B_1}$ satisfy
$D_1\sigma_1(D_1)=D_1$, and that
$\varphi\colon D\to D_1$ is a *-homo\-mor\-phism with
$\varphi \left( \sigma ^{-1}(a)b \right) 
 =\sigma_1 ^{-1} \left( \varphi(a) \right) \varphi(b)$ 
for all $a,b\in D$. 
Then
$\varphi$ extends to a *-homo\-mor\-phism
$\varphi_e\colon [D]_{\sigma}\to [D_1]_{\sigma_1}$ with
$\varphi_e\left(\sigma(c)\right)
=\sigma_1\left(\varphi_e(c)\right)$ for 
$c\in [D]_{\sigma}$.

If $D$ is $\sigma$-modular in the sense of
Definition \ref{D:sigm-mod} and if
$\varphi$ is a *-mono\-mor\-phism 
(respectively a *-iso\-mor\-phism from $D$ onto $D_1$)
then $\varphi_e$ is a mono\-mor\-phism
(respectively  $\varphi_e$  is an iso\-mor\-phism
from $[D]_{\sigma}$ onto $[D_1]_{\sigma_1}$).
\end{PRP}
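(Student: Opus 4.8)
The plan is to build $\varphi_e$ by hand on the dense $*$-subalgebra $\sum_{k\in\Z}\sigma^k(D)$ of $[D]_\sigma=D_{-\infty,\infty}$ and then extend by continuity. The intertwining hypothesis $\varphi(\sigma^{-1}(a)b)=\sigma_1^{-1}(\varphi(a))\varphi(b)$ is the engine: I would first recast it, using ($\alpha$) and the factorization remark \ref{R:a4.1}(i), as the assertion that the prescription $\varphi_e(\sigma^k(c)):=\sigma_1^k(\varphi(c))$ is forced and consistent. Concretely, for $c\in D$ and $k\ge 0$ I want to define $\varphi_e$ on $\sigma^k(c)$ by $\sigma_1^k(\varphi(c))$; since by \ref{R:a4.1}(ii) every element of $\sigma^k(D)$ factors as $\sigma^k(c)=d\,\sigma^k(c')$ with $d\in D$ (using $D\sigma^k(D)=\sigma^k(D)$), the given identity lets me compute $\varphi_e$ on products landing in different ``levels.'' The key algebraic check is that $\varphi_e$ is multiplicative on $\sigma^j(D)\cdot\sigma^k(D)$; here I use that, by ($\alpha$), $\sigma^j(D)\sigma^k(D)\subset\sigma^{\max(j,k)}(D)$ (exactly as in Equation~(\ref{Eq:UD}) and the proof of Proposition \ref{P:Pims-alg}), so both sides of the desired identity live in a single level where the defining formula applies unambiguously.

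Next I would verify that $\varphi_e$ is well-defined and bounded. Well-definedness amounts to showing that if $\sum_k \sigma^k(c_k)=0$ then $\sum_k\sigma_1^k(\varphi(c_k))=0$; I expect to reduce this to the single-level statement via the multiplication structure, multiplying by elements of $D$ to isolate each homogeneous component. Boundedness is automatic once $\varphi_e$ is shown to be a $*$-homomorphism on the dense $*$-subalgebra, since $*$-homomorphisms between $C^*$-algebras are contractive; thus it extends uniquely to $\varphi_e\colon[D]_\sigma\to[D_1]_{\sigma_1}$. The covariance $\varphi_e(\sigma(c))=\sigma_1(\varphi_e(c))$ holds on the dense subalgebra by construction (it is the defining relation shifted by one level) and passes to the closure by continuity of $\sigma$, $\sigma_1$, and $\varphi_e$.

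For the injectivity statement I would assume $D$ is $\sigma$-modular and $\varphi$ is a monomorphism. The natural route is to use properties ($\beta$) and ($\gamma$) to show that $[D]_\sigma$ is an increasing union (inductive limit) of the algebras $D_{-n,\infty}:=\overline{\sum_{k\ge -n}\sigma^k(D)}$, each of which is identified — via $\sigma$ — with $D_{0,\infty}$, and where ($\beta$) guarantees that $\sigma(D)$ sits as an essential ideal so that the extension of $\varphi$ from $D$ to $D+\sigma(D)$ stays faithful. I would argue inductively that faithfulness on $D$ propagates to faithfulness on each $D+\sigma(D)+\cdots+\sigma^k(D)$: an element killed by $\varphi_e$ has a homogeneous decomposition, and using ($\gamma$) (via \ref{R:a4.1}(iv), $a+\sigma(b)=0\Rightarrow a=b=0$) together with the essential-ideal property ($\beta$) from \ref{R:a4.1}(iii) one peels off the top level and descends. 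The surjectivity in the isomorphism case is then immediate, since $\varphi(D)=D_1$ forces $\varphi_e(\sigma^k(D))=\sigma_1^k(D_1)$, whose span is dense in $[D_1]_{\sigma_1}$.

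The main obstacle I anticipate is the faithfulness argument, specifically controlling the interaction between different levels $\sigma^k(D)$ when the pieces do not commute and $D\cap\sigma(D)=\{0\}$ is only a ``generic independence'' rather than orthogonality. The honest difficulty is to turn the abstract conditions ($\beta$) and ($\gamma$) into a concrete induction that an element of $\overline{\sum_{k}\sigma^k(D)}$ mapping to zero must itself be zero; I expect this to require a careful use of the essential-ideal property to avoid cancellation between a level and its $\sigma$-shift, essentially an iterated application of the equivalence in \ref{R:a4.1}(iv). By contrast, the construction, multiplicativity, and covariance of $\varphi_e$ should be a routine — if slightly bookkeeping-heavy — consequence of the level-shifting identities already established for the analogous crossed-product picture in Proposition \ref{P:Pims-alg}.
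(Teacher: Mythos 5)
Your proposal is correct and follows essentially the same route as the paper: define $\varphi_e$ level-wise by $\varphi_e(\sigma^k(c)):=\sigma_1^k(\varphi(c))$, verify multiplicativity by inducting on level differences via the factorization $D=\sigma^{-1}(D\sigma(D))$ from ($\alpha$), obtain well-definedness from the vector-space direct-sum decomposition of $D_{m,n}$ (the paper's Lemma \ref{L:direct-sum}, proved exactly by the ``multiply to isolate components'' device you describe), extend through the inductive limit of the \Cast-algebras $D_{-k,k}$, and derive injectivity from the essentiality of the top level $\sigma^{n-1}(D)$ in $D_{1,n}$ granted by ($\beta$) and ($\gamma$) (the paper packages this as Lemma \ref{L:a4.5} applied to the $\sigma$-invariant ideal $\ker\varphi_e$). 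Your caveat that a general element of $\ker\varphi_e$ has no homogeneous decomposition is handled correctly by your plan of proving faithfulness on each finite level and passing to the closure by isometry on a dense subalgebra.
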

\begin{PRP}\Label{P:a4.7}
Suppose that $\sigma\in \Aut{B}$ and $D\subset B$
is $\sigma$-modular in the sense of Definition
\ref{D:sigm-mod}.
%
Every non-zero closed ideal $I$ of 
$[D]_\sigma\rtimes_\sigma\Z$
has non-zero intersection $J=I\cap D$ with $D$, and $J$
satisfies $J\sigma(D)\subset\sigma(J)$.
\end{PRP}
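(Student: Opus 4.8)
The plan is to establish the two assertions in turn, disposing of the inclusion $J\sigma(D)\subset\sigma(J)$ first, since it is purely formal, and then spending the real effort on the non-vanishing of $J=I\cap D$.

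I would begin by observing that $\sigma$ becomes inner on $[D]_\sigma\rtimes_\sigma\Z$: the canonical unitary $u$ satisfies $uau^*=\sigma(a)$ for $a\in[D]_\sigma$, so every closed ideal $I$ is invariant, $uIu^*=I$, i.e.\ $\sigma(I)=I$. Write $E:=[D]_\sigma$. Because $\sigma$ is an automorphism it commutes with intersections, whence $\sigma(J)=\sigma(I\cap D)=\sigma(I)\cap\sigma(D)=I\cap\sigma(D)$. Granting $J\neq\{0\}$ for the moment, the second assertion drops out at once: from $J\subset D$ and property $(\alpha)$ of Definition \ref{D:sigm-mod} one gets $J\sigma(D)\subset D\sigma(D)=\sigma(D)$, while $J\subset I$ and the ideal property give $J\sigma(D)\subset I$; hence $J\sigma(D)\subset I\cap\sigma(D)=\sigma(J)$.

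For the non-vanishing I would argue contrapositively and in two steps, so assume $J=I\cap D=\{0\}$. \emph{Step 1.} The quotient map $q\colon E\rtimes_\sigma\Z\to(E\rtimes_\sigma\Z)/I$ is then isometric on $D$, so $q|_D$ is a $*$-monomorphism, and since $I$ is $\sigma$-invariant the automorphism $\bar\sigma:=\mathrm{Ad}(q(u))$ of the quotient satisfies $q\circ\sigma=\bar\sigma\circ q$. Thus $q|_D$ is a monomorphism intertwining $\sigma$ and $\bar\sigma$ exactly as required in Proposition \ref{P:a4.15}, and the image $q(D)$ inherits from $D$ the algebraic relation needed there. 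As $D$ is $\sigma$-modular, the monomorphism part of Proposition \ref{P:a4.15} extends $q|_D$ to a monomorphism of $[D]_\sigma=E$ into the quotient; by uniqueness of the $\sigma$-covariant extension (the $\sigma^k(D)$ generate $E$) this extension must coincide with $q|_E$. Hence $q|_E$ is injective, i.e.\ $I\cap E=\{0\}$, and the problem is reduced to the implication $I\cap E=\{0\}\Rightarrow I=\{0\}$.

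\emph{Step 2}, which I expect to be the main obstacle, is the assertion that the coefficient algebra $E$ separates the ideals of its $\Z$-crossed product. I would derive it from the faithful conditional expectation $\Phi\colon E\rtimes_\sigma\Z\to E$ afforded by the dual circle action, together with proper outerness of $\sigma^n$ for every $n\neq0$: once $\sigma^n$ is properly outer, a Kishimoto-type averaging argument lets one compress a non-zero positive element of $I$, by elements of $E$, to a non-zero element of $E$ still lying in $I$, forcing $I\cap E\neq\{0\}$. Establishing this proper outerness is precisely where the remaining force of $\sigma$-modularity must be spent: iterating $(\gamma)$ to $D\cap\sigma^k(D)=\{0\}$ for all $k\neq0$ shows that the levels $\sigma^k(D)$ are mutually disjoint, and the essentiality in $(\beta)$ prevents $\sigma^n$ from agreeing with an inner automorphism on any non-zero $\sigma$-invariant ideal of $E$. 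I would verify proper outerness either directly from these two facts, or, in the situations of interest, by transporting the question to the forward-shift model of Remark \ref{R:sigma-from-H0}, where outerness of the shift is manifest, and checking that it restricts to $E=[D]_\sigma$.
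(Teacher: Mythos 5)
Your treatment of the two formal parts is correct and matches the paper: the inclusion $J\sigma(D)\subset\sigma(J)$ follows exactly as you say from $\sigma$-invariance of $I_1:=I\cap[D]_\sigma$ together with ($\alpha$) (this is the last paragraph of the proof of Lemma \ref{L:a4.5}), and your Step 1 reduction to the implication $I\cap[D]_\sigma=\{0\}\Rightarrow I=\{0\}$ is the same essentiality statement the paper extracts from Lemma \ref{L:a4.5} (your detour through Proposition \ref{P:a4.15} is harmless, since that proposition's injectivity claim is itself proved by Lemma \ref{L:a4.5}).

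The genuine gap is in Step 2, which you yourself flag as the main obstacle but do not close. You propose to get the intersection property from the faithful conditional expectation plus \emph{proper outerness} of $\sigma^n$ ($n\neq 0$) via a Kishimoto-type averaging, and you suggest deriving proper outerness from $D\cap\sigma^k(D)=\{0\}$ and from ($\beta$), or from the ``manifest outerness of the shift'' in the model of Remark \ref{R:sigma-from-H0}. This is not enough as stated: mutual disjointness of the levels $\sigma^k(D)$ together with essentiality only rules out $\sigma^n|_I$ being \emph{equal} to an inner automorphism on invariant ideals, i.e.\ it gives outerness, which is strictly weaker than the proper outerness ($\|\sigma^n|_I-\mathrm{Ad}(u)\|=2$ for all unitaries $u\in\Mult{I}$) that the Olesen--Pedersen/Kishimoto machinery requires; and outerness of the shift on the ambient $\ell_\infty(\Mult{A})/c_0(\Mult{A})$ does not restrict to proper outerness on the subalgebra $[D]_\sigma$ without a further argument. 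The concrete idea you are missing is the one the paper isolates in Lemma \ref{L:secdual}: the units $e_n$ of $\wcl{D_{n,n}}$ are decreasing \emph{central} projections of $\left([D]_\sigma\right)^{**}$ with $\sigma(e_n)=e_{n+1}$ and $\|a(e_{-n}-e_{n+1})\|=\|a\|$ for $a\in D_{-k,k}$, $k\le n$. The mutually orthogonal central projections $p_n:=e_n-e_{n+1}$ then let one write down the central unitaries $v(\theta)=\sum_n p_n\theta^n$ with $\theta\,\sigma(v(\theta))=v(\theta)$, so that by Lemma \ref{L:inner-dual-action} and Lemma \ref{L:second-dual} the dual circle action becomes inner on (a faithful central cut-down of) the second dual of the crossed product; every ideal is therefore invariant under the dual action and is detected by the conditional expectation onto $[D]_\sigma$ (Proposition \ref{P:e-n-in-second}). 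If you insist on the proper-outerness route you would in effect have to prove these same second-dual statements first and then additionally import Kishimoto's theorem, so you should either supply that verification in full or switch to the paper's self-contained argument.
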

In particular, if 
$\varrho\colon [D]_\sigma\rtimes_\sigma\Z\to
\LOp{H}$ is a *-representation with $\varrho|D$ faithful, 
then $\varrho$ is faithful.
\begin{COR}\Label{C:simple}
Suppose $\sigma\in\Aut{B}$ and that $D\subset B$ is
$\sigma$-modular
(cf.~Definition \ref{D:sigm-mod}).
If $J\sigma(D)\not\subset\sigma(J)$ holds for every
closed non-trivial ideal $J$ of $D$
then ${[D]}_\sigma\rtimes_{\sigma}\Z$ is a simple 
\Cast-algebra.
\end{COR}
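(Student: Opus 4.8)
The plan is to deduce simplicity directly from Proposition \ref{P:a4.7}, which already carries all the analytic weight; the corollary is then an essentially algebraic deduction. First I would take an arbitrary non-zero closed ideal $I$ of $[D]_\sigma\rtimes_\sigma\Z$. By Proposition \ref{P:a4.7} the intersection $J:=I\cap D$ is a \emph{non-zero} closed ideal of $D$ satisfying $J\sigma(D)\subset\sigma(J)$. The hypothesis asserts $J\sigma(D)\not\subset\sigma(J)$ for every closed \emph{non-trivial} ideal $J$ of $D$, so by contraposition the relation $J\sigma(D)\subset\sigma(J)$ forces $J$ to be trivial, i.e.\ $J=\{0\}$ or $J=D$. (Observe that property ($\alpha$) of Definition \ref{D:sigm-mod} gives $D\sigma(D)=\sigma(D)$, so $J\sigma(D)\subset\sigma(J)$ holds automatically for $J=D$; this is exactly why ``non-trivial'' must mean $\{0\}\neq J\neq D$ and is consistent with the hypothesis.) Since $J\neq\{0\}$, I conclude $J=D$, hence $D\subset I$.

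The second step is to propagate $D\subset I$ to all of $[D]_\sigma$. In $\Mult{[D]_\sigma\rtimes_\sigma\Z}$ there is the canonical unitary $u$ implementing $\sigma$, with $u\,c\,u^*=\sigma(c)$ for $c\in[D]_\sigma$. As $I$ is a closed ideal, it is invariant under conjugation by the unitary multiplier $u$, so $D\subset I$ gives $\sigma(D)=uDu^*\subset I$ and $\sigma^{-1}(D)=u^*Du\subset I$, and by induction $\sigma^k(D)\subset I$ for every $k\in\Z$. Recalling from Remark \ref{R:a4.1}(ii) that $[D]_\sigma$ is the closure of $\sum_{k\in\Z}\sigma^k(D)$, and that $I$ is closed, I obtain $[D]_\sigma\subset I$.

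Finally, an ideal of the crossed product that contains its coefficient algebra must be the whole algebra: the linear span of the elements $c\,u^k$ with $c\in[D]_\sigma$ and $k\in\Z$ is dense in $[D]_\sigma\rtimes_\sigma\Z$, and each such element lies in $I$, because $c\in[D]_\sigma\subset I$ and $u^k$ is a multiplier. Hence $I$ is dense, and being closed, $I=[D]_\sigma\rtimes_\sigma\Z$. As every non-zero closed ideal thus equals the whole algebra, $[D]_\sigma\rtimes_\sigma\Z$ is simple.

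The only point requiring genuine care — and the step I would treat as the main obstacle — is the handling of the implementing unitary $u$ when $[D]_\sigma$ is non-unital: then $u$ lives in the multiplier algebra rather than in the crossed product itself, so one must argue with the elements $c\,u^k$ (which do lie in the crossed product) rather than with $u$ directly, and invoke the invariance of closed ideals under multiplication by multiplier unitaries. Everything else is a routine transcription of Proposition \ref{P:a4.7} together with the description of $[D]_\sigma$ in Remark \ref{R:a4.1}(ii).
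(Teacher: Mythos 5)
Your proof is correct and is exactly the deduction the paper intends: Corollary \ref{C:simple} is stated as an immediate consequence of Proposition \ref{P:a4.7}, and your steps (a non-zero closed ideal $I$ yields a non-zero ideal $J=I\cap D$ with $J\sigma(D)\subset\sigma(J)$, hence $J=D$ by the hypothesis; conjugation by the canonical unitary multiplier propagates $D\subset I$ to $\sigma^k(D)\subset I$ for all $k\in\Z$ and thus $[D]_\sigma\subset I$; density of the span of the elements $c\,u^k$ then forces $I=[D]_\sigma\rtimes_\sigma\Z$) simply fill in the routine details the paper leaves implicit. Your side remark that ``non-trivial'' must exclude $J=D$ --- since property ($\alpha$) makes $D\sigma(D)\subset\sigma(D)$ automatic --- is also a correct reading of the statement.
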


\medskip

We need some notation 
for the proofs of Propositions \ref{P:a4.15} and \ref{P:a4.7}.
For $m\le n$ and any 
\Cast-sub\-al\-ge\-bra $D\subset B$ let:
\FRem{leftdot and rightbrace to add for number}
\begin{eqnarray}
D_{m,n}&:=&\sigma^{m-1}(D)+\cdots+\sigma^{n-1}(D)\,,\nonumber\\
D_{-\infty,\infty}&:=&\overline{\bigcup_{k\in\N} D_{-k,k}}\,,
  \Label{Eq:a4.0}\\
D_{-\infty,n} \,:=\,
\overline{\bigcup_{k\in\N} D_{n-k,n}}\,, &\quad&\quad
D_{n,\infty} \,:=\,
\overline{\bigcup_{k\in\N} D_{n,n+k}}\nonumber\,.
\end{eqnarray}
It will be shown in Lemma \ref{L:a4.3} 
that these vector spaces are
\Cast-sub\-al\-ge\-bras of $B$ under the additional assumption 
\begin{equation}\Label{Eq:alpha-subset}
D\sigma^k(D)\subset\sigma^k(D) \qquad\text{for}\quad k\in\N.
\end{equation}

\begin{LEM}\Label{L:D-sigma-k-E}
Under the assumption (\ref{Eq:alpha-subset}) holds: 
\begin{Aufl}
\item $\sigma^{j}(D_{m,n})=D_{m+j,n+j}$   
for $j, m,n\in \Z$, $m\leq n$.
\item $D_{m,n}D_{i,j} \subset D_{\max(m,i),\max(n,j)}$
for $m\leq n$ and $i\leq j$, $i,j, m,n\in \Z$.
\end{Aufl}
\end{LEM}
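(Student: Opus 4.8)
The plan is to reduce both parts to the single ``building-block'' inclusion
$$\sigma^a(D)\,\sigma^b(D)\subset\sigma^{\max(a,b)}(D)\qquad(a,b\in\Z),$$
and then to expand the finite sums defining $D_{m,n}$ and $D_{i,j}$ termwise against this.

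Part (i) I would dispose of first, since it is immediate. As $\sigma$ is a $*$-automorphism, it is an isometric linear bijection of $B$, so $\sigma^j$ commutes with the finite sum (and, if $D_{m,n}$ is read as a closure, with the closure). Writing $D_{m,n}=\sigma^{m-1}(D)+\cdots+\sigma^{n-1}(D)$ and applying $\sigma^j$ simply shifts every exponent by $j$, giving $\sigma^{m+j-1}(D)+\cdots+\sigma^{n+j-1}(D)=D_{m+j,n+j}$.

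For the building-block inclusion I would argue as follows. The hypothesis (\ref{Eq:alpha-subset}) gives $D\sigma^k(D)\subset\sigma^k(D)$ for all $k\ge 0$ (the case $k=0$ being trivial since $D$ is a subalgebra). Taking adjoints, and using that $D$ and $\sigma^k(D)$ are $*$-closed and $\sigma$ is a $*$-homo\-mor\-phism, yields the mirror inclusion $\sigma^k(D)D\subset\sigma^k(D)$. Applying the automorphism $\sigma^p$ to these two inclusions produces $\sigma^p(D)\sigma^{p+k}(D)\subset\sigma^{p+k}(D)$ and $\sigma^{p+k}(D)\sigma^p(D)\subset\sigma^{p+k}(D)$ for all $p\in\Z$, $k\ge 0$. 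The first, with $p=a$ and $k=b-a$, settles the case $a\le b$; the second, with $p=b$ and $k=a-b$, settles $a\ge b$. In both cases the product lands in $\sigma^{\max(a,b)}(D)$, which is the asserted inclusion.

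Part (ii) then follows by bilinearity. Any element of $D_{m,n}$ is a finite sum of elements $x_a\in\sigma^a(D)$ with $m-1\le a\le n-1$, and any element of $D_{i,j}$ a sum of $y_b\in\sigma^b(D)$ with $i-1\le b\le j-1$. Each product $x_ay_b$ lies in $\sigma^{\max(a,b)}(D)$ by the building-block inclusion, and $\max(a,b)$ ranges within $[\max(m,i)-1,\,\max(n,j)-1]$; hence every such product, and thus the whole product $D_{m,n}D_{i,j}$, is contained in $\sigma^{\max(m,i)-1}(D)+\cdots+\sigma^{\max(n,j)-1}(D)=D_{\max(m,i),\max(n,j)}$. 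If the $D_{m,n}$ are understood as norm closures rather than algebraic sums, one passes to closures at the end using joint continuity of multiplication. I expect the only genuine subtlety to be the direction of the inclusion when the larger exponent sits on the \emph{left} of the product; this is exactly where the mirror form $\sigma^k(D)D\subset\sigma^k(D)$ is required, so I would derive it explicitly before the case split on $\max(a,b)$. Everything else is bookkeeping of the index ranges.
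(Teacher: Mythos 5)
Your proof is correct and follows essentially the same route as the paper: both reduce part (ii) to the single inclusion $\sigma^i(D)\sigma^j(D)\subset\sigma^{\max(i,j)}(D)$, obtained by applying a power of $\sigma$ to the hypothesis $D\sigma^k(D)\subset\sigma^k(D)$ and handling the reversed-order case by taking adjoints of the $*$-closed subalgebras. Your explicit treatment of the $k=0$ case and the termwise bookkeeping are just spelled-out versions of what the paper leaves implicit.
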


\begin{proof} (i) is obvious from definition.

\Ad{(ii)} $\sigma^i(D)\sigma^j(D)=
\sigma^i \left( D\sigma^{j-i}(D) \right)$ 
for $i\leq j$.
Since $\left(\sigma^i(D)\sigma^j(D) \right)^*=
\sigma^j(D)\sigma^i(D)$, we get
$\sigma^i(D)\sigma^j(D)\subset \sigma^{\max(i,j)}(D)$
for all $i,j\in\Z$.

This implies (ii) by definition of $D_{m,n}$.
\end{proof}
\begin{LEM}\Label{L:a4.2}
  Let $E, F$ be 
        \Cast-sub\-al\-ge\-bras of $B$.  If
  $EF\subset F$ then the sum $E+F$ is a 
        \Cast-subalgebra of \(B\).
\end{LEM}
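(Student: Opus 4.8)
The plan is to verify first that the algebraic sum $E+F$ is a $*$-subalgebra of $B$, and then to show that it is \emph{automatically} norm-closed by passing to a quotient. The norm-closedness is the only non-trivial point; the algebraic closure properties are routine.

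First I would record that $E+F$ is a linear subspace (both $E$ and $F$ are) and that it is closed under the involution, since $(e+f)^*=e^*+f^*$ with $e^*\in E$ and $f^*\in F$. For multiplication, expanding
\[
(e_1+f_1)(e_2+f_2)=e_1e_2+e_1f_2+f_1e_2+f_1f_2
\]
shows that the product lies in $E+F$: the term $e_1e_2$ is in $E$, while $f_1f_2\in F$ and $e_1f_2\in EF\subset F$ by hypothesis. For the remaining term $f_1e_2$ I would observe that taking adjoints in $EF\subset F$ gives $FE=(EF)^*\subset F^*=F$, so $f_1e_2\in F$ as well. Hence $E+F$ is a $*$-subalgebra, and its closure $C:=\overline{E+F}$ is a \Cast-subalgebra of $B$.

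The key step is to see that $E+F$ is already closed, i.e.\ $E+F=C$. Since $(E+F)F=EF+FF\subset F$ and $F(E+F)=FE+FF\subset F$, continuity of multiplication shows that the closed algebra $F$ is a closed two-sided ideal of $C$. Let $\pi\colon C\to C/F$ be the quotient map. Then $\pi(E+F)=\pi(E)$, and since $E+F$ is dense in $C$, the image $\pi(E)$ is dense in $C/F$. But $\pi|_E\colon E\to C/F$ is a $*$-homo\-mor\-phism of \Cast-algebras, so its image $\pi(E)$ is itself a \Cast-subalgebra and hence norm-closed; a dense closed subalgebra must be everything, so $\pi(E)=C/F$, i.e.\ $\pi|_E$ is surjective. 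Now, given any $c\in C$, there is $e\in E$ with $\pi(e)=\pi(c)$, so $c-e\in\ker\pi=F$ and $c=e+(c-e)\in E+F$. Thus $C\subset E+F$, and therefore $E+F=C$ is closed, completing the proof.

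The main obstacle is exactly this closedness, and the mechanism that makes it work is the automatic norm-closedness of the image of a \Cast-algebra under a $*$-homo\-mor\-phism: that is what upgrades the mere density of $\pi(E)$ in $C/F$ to surjectivity and lets one lift elements of $C$ back into $E+F$. The hypothesis $EF\subset F$ enters twice in an essential way: directly, to keep the cross terms of products inside $E+F$, and (via adjoints) to guarantee that $F$ is a genuine two-sided ideal of $C$ so that the quotient argument applies.
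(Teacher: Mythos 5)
Your proof is correct and rests on the same mechanism as the paper's: the identification of $(E+F)/F$ with the \Cast-algebra $E/(E\cap F)$ (in your phrasing, the automatic closedness of the $*$-homomorphic image $\pi(E)$ in $C/F$), which is exactly how the paper upgrades the algebraic sum to a closed one. The only cosmetic difference is that the paper deduces closedness of $E+F$ as the preimage of the closed set $(E+F)/F$ under the quotient map, whereas you argue inside the closure $C$ and lift elements back; these are the same argument in different packaging.
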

\begin{proof}
  $E+F$ is a *-subalgebra, $F$ is a closed ideal of
  $E+F$ and
  $E\cap F$ is a closed ideal in $E$, because
  $FE=F^*E^*=(EF)^*\subset F^*=F$. The map 
  $$
  \varphi\colon c+(E\cap F)\in E/(E\cap F) 
        \mapsto  c+F\in (E+F)/F
  $$
  is a well-defined *-iso\-mor\-phism from $E/(E\cap F)$
  onto $(E+F)/F$.  
This shows, that
  $(E+F)/F\subset \Norm{B,F}/F$ is a \Cast-algebra.
  Therefore the pre-image $E+F$ 
  of the quotient map is closed.
\end{proof}
\begin{LEM}\Label{L:a4.3}
Let $D\sigma^k(D)\subset\sigma^k(D)$ for $k\in\N$. Then, for
$m,n\in\Z$ with $m\le n\,$:
\begin{Aufl}
\item $D_{m,n}$ is a \Cast-sub\-al\-ge\-bra of $B$ and
  $D_{i,n}$ is an ideal of $D_{m,n}$ for $m\le i\le n$.
\item $D_{-\infty,\infty}$ is the smallest $\sigma$-invariant
  \Cast-sub\-al\-ge\-bra $[D]_\sigma$ of $B$ containing $D$,
  and is the inductive limit of the \Cast-algebras
  $ \left( D_{-k,k} \right) _{k\in\N}$.

        If, in addition, $D$ is of type~I, 
        then $[D]_\sigma$ is the 
        inductive limit of type~I algebras.

\item $D_{-\infty,n}$ is a \Cast-algebra and 
  satisfies 
        $\sigma^{-1} \left( D_{-\infty,n} \right) 
        \subset D_{-\infty,n}$.
\item $D_{n,\infty}$ is a closed ideal of 
  $[D]_\sigma$ and
  satisfies 
  $\sigma \left( D_{n,\infty} \right)  \subset D_{n,\infty}$.
\end{Aufl}
\end{LEM}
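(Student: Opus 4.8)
The plan is to derive all four parts from three facts already established: the covariance $\sigma^j(D_{m,n}) = D_{m+j,n+j}$ and the multiplication rule $D_{m,n}D_{i,j} \subset D_{\max(m,i),\max(n,j)}$ of Lemma~\ref{L:D-sigma-k-E}, together with Lemma~\ref{L:a4.2}, which makes $E+F$ a \Cast-subalgebra whenever $EF \subset F$.

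For (i) I would induct on $n-m$. The base case $D_{m,m} = \sigma^{m-1}(D)$ is a \Cast-subalgebra as an automorphic image of $D$. For the step, peel off the leftmost summand, $D_{m,n} = D_{m,m} + D_{m+1,n}$; the multiplication rule gives $D_{m,m}D_{m+1,n} \subset D_{m+1,n}$, so Lemma~\ref{L:a4.2} (with $E = D_{m,m}$, $F = D_{m+1,n}$) makes $D_{m,n}$ a \Cast-subalgebra. The ideal claim is then immediate: for $m \le i \le n$ the multiplication rule yields $D_{m,n}D_{i,n} \subset D_{\max(m,i),n} = D_{i,n}$, and likewise on the other side.

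For (ii) the $D_{-k,k}$ form an increasing chain of \Cast-subalgebras by (i), so their union is a *-algebra and $D_{-\infty,\infty}$ is its closure, that is, the inductive limit of the $D_{-k,k}$. Covariance gives $\sigma^{\pm1}(D_{-k,k}) = D_{-k\pm1,k\pm1} \subset D_{-(k+1),k+1}$, hence $\sigma(D_{-\infty,\infty}) = D_{-\infty,\infty}$; since $D_{-\infty,\infty}$ contains $D = D_{1,1}$ and any $\sigma$-invariant \Cast-subalgebra containing $D$ must contain every $\sigma^k(D)$ and hence each $D_{m,n}$, it is the smallest such algebra, so $D_{-\infty,\infty} = [D]_\sigma$. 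For the type~I addendum I would use the composition series $D_{m,n} \supset D_{m+1,n} \supset \cdots \supset D_{n,n}$ and the isomorphism $(E+F)/F \cong E/(E\cap F)$ from the proof of Lemma~\ref{L:a4.2} to realize each quotient $D_{j,n}/D_{j+1,n}$ as a quotient of $\sigma^{j-1}(D) \cong D$, hence type~I; thus each $D_{-k,k}$ is type~I and $[D]_\sigma$ is an inductive limit of type~I algebras.

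Parts (iii) and (iv) go in parallel. Each of $D_{-\infty,n}$ and $D_{n,\infty}$ is an increasing union of the \Cast-subalgebras of (i), hence a \Cast-subalgebra, and the invariances follow by shifting indices via the covariance: $\sigma^{-1}(D_{n-k,n}) = D_{n-k-1,n-1} \subset D_{n-(k+1),n}$ gives $\sigma^{-1}(D_{-\infty,n}) \subset D_{-\infty,n}$, while $\sigma(D_{n,n+k}) = D_{n+1,n+k+1} \subset D_{n,n+k+1}$ gives $\sigma(D_{n,\infty}) \subset D_{n,\infty}$. The only point with real content is that $D_{n,\infty}$ is an ideal of all of $[D]_\sigma$: the multiplication rule gives $D_{-m,m}D_{n,n+k} \subset D_{\max(-m,n),\max(m,n+k)}$, and because the $\max$ on the lower index stays $\ge n$ the product lands in $D_{n,\infty}$; taking closures and adjoints then yields the two-sided ideal property. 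I expect the main obstacle to be clerical rather than conceptual: keeping the index bookkeeping straight in this last step and in the type~I composition series, and noting that it suffices to verify each containment on the dense subalgebras $D_{-m,m}$ before invoking continuity of multiplication.
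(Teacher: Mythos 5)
Your proposal is correct and follows essentially the same route as the paper: induction via Lemma \ref{L:a4.2} and the multiplication rule of Lemma \ref{L:D-sigma-k-E} for (i), the increasing chain $D_{-k,k}$ and the composition series with factors $\cong D/(D\cap\,\cdot\,)$ for (ii), and index-shifting plus the $\max$-rule for (iii) and (iv). The only cosmetic difference is that you peel off the leftmost summand $D_{m,m}$ in the induction for (i), whereas the paper peels off the rightmost summand $\sigma^n(D)$; both satisfy the hypothesis $EF\subset F$ of Lemma \ref{L:a4.2}.
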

\begin{proof}
{(i)} follows from Lemma \ref{L:D-sigma-k-E} and
by induction over $n\ge m$ by applying Lemma \ref{L:a4.2}
to $D_{m,n+1}=D_{m,n}+\sigma^n(D)$.

\Ad{(ii)} 
It is obvious from the definition of 
$D_{-\infty,\infty}$ and (i)
that it is the
inductive limit of the sequence 
$D_{-1,1}\subset D_{-2,2}\subset\dots\,$
of \Cast-algebras.
The $\sigma$-invariance follows from 
Lemma \ref{L:D-sigma-k-E}(i).  
Clearly $D_{-\infty,\infty}$
is the smallest 
$\sigma$-invariant closed vector subspace of $B$
containing $D$.

By (i), $D_{-k,k}$ has an ideal decomposition series 
$$
\sigma^{k-1}(D)=D_{k,k}\subset D_{k-1,k}\subset \dots
\subset D_{k-j,k} \subset \dots\subset 
D_{-k+1,k}\subset D_{-k,k}\,.
$$
The intermediate factors are isomorphic to 
$D/\left( D\cap D_{2,j+1} \right)$,
which are of type~I if $D$ is type~I.
Thus, $D_{-k,k}$ is of type I if $D$ is of type I
in addition to (\ref{Eq:alpha-subset}).

\Ad{(iii),(iv)} 
In the same way one gets that $D_{-\infty,n}$ (respectively 
$D_{n,\infty}$) is the inductive limit of $D_{-k,n}$
(respectively of $D_{n,k}$) for $k\to\infty$.
Observe $\sigma^{-1} \left( D_{-\infty,n} \right) 
=D_{-\infty,n-1}\subset D_{-\infty,n}$ and
$\sigma \left( D_{n,\infty} \right) 
=D_{n+1,\infty}\subset D_{n,\infty}$.

$D_{n,\infty}$ is a closed ideal of $D_{-\infty,\infty}$
by Lemma \ref{L:D-sigma-k-E}(ii).
\end{proof}
\begin{LEM}\Label{L:a4.5}
If $D\sigma^k(D)\subset\sigma^k(D)$ for $k\in\N$, 
$D\cap\sigma(D)=0$, and if
$\sigma(D)$ is an essential ideal of 
$D+\sigma(D)$ then
$D_{j,n}$ is an essential ideal of $D_{m,n}$ 
for all $m\le j\le n$,
$j,n\in\Z$, $m=-\infty$ or $m\in\Z$.

If $I$ is a non-zero $\sigma$-invariant closed ideal of 
$[D]_\sigma$, then $J:=I\cap D$ is non-zero
and satisfies $J\sigma^k(D)\subset \sigma^k(D)$ for
$k\in \N$.
\end{LEM}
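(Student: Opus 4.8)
The plan is to first extract a single consequence of $(\alpha)$ that carries most of the weight, and then to reduce the essentiality statement to the two–summand situation, which is exactly $(\beta)$. The key fact is: \emph{an approximate unit $(e_\lambda)$ of $\sigma^{j}(D)$ is a two–sided approximate unit for every $\sigma^{l}(D)$ with $l\ge j$, and hence for $D_{j+1,n}$}. This follows from Remark \ref{R:a4.1}(i),(ii) (which give that an approximate unit of $D$ is a two–sided approximate unit for each $\sigma^{k}(D)$, $k\ge1$, using self‑adjointness) by applying $\sigma^{j}$. Combined with $\sigma^{a}(D)\sigma^{b}(D)\subseteq\sigma^{\max(a,b)}(D)$ from Lemma \ref{L:D-sigma-k-E}, this yields \emph{disjointness of consecutive blocks}: $D_{m,k}\cap D_{k+1,n}=\{0\}$ for $m\le k<n$. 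Indeed, for $z$ in the intersection the approximate unit of $\sigma^{k}(D)$ (the lowest summand of $D_{k+1,n}$) reproduces $z$, while pushing the summands of $z\in D_{m,k}$ into $\sigma^{k}(D)$, so $z\in\sigma^{k}(D)\cap D_{m,k}$; a second application with the approximate unit of $\sigma^{k-1}(D)$ pushes $z$ into $\sigma^{k-1}(D)$, whence $z\in\sigma^{k-1}(D)\cap\sigma^{k}(D)=\sigma^{k-1}(D\cap\sigma(D))=\{0\}$ by $(\gamma)$.

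For the first assertion, $D_{j,n}\triangleleft D_{m,n}$ by Lemma \ref{L:a4.3}(i), so it remains to prove essentiality. I would establish the one–step case \emph{$D_{i+1,n}$ is essential in $D_{i,n}$} for $m\le i<n$ and then compose. Here I use the elementary fact that essentiality is inherited from a quotient: if $K\subseteq J$ are closed ideals of a \Cast-algebra $A$ and $J/K$ is essential in $A/K$, then $J$ is essential in $A$ (if $aJ=0$ then $aK=0$, so the image of $a$ annihilates $J/K$, forcing $a\in K\subseteq J$, and then $aa^{*}\in aJ=0$). Apply this with $A=D_{i,n}$, $J=D_{i+1,n}$, $K=D_{i+2,n}$ (the case $i=n-1$ being $(\beta)$ shifted by $\sigma^{i-1}$). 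Since $D_{i,n}=D_{i,i+1}+D_{i+2,n}$ with $D_{i+2,n}$ an ideal and $D_{i,i+1}\cap D_{i+2,n}=\{0\}$ by the disjointness above, the inclusion $D_{i,i+1}\hookrightarrow D_{i,n}$ induces an isomorphism $D_{i,n}/D_{i+2,n}\cong D_{i,i+1}$ carrying $D_{i+1,n}/D_{i+2,n}$ onto the ideal $\sigma^{i}(D)$ of $D_{i,i+1}=\sigma^{i-1}(D)+\sigma^{i}(D)$; and $\sigma^{i}(D)$ is essential in $D_{i,i+1}$ by $(\beta)$. Thus $D_{i+1,n}$ is essential in $D_{i,n}$, and transitivity of essential ideals along $D_{n,n}\subset D_{n-1,n}\subset\cdots\subset D_{m,n}$ gives that every $D_{j,n}$ is essential in $D_{m,n}$. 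For $m=-\infty$ I pass to the limit: the annihilator ideal $\{a\in D_{-\infty,n}:aD_{j,n}=0\}$ meets every $D_{-k,n}$ trivially (finite case) and is therefore $\{0\}$, since a nonzero closed ideal of the inductive limit $D_{-\infty,n}$ meets some stage nontrivially.

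For the second assertion the inclusion $J\sigma^{k}(D)\subseteq D\sigma^{k}(D)\subseteq\sigma^{k}(D)$ is immediate from $J\subseteq D$ and Remark \ref{R:a4.1}(ii). To see that $J=I\cap D\neq\{0\}$: since $[D]_\sigma=D_{-\infty,\infty}$ is the inductive limit of the $D_{-k,k}$ (Lemma \ref{L:a4.3}(ii)), the nonzero closed ideal $I$ contains a nonzero $y\in D_{-k,k}$ for some $k$. By the first assertion the top summand $\sigma^{k-1}(D)=D_{k,k}$ is essential in $D_{-k,k}$, so $y\,\sigma^{k-1}(D)\neq\{0\}$; as $D_{-k,k}\,\sigma^{k-1}(D)\subseteq\sigma^{k-1}(D)$ by Lemma \ref{L:D-sigma-k-E} and $I$ is an ideal, this produces a nonzero element of $I\cap\sigma^{k-1}(D)$. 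Finally the $\sigma$‑invariance of $I$ gives $\sigma^{-(k-1)}\!\left(I\cap\sigma^{k-1}(D)\right)=I\cap D\neq\{0\}$.

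The hard part is the first (essentiality) assertion. The naive strategy of propagating the annihilation condition one summand down the tower stalls at the bottom summand, precisely because left multiplication by a low shift leaves the high summands untouched ($\sigma^{i-1}(D)\sigma^{l}(D)=\sigma^{l}(D)$ for $l\ge i$), so one cannot extract that an annihilator has trivial high part. What breaks the deadlock is the combination of the approximate‑unit fact from $(\alpha)$ (giving the block disjointness, which in turn uses $(\gamma)$) with the observation that essentiality is detected in a quotient; together these reduce each one–step essentiality to the two–summand case governed by $(\beta)$, after which transitivity and a routine inductive–limit argument finish the proof.
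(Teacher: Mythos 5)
Your reduction of each one-step essentiality to the two-summand case via the quotient $D_{i,n}/D_{i+2,n}\cong D_{i,i+1}$ is an attractive idea, but the step feeding it is not available under the hypotheses of the lemma. Lemma \ref{L:a4.5} assumes only the \emph{inclusion} $D\sigma^k(D)\subset\sigma^k(D)$ of (\ref{Eq:alpha-subset}), not the equality $D\sigma(D)=\sigma(D)$ of condition ($\alpha$); your ``key fact'' that an approximate unit of $\sigma^{j}(D)$ is an approximate unit for every $\sigma^{l}(D)$ with $l\ge j$ is, by Remark \ref{R:a4.1}(i), precisely equivalent to that equality. Without it your block-disjointness $D_{m,k}\cap D_{k+1,n}=\{0\}$ --- and hence the identification $D_{i,n}/D_{i+2,n}\cong D_{i,i+1}$ on which the whole first assertion rests --- is unproved. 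This is not a removable technicality in the paper's architecture: the multi-block disjointness is Lemma \ref{L:direct-sum}, whose proof under the essentiality alternative \emph{invokes the present lemma}, so importing it here is circular; and that alternative of Lemma \ref{L:direct-sum} genuinely needs the weak-hypothesis version of \ref{L:a4.5}, since it does not assume $D\sigma(D)=\sigma(D)$. The paper's own proof avoids all of this by a direct induction showing the \emph{top} summand $D_{n,n}$ is essential in $D_{1,n}$: given $x=y+z$ with $y\in D_{1,n}$, $z\in D_{n+1,n+1}$ and $xD_{n+1,n+1}=0$, multiply by $a\in D_{n,n}$ to compress $ax$ into $D_{n,n}+D_{n+1,n+1}=\sigma^{n-1}(D_{1,2})$, apply the base case (essentiality of $\sigma(D)$ in $D+\sigma(D)$) to get $ax=0$, use only the consecutive disjointness $D_{n,n}\cap D_{n+1,n+1}=\sigma^{n-1}\left(D\cap\sigma(D)\right)=\{0\}$ (immediate from ($\gamma$)) to split $ay=az=0$, and then invoke the induction hypothesis. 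Your quotient-inheritance lemma, the transitivity step, the inductive-limit argument for $m=-\infty$, and your proof that $J\neq\{0\}$ are all fine and essentially agree with the paper.

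A smaller point: as printed the conclusion $J\sigma^k(D)\subset\sigma^k(D)$ is indeed trivial, but it is a misprint for $J\sigma^k(D)\subset\sigma^k(J)$, which is what Proposition \ref{P:a4.7} actually quotes. The stronger inclusion follows from $J\sigma^k(D)\subset I\cap\sigma^k(D)=\sigma^k(I\cap D)=\sigma^k(J)$, using the $\sigma$-invariance of $I$; your one-line observation does not give it.
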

\begin{proof}
First we show that the ideal 
$D_{n,n}$ of $D_{1,n}$ is essential 
by induction:
$n=1$ is trivial and $n=2$ is true by assumption. 
Suppose it is true for
$n$.  Let $x\in D_{1,n+1}$ with $x\cdot D_{n+1,n+1}=0$.
$x$ can be written as the sum of $y\in D_{1,n}$ and 
$z\in D_{n+1,n+1}$.
$ay\in D_{n,n}$ and $az\in D_{n+1,n+1}$ for all 
$a\in D_{n,n}$, i.e.
$ax\in D_{n,n}+D_{n+1,n+1}=\sigma^{n-1}\left(D_{1,2}\right)$.
But
$D_{n+1,n+1}=\sigma^{n-1}\left(D_{2,2}\right)$ is essential in 
$\sigma^{n-1} \left( D_{1,2} \right) $ by assumption, 
thus $ax\cdot D_{n+1,n+1}=0$
implies $ax=0$. Then 
$ay=-az\in D_{n,n}\cap D_{n+1,n+1}=\{0\}$ shows 
$ay=az=0$, but $D_{n,n}$ was assumed essential in $D_{1,n}$ 
implying $y=0$ and $x=z$ and this again implies $x=0$.

By applying $\sigma^{1-m}$ to $D_{m,n}$, we see that
$D_{n,n}$ is essential in $D_{m,n}$.  
$D_{j,n}$ is
an ideal of $D_{m,n}$ by Lemma \ref{L:a4.3}(i).
$D_{j,n}$ is essential, because 
$D_{j,n}\supset D_{n,n}$.

It follows that $D_{j,n}$ is an essential ideal of 
$D_{-\infty,n}
=\mathrm{indlim}_{m\to -\infty} D_{m,n}$.
Indeed, $D_{j,n}$ is a closed ideal of $D_{-\infty,n}$, and
the natural *-homo\-mor\-phism from 
$D_{-\infty,n}$ to $\Mult{D_{j,n}}$
is isometric on 
$\bigcup_{m\le j} D_{m,n}\subset D_{-\infty,n}$,
thus is faithful on $D_{-\infty,n}$.

Since $D_{-\infty,\infty}$ is the inductive limit of
$D_{-k,k}$ there exists $k\in\N$ with $I\cap D_{-k,k}\neq\{0\}$
if $I$ is non-zero.
If $I$ is non-zero and $\sigma$-invariant, then there exists
$n=2k+1\in\N$ with non-zero intersection $D_{1,n}\cap I$.
Since $D_{n,n}$ is an essential ideal
of $D_{1,n}$, the intersection 
$D_{n,n}\cap I=\sigma^{n-1}(D\cap I)$ 
is non-zero.

Let $J:=I\cap D$ then $J\sigma^k(D)\subset \sigma^k(J)$
for $k\in \N$,
because
$J\sigma^k(D)\subset I$, $J\sigma^k(D)\subset \sigma^k(D)$
and $I\cap \sigma^k(D)=\sigma^k(J)$.
\end{proof}

\begin{LEM}\Label{L:direct-sum}
Suppose 
$D\sigma^k(D)\subset\sigma^k(D)$ for $k\in\N$ and
$D\cap\sigma(D)=\{0\}$.
If $\sigma(D)$ is essential in $D+\sigma(D)$ or if
$D\sigma(D)=\sigma(D)$,
then $D_{m,n}$ is as a vector space the direct sum of the
subspaces $D_{j,j}=\sigma^{j-1}(D)\cong D$ for
$m\le j\le n$.

In particular, 
$D_{m,n}=D_{m,j}+D_{j+1,n}$
and $D_{m,j}\cap D_{j+1,n}=0$ for $m\le j< n$. 
\end{LEM}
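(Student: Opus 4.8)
The plan is to reduce the whole statement to the injectivity of the natural summation map $\bigoplus_{j=m}^{n}\sigma^{j-1}(D)\to D_{m,n}$, i.e.\ to showing that $\sum_{j=m}^{n}d_j=0$ with $d_j\in\sigma^{j-1}(D)$ forces every $d_j=0$ (surjectivity onto $D_{m,n}$ is free, since by Lemma~\ref{L:a4.2} the sum is already the algebraic sum). Using Lemma~\ref{L:D-sigma-k-E}(i) one has $\sigma^{m-1}(D)\cap D_{m+1,n}=\sigma^{m-1}\bigl(D\cap D_{2,\,n-m+1}\bigr)$, so by applying $\sigma^{1-m}$ and peeling off the lowest summand the whole problem collapses to the single family of assertions
\[
D\cap D_{2,M}=\{0\}\qquad(M\ge 2).
\]
Indeed, granting this, the lowest term of any relation lies in $\sigma^{m-1}(D)\cap D_{m+1,n}=\{0\}$, and one repeats; and the \emph{in particular} clause then follows because a nonzero element of $D_{m,j}\cap D_{j+1,n}$ would furnish a nontrivial relation among the $\sigma^{i-1}(D)$.

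I would prove $D\cap D_{2,M}=\{0\}$ by induction on $M$, the case $M=2$ being exactly hypothesis ($\gamma$). For the step, take $x\in D\cap D_{2,M}$ and \emph{any} representation $x=\sum_{j=2}^{M}d_j$ with $d_j\in\sigma^{j-1}(D)$, and set $w:=x-d_2=\sum_{j=3}^{M}d_j\in D_{3,M}$. The key computation exploits that $x\in D$ forces $x\sigma(e)\in D\sigma(D)\subset\sigma(D)$ for $e\in D$, whereas $d_j\sigma(e)\in\sigma^{j-1}(D)\sigma(D)\subset\sigma^{j-1}(D)$ by Lemma~\ref{L:D-sigma-k-E}(ii); hence
\[
w\sigma(e)=x\sigma(e)-d_2\sigma(e)\in\sigma(D)\cap D_{3,M}
=\sigma\bigl(D\cap D_{2,M-1}\bigr)=\{0\}
\]
by the induction hypothesis, and symmetrically $\sigma(e)w=0$. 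Thus $w\,\sigma(D)=\sigma(D)\,w=\{0\}$.

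The main obstacle is then to deduce $w=0$, and this is precisely where the two alternative hypotheses enter. I expect the decisive trick to be that subtracting the bottom term $d_2$ pushes $w$ back into the two-level algebra: since $x\in D$ and $d_2\in\sigma(D)$ we have $w\in D+\sigma(D)=D_{1,2}$. Therefore, if $\sigma(D)$ is essential in $D+\sigma(D)$, then $w\in\Ann{D+\sigma(D),\sigma(D)}=\{0\}$ by ($\beta$). In the alternative case $D\sigma(D)=\sigma(D)$, Remark~\ref{R:a4.1}(ii) propagates the equality to $\sigma(D)\sigma^{k}(D)=\sigma^{k}(D)$ for $k\ge2$, so $\sigma(D)\,D_{3,M}=D_{3,M}$; an approximate unit of $\sigma(D)$ is then a left unit for $w\in D_{3,M}$, and $\sigma(D)w=\{0\}$ again gives $w=0$. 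In both cases $x=d_2\in D\cap\sigma(D)=\{0\}$ by ($\gamma$), closing the induction and hence establishing directness of $D_{m,n}$ together with the asserted splitting $D_{m,n}=D_{m,j}+D_{j+1,n}$, $D_{m,j}\cap D_{j+1,n}=\{0\}$.
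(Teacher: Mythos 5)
Your proof is correct, but it runs the induction from the opposite end of the sum compared with the paper, and this changes which auxiliary facts are needed. The paper fixes $m=1$, inducts on $n$, and, given a nontrivial relation $d_1+\dots+d_{n+1}=0$, multiplies it on the right by a test element $e$ of the \emph{top} summand $D_{n,n}$; since $D_{1,n}D_{n,n}\subset D_{n,n}$ and $D_{n+1,n+1}D_{n,n}\subset D_{n+1,n+1}$ by Lemma~\ref{L:D-sigma-k-E}(ii), this collapses the relation to the two-level case and produces, after applying $\sigma^{1-n}$, a nonzero element of $D\cap\sigma(D)$ --- a contradiction with $(\gamma)$. Finding $e$ with $(d_1+\dots+d_n)e\neq 0$ requires, in the essential case, that $D_{n,n}$ be essential in $D_{1,n}$, i.e.\ the first part of Lemma~\ref{L:a4.5}. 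You instead peel off the \emph{bottom} summand: reformulating directness as $D\cap D_{2,M}=\{0\}$, you test against $\sigma(D)$ to annihilate the tail $w=x-d_2$ via the induction hypothesis, and then exploit that $w$ lies simultaneously in $D+\sigma(D)$ and in $D_{3,M}$, so that $w\in\Ann{D+\sigma(D),\sigma(D)}$ and hypothesis $(\beta)$ (or, under $(\alpha)$, the approximate-unit argument) applies verbatim, after which $x=d_2\in D\cap\sigma(D)=\{0\}$. What your route buys is independence from Lemma~\ref{L:a4.5} and a completely transparent use of the annihilator hypothesis; what the paper's buys is brevity, since Lemma~\ref{L:a4.5} is already established and needed elsewhere. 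Both reductions terminate in $D\cap\sigma(D)=\{0\}$, and your treatment of the two alternative hypotheses and of the ``in particular'' clause is sound.
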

\begin{proof}
It suffices to check the case $m=1$ by induction over $n$, 
because
$D_{m,n}=\sigma^{m-1} \left( D_{1,n-m+1} \right)$.
For $n=1$ this is trivial and for $n=2$ it follows from
$D\cap\sigma(D)=\{0\}$.
Let $d_j\in D_{j,j}$ for $j=1,\dots,n+1$ with
$d_1+\dots+d_n+d_{n+1}=0$.  If $d_1+\dots+d_n=0$ we get by
induction assumption that $d_1=d_2=\dots=d_n=0$ and thus
$d_{n+1}=0$.
Suppose that $d_1+\dots+d_n\neq 0$.  Then $d_{n+1}\neq 0$.
If $\sigma(D)$ is essential in $D+\sigma(D)$, then 
$D_{n,n}=\sigma^{n-1}(D)$ is essential in $D_{1,n}$ by 
Lemma \ref{L:a4.5}.
Thus there is $e\in D_{n,n}$ with 
$f:=
\sigma^{1-n}\left(\left( d_1+\dots+d_n\right)\cdot e\right)
\neq 0$, and
$g:=\sigma^{1-n} \left( d_{n+1}e \right) \neq 0$ and $f=-g
\in D\cap\sigma(D)$ which contradicts $D\cap \sigma(D)=\{0\}$.

If $D\sigma(D)=\sigma(D)$ 
then there exists $e\in D_{n,n}$ with
$ed_{n+1}\neq 0$, because 
$$D_{n,n}\cdot D_{n+1,n+1}
=\sigma^{n-1} \left( D\sigma(D) \right) 
=\sigma^n(D)=D_{n+1,n+1}$$
and the latter implies that 
$D_{n,n}$ contains an approximate 
unit for $D_{n+1,n+1}$.
With $f$ and $g$ as above, we get again 
$f=-g\in D\cap\sigma(D)$.
\end{proof}

\begin{proof}[Proof of Proposition \ref{P:a4.15}]
By induction over $n=j-i$ we get
$\varphi \left( \sigma^{i-j}(f)g \right) 
=\sigma_1^{i-j} \left( \varphi(f) \right) \varphi(g)$
for $i\le j$, $f,g\in D$, indeed:

By assumption we find $f_1,g_1\in D$ with
$g=\sigma^{-1}\left(f_1\right)g_1$ because 
$D=\sigma^{-1} \left( D\sigma(D)\right)$.
The induction step uses
$$ 
\varphi\left(\sigma^{-n-1}(f)g\right) 
=
\varphi\left(
\sigma^{-1}\left(\sigma^{-n}(f)f_1\right)g_1
\right), 
$$
where $\sigma^{-n}(f)f_1$ is in $D$.

Thus
\begin{equation}\Label{Eq:varphi-product}
 \varphi \left( \sigma^{1-j} \left( d_id_j \right)  \right) 
=\sigma_1^{i-j} \left( \varphi \left( \sigma^{1-i}
 \left( d_i \right)  \right)  \right) 
\varphi \left( \sigma^{1-j} \left( d_j \right)  \right) 
\end{equation}
for $d_i\in D_{i,i}$, $d_j\in D_{j,j}$, 
and $i\le j$.

Let $d_j\in D_{j,j}$ and 
$\varphi_e \left( d_j \right) 
:=
\sigma_1^{j-1}\varphi\left(\sigma^{1-j}\left(d_j\right)\right) 
\in \left( D_1\right)_{j,j}$ 
for $-k\le j\le k$, and let
$$ 
\varphi_e \left( d_{-k}+d_{-k+1}+\dots+d_k \right) 
:=
\varphi_e \left(d_{-k}\right)+
\varphi_e \left(d_{-k+1}\right) + \dots
+\varphi_e \left( d_{k} \right).
$$
Then $\varphi_e$ is a well-defined linear map from 
$D_{-k,k}$ into $ \left( D_1 \right) _{-k,k}$
by Lemma \ref{L:direct-sum}, 
because $D\cap\sigma(D)=\emptyset$.
$\varphi_e$ is a *-homo\-mor\-phism, 
because
$\varphi_e ( d_id_j ) =\varphi_e ( d_i ) \varphi_e( d_j )$
by Equation (\ref{Eq:varphi-product}) and
and $\varphi_e \left( d_j^* \right) =
\varphi_e \left( d_j \right)^*\,$.

Since $[D]_\sigma= D_{-\infty,\infty}$
and 
$\left[D_1\right]_{\sigma_1}= 
\left( D_1 \right) _{-\infty,\infty}$
are the inductive limits of the \Cast-algebras 
$D_{-k,k}$
respectively of $\left( D_1 \right) _{-k,k}$, we get that
$\varphi_e$ uniquely extends to a *-homo\-mor\-phism from
$[D]_\sigma$ to $\left[D_1\right]_{\sigma_1}$.  
$\varphi_e$ satisfies 
$\varphi_e\sigma=\sigma_1\varphi_e$ because
$\varphi_e \left(\sigma^n(d) \right) 
=\sigma_1^n \left(\varphi(d)\right)$
for $n\in\Z$, $d\in D$.

If $\varphi$ is a 
*-mono\-mor\-phism from $D$ into $D_1$, then
the kernel of $\varphi_e$ is a 
$\sigma$-invariant closed ideal
$I$ of $\left[D\right]_{\sigma}$ with $I\cap D=\{0\}$.
It implies $I=\{0\}$ by Lemma \ref{L:a4.5} 
if $D$ is $\sigma$-modular,
i.e.~if $D$ satisfies ($\beta$) in addition.

In particular, $\varphi_e$ is a 
*-mono\-mor\-phism with dense image in
$ \left( D_1 \right) _{-\infty,\infty}=
\left[D_1\right]_{\sigma_1}$ 
if $\varphi$ is an iso\-mor\-phism from $D$ onto $D_1$.
\end{proof}

\begin{REM}\Label{R:type-I-endo}
Under the assumption of ($\alpha$) and ($\gamma$),
$ \{ D_{-k,1} \} $ is a decomposition series of ideals of
$D_{-\infty,1}$ 
with intermediate factors isomorphic to $D$,
and $\sigma^{-1}$ defines an endo\-mor\-phism of 
$D_{-\infty,1}$
such that $[D]_\sigma$ is the inductive limit of 
$\sigma^{-1}\colon D_{-\infty,1}\hookrightarrow 
D_{-\infty,1}\,$,
i.e.\ $ \left( [D]_\sigma ,\sigma^{-1} \right) $ 
is the natural 
dynamical system corresponding to the endo\-mor\-phism
$\sigma^{-1}\colon D_{-\infty,1}
\hookrightarrow D_{-\infty,1}$.
In particular, $D_{-\infty,1}$ is of type~I if 
$A$ is of type~I.

Indeed, the \Cast-algebras $D_{-k,1}$ are closed ideals of 
$D_{-n,1}$ for $n\ge k$ by Lemma \ref{L:a4.3}(i).  
Since 
$D_{-\infty,1}$ is the inductive limit of $D_{-n,1}$ 
we get
that $D_{-k,1}$ is also an ideal of $D_{-\infty,1}$.
The quotient $D_{-(k+1),1}/ D_{-k,1}$ is isomorphic to $D$
by Lemma \ref{L:direct-sum}.
$\sigma^{-1}$ is an endo\-mor\-phism of 
$D_{-\infty,1}$ because
$\sigma^{-1} \left( D_{-\infty,1} \right) 
=D_{-\infty,0}
\subset D_{-\infty,1}$.  
Clearly, the smallest $\sigma$-invariant
\Cast-sub\-al\-ge\-bra of $B$ containing $D_{-\infty,1}$ is
the inductive limit of $D_{-\infty,1}
\stackrel{\sigma^{-1}}{\longrightarrow}
D_{-\infty,1}
\stackrel{\sigma^{-1}}{\longrightarrow}D_{-\infty,1}
\stackrel{\sigma^{-1}}{\longrightarrow}\dots$ 
and is the same as $D_{-\infty,\infty}=[D]_\sigma$.
And $\sigma^{-1}$ is the unique auto\-mor\-phism of 
$[D]_\sigma$ that
induces $\sigma^{-1}$ on $D_{-\infty,1}$.
\end{REM}

\begin{LEM}\Label{L:secdual}
Suppose $\sigma\in \Aut{B}$ and that $D\subset B$ is 
$\sigma$-modular.
Let $e_n$ denote the unit of 
$\wcl{D_{n,n}}\subset B^{**}$, then
\begin{Aufl}
\item $e_n\ge e_{n+1}$, 
$\sigma \left( e_n \right) =e_{n+1}$,
\item $ \left( [D]_\sigma \right)^{**}\cong
  \wcl{D_{-\infty,\infty}}\subset B^{**}$,
\item 
        $e_n$ is the unit of 
        $\left( D_{n,\infty} \right) ^{**}$
  and is in the center of $ \left( [D]_\sigma \right) ^{**}$,
\item $\left\| a(e_{-n}-e_{n+1}) \right\| =\| a\|$
for $a\in D_{-k,k}$ and $k,n\in \N$ with $k\leq n$.
\end{Aufl}
\end{LEM}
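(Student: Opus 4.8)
Throughout I would identify the bidual of any \Cast-subalgebra $C\subset B$ with its weak closure $\wcl{C}\subset B^{**}$, so that the unit $e_C$ of $\wcl{C}$ is the strong limit in $B^{**}$ of any increasing approximate unit of $C$. I would also use two standard facts: a closed two-sided ideal $J$ of a \Cast-algebra $A$ has \emph{central} support projection $z_J$ (the unit of $\wcl{J}$) in $A^{**}$, with $\|a+J\|=\|a(1-z_J)\|$ for $a\in A$; and an injective *-homomorphism of \Cast-algebras is isometric. Part (ii) is then immediate: by Lemma \ref{L:a4.3}(ii) one has $[D]_\sigma=D_{-\infty,\infty}$, and the isometric inclusion $D_{-\infty,\infty}\hookrightarrow B$ extends to a normal *-monomorphism of $([D]_\sigma)^{**}$ onto the weak closure $\wcl{D_{-\infty,\infty}}\subset B^{**}$.

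For (i) and (iii) the plan is an approximate-unit argument. First I would record that ($\alpha$) yields $D\sigma^k(D)=\sigma^k(D)$ for $k\ge 0$ by Remark \ref{R:a4.1}(ii), hence $D_{n,n}D_{j,j}=D_{j,j}$ for $j\ge n$ and, summing over $n\le j\le m$, $\overline{D_{n,n}D_{n,m}}=D_{n,m}$ for all $m\ge n$. Thus an approximate unit of $D_{n,n}$ is an approximate unit for $\bigcup_m D_{n,m}$, which is dense in $D_{n,\infty}$; passing to strong limits in $B^{**}$ shows that $e_n$ acts as the unit on $\wcl{D_{n,\infty}}$. Since $D_{n,n}\subset D_{n,\infty}$ forces $e_n\le$ (unit of $\wcl{D_{n,\infty}}$), the two coincide, giving the unit claim of (iii); centrality of $e_n$ in $([D]_\sigma)^{**}$ then follows because $D_{n,\infty}$ is a closed ideal of $[D]_\sigma$ by Lemma \ref{L:a4.3}(iv). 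For (i), $D_{n+1,\infty}\subset D_{n,\infty}$ now gives $e_{n+1}\le e_n$, while $\sigma(D_{n,n})=D_{n+1,n+1}$ together with normality of the extended automorphism $\sigma$ gives $\sigma(e_n)=e_{n+1}$.

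The real work is (iv). The key device is a contractive *-homomorphic retraction $P\colon D_{-n,\infty}\to D_{-n,n}$ that is the identity on $D_{-n,n}$ and vanishes on $D_{n+1,\infty}$. To build it I would use Lemma \ref{L:direct-sum}: for $m>n$ the decomposition $D_{-n,m}=D_{-n,n}\oplus D_{n+1,m}$ is direct, and $D_{n+1,m}$ is an ideal by Lemma \ref{L:a4.3}(i), so $D_{-n,m}/D_{n+1,m}\cong D_{-n,n}$; composing the quotient map with this isomorphism defines a *-homomorphism $P_m$, the $P_m$ cohere for increasing $m$, and they extend over the inductive limit to $P$. Then for $a\in D_{-n,n}$ the quotient map $q\colon D_{-n,\infty}\to D_{-n,\infty}/D_{n+1,\infty}$ and the induced $\overline{P}$ give $\|q(a)\|\ge\|\overline{P}\,q(a)\|=\|a\|\ge\|q(a)\|$, so $q$ is isometric on $D_{-n,n}$. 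Finally, for $a\in D_{-k,k}\subset D_{-n,n}$ (using $k\le n$), part (iii) gives $ae_{-n}=a$ and centrality of $e_{n+1}$ gives $\|a(e_{-n}-e_{n+1})\|=\|a(1-e_{n+1})\|=\|a+D_{n+1,\infty}\|=\|q(a)\|=\|a\|$.

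I expect the main obstacle to be the construction of the retraction $P$: verifying that each $P_m$ is a genuine *-homomorphism (which rests precisely on the \emph{directness} of the decomposition from Lemma \ref{L:direct-sum}, so that $D_{-n,n}\cap D_{n+1,m}=\{0\}$ and the quotient is isomorphic to $D_{-n,n}$), and that the $P_m$ are mutually compatible so as to assemble into a single contractive *-homomorphism on the inductive limit $D_{-n,\infty}$. Once $P$ is available, the norm identity in (iv) reduces to the short computation above, combining the quotient-norm formula for the central projection $e_{n+1}$ with the unit property of $e_{-n}$ established in (iii).
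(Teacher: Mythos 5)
Your proof is correct. Parts (i)--(iii) follow essentially the same route as the paper: approximate units of $D_{n,n}$ acting as (outer) approximate units of $D_{n,m}$ via ($\alpha$), weak continuity of $\sigma^{**}$ for $\sigma(e_n)=e_{n+1}$, and centrality of $e_n$ because it is the support projection of the closed ideal $D_{n,\infty}$ of $[D]_\sigma$. For part (iv), however, you take a genuinely different path. The paper observes directly that $\rho\colon a\mapsto a(e_{-k}-e_{k+1})$ is a *-homo\-mor\-phism on $D_{-k,k}$ and analyses its kernel: $\rho(a)=0$ forces $a=ae_{k+1}$, and since $e_{k+1}$ is the weak limit of $\sigma^k(b_\tau)$ for an approximate unit $\{b_\tau\}$ of $D$, a Hahn--Banach separation argument places $a$ in the norm-closed span of $D_{-k,k}\sigma^k(D)\subset\sigma^k(D)$, whence $a\in D_{-k,k}\cap\sigma^k(D)=\{0\}$ by Lemma \ref{L:direct-sum}; injectivity then gives isometry. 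You instead build an explicit *-homomorphic retraction $P\colon D_{-n,\infty}\to D_{-n,n}$ annihilating $D_{n+1,\infty}$, assembled from the finite-stage decompositions $D_{-n,m}=D_{-n,n}\oplus D_{n+1,m}$ (again Lemma \ref{L:direct-sum} plus the ideal property from Lemma \ref{L:a4.3}(i)), and then identify $\left\|a(e_{-n}-e_{n+1})\right\|$ with the quotient norm $\|a+D_{n+1,\infty}\|$ via the central support projection $e_{n+1}$. Both arguments hinge on the same directness statement $D_{-n,n}\cap\sigma^{n}(D)=\{0\}$; yours trades the Hahn--Banach/weak-limit step for the (purely algebraic) verification that the finite-stage retractions cohere and pass to the inductive limit, and as a by-product it exhibits $D_{-n,\infty}/D_{n+1,\infty}\cong D_{-n,n}$ as a split quotient, which the paper does not state explicitly. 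The paper's version is shorter once one accepts the identification of $e_{k+1}$ as a weak limit of approximate units; also note that the paper's final reduction from general $n\geq k$ to $n=k$ uses $(e_{-k}-e_{k+1})(e_{-n}-e_{n+1})=e_{-k}-e_{k+1}$, whereas your choice of working directly with $D_{-n,n}\supset D_{-k,k}$ makes that step unnecessary.
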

Here $\wcl{X}$ denotes the ultra-weak closure of a 
subspace $X$ of 
the $W^\ast$-algebra $B^{**}$. $\wcl{X}$ is
naturally isomorphic to $X^{**}$ if $X$ is a closed subspace 
of $B$.  We write also $\sigma$ for
the second conjugate $\sigma^{**}$ of $\sigma$.
\FRem{check}
\begin{proof}
\Ad{(i)}
$\sigma\left(e_n\right)=e_{n+1}$, because 
$\sigma\left(D_{n,n}\right)=D_{n+1,n+1}$ and
$\sigma^{**}$ is weakly continuous on $B^{**}$.
Since $D\sigma(D)=D$, $D_{n,n}$ contains an approximate unit
for $D_{n+1,n+1}$, which implies $e_{n+1}\le e_n$.

\Ad{(ii)}
(ii) follows from Lemma \ref{L:a4.3}(ii).

\Ad{(iii)}
$e_n$ is in $\wcl{D_{n,\infty}}$.
If $a\in D_{k,k}$ and $k\ge n$, then $e_k\le e_n$,
$a\in e_k B^{**} e_k$, and $e_nae_n=a$.  
Thus $e_nae_n=a$ for $a\in D_{n,k}$, $k\ge n$, which
implies $e_nbe_n=b$ for $b\in \wcl{D_{n,\infty}}\,$,
i.e.\ $e_n$ is the unit of $\wcl{D_{n,\infty}}\,$.

$D_{n,\infty}\,$ is a closed ideal of
$[D]_\sigma\subset B$
by Remark \ref{R:a4.1}(ii) and Lemma \ref{L:a4.3}(iv),
and $e_n$ is the (open) support projection of
$D_{n,\infty}\,$ in 
$\wcl{D_{-\infty,\infty}}\cong 
\left([D]_\sigma\right)^{**}\,$.
Thus $e_n$ is in the
center of $\left([D]_\sigma\right)^{**}\,$.


\Ad{(iv)} 
The map
$\rho\colon a\mapsto a\left(e_{-k}-e_{k+1}\right)$ 
is a *-homo\-mor\-phism
from $D_{-k,k}$ into 
$\wcl{D_{-k,k+1}} \cong \left(D_{-k,k+1}\right)^{**}$.
If $\rho(a)=0$ then $a=ae_{k+1}$.
If $ \{ b_\tau \} $ is an approximate unit of $D$ 
then $\sigma^k \left( b_\tau \right) $ tends weakly to 
$e_{k+1}$.
By Hahn-Banach separation this implies that $a$ is in the
closed span of $D_{-k,k}\sigma^k(D)\subset\sigma^k(D)$.  
Since 
$D\cap \sigma(D)=\{0\}$, we get 
$D_{-k,k}\cap\sigma^k(D)=\{0\}$
by Lemma \ref{L:direct-sum}. Hence $a=0$
and $\rho$ is isometric on $D_{-k,k}$. 
Now 
$\left\| a(e_{-n}-e_{n+1})\right\| = \| a\|$ for 
$a\in D_{-k,k}$
follows from 
$(e_{-k}-e_{k+1})(e_{-n}-e_{n+1})=e_{-k}-e_{k+1}$
for $0\leq k\leq n$.
\end{proof}

We now return to the Proposition \ref{P:a4.7} 
stated at the beginning
of this section:
\begin{proof}[Proof of Proposition \ref{P:a4.7}]

If $I$ is a closed ideal of $[D]_\sigma\rtimes \Z$
then $I_1:=[D]_\sigma\cap I$ is a $\sigma$-invariant
closed ideal of $[D]_\sigma$. The closed ideal
$J:=I\cap D=I_1\cap D$ satisfies 
$J\sigma(D)\subset \sigma(J)$
by Lemma \ref{L:a4.5}.

Suppose that 
$\varrho\colon [D]_\sigma\rtimes_\sigma\Z\to\LOp{H}$ is a
*-representation of $[D]_\sigma\rtimes_\sigma\Z$ such that
$\varrho|D$ is faithful, and let $I:=\ker(\varrho)$.
Then $I_1:=[D]_\sigma \cap I$ satisfies $I_1\cap D=\{0\}$.

By Lemma \ref{L:a4.5} it follows that $I_1=\{0\}$.  
Now Lemma \ref{L:secdual} shows that 
Proposition \ref{P:e-n-in-second} applies.
Hence, $\varrho$ is faithful
on $[D]_\sigma\rtimes_\sigma\Z$ 
by Proposition \ref{P:e-n-in-second}.

Equivalently: Every non-zero
closed ideal $I$ of 
$[D]_\sigma\rtimes_\sigma\Z$ 
has non-zero intersection $J=I\cap D$
with $D$.
\end{proof}

\subsection{Semi-invariant ideals}
\Label{ssec:semi-inv-ideals}
\begin{DEF}\Label{D:s-inv/cancel}
We say that a closed ideal $J$ of $D$ is 
\emph{$(D,\sigma)$-semi-invariant},
if $J\sigma(D)\subset \sigma(J)$.
$J$ has \emph{$(D,\sigma)$-cancellation}
if $a,b\in D$ and 
$(a+\sigma(b))\sigma(D)\subset \sigma(J)$ implies 
that $a\in J$. 
\end{DEF}
If $J$ is semi-invariant and has
cancellation,
then $(a+\sigma(b))\sigma(D)\subset \sigma(J)$  implies 
that $a,b\in J$, because $bD\subset J$ implies 
$b\in J$.

\begin{LEM}\Label{L:invIdeal}
Suppose that $D\subset B$ and $\sigma\in \Aut{B}$ satisfy
conditions ($\alpha$) and ($\gamma$) of 
Definition \ref{D:sigm-mod},
and that $J$ is a closed ideal of $D$ with 
$J\sigma(D)\subset \sigma(J)$.
\begin{Aufl}
\item 
  $J\sigma^k(D)\subset \sigma^k(J)$ for $k\in \N$, and
  $[J]_\sigma$ is a closed $\sigma$-invariant ideal of 
        $[D]_\sigma$.
\item
  $\left(D+\sigma(D)\right)\cap [J]_\sigma=J+\sigma(J)\,$.
\item
  The system 
  $D_1:=\left(D+[J]_\sigma\right)/[J]_\sigma \subset 
   B_1:=[D]_\sigma/[J]_\sigma\,$,
  $\sigma_1\in \Aut{B_1}$ with 
  $\sigma_1\left(a+[J]_\sigma\right):=\sigma(a)+[J]_\sigma$
  satisfies again ($\alpha$) and ($\gamma$) of
  Definition \ref{D:sigm-mod}, 
        i.e.~$D_1\sigma_1\left(D_1\right)=D_1$
  and  $D_1\cap \sigma_1\left(D_1\right)=\{ 0\}$.
\item
  $D_1$ and $\sigma_1$ satisfy also condition ($\beta$), 
  i.e.\
$\Ann{D_1+\sigma_1\left(D_1\right),\sigma_1\left(D_1\right)}
=\{ 0\}\,$, 
  if and only if
  $J$  has $(D,\sigma)$-cancellation.
\item If $J$  has $(D,\sigma)$-cancellation, then
$K=[J]_\sigma$ for every $\sigma$-invariant closed
ideal $K$ of $[D]_\sigma$ with $K\cap D=J$.
\end{Aufl}
\end{LEM}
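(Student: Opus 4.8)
The plan is to establish the two inclusions $[J]_\sigma\subseteq K$ and $K\subseteq [J]_\sigma$ separately. The first is immediate: from $K\cap D=J$ we get $J\subseteq K$, and $K$ is a $\sigma$-invariant \Cast-subalgebra of $[D]_\sigma$, so it contains $[J]_\sigma$, the smallest $\sigma$-invariant \Cast-subalgebra of $[D]_\sigma$ containing $J$ (which is in fact a closed ideal by part (i)). For the reverse inclusion I would pass to the quotient $B_1:=[D]_\sigma/[J]_\sigma$ and apply the second assertion of Lemma \ref{L:a4.5} to the induced system $D_1\subset B_1$.

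First I record that $D\cap [J]_\sigma=J$. Part (ii) gives $(D+\sigma(D))\cap [J]_\sigma=J+\sigma(J)$, hence $D\cap [J]_\sigma=D\cap\bigl(J+\sigma(J)\bigr)$; and if $x=j_1+\sigma(j_2)\in D$ with $j_1,j_2\in J$, then $\sigma(j_2)=x-j_1\in D\cap\sigma(D)=\{0\}$ by ($\gamma$), so $x=j_1\in J$. Thus $D_1=(D+[J]_\sigma)/[J]_\sigma$ is canonically isomorphic to $D/J$. Because $J$ has $(D,\sigma)$-cancellation, parts (iii) and (iv) show that the system $D_1\subset B_1$, together with $\sigma_1$, is $\sigma_1$-modular in the sense of Definition \ref{D:sigm-mod}. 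Moreover $B_1$ is generated as a \Cast-algebra by $\bigcup_k\sigma_1^k(D_1)$, so that $[D_1]_{\sigma_1}=B_1$.

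Now set $\bar K:=K/[J]_\sigma$, which is meaningful since $[J]_\sigma\subseteq K$; it is a $\sigma_1$-invariant closed ideal of $[D_1]_{\sigma_1}=B_1$. I claim $\bar K\cap D_1=\{0\}$. Indeed, if $d\in D$ represents an element of $\bar K\cap D_1$, then $d$ lies in the preimage $K+[J]_\sigma=K$ of $\bar K$, so $d\in K\cap D=J\subseteq [J]_\sigma$, and its class in $D_1$ vanishes. By the second part of Lemma \ref{L:a4.5}, applied to the $\sigma_1$-modular system $D_1\subset B_1$, any non-zero $\sigma_1$-invariant closed ideal of $[D_1]_{\sigma_1}$ meets $D_1$ non-trivially; hence $\bar K=\{0\}$, that is, $K=[J]_\sigma$.

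The argument is largely bookkeeping resting on the earlier parts of the lemma; the one place where the hypothesis genuinely enters is the passage to a \emph{$\sigma_1$-modular} quotient. This is exactly what licenses the use of Lemma \ref{L:a4.5}, whose second assertion requires property ($\beta$) for $D_1$ — and $(D,\sigma)$-cancellation is precisely the condition, via part (iv), that supplies it. Without cancellation $D_1$ would satisfy only ($\alpha$) and ($\gamma$), and the implication $\bar K\cap D_1=\{0\}\Rightarrow\bar K=\{0\}$ need not hold.
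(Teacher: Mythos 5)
Your argument for part (v) is correct and follows essentially the same route as the paper: pass to the quotient $B_1=[D]_\sigma/[J]_\sigma$, use parts (iii) and (iv) to see that $D_1\subset B_1$ is $\sigma_1$-modular, and invoke Lemma \ref{L:a4.5} to conclude that the $\sigma_1$-invariant ideal $K/[J]_\sigma$, which meets $D_1$ trivially, must vanish. Note only that your write-up proves part (v) alone, taking parts (i)--(iv) as already established.
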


\begin{proof} First note that
$J\sigma^k(J)\subset\sigma^k(J)$ and
$J_{m,n}$ (respectively
$J_{-\infty,\infty}$) is a closed ideal of 
of $D_{m,n}$ 
(respectively of $D_{-\infty,\infty}$),
if $J$ is a closed ideal of $D$ with 
$J\sigma^k(D)\subset\sigma^k(J)$
for all $k\in\N$. 
This follows from Lemma \ref{L:a4.5}(i) and
$(\sigma^i(D)\sigma^j(J))^*=\sigma^j(J)\sigma^i(D)
\subset\sigma^{\max(i,j)}(J)$
for $i,j\in\Z$:
If $i\le j$, then
$\sigma^{j-i}(J)D= \sigma^{j-i}(JD)D 
  = \sigma^{j-i}(J)\sigma^{j-i}(D)D
  \subset \sigma^{j-i}(J)\sigma^{j-i}(D) =\sigma^{j-i}(J)\,
$,
and, if $i\ge j$, then
$$
  \sigma^j(J)\sigma^i(D)=\sigma^j\left(J\sigma^{i-j}(D)\right)
  \subset\sigma^j\left(\sigma^{i-j}(J)\right)=\sigma^i(J)\, 
.
$$

\Ad{(i)} 
$J\sigma^{k+1}(D)=
J\sigma^k(D)\sigma^{k+1}(D)\subset \sigma^k(J\sigma(D))$
by ($\alpha$) and induction assumption. 
The right side is contained in
$\sigma^{k+1}(J)$. Thus $[J]_\sigma=J_{-\infty,\infty}$
is a closed ideal of $[D]_\sigma=D_{-\infty,\infty}$.

\Ad{(ii)} Above we have seen that
$J_{-k,k}$ is a closed ideal of $D_{-k,k}$ for $k\in \N$. 
$[J]_\sigma=J_{-\infty,\infty}$ is
the inductive limit of $\{ J_{-k,k} \}_{k\in \N}$ 
by Lemma \ref{L:a4.3}(ii).
It follows 
$\textrm{dist}\left(a, [J]_\sigma\right)=
\lim_k \textrm{dist}\left(a, J_{-k,k}\right)$ 
and 
$\textrm{dist}\left(a, J_{-k,k}\right)=
\textrm{dist}
\left(a,\left(D+\sigma(D)\right)\cap J_{-k,k}\right)$ 
for $a\in D+\sigma(D)$.
But 
$\left( D+\sigma(D) \right) \cap J_{-k,k}=J+\sigma(J)$ 
by Lemma \ref{L:direct-sum}.

\Ad{(iii)}
Let $\pi\colon [D]_\sigma\to B_1=[D]_\sigma/[J]_\sigma$
the quotient map. Then $\pi\circ\sigma=\sigma_1\circ\pi$
and $\pi(D)=D_1$. 
Thus, 
$D_1\sigma_1\left(D_1\right)=\pi\left(D\sigma(D)\right)=D_1$
by ($\alpha$). If $d,e\in D$, 
$d_1=\pi (d)$ and $e_1=\pi (e)$ 
satisfy $d_1=\sigma_1(e_1)$, then 
$d-\sigma(e)\in [J]_\sigma$.
There are $f,g\in J$ with $d-\sigma(e)=f+\sigma(g)$, 
by part (ii).
$d-f=\sigma(e+g)$ implies $d=f$ and $d_1=\pi(f)=0$ by 
($\gamma$)
for $(D,\sigma)$.

\Ad{(iv)} 
$\sigma(D)\cap [J]_\sigma 
=\sigma(D)\cap\left( J+\sigma(J) \right)=\sigma(J)$
by (ii) and property ($\gamma$).
Let $d,e\in D$.  
$\left( \pi(d)+\sigma_1(\pi(e))\right)\sigma_1\left(D_1\right)
=\{ 0\}$
is equivalent to 
$\left(a+\sigma(e)\right)\sigma(D)
\subset \sigma(D)\cap [J]_\sigma$.
Thus, 
$\Ann{D_1+\sigma_1\left(D_1\right),\sigma_1\left(D_1\right)}
=\{ 0\}$ 
if and only if
$J$ has $(D,\sigma)$-cancellation.

\Ad{(v)} Clearly $[J]_\sigma\subset K$ if
$K$ is a $\sigma$-invariant closed ideal with $J=D\cap K$.
Then $\pi (K)$ is a closed ideal of 
$B_1 = [D]_\sigma/[J]_\sigma$ with $\pi(K)\cap \pi(D)=\{ 0\}$.
Since $D_1\subset B_1$ is $\sigma_1$-modular,
it follows that $\pi(K)=\{ 0\}$ by Lemma \ref{L:a4.5},
i.e.\ $K=[J]_\sigma$.
\end{proof}

\begin{PRP}\Label{P:ideal}
Suppose that $\sigma\in \Aut{B}$ and $D\subset B$
is $\sigma$-modular in the sense of Definition
\ref{D:sigm-mod}.
%
%
If $I$ is a closed ideal of $[D]_\sigma \rtimes_\sigma \Z$
such that $J:=D\cap I$ has $(D,\sigma)$-cancellation
(cf.\ Definition  \ref{D:s-inv/cancel}),
then $I$ is the natural image of
$[J]_\sigma \rtimes_\sigma \Z$ in 
$[D]_\sigma \rtimes_\sigma \Z$.
\end{PRP}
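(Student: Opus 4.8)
The plan is to reduce everything to Proposition~\ref{P:a4.7} by dividing out the natural image of $[J]_\sigma\rtimes_\sigma\Z$ and showing that the resulting quotient ideal is forced to vanish. Throughout I write $A:=[D]_\sigma\rtimes_\sigma\Z$ and let $u\in\Mult{A}$ be the canonical unitary implementing $\sigma$ on $[D]_\sigma$, i.e.\ $ucu^*=\sigma(c)$ for $c\in[D]_\sigma$ (cf.\ Remark~\ref{Rcross-prod}).

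First I would extract the invariant ideal of $[D]_\sigma$ sitting inside $I$. Set $I_1:=[D]_\sigma\cap I$. Since $u$ is a unitary multiplier and $I$ is a closed ideal of $A$, one has $uIu^*=I$, hence $\sigma(I_1)=uI_1u^*=I_1$; thus $I_1$ is a $\sigma$-invariant closed ideal of $[D]_\sigma$ and $J=D\cap I=D\cap I_1$. By Lemma~\ref{L:a4.5} the ideal $J$ is $(D,\sigma)$-semi-invariant, so Lemma~\ref{L:invIdeal}(i) applies and $[J]_\sigma$ is a $\sigma$-invariant closed ideal of $[D]_\sigma$. As $J\subset I_1$ with $I_1$ $\sigma$-invariant, every $\sigma^k(J)$ lies in $I_1\subset I$, and therefore $[J]_\sigma\subset I$.

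Next I would pass to the quotient. Let $\tilde I$ be the natural image of $[J]_\sigma\rtimes_\sigma\Z$ in $A$, i.e.\ the closed ideal of $A$ generated by $[J]_\sigma$; by exactness of the crossed product by $\Z$ (cf.\ Remark~\ref{Rcross-prod}) there is a short exact sequence
\[
0\to[J]_\sigma\rtimes_\sigma\Z\to A\xrightarrow{\,q\,} B_1\rtimes_{\sigma_1}\Z\to 0,
\]
where $B_1:=[D]_\sigma/[J]_\sigma$, $\pi\colon[D]_\sigma\to B_1$ is the quotient map and $\sigma_1$ is the induced automorphism. Since $[J]_\sigma\subset I$ we already have $\tilde I\subset I$, so it remains to prove $I\subset\tilde I$, i.e.\ $q(I)=\{0\}$. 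Put $D_1:=q(D)=(D+[J]_\sigma)/[J]_\sigma\subset B_1$. Exactly as in Lemma~\ref{L:a4.3}(ii) one gets $B_1=\pi(D_{-\infty,\infty})=[D_1]_{\sigma_1}$. The crucial point is that $D_1$ is again $\sigma_1$-modular: Lemma~\ref{L:invIdeal}(iii) yields properties ($\alpha$) and ($\gamma$) for $(D_1,\sigma_1)$, and Lemma~\ref{L:invIdeal}(iv) yields property ($\beta$) \emph{precisely} because $J$ has $(D,\sigma)$-cancellation. This is the step where the cancellation hypothesis is indispensable.

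Finally I would compute $q(I)\cap D_1$ and invoke Proposition~\ref{P:a4.7}. Every element of $D_1$ is of the form $q(d)$ with $d\in D$, and $q(d)\in q(I)$ holds if and only if $d\in I+\tilde I=I$, that is if and only if $d\in D\cap I=J$; but then $q(d)=\pi(d)=0$ since $J\subset\ker\pi=[J]_\sigma$. Hence $q(I)\cap D_1=\{0\}$. As $D_1$ is $\sigma_1$-modular in $B_1=[D_1]_{\sigma_1}$ and $q(I)$ is a closed ideal of $[D_1]_{\sigma_1}\rtimes_{\sigma_1}\Z\cong A/\tilde I$ meeting $D_1$ trivially, Proposition~\ref{P:a4.7} forces $q(I)=\{0\}$, i.e.\ $I\subset\tilde I$. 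Together with $\tilde I\subset I$ this gives $I=\tilde I$, which is the assertion. The only genuinely non-formal ingredients are the exactness of the $\Z$-crossed product (so that the quotient is again a crossed product of the expected form) and the persistence of $\sigma$-modularity under this quotient; once these are secured, the statement drops out of Proposition~\ref{P:a4.7}.
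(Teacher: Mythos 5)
Your proof is correct and follows essentially the same route as the paper's own argument: both pass to the quotient by the natural image of $[J]_\sigma\rtimes_\sigma\Z$, use Lemma \ref{L:invIdeal}(iii),(iv) (with the cancellation hypothesis) to see that $D_1\subset B_1$ is $\sigma_1$-modular, verify that the image of $I$ meets $D_1$ trivially, and conclude via Proposition \ref{P:a4.7}. The only difference is that you spell out the verification of $q(I)\cap D_1=\{0\}$ and the inclusion $[J]_\sigma\subset I$ in more detail than the paper does.
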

\begin{proof} 
$K:=[D]_\sigma \cap I$ is a $\sigma$-invariant ideal
of $[D]_\sigma$. 
Thus $J=D\cap I$ satisfies $J\sigma(D)\subset \sigma(J)$
and $[J]_\sigma$ is a $\sigma$-invariant closed ideal of
$[D]_\sigma$. Let  
$D_1:=\left( D+[J]_\sigma\right)/[J]_\sigma$,
$B_1:=[D]_\sigma/[J]_\sigma\,$, and
$\sigma_1\in \Aut{B_1}$ defined by 
$\sigma_1\left(a+[J]_\sigma\right):=\sigma(a)+[J]_\sigma$.
The natural image $K$ of 
$[J]_\sigma \rtimes_\sigma \Z$ in 
$[D]_\sigma \rtimes_\sigma \Z$
is contained in $I$ 
and is the kernel of the natural epi\-mor\-phism
$\pi\rtimes \Z$ from $[D]_\sigma \rtimes_\sigma \Z$ onto 
$[D_1]_{\sigma_1}\rtimes _{\sigma_1} \Z$ 
(cf.~Remark \ref{Rcross-prod}(vi)).
Let $I_1:=(\pi\rtimes \Z)(I)$. $I_1$ is a closed ideal of 
$[D_1]_{\sigma_1}\rtimes _{\sigma_1} \Z$ with 
$I_1\cap D_1=\{ 0\}\,$.
$\,\, D_1\subset B_1$ is $\sigma_1$-modular by 
Lemma \ref{L:invIdeal}(iv),
because $J$ is $(D,\sigma)$-semi-invariant and has 
$(D,\sigma)$-cancellation.
Thus, $I_1=\{ 0\}$ by Proposition \ref{P:a4.7}, i.e.~$I=K$.
\end{proof}

\FRem{next implicitly contained in some Lemma ?}
\begin{REM}\Label{R:ideal} 
In the above proof we have also shown:

If $K$ is a $\sigma$-invariant
closed ideal of $D_{-\infty,\infty}$ such that
$J:=D\cap K$ has $(D,\sigma)$-cancellation,
then 
$K= J_{-\infty,\infty}$ and every *-representation
$$\varrho \colon 
D_{-\infty,\infty}\rtimes_\sigma\Z\to\LOp{H}$$
with 
$\ker \left( \varrho |D \right) =D\cap K$ has kernel
$K\rtimes_\sigma\Z$.

Thus, $\varrho$ defines a faithful representation of
$\left( D_{-\infty,\infty}/K \right) \rtimes_{[\sigma]}\Z$,
where $[\sigma]$ is induced by $\sigma$ on 
$D_{-\infty,\infty}/K$.
\end{REM}
\begin{COR}\Label{C:ideals}
Suppose that $\sigma\in \Aut{B}$ and $D\subset B$
is $\sigma$-modular (cf.\ Definition  \ref{D:sigm-mod}).

If every closed ideal $J$ of $D$ with 
$J\sigma(D)\subset\sigma(J)$
has $(D,\sigma)$-cancellation 
(cf.\ Definition  \ref{D:s-inv/cancel}),
then the map
$$
I\in \latI{[D]_\sigma \rtimes_\sigma\Z}\mapsto
I\cap D \in \latI{D},
$$
is a lattice iso\-mor\-phism from
$\latI{[D]_\sigma \rtimes_\sigma \Z}\cong
\latO{\Prim{[D]_\sigma \rtimes_\sigma \Z}}$
onto the lattice $\latI{D}^\sigma$
of closed ideals 
$J$ in $D$ with
$J\sigma(D)\subset\sigma(J)$.

The inverse lattice isomorphism is given by
$\,J\mapsto 
[J]_\sigma \rtimes_\sigma \Z\,$.

$D$ is a regular \Cast-subalgebra of
$[D]_\sigma \rtimes_\sigma \Z$ in the sense of
Definition \ref{D:regular}.
\end{COR}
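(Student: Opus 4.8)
The plan is to exhibit $\Phi\colon I\mapsto I\cap D$ as an order isomorphism with inverse $J\mapsto [J]_\sigma\rtimes_\sigma\Z$, and then read off regularity from the resulting lattice structure. First I would check that $\Phi$ is a well-defined, order-preserving map into $\latI{D}^\sigma$: for every closed ideal $I$ of $[D]_\sigma\rtimes_\sigma\Z$, Proposition \ref{P:a4.7} yields that $J:=I\cap D$ is a closed ideal of $D$ with $J\sigma(D)\subset\sigma(J)$, i.e.\ $J\in\latI{D}^\sigma$. I would also note that $\latI{D}^\sigma$ is a sublattice of $\latI{D}$, since intersections and (automatically closed) sums of $(D,\sigma)$-semi-invariant ideals are again semi-invariant: because $\sigma$ is an automorphism it commutes with $\bigcap$ and with $\overline{\sum}$, so $(\bigcap_\alpha J_\alpha)\sigma(D)\subset\bigcap_\alpha\sigma(J_\alpha)=\sigma(\bigcap_\alpha J_\alpha)$ and likewise for closed sums.

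Next I would establish the bijection. Given any $I$, the ideal $J:=I\cap D$ is semi-invariant, hence has $(D,\sigma)$-cancellation by the hypothesis of the corollary; Proposition \ref{P:ideal} then identifies $I$ with the natural image of $[J]_\sigma\rtimes_\sigma\Z$, so $I$ is determined by $\Phi(I)$ and the composite $(J\mapsto[J]_\sigma\rtimes_\sigma\Z)\circ\Phi$ is the identity. For the reverse composite, fix an arbitrary $J\in\latI{D}^\sigma$. By Lemma \ref{L:invIdeal}(i), $[J]_\sigma$ is a $\sigma$-invariant closed ideal of $[D]_\sigma$, so $[J]_\sigma\rtimes_\sigma\Z$ embeds as a closed ideal of $[D]_\sigma\rtimes_\sigma\Z$ (the kernel of the quotient crossed-product homomorphism, cf.\ Remark \ref{Rcross-prod}), and its intersection with $[D]_\sigma$ is $[J]_\sigma$. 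The key identity $[J]_\sigma\cap D=J$ then follows from Lemma \ref{L:invIdeal}(ii), which gives $(D+\sigma(D))\cap[J]_\sigma=J+\sigma(J)$: an element $j+\sigma(j')\in J+\sigma(J)$ that lies in $D$ forces $\sigma(j')=(j+\sigma(j'))-j\in D\cap\sigma(D)=\{0\}$ by property $(\gamma)$ (Lemma \ref{L:direct-sum}), so it equals $j\in J$. Hence $([J]_\sigma\rtimes_\sigma\Z)\cap D=[J]_\sigma\cap D=J$, so $\Phi$ is onto $\latI{D}^\sigma$ and $J\mapsto[J]_\sigma\rtimes_\sigma\Z$ is a genuine two-sided inverse.

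Finally I would deduce the lattice isomorphism and regularity. The inverse is order-preserving ($J_1\subset J_2$ gives $[J_1]_\sigma\subset[J_2]_\sigma$ and hence nested crossed-product ideals), so $\Phi$ is an order isomorphism of complete lattices, thus a lattice isomorphism; composing with the standard correspondence $\latI{\cdot}\cong\latO{\Prim{\cdot}}$ gives the stated isomorphism onto $\latO{\Prim{[D]_\sigma\rtimes_\sigma\Z}}$. For regularity in the sense of Definition \ref{D:regular}, property (i) (separation of ideals) is precisely the injectivity of $\Phi$, and for property (ii) I use that an order isomorphism preserves finite joins: since the join in $\latI{D}^\sigma$ of $I_1\cap D$ and $I_2\cap D$ is their closed sum $(I_1\cap D)+(I_2\cap D)$ (which is semi-invariant by the first paragraph, and closed as a sum of closed ideals), one gets $(I_1+I_2)\cap D=\Phi(I_1\vee I_2)=\Phi(I_1)\vee\Phi(I_2)=(I_1\cap D)+(I_2\cap D)$.

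The substantive content is carried by Propositions \ref{P:a4.7} and \ref{P:ideal}, so the corollary is mainly assembly; the only delicate points I expect are the intersection identity $[J]_\sigma\cap D=J$ and the crossed-product fact $([J]_\sigma\rtimes_\sigma\Z)\cap[D]_\sigma=[J]_\sigma$. The main obstacle will be verifying that the join used in regularity (ii) is genuinely computed inside $\latI{D}^\sigma$ as the honest closed sum of ideals in $D$, rather than an a priori larger semi-invariant hull; this is exactly what the sublattice observation of the first paragraph is designed to settle.
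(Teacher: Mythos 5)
Your proposal is correct and follows essentially the same route as the paper: well-definedness via Proposition \ref{P:a4.7}, injectivity/surjectivity via Proposition \ref{P:ideal} together with Lemma \ref{L:invIdeal}(i),(ii), the identity $[J]_\sigma\cap D=J$ from property $(\gamma)$, the crossed-product fact from Remark \ref{Rcross-prod}(iii), and regularity by matching finite joins on both sides. Your explicit check that $\latI{D}^\sigma$ is closed under honest closed sums (so the join there is not a larger semi-invariant hull) is a slightly more careful rendering of the step the paper states without comment.
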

(Here we identify 
$[J]_\sigma \rtimes_\sigma \Z$ with its natural image
in $[D]_\sigma \rtimes_\sigma \Z$.)
\begin{proof} 
We show that
$$J\mapsto [J]_\sigma \mapsto [J]_\sigma \rtimes_\sigma \Z$$
define a lattice iso\-mor\-phisms from the lattice 
$\latI{D}^\sigma$
of $(D,\sigma)$-semi-invariant closed ideals of $D$
onto the lattice $\latI{[D]_\sigma}^\sigma$ of 
$\sigma$-invariant
closed ideals of $[D]_\sigma\,$, and from 
$\latI{[D]_\sigma}^\sigma$
onto $\latI{[D]_\sigma \rtimes_\sigma \Z}$, and that
the map 
$$I\in \latI{[D]_\sigma \rtimes_\sigma \Z}\mapsto D\cap I
\in \latI{D}^\sigma $$
is the inverse of 
$J\in \latI{D}^\sigma \mapsto [J]_\sigma \rtimes_\sigma \Z$.

If $J$ is a closed ideal with 
$J\sigma(D)\subset \sigma(J)$, then
$[J]_\sigma$ is a 
$\sigma$-invariant closed ideal of $[D]_\sigma$
by Lemma \ref{L:invIdeal}(i).
Thus $[J]_\sigma \rtimes_\sigma \Z$ is a closed ideal of
$[D]_\sigma \rtimes_\sigma \Z$.
Clearly, the maps 
$J\in \latI{D}^\sigma \mapsto [J]_\sigma$ and 
$K\in \latI{[D]_\sigma}^\sigma\mapsto K\rtimes _\sigma \Z$
are order preserving (i.e.~``$\subset$''-preserving).
$J\mapsto[J]_\sigma$ is injective, because by Lemma
\ref{L:invIdeal}(ii) and ($\gamma$) of Definition 
\ref{Eq:alpha-subset}:
$J\subset[J]_\sigma\cap D\subset \left(J+\sigma(J)\right)
\cap D\subset J.$

If $K$ is a $\sigma$-invariant closed ideal of $[D]_\sigma$,
then $K=[D]_\sigma \cap 
\left(K\rtimes_\sigma \Z\right)$ by 
Remark \ref{Rcross-prod}(iii). 
In particular, $K\mapsto K\rtimes _\sigma \Z$
is an injective map from the lattice of
$\sigma$-invariant closed ideals of $[D]_\sigma$
into the lattice of closed ideals of $[D]_\sigma\rtimes \Z$.

If $I$ is a closed ideal of 
$[D]_\sigma \rtimes_\sigma \Z$,
then $J:=D\cap I$ is a closed ideal with 
$J\sigma(D)\subset \sigma(J)$. 
By Proposition \ref{P:ideal}, 
$I=[D\cap I]_\sigma \rtimes _\sigma \Z$, because
$J$ has $(D,\sigma)$-cancellation
(by assumption).
 
Hence, 
$I\in \latI{[D]_\sigma \rtimes_\sigma \Z}\mapsto 
D\cap I\in \latI{D}^\sigma$
is an order iso\-mor\-phism which
is invertible with inverse map 
$J\mapsto [J]_\sigma \rtimes \Z$.
It follows that $J\mapsto [J]_\sigma$ is injective.
If the superposition of two injective maps is surjective, 
then
the maps are both invertible.

If $I_1,I_2$ are closed ideals of 
$[D]_\sigma \rtimes \Z$, then $J_1:=I_1\cap D$,
$J_2:=I_2\cap D$ and $J_3:=J_1+J_2$ are 
$(D,\sigma)$-semi-invariant.

$I_1+I_2$ is the l.u.b.\ of $\{ I_1,I_2\}$ in
the lattice $\latI{[D]_\sigma \rtimes \Z}$,
and $J_3$ is the l.u.b.\ of 
$\{ J_1,J_2\}$ in
the lattice $\latI{D}^{\sigma}$. 
Thus $D\cap (I_1+I_2)=(D\cap I_1)+(D\cap I_2)$.
It follows that $D$ is a regular
\Cast-subalgebra of $[D]_\sigma\rtimes_\sigma \Z$.
%
%
\end{proof}

\begin{REM}\Label{R:h-from-sigma} 
Suppose that $D\subset B$ and $\sigma\in \Aut{B}$
satisfies condition ($\alpha$). Define a 
natural *-homo\-mor\-phism $h\colon D\to\Mult{D}$  by 
$h(a)b:=\sigma^{-1}(a)b$.
Clearly, $h$ is non-degenerate (i.e.~$h(D)D=D$) and 
the kernel of $h$ is equal to 
$D\cap \Ann{D+\sigma(D),\sigma(D)}$.

Then the conditions
($\beta$) and ($\gamma$), i.e.~
$\Ann{D+\sigma(D),\sigma(D)}=\{ 0\} $
and $D\cap \sigma (D)=\{ 0\}$, 
are equivalent to $h^{-1}\left( h(D)\cap D \right) =0$, i.e.,
$h\colon D\to \Mult{D}$ is faithful and $h(D)\cap D=\{ 0\}$.

(Indeed: 
$h^{-1}(h(D)\cap D) =\{ a\in D\,\colon\,\; h(a)\in D\}$
is the set of all $a\in D$ with the property that there is
$b\in D$ with 
$\sigma^{-1}\left((a-\sigma(b))\sigma(D)\right)
=(h(a)-b)D=\{0\}$,  
i.e.~with $a+\sigma(-b)\in \Ann{D+\sigma(D),\sigma(D)}$.)

A closed ideal $J$ of $D$ satisfies 
$J\sigma(D)\subset \sigma(J)$
if and only if $h(J)D\subset J$, i.e.~, 
$h(J)\subset \Mult{D,J}$.

$J$ has $(D,\sigma)$-cancellation 
in the sense of Definition \ref{D:s-inv/cancel},
if and only if, the existence of $b\in D$ with
$\left(\sigma^{-1}(a)+b\right)D\subset J$ implies $a\in J$.
Thus,  $J$ has $(D,\sigma)$-cancellation,
if and only if,
$$h^{-1}\left(h(D)\cap\left(D+\Mult{D,J}\right)\right)
\subset J\,.$$

Hence, a closed ideal 
$J$ of $D$ is $(D,\sigma)$-semi-invariant and has
$(D,\sigma)$-cancellation if and only if
$h(D)\cap \left(D+\Mult{D,J}\right) =h(J)$.
\end{REM}

\begin{COR}\Label{C:mod-to-CP}
Suppose that $\sigma\in\Aut{B}$ and that $D\subset B$
is $\sigma$-modular in the sense of 
Definition \ref{D:sigm-mod}. 
Let $h\colon D\to\Mult{D}$ denote the 
non-degenerate *-mono\-mor\-phism given by 
$h(a)b:=\sigma^ {-1}(a)b$ for $a,b\in D$.  

Then 
the hereditary \Cast-sub\-al\-ge\-bra of 
$[D]_\sigma\rtimes_\sigma\Z$ generated by 
$D$ is full in $[D]_\sigma\rtimes_\sigma\Z$ 
and is isomorphic
to the \Name{Cuntz--Pimsner} algebra $\OOO{\Hm{D,h}}$.

If every closed ideal $J$ of $D$ with 
$h(J)D\subset J$ satisfies
$$h(J)=h(D)\cap \left(D+\Mult{D,J}\right),$$ 
then $I\mapsto J:=D\cap I$ defines an isomorphism
from 
$\latI{\OOO{\Hm{D,h}}}$ 
onto the lattice 
$$
\left\{ 
J\in \latI{D}\fdg  h(J)D\subset J 
\right\}
.$$
\end{COR}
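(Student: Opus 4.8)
The plan is to deduce both assertions from the concrete realization of $\OOO{\Hm{D,h}}$ in Proposition \ref{P:Pims-alg}, the rigidity of $\Z$-crossed products in Proposition \ref{P:a4.15}, and the ideal-lattice description in Corollary \ref{C:ideals}, translating every ideal condition through Remark \ref{R:h-from-sigma}. First I would record, via Remark \ref{R:h-from-sigma} and the hypothesis that $D$ is $\sigma$-modular, that $h\colon D\to\Mult{D}$, $h(a)b:=\sigma^{-1}(a)b$, is a faithful non-degenerate *-monomorphism with $h(D)\cap D=\{0\}$. Then I would apply Proposition \ref{P:Pims-alg} to this $h$ (with $A:=D$): it produces the forward-shift system $(D_0,\sigma_0)$ inside $Q^s(\Mult{D})$, a unitary $U$, and a *-homomorphism $\varphi$ with $\sigma_0(b)=UbU^*$ and $\varphi(h(a)b)=U^*\varphi(a)U\,\varphi(b)=\sigma_0^{-1}(\varphi(a))\varphi(b)$, such that $\OOO{\Hm{D,h}}$ is the \emph{full hereditary} \Cast-subalgebra of $[D_0]_{\sigma_0}\rtimes_{\sigma_0}\Z$ generated by $D_0:=\varphi(D)$. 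Since $h$ is faithful, the norm computation in the proof of Proposition \ref{P:Pims-alg} shows $\varphi$ is isometric, so $\varphi$ is a *-isomorphism of $D$ onto $D_0$.

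The key observation is that $\varphi$ intertwines the two systems exactly as Proposition \ref{P:a4.15} requires: because $h(a)b=\sigma^{-1}(a)b$, the identity above reads $\varphi(\sigma^{-1}(a)b)=\sigma_0^{-1}(\varphi(a))\varphi(b)$, and $(D_0,\sigma_0)$ satisfies condition ($\alpha$), i.e.\ $D_0\sigma_0(D_0)=\sigma_0(D_0)$, since $h(D)D=D$; one checks that this provides the remaining hypothesis of Proposition \ref{P:a4.15}. As $(D,\sigma)$ is $\sigma$-modular (hence satisfies ($\alpha$) and ($\gamma$)) and $\varphi$ is an isomorphism onto $D_0$, Proposition \ref{P:a4.15} yields a *-isomorphism $\varphi_e\colon[D]_\sigma\to[D_0]_{\sigma_0}$ with $\varphi_e\sigma=\sigma_0\varphi_e$, which therefore induces an isomorphism $[D]_\sigma\rtimes_\sigma\Z\cong[D_0]_{\sigma_0}\rtimes_{\sigma_0}\Z$ carrying the copy of $D$ onto $D_0$. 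Under it the hereditary \Cast-subalgebra generated by $D$ is carried onto the one generated by $D_0$, which by the first paragraph is $\OOO{\Hm{D,h}}$ and is full; this is the first assertion. (Fullness also follows directly: the ideal of $[D]_\sigma\rtimes_\sigma\Z$ generated by $D$ contains every $\sigma^k(D)$, hence all of $[D]_\sigma$, hence the whole crossed product.)

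For the second assertion I would first translate the hypothesis. By the final statement of Remark \ref{R:h-from-sigma}, the condition ``$h(J)=h(D)\cap(D+\Mult{D,J})$ for every closed ideal $J$ with $h(J)D\subset J$'' says precisely that every $(D,\sigma)$-semi-invariant ideal (those with $J\sigma(D)\subset\sigma(J)$, equivalently $h(J)D\subset J$) has $(D,\sigma)$-cancellation. Thus Corollary \ref{C:ideals} applies and $I\mapsto I\cap D$ is a lattice isomorphism from $\latI{[D]_\sigma\rtimes_\sigma\Z}$ onto $\{J\in\latI{D}\fdg h(J)D\subset J\}$. It then remains to pass from the crossed product to its full hereditary \Cast-subalgebra $\OOO{\Hm{D,h}}$: for a full hereditary \Cast-subalgebra $G\subset C$ the map $I\mapsto I\cap G$ is a lattice isomorphism $\latI{C}\to\latI{G}$, and since $D\subset G$ one has $(I\cap G)\cap D=I\cap D$. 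Composing this with the isomorphism above shows that $\tilde I\mapsto \tilde I\cap D$ is a lattice isomorphism from $\latI{\OOO{\Hm{D,h}}}$ onto $\{J\in\latI{D}\fdg h(J)D\subset J\}$, as claimed.

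The main obstacle is the careful invocation of Proposition \ref{P:a4.15}: one must verify that the shift system $(D_0,\sigma_0)$ from Proposition \ref{P:Pims-alg} meets its hypotheses (the ($\alpha$)-type condition on the target and the intertwining relation with the correct inverse of $\sigma_0$), and that $\varphi_e$ genuinely descends to an isomorphism of the $\Z$-crossed products matching $D$ with $D_0$, i.e.\ that the crossed product is functorial for $\sigma$-equivariant isomorphisms. Once the equivalences of Remark \ref{R:h-from-sigma} and the standard full-hereditary-subalgebra ideal correspondence are in hand, the ideal-theoretic bookkeeping of the second assertion is routine.
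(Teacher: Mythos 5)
Your proposal is correct and follows essentially the same route as the paper: Remark \ref{R:h-from-sigma} to translate $\sigma$-modularity into properties of $h$, Proposition \ref{P:Pims-alg} for the concrete shift realization of $\OOO{\Hm{D,h}}$, Proposition \ref{P:a4.15} to extend $\varphi$ equivariantly to $[D]_\sigma$, and Corollary \ref{C:ideals} (via Remark \ref{R:h-from-sigma} again) for the ideal lattice. The only cosmetic difference is that the paper passes from the equivariant map to an isomorphism of crossed products via a covariant representation (Remark \ref{Rcross-prod}(vi)) whose faithfulness is checked with Proposition \ref{P:a4.7}, whereas you invoke functoriality of $\rtimes\,\Z$ for equivariant isomorphisms; your explicit treatment of the ideal bijection for the full hereditary subalgebra is a detail the paper leaves implicit.
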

\begin{proof}
By Remark \ref{R:h-from-sigma} and Proposition 
\ref{P:Pims-alg},
there exists a *-mono\-mor\-phism $\varphi$
from $D$ into 
$\OOO{\Hm{D,h}}
\subset Q^s\left(\Mult{D}\right)$
and a unitary $U$ in the stable corona 
$Q^s\left(\Mult{D}\right):=\Mult{\Mult{D}\otimes \K}/
\left(\Mult{D}\otimes \K\right)$
of $\Mult{D}$
such that 
$U \varphi(D)$ generates $\OOO{\Hm{D,h}}$
and that $ \varphi \left( \sigma^{-1}(a)b \right) =
U^*\varphi(a)U\varphi(b)$ for all
$a,b\in D$.  
\FRem{notation $E$ OK? compare proof Prop.\ref{P:Pims-alg}}
Moreover, the \Cast-algebra $E$ generated
by $\bigcup _{n\in \N} U^{-n}\varphi(D)$ is
naturally isomorphic to 
$[\varphi(D)]_{\sigma_1}\rtimes_{\sigma_1}\Z$
for $\sigma_1(b):=UbU^*$, 
and $\OOO{\Hm{D,h}}$
is the hereditary \Cast-subalgebra 
of $E$ generated by 
$\varphi(D)$.

By Proposition \ref{P:a4.15}
it follows that $\varphi$ extends to a *-mono\-mor\-phism
$\varphi_e$ from $[D]_\sigma$ into 
$E\subset Q^s\left(\Mult{D}\right)$
with 
$\varphi_e\left(\sigma(a)\right)=U\varphi_e(a)U^*$
for $a\in [D]_\sigma$.
By Remark \ref{Rcross-prod}(vi)
there is a *-homo\-mor\-phism $\varrho$ 
from $[D]_\sigma\times_\sigma \Z$ onto $E$
with $\varrho|[D]_\sigma=\varphi_e$. 
$\varrho$ is an iso\-mor\-phism 
by Proposition \ref{P:a4.7} 
because $\varrho|D$
is faithful. $\varrho$ maps the full 
hereditary \Cast-subalgebra
of $[D]_\sigma\times_\sigma \Z$ generated by
$D$ onto $\OOO{\Hm{D,h}}$,
thus, the restriction of $\varrho$
to $D([D]_\sigma\times_\sigma \Z)D$ 
is an isomorphism onto $\OOO{\Hm{D,h}}=\varphi(D)E\varphi(D)$.

The lattice iso\-mor\-phisms follow from 
Corollary \ref{C:ideals}
by Remark \ref{R:h-from-sigma}. 
\end{proof}

\begin{COR}
Suppose that $D\subset B$ and $\sigma\in\Aut{B}$ satisfy
$D\sigma(D)=\sigma(D)$. 
If for the
(non-degenerate) *-homo\-mor\-phism 
$h\colon D\to\Mult{D}$ defined by
$h(a)b:=\sigma^{-1}(a)b$ holds 
$h^{-1}\left( h(D)\cap D \right)=\{ 0\}$ and 
$h(J)D\not\subset J$ for every non-trivial closed ideal 
$J$ of $D$,
then $[D]_\sigma\rtimes_\sigma\Z$ 
is simple and contains a copy of  the
\Name{Cuntz--Pimsner} algebra 
$\OOO{\Hm{D,h}}$ as a hereditary 
\Cast-subalgebra.

If, in addition, 
$D$ is $\sigma$-unital and stable, then 
$[D]_\sigma\rtimes_\sigma\Z$ 
is  isomorphic to $\OOO{\Hm{D,h}}$.
\end{COR}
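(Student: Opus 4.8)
The plan is to recognize the statement as an assembly of the structural results already established for $\sigma$-modular subalgebras, so that essentially all the work lies in verifying the hypotheses of Corollaries \ref{C:simple} and \ref{C:mod-to-CP} and of Lemma \ref{L:Brown}; no new estimate should be required.

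First I would check that $D$ is $\sigma$-modular in the sense of Definition \ref{D:sigm-mod}. Property ($\alpha$) is exactly the standing hypothesis $D\sigma(D)=\sigma(D)$. For ($\beta$) and ($\gamma$) I invoke Remark \ref{R:h-from-sigma}, which asserts that $\Ann{D+\sigma(D),\sigma(D)}=\{0\}$ together with $D\cap\sigma(D)=\{0\}$ is equivalent to $h^{-1}(h(D)\cap D)=\{0\}$; this last condition is assumed. Hence $D$ satisfies ($\alpha$)--($\gamma$) and is $\sigma$-modular. In particular $h$ is faithful, since $h(a)=0$ gives $h(a)=0\in h(D)\cap D$ and therefore $a\in h^{-1}(h(D)\cap D)=\{0\}$.

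Next, for simplicity, I would again use Remark \ref{R:h-from-sigma}, which identifies the condition $h(J)D\subset J$ with $J\sigma(D)\subset\sigma(J)$ for a closed ideal $J$ of $D$. Thus the hypothesis that $h(J)D\not\subset J$ for every non-trivial closed ideal $J$ translates into $J\sigma(D)\not\subset\sigma(J)$ for every such $J$, and Corollary \ref{C:simple} yields at once that $[D]_\sigma\rtimes_\sigma\Z$ is simple. For the embedded Cuntz--Pimsner algebra I would apply Corollary \ref{C:mod-to-CP}: since $D$ is $\sigma$-modular and $h(a)b=\sigma^{-1}(a)b$, the hereditary \Cast-subalgebra $G:=\overline{D([D]_\sigma\rtimes_\sigma\Z)D}$ generated by $D$ is full in $[D]_\sigma\rtimes_\sigma\Z$ and is isomorphic to $\OOO{\Hm{D,h}}$, which is precisely the asserted hereditary copy.

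Finally, under the extra assumption that $D$ is $\sigma$-unital and stable, I would upgrade this full inclusion to an isomorphism by Brown's theorem in the form of Lemma \ref{L:Brown}. This requires that both $E:=[D]_\sigma\rtimes_\sigma\Z$ and $G\cong\OOO{\Hm{D,h}}$ be stable and $\sigma$-unital, with $G$ full in $E$; fullness is already supplied by Corollary \ref{C:mod-to-CP}. The stability of $E$ and of $\OOO{\Hm{D,h}}=\overline{DED}$, and the existence of strictly positive elements in both, when $D$ is stable and $\sigma$-unital, are exactly the assertions proved at the close of Proposition \ref{P:Pims-alg} (via Remark \ref{R:delta-infty} for the stability of $\overline{DED}$ and the inductive-limit description of $E$). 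Lemma \ref{L:Brown} then produces an isomorphism $\psi\colon G\to E$, i.e.\ $\OOO{\Hm{D,h}}\cong[D]_\sigma\rtimes_\sigma\Z$. The only genuine points to watch are the bookkeeping between the two equivalent formulations of the semi-invariance condition (the $h$-language of the statement versus the $\sigma$-language of Definition \ref{D:s-inv/cancel} and Corollary \ref{C:simple}) and the confirmation of the stability and $\sigma$-unitality hypotheses needed to invoke the abstract isomorphism theorem; everything else is a direct citation of the earlier propositions.
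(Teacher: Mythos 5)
Your assembly is correct and is exactly the argument the paper leaves implicit: $\sigma$-modularity of $D$ follows from the hypotheses via Remark \ref{R:h-from-sigma}, simplicity from Corollary \ref{C:simple}, the full hereditary copy of $\OOO{\Hm{D,h}}$ from Corollary \ref{C:mod-to-CP}, and the isomorphism in the stable $\sigma$-unital case from the stability and strict-positivity assertions at the end of the proof of Proposition \ref{P:Pims-alg} together with Lemma \ref{L:Brown}. No gaps.
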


\subsection{$(D,\sigma)$ and  $\Hm{A,h}$}
Here we translate the
results on
$\sigma$-modular algebras $D$ 
in subsections \ref{ssec:semi-inv-ideals} and
\ref{ssec:id-of-crossed} 
to the situation of our
special case of Cuntz--Pimsner algebras.

\begin{NOR} \Label{Nr:a4.16}
We consider in this section the following 
situation as an example of the previous section:\\
Suppose that $A$ is a \Cast-algebra and 
$h\colon A\to\Mult{A}$ a *-homo\-mor\-phism with
\begin{Aufl}
\item[(ND)] $h$ is \emph{non-degenerate}, i.e.~$h(A)A=A$,
\item[(GP)] $h(A)$ is in \emph{general position}, 
i.e.~$h(A)\cap A=0$,
\item[(FF)] $h$ is \emph{faithful}, i.e.~$\ker(h)=\{ 0\}$.
\end{Aufl}

Then $h(A)+A$ is a \Cast-sub\-al\-ge\-bra of $\Mult{A}$ and 
$A$ is an
essential ideal of $h(A)+A$, 
because $A$ is essential in $\Mult{A}$.

Since $h$ is non-degenerate, 
$h$ extends uniquely to a *-homo\-mor\-phism 
$\Mult{h}\colon \Mult{A}\to\Mult{A}$ with 
$\Mult{h}(a)h(b)c=h(ab)c$
for $a\in \Mult{A}$ and $b,c\in A$.
We denote $\Mult{h}$ also by $h$ to keep notation simple.
It follows from properties (ND) and (FF) that 
$h\colon \Mult{A}\to \Mult{A}$ becomes a
strictly continuous \emph{unital *-mono\-mor\-phism}.
\end{NOR}

\FRem{cross-references to above are desirable !!}

Let $B:=\ell_\infty (\Mult{A})/c_0(\Mult{A})$ and
$\sigma\in\Aut{B}$ be the auto\-mor\-phism induced by the 
forward-shift
$( a_1,a_2,\dots ) \mapsto ( 0,a_1,a_2,\dots )$
on
$\ell_\infty \left( \Mult{A}\right)$.

Now consider the natural embedding of the
inductive limit $G$ of
$$\Mult{A} \stackrel{h}{\longrightarrow} 
\Mult{A}\stackrel{h}{\longrightarrow} \cdots\, $$
into $B$,
given by the unital \emph{*-monomorphisms}
$h_{n,\infty}\colon \Mult{A}\to B$ with: 
\begin{eqnarray*}
h^\infty\colon a\in \Mult{A} &\mapsto&
  \left(a,h(a),h^2(a),\dots\right)
        \in\ell_\infty\left(\Mult{A}\right)\,, \\
h_{n,\infty}(a) &:=&
  \sigma^{n-1}\left(h^\infty(a)
        +c_0(\Mult{A})\right) \,,
\end{eqnarray*}
compare Remark \ref{R:h-non-deg} and the notations in
(i)--(vi) above Lemma \ref{L:B-rtimes-Z}.
\begin{eqnarray*}
G&:=&\indlim{h\colon \Mult{A}\to\Mult{A}}
=\overline{\bigcup h_{k,\infty} \left( \Mult{A} \right)}\\
&\subset& B:=
\ell_\infty \left(\Mult{A}\right)/c_0 \left(\Mult{A}\right).
\end{eqnarray*}

$\sigma$ induces an auto\-mor\-phism on $G$ with
$$
\sigma \left( h_{k,\infty}\left(\Mult{A}\right)\right)=
h_{k+1,\infty} \left( \Mult{A} \right).
$$
Then 
$h_{k,\infty}\colon \Mult{A} \stackrel{\sim}{\to} 
h_{k,\infty} (\Mult{A})
\subset h_{k+1,\infty}(\Mult{A})\subset B$ 
define unital *-mono\-mor\-phisms with
\begin{Aufl}
\item[(HR)] $\sigma h_{k,\infty}=h_{k+1,\infty}\,$,
$\,h_{k+1,\infty} h=h_{k,\infty}\,$,\,
and  $\,\sigma^{-1}h_{k,\infty}=h_{k,\infty} h\,$
for $k\in \N\,$.
\end{Aufl}
Let $D:=h_{1,\infty} (A)$. 
We have $h_{2,\infty}( h(A)+A ) 
=
h_{1,\infty}(A)+h_{2,\infty}(A)=D+\sigma (D)$,
$\sigma^{n-1}(D)=h_{n,\infty}(A)$ for $n\in\N\,$.

The following Lemma \ref{L:h-and-D} translates
(ND), (GP), (FF) to
the terminology in  subsections
\ref{ssec:semi-inv-ideals} and
\ref{ssec:id-of-crossed}. 

\begin{LEM}\Label{L:h-and-D}
With $D:=h_{1,\infty}(A)\subset B$, 
$\sigma\in\Aut{B}$ as above we get from properties
(ND), (GP) and (FF) of \ref{Nr:a4.16}:
\begin{Aufl}
\item $D$ is $\sigma$-modular in the sense
        of Definition \ref{D:sigm-mod}.
\item 
  $J\in \latI{A}$ 
  satisfies $h(J)A\subset J$, if and only if,
  $J_1:=h_{1,\infty}(J)$ is  
        $(D,\sigma)$-semi-invariant
  in the sense of Definition \ref{D:s-inv/cancel}.
\item
  $J\in \latI{A}$ 
  satisfies $h(J)=h(A)\cap \left( A+\Mult{A,J}\right)$, 
if and only if,
  $J_1:=h_{1,\infty}(J)$ is $(D,\sigma)$-semi-invariant
  and has $(D,\sigma)$-cancellation
  in the sense of Definition \ref{D:s-inv/cancel}.
\end{Aufl}
\end{LEM}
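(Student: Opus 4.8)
The plan is to deduce all three assertions from the dictionary already assembled in Remark~\ref{R:h-from-sigma}, applied to the pair $(D,\sigma)$ with $D=h_{1,\infty}(A)$. To invoke that remark I must first check its standing hypothesis ($\alpha$), and then identify the *-homomorphism it produces with the original $h$.

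\emph{Verifying ($\alpha$).} Since $\sigma(D)=h_{2,\infty}(A)$ and, by (HR), $h_{1,\infty}=h_{2,\infty}\circ h$, we have $D=h_{2,\infty}(h(A))$. As $h_{2,\infty}$ is a *-homomorphism, $\Span{D\,\sigma(D)}=h_{2,\infty}\bigl(\Span{h(A)\,A}\bigr)$, whose closure equals $h_{2,\infty}(A)=\sigma(D)$ by non-degeneracy (ND). Hence $D\sigma(D)=\sigma(D)$, so ($\alpha$) holds and Remark~\ref{R:h-from-sigma} is available for $(D,\sigma)$.

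\emph{The identification $A\cong D$.} By \ref{Nr:a4.16} the extension $h\colon\Mult{A}\to\Mult{A}$ is an injective, hence isometric, *-homomorphism, so $\|h_{1,\infty}(m)\|=\limsup_n\|h^n(m)\|=\|m\|$ for $m\in\Mult{A}$; thus $h_{1,\infty}$ is an isometric *-isomorphism of $A$ onto $D$ whose canonical extension carries $\Mult{A}$ onto the multipliers $h_{1,\infty}(\Mult{A})\subset\Mult{D}$, and it sends $\Mult{A,J}$ onto $\Mult{D,J_1}$ (because $h_{1,\infty}(m)D=h_{1,\infty}(mA)$ lies in $J_1=h_{1,\infty}(J)$ iff $mA\subset J$). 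Moreover (HR) gives $\sigma^{-1}(h_{1,\infty}(a))=h_{1,\infty}(h(a))$, so under this identification $\sigma^{-1}|_D$ corresponds to $h$; equivalently, the *-homomorphism $\xi\mapsto(\eta\mapsto\sigma^{-1}(\xi)\eta)$ of Remark~\ref{R:h-from-sigma} corresponds exactly to the original $h\colon A\to\Mult{A}$.

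\emph{Conclusion.} With these identifications the three equivalences of Remark~\ref{R:h-from-sigma} read off directly. For (i): conditions ($\beta$) and ($\gamma$) are equivalent to $h$ being faithful and satisfying $h(A)\cap A=\{0\}$, i.e.\ to (FF) and (GP); together with ($\alpha$) this makes $D$ $\sigma$-modular in the sense of Definition~\ref{D:sigm-mod}. For (ii): $J_1=h_{1,\infty}(J)$ is $(D,\sigma)$-semi-invariant, i.e.\ $J_1\sigma(D)\subset\sigma(J_1)$, iff $h(J)A\subset J$. For (iii): $J_1$ is $(D,\sigma)$-semi-invariant and has $(D,\sigma)$-cancellation iff $h(A)\cap(A+\Mult{A,J})=h(J)$. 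The only point requiring care is the compatibility at the multiplier level—namely that $h_{1,\infty}$ matches $\Mult{A,J}$ with $\Mult{D,J_1}$—since the cancellation criterion in (iii) is phrased via relative multipliers; this is precisely the isometry-and-extension check carried out in the previous paragraph.
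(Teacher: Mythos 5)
Your proof is correct. The route differs from the paper's in organization rather than in substance: the paper proves (i)--(iii) by direct computation, applying the faithful homomorphism $h_{2,\infty}=\sigma\circ h_{1,\infty}$ to both sides of each relevant inclusion (e.g.\ $D\sigma(D)=h_{2,\infty}(h(A)A)$, $J_1\sigma(D)=h_{2,\infty}(h(J)A)$, $(h(a)+b)A\subset J \Leftrightarrow (a_1+\sigma(b_1))\sigma(D)\subset\sigma(J_1)$), whereas you first verify ($\alpha$), identify $(A,h)$ with $(D,\,\sigma^{-1}(\cdot)\cdot)$ via the isometry $h_{1,\infty}$, and then invoke the general dictionary of Remark~\ref{R:h-from-sigma} as a black box. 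This is essentially the content of Remark~\ref{R:D-to-h}, which the paper states immediately after the lemma. What your approach buys is that the three equivalences come for free once the identification is in place; what it costs is the extra verification you correctly identify as the delicate point, namely that $h_{1,\infty}$ is isometric on $\Mult{A}$ (so that $\Mult{A}\cong\Mult{D}$ and $\Mult{A,J}$ matches $\Mult{D,J_1}$) and that the induced left multiplication $\sigma^{-1}(\cdot)\cdot$ on $D$ really is the original $h$ under this identification via (HR). The paper's direct computation sidesteps the multiplier-level identification entirely by never leaving the ambient algebra $B$. Both arguments are sound and of comparable length.
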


\begin{proof} \Ad{(i)}
$\sigma(D)=h_{2,\infty}(A)$
and $h_{2,\infty}h=h_{1,\infty}$ by (HR), thus: 

\Ad{($\alpha$)} $D\sigma (D)=h_{2,\infty} (h(A)A) 
=h_{2,\infty}(A)=\sigma(D)$.

\Ad{($\beta$)}  
  $\sigma(D)=h_{2,\infty}(A)$ is essential in $D+\sigma(D)=
  h_{2,\infty}( h(A)+A ) 
        \subset h_{2,\infty} \left( \Mult{A} \right)$
        because $A$ is essential in $h(A)+A\subset \Mult{A}$
  and $h_{2,\infty}$ is faithful on $\Mult{A}$.

\Ad{($\gamma$)} 
$D\cap \sigma (D)= h_{2,\infty}(h(A)\cap A )=\{0\}$.

\Ad{(ii)} Since $h_{1,\infty}|A$ is a \Cast-isomorphism
from $A$ onto $D$, $J\mapsto J_1:=h_{1,\infty}(J)$
is a lattice isomorphism from $\latI{A}$
onto $\latI{D}$. 
It holds
\begin{eqnarray*}
J_1\sigma(D)
&=&
h_{1,\infty}(J)h_{2,\infty}(A)=h_{2,\infty} ( h(J)A )\, \\
\sigma(J_1)
&=&
\sigma \left( h_{1,\infty}(J) \right) =
h_{2,\infty}(J)\quad \textrm{and}\\
h^{-1} \left( h(A)\cap\Mult{A,J} \right) &=&
   \{  a\in A \fdg h(a)A\subset J \}\,.
\end{eqnarray*}
Therefore, the following equivalences hold:
$$J_1\sigma(D)\subset \sigma \left( J_1 \right) 
\Longleftrightarrow h(J)A\subset J \Longleftrightarrow 
J\subset h^{-1}\left( h(A)\cap\Mult{A,J}\right) \,.$$

\Ad{(iii)} $a\in A$ satisfies $h(a)\in \Mult{A,J}+A$,
if and only if, there is $b\in A$ with
$(h(a)+b)A\subset J$. If we apply 
$h_{2,\infty}=\sigma\circ h_{1,\infty}$, we see that 
$(h(a)+b)A\subset J$ is equivalent to
$(a_1+\sigma(b_1))\sigma(D)\subset \sigma(J_1)$
for $J_1:=h_{1,\infty}(J)$, $a_1:=h_{1,\infty}(a)$
and $b_1:=h_{1,\infty}(b)$.
Hence, $J_1$ has $(D,\sigma)$-cancellation,
if and only if, $h^{-1}(h(A)\cap(A+\Mult{A,J})\subset J$.
\end{proof}

\begin{REM}\Label{R:D-to-h}
Conversely, if $B_1$ is a \Cast-algebra, 
$\sigma_1\in \Aut{B_1}$ and if $A\subset B_1$
is $\sigma$-modular in the sense of
Definition \ref{D:sigm-mod} (i.e.\ satisfies 
($\alpha$), ($\beta$) and ($\gamma$) for
$A$ in place of $D$), then 
$h\colon A\to \Mult{A}$ defined by 
$h(a)b:=\sigma^{-1}(a)b$ has properties
(ND), (GP) and (FF).
The map $\varphi:=h_{1,\infty}$ is a *-isomorphism
from $A$ onto $D$ with 
$\varphi(\sigma_1^{-1}(a)b)=
\sigma^{-1}(\varphi(a))\varphi(b)$.
It extends to
an isomorphism $\varphi_e$ from
$[A]_{\sigma_1}$ onto $[D]_\sigma$
with $\varphi_e \circ \sigma_1=\sigma \circ \varphi_e$
(see Proposition \ref{P:a4.15}).

A closed ideal $J$ of $A$ is 
$(A,\sigma_1)$-semi-invariant
(respectively has $(A,\sigma_1)$-cancellation)
if and only if $h(J)A\subset J$
(respectively $h(A)\cap (A+\Mult{A,J})\subset J$)
by Remark \ref{R:h-from-sigma}.
\end{REM}

\begin{REM}\Label{R:h-D(infty,1)} 
By (HR), it holds
$$
\sigma^{-n}(D)+\dots+\sigma^{-1}(D)+D
=
h_{1,\infty}(h^n(A)+\dots+h(A)+A)\,.$$
Hence, $D_{-\infty,1}=h_{1,\infty}(C)$ for
the closure $C$ of $A+h(A)+h^2(A)+\cdots\,$.

$h\colon C\to C$ is a non-degenerate faithful
endomorphism, and $[D]_\sigma$ is
isomorphic to the inductive limit of
$C \stackrel{h}{\longrightarrow}
C \stackrel{h}{\longrightarrow} \cdots\,$,
because $h_{1,\infty}\circ h=\sigma^{-1}\circ h_{1,\infty}$
and $[D]_\sigma$ is in a natural way the inductive limit of
$D_{-\infty,1} \stackrel{\sigma^{-1}}{\longrightarrow}
D_{-\infty,1} \stackrel{\sigma^{-1}}{\longrightarrow} \cdots\,$
(cf.\ Remark \ref{R:type-I-endo}).

$C$ is of type I if $D$ is of type I.
\end{REM}


\begin{COR}\Label{C:char-[D]sigma-by-h}
Suppose that $h\colon A\to \Mult{A}$
is a non-degenerate *-mono\-mor\-phism
with $h(A)\cap A=\{ 0\}$.
Let $B$,
$h_{1,\infty}\colon \Mult{A}\to B$, and
$\sigma\in \Aut{B}$ as above.
Then:
\begin{Aufl}
\item $D:=h_{1,\infty}(A)$ is 
  $\sigma$-modular in the sense of Definition
  \ref{D:sigm-mod}.
\item
  The \Name{Cuntz--Pimsner} algebra
  $\OOO{\Hm{A,h}}$ is the full hereditary
  \Cast-sub\-al\-ge\-bra of
  $[D]_\sigma\rtimes_\sigma\Z$ 
  generated by $D$.
\item
  Every non-zero closed ideal $I$ of $\OOO{\Hm{A,h}}$ 
  has non-zero intersection $D\cap I$ with $D$.
  (I.e.\ every *-representation 
  $\varrho \colon \OOO{\Hm{A,h}} \to\LOp{H}$
  with faithful restriction
  $\varrho |D$ is itself faithful.)
\item 
  $J:=(h_{1,\infty})^{-1}(D\cap I)$
  satisfies $h(J)A\subset J$.
\end{Aufl}
\end{COR}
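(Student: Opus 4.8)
The plan is to read off all four assertions from the machinery already in place, because the hypotheses on $h$ are precisely conditions (ND), (GP) and (FF) of \ref{Nr:a4.16}. Assertion (i) is then immediate from Lemma \ref{L:h-and-D}(i): with $D:=h_{1,\infty}(A)$ and $\sigma\in\Aut{B}$ as constructed above, those three properties translate into ($\alpha$), ($\beta$), ($\gamma$) of Definition \ref{D:sigm-mod}, so $D$ is $\sigma$-modular; no further work is needed here.

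For (ii) I would invoke Proposition \ref{P:Pims-alg} with $\varphi=h_{1,\infty}$. Since $h$ is faithful, $\varphi$ restricts to a *-monomorphism on $A$, so $D=\varphi(A)$ is an isomorphic copy of $A$; Proposition \ref{P:Pims-alg} identifies the \Cast-subalgebra of $Q^s(\Mult{A})$ generated by $\bigcup_k U^{-k}D$ with $[D]_\sigma\rtimes_\sigma\Z$ and exhibits $\OOO{\Hm{A,h}}$ as the full hereditary \Cast-subalgebra generated by $D$. (Alternatively one can route this through Corollary \ref{C:mod-to-CP}, using that relation (HR) makes $h_{1,\infty}|A$ intertwine $h$ on $A$ with the endomorphism $d\mapsto\sigma^{-1}(d)$ on $D$.)

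The crux is (iii), where the key step is the ideal correspondence for a full hereditary \Cast-subalgebra. By (ii), $G:=\OOO{\Hm{A,h}}$ is a full hereditary \Cast-subalgebra of $E:=[D]_\sigma\rtimes_\sigma\Z$, so $I\mapsto\overline{EIE}$ is a lattice isomorphism from $\latI{G}$ onto $\latI{E}$ with inverse $\tilde I\mapsto\tilde I\cap G$, and in particular it sends non-zero ideals to non-zero ideals. Given a non-zero $I\in\latI{G}$, the generated ideal $\tilde I:=\overline{EIE}$ of $E$ is non-zero, so Proposition \ref{P:a4.7} gives $\tilde I\cap D\neq\{0\}$. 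As $D\cdot D\cdot D$ is dense in $D$ we have $D\subset\overline{DED}=G$, whence $\tilde I\cap D=(\tilde I\cap G)\cap D=I\cap D$, so $I\cap D\neq\{0\}$. The parenthetical representation statement then follows at once: if $\varrho|D$ is faithful, then $\ker\varrho\cap D=\{0\}$ forces $\ker\varrho=\{0\}$.

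Finally, for (iv) I would reuse the conclusion of Proposition \ref{P:a4.7}, which asserts not only $\tilde I\cap D\neq\{0\}$ but also $(\tilde I\cap D)\sigma(D)\subset\sigma(\tilde I\cap D)$; since $\tilde I\cap D=D\cap I$, this says exactly that $D\cap I$ is $(D,\sigma)$-semi-invariant in the sense of Definition \ref{D:s-inv/cancel}. Setting $J:=(h_{1,\infty})^{-1}(D\cap I)$, so that $h_{1,\infty}(J)=D\cap I$, Lemma \ref{L:h-and-D}(ii) converts the $(D,\sigma)$-semi-invariance of $h_{1,\infty}(J)$ into $h(J)A\subset J$. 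The only genuinely delicate point in the whole argument is making the full-hereditary-subalgebra ideal correspondence of (iii) precise and verifying that intersection with $D$ is unaffected by passing between $I$ and $\tilde I$; everything else is a direct citation of the preceding results.
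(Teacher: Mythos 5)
Your proposal is correct and follows essentially the same route as the paper: (i) and (ii) are read off from Lemma \ref{L:h-and-D}(i) and Proposition \ref{P:Pims-alg}, and (iii)--(iv) are obtained by lifting the ideal $I$ to the ideal of $[D]_\sigma\rtimes_\sigma\Z$ it generates via the full-hereditary-subalgebra correspondence and then applying Proposition \ref{P:a4.7} together with Lemma \ref{L:h-and-D}(ii). The only difference is that you spell out explicitly the lattice isomorphism $\tilde I\mapsto\tilde I\cap G$ and the verification $\tilde I\cap D=I\cap D$, which the paper leaves implicit.
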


\begin{proof}
(i) is Lemma \ref{L:h-and-D}(i).

(ii) is part of Proposition \ref{P:Pims-alg}.

\Ad{(iii)}
Since $\OOO{\Hm{A,h}}$ is a full hereditary \Cast-subalgebra
of $[D]_\sigma\rtimes_\sigma\Z$, every non-zero closed ideal
is the intersection of a non-zero closed ideal $K$ of
$[D]_\sigma\rtimes_\sigma\Z$ with $\OOO{\Hm{A,h}}$.
By Proposition \ref{P:a4.7} the intersection $I\cap D=K\cap D$
is non-zero.

\Ad{(iv)}
Since $I\cap D=K\cap D$ for a closed ideal $K$ of 
$[D]_\sigma\rtimes_\sigma\Z$, $I\cap D$ is $(D,\sigma)$-semi-invariant
in the sense of Definition \ref{D:s-inv/cancel} by
Proposition \ref{P:a4.7}.
Thus, $J:=h_{1,\infty}^{-1}(I\cap D)$ satisfies 
$h(J)A\subset J$ by Lemma \ref{L:h-and-D}(ii).
\end{proof}

\FRem{Next necessary?}
\begin{REM}\Label{R:converse}
Conversely, if $B_1$ and $\sigma_1\in \Aut{B_1}$
and $\sigma_1$-modular $D_1\subset B_1$ are given,
then $h_1\colon D_1\to \Mult{D_1}$  defined
by $h_1(a)b:=\sigma_1^{-1}(a)b$ is a non-degenerate
*-monomorphism with $h_1(D_1)\cap D_1=\{ 0\}$.
Then $h_{1,\infty}\colon D_1\to D$ 
extends to an isomorphism from
$([D_1]_{\sigma_1},\sigma_1)$ onto
$([D]_\sigma,\sigma)$ where $h_{1,\infty}$ and 
$\sigma\in\Aut{B}$ 
are as in Corollary \ref{C:char-[D]sigma-by-h}.
(See Remark \ref{R:h-from-sigma}, 
Corollary \ref{C:mod-to-CP}
and \ref{P:a4.15}.)
\end{REM}

\begin{COR}\Label{C:stable-isomorphism}
If $h\colon A\to \Mult{A}$
is a non-degenerate *-mono\-mor\-phism
with $h(A)\cap A=\{ 0\}$, and $A$ is
stable and $\sigma$-unital,
then the \Name{Cuntz--Pimsner} algebra $\OOO{\Hm{A,h}}$ 
is stable and is isomorphic to the crossed product 
of the inductive limit 
$C \stackrel{h}{\longrightarrow}
C \stackrel{h}{\longrightarrow} \cdots\,$ by $\Z$,
where $C$ is the closure of $A+h(A)+h^2(A)+\cdots$.

$C$ has a decomposition series with intermediate factors 
isomorphic to $A$.  In particular $C$ is of type I if
$A$ is of type I.
\end{COR}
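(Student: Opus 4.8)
The plan is to assemble the statement from the structural results already established for $\sigma$-modular subalgebras, the only genuinely new input being the hypotheses that $A$ is stable and $\sigma$-unital.

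First I would put $D:=h_{1,\infty}(A)\subset B$, with $B$ and $\sigma\in\Aut{B}$ as in \ref{Nr:a4.16}. By Corollary \ref{C:char-[D]sigma-by-h}(i) the subalgebra $D$ is $\sigma$-modular. Since $A$ is $\sigma$-unital it contains a strictly positive element, and $A$ is stable by hypothesis; hence the concluding assertion of Proposition \ref{P:Pims-alg} applies and yields an isomorphism $\OOO{\Hm{A,h}}\cong [D]_\sigma\rtimes_\sigma\Z$. The stability of this algebra is exactly what is established in the proof of Proposition \ref{P:Pims-alg} under the assumption that $A$ is stable (there the crossed product appears as an inductive limit of stable hereditary subalgebras), so $\OOO{\Hm{A,h}}$ is stable.

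Next I would identify $[D]_\sigma\rtimes_\sigma\Z$ with the crossed product of the inductive limit of $C\stackrel{h}{\longrightarrow}C\stackrel{h}{\longrightarrow}\cdots$. By Remark \ref{R:h-D(infty,1)} one has $D_{-\infty,1}=h_{1,\infty}(C)$ and $h_{1,\infty}\circ h=\sigma^{-1}\circ h_{1,\infty}$, so that $h_{1,\infty}$ induces an isomorphism from $\indlim{h\colon C\to C}$ onto $[D]_\sigma$; by Remark \ref{R:type-I-endo} the latter is, as a dynamical system, the dilation of the endomorphism $\sigma^{-1}\colon D_{-\infty,1}\to D_{-\infty,1}$. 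Thus under $h_{1,\infty}$ the connecting endomorphism $h$ corresponds to $\sigma^{-1}$, and the natural (shift) automorphism of the inductive limit corresponds to $\sigma^{-1}$. Since a crossed product by $\Z$ is unchanged up to isomorphism when the automorphism is replaced by its inverse, this gives $\OOO{\Hm{A,h}}\cong[D]_\sigma\rtimes_\sigma\Z\cong\left(\indlim{h\colon C\to C}\right)\rtimes\Z$, which is the second assertion.

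Finally, for the internal structure of $C$ I would note that $h_{1,\infty}$ is a faithful unital *-homomorphism on $\Mult{A}$, hence isometric, so it restricts to an isomorphism $C\cong D_{-\infty,1}$. Remark \ref{R:type-I-endo} supplies the decomposition series $\{D_{-k,1}\}$ of ideals of $D_{-\infty,1}$ whose intermediate factors are isomorphic to $D\cong A$; pulling this back along $h_{1,\infty}^{-1}$ produces a decomposition series of $C$ with successive quotients isomorphic to $A$. Because the class of type~I \Cast-algebras is closed under ideals, quotients, extensions and countable inductive limits, $C$ is of type~I whenever $A$ is (this is also recorded at the end of Remark \ref{R:h-D(infty,1)}). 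The one point that requires real care, rather than being mere bookkeeping, is the matching of automorphisms in the third paragraph: one must verify that under the isomorphism of Remark \ref{R:h-D(infty,1)} the natural shift on the inductive limit genuinely corresponds to $\sigma$ up to inversion, so that the two crossed products coincide; everything else follows by citing the propositions already proved.
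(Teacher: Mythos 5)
Your proposal is correct and follows essentially the same route as the paper's proof: Corollary \ref{C:char-[D]sigma-by-h} and the final assertions of Proposition \ref{P:Pims-alg} give stability and the isomorphism $\OOO{\Hm{A,h}}\cong[D]_\sigma\rtimes_\sigma\Z$, Remark \ref{R:h-D(infty,1)} identifies $[D]_\sigma$ with $\indlim{h\colon C\to C}$, and the decomposition series $C_n=A+h(A)+\cdots+h^n(A)$ is the pullback under $h_{1,\infty}$ of the series $\{D_{-k,1}\}$ you cite. Your extra remark that the shift automorphism of the inductive limit corresponds to $\sigma^{-1}$ rather than $\sigma$ (harmless, since $\rtimes_\sigma\Z\cong\rtimes_{\sigma^{-1}}\Z$) is a point the paper leaves implicit, but it does not change the argument.
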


\begin{proof}
Since $\OOO{\Hm{A,h}}$ is the full hereditary \Cast-subalgebra
of $[D]_\sigma\rtimes_\sigma\Z$ generated by $D$ (cf.\ 
Corollary \ref{C:char-[D]sigma-by-h}) and since $D$ is
isomorphic to $A$, we have that $[D]_\sigma\rtimes_\sigma\Z$ 
is stable and is isomorphic to $\OOO{\Hm{A,h}}$ by
Proposition \ref{P:Pims-alg}. 
On the other hand $[D]_\sigma$ is isomorphic to the inductive limit
$C \stackrel{h}{\longrightarrow}
C \stackrel{h}{\longrightarrow} \cdots\,$
by Remark \ref{R:h-D(infty,1)}.
$C$ has a decomposition series given by the closed ideals
$C_n:=A+h(A)+\cdots+h^n(A)$ with $C_{n+1}/C_n\cong h^{n+1}(A)\cong A$.
\end{proof}

\begin{REM}
More generally, if $\mathcal{P}$ is a property of 
\Cast-algebras
that is preserved under split-extensions, 
\Name{Morita} equivalence, crossed products by $\Z$, and
inductive limits, and if $A$ has property 
$\mathcal{P}$,
then $\OOO{\Hm{A,h}}$ has property $\mathcal{P}$.  
This happens
for example for the property $\mathcal{P}$ of being locally-reflexive,
exact, nuclear or weakly-injective.
\end{REM}

\begin{COR}\Label{C:inv-ideal}
Suppose that $h\colon A\to \Mult{A}$
is a non-degenerate *-mono\-mor\-phism
with $h(A)\cap A=\{ 0\}$.
Build $B$,
$\sigma\in \Aut{B}$ and 
$D\subset B$ as above from $h$.

If $I$ is a closed ideal of $\OOO{\Hm{A,h}}$ 
such that $J:=(h_{1,\infty})^{-1}(D\cap I)$
satisfies 
$h(J)=h(A)\cap (A+\Mult{A,J})$,
then $I$ is the closed linear span of
$\OOO{\Hm{A,h}}(D\cap I)\OOO{\Hm{A,h}}$.
\end{COR}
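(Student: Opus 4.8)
The plan is to transport the statement into the crossed product $E:=[D]_\sigma\rtimes_\sigma\Z$ and apply the semi-invariant-ideal machinery of Proposition \ref{P:ideal}. Write $G:=\OOO{\Hm{A,h}}$. By Corollary \ref{C:char-[D]sigma-by-h}(ii), $G$ is a \emph{full hereditary} \Cast-subalgebra of $E$, so (as recorded in the introduction) $K\mapsto K\cap G$ is a lattice isomorphism from $\latI{E}$ onto $\latI{G}$, with inverse $L\mapsto \overline{ELE}$. First I would set $K:=\overline{EIE}$, the closed ideal of $E$ corresponding to $I$, so that $K\cap G=I$; since $D\subset G$ this gives $D\cap K=D\cap I$.

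Next I would check that Proposition \ref{P:ideal} applies to $K$. By Corollary \ref{C:char-[D]sigma-by-h}(iv) and Lemma \ref{L:h-and-D}(ii), the ideal $D\cap I=h_{1,\infty}(J)$ is $(D,\sigma)$-semi-invariant; and the standing hypothesis $h(J)=h(A)\cap(A+\Mult{A,J})$ is, by Lemma \ref{L:h-and-D}(iii), exactly the assertion that $D\cap I$ has $(D,\sigma)$-cancellation. Hence Proposition \ref{P:ideal}, whose hypothesis is needed only for the single ideal $D\cap K=D\cap I$, yields that $K$ is the natural image of $[D\cap I]_\sigma\rtimes_\sigma\Z$ in $E$.

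It then remains to identify this ideal with the one generated by $D\cap I$. I would show that the closed ideal $\overline{E(D\cap I)E}$ of $E$ generated by $D\cap I$ coincides with $[D\cap I]_\sigma\rtimes_\sigma\Z$: on the one hand $[D\cap I]_\sigma$ is a $\sigma$-invariant closed ideal of $[D]_\sigma$ by Lemma \ref{L:invIdeal}(i), so $[D\cap I]_\sigma\rtimes_\sigma\Z$ is a closed ideal of $E$ containing $D\cap I$, giving one inclusion; on the other hand $\overline{E(D\cap I)E}$ is invariant under conjugation by the canonical unitary implementing $\sigma$, so $\overline{E(D\cap I)E}\cap[D]_\sigma$ is a $\sigma$-invariant closed ideal of $[D]_\sigma$ containing $D\cap I$, hence contains $[D\cap I]_\sigma$ and therefore the whole of $[D\cap I]_\sigma\rtimes_\sigma\Z$. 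Combining with the previous paragraph gives $\overline{E(D\cap I)E}=K$. Applying the isomorphism $K\mapsto K\cap G$ and noting that, since $D\cap I\subset G$, the ideal of $G$ generated by $D\cap I$ is $\overline{G(D\cap I)G}$ with image $\overline{E(D\cap I)E}$ under the inverse isomorphism, I conclude $\overline{G(D\cap I)G}=\overline{E(D\cap I)E}\cap G=K\cap G=I$, which is the asserted closed linear span.

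The main obstacle is bookkeeping rather than any single hard estimate: one must keep the two ideal correspondences straight — the Morita-type bijection between $\latI{E}$ and $\latI{G}$ coming from fullness of $G$, and the semi-invariant-ideal bijection of Proposition \ref{P:ideal} relating ideals of $E$ to $(D,\sigma)$-semi-invariant ideals of $D$ — and verify that generating an ideal inside $G$ and inside $E$ from the common set $D\cap I$ produces corresponding ideals. The genuinely delicate point is that \emph{cancellation} is assumed only for the single ideal $D\cap I$, not for all semi-invariant ideals as in Corollary \ref{C:ideals}; this is precisely why Proposition \ref{P:ideal}, and not Corollary \ref{C:ideals}, is the right tool here.
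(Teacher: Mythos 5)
Your proposal is correct and follows essentially the same route as the paper: translate the hypothesis into $(D,\sigma)$-semi-invariance and $(D,\sigma)$-cancellation of $D\cap I$ via Lemma \ref{L:h-and-D} and Corollary \ref{C:char-[D]sigma-by-h}, invoke Proposition \ref{P:ideal} to see that $D\cap I$ determines a unique ideal of $[D]_\sigma\rtimes_\sigma\Z$, and then use that $\OOO{\Hm{A,h}}$ is a full hereditary \Cast-subalgebra to transfer this back to $I$. You merely spell out the Morita-type bookkeeping between $\latI{E}$ and $\latI{G}$ that the paper leaves implicit.
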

In particular, every ideal $K$ of $\OOO{\Hm{A,h}}$ 
with $D\cap I=D\cap K$ coincides with $I$. 
\begin{proof}
By Corollary \ref{C:char-[D]sigma-by-h}(iv)
and Lemma \ref{L:h-and-D}(ii) $J_1:=D\cap I$ is
$(D,\sigma)$-semi-invariant in the sense of 
Definition \ref{D:sigm-mod}.
It has $(D,\sigma)$-cancellation by Lemma \ref{L:h-and-D}(iii).
Thus, there is only one closed ideal $I_1$ of 
$[D]_\sigma\rtimes_\sigma\Z$
with $I_1\cap D=h_{1,\infty}(J)=D\cap I$.
Since $\OOO{\Hm{A,h}}$ is a full hereditary \Cast-subalgebra
of $[D]_\sigma\rtimes_\sigma\Z$ by 
Corollary \ref{C:char-[D]sigma-by-h}(ii),
it follows that the closed ideal of $\OOO{\Hm{A,h}}$ generated by 
$D\cap I$ coincides with $I$.
\end{proof}

\begin{COR}\Label{C:ideals-of-O(H(A,h))}
Suppose that $h\colon A\to \Mult{A}$
is a non-degenerate *-mono\-mor\-phism
with $h(A)\cap A=\{ 0\}$.
Build $B$,
$\sigma\in \Aut{B}$ and 
$D\subset B$ as above from $h$.

If every closed ideal $J$ of $A$ with 
$h(J)A\subset J$ satisfies 
$$h(J)=h(A)\cap( A+ \Mult{A,J}),$$
then the map
$$
I\in \latI{\OOO{\Hm{A,h}}} \mapsto I\cap D \in \latI{D},
$$
is a lattice iso\-mor\-phism from
$\latI{\OOO{\Hm{A,h}}}\cong
\latO{\Prim{\OOO{\Hm{A,h}}}}$
onto the Hausdorff lattice of closed ideals 
$J$ of $A$  that satisfy
$h(J)A\subset J$.

In particular, 
$D\cong A$ is a regular \Cast-subalgebra of $\OOO{\Hm{A,h}}$.
\end{COR}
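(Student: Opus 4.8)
The plan is to reduce the whole statement to the ideal theory of the crossed product $[D]_\sigma\rtimes_\sigma\Z$ established in Corollary \ref{C:ideals}, and then to transport that conclusion along the full hereditary embedding of $\OOO{\Hm{A,h}}$. Throughout set $E:=[D]_\sigma\rtimes_\sigma\Z$ and $G:=\OOO{\Hm{A,h}}$. First I would record the translation of the hypotheses. By Corollary \ref{C:char-[D]sigma-by-h}(i) the algebra $D=h_{1,\infty}(A)$ is $\sigma$-modular, and $h_{1,\infty}|A$ is a \Cast-isomorphism of $A$ onto $D$, so $J\mapsto J_1:=h_{1,\infty}(J)$ is a lattice isomorphism $\latI{A}\to\latI{D}$. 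By Lemma \ref{L:h-and-D}(ii) the condition $h(J)A\subset J$ is equivalent to $J_1$ being $(D,\sigma)$-semi-invariant, and by Lemma \ref{L:h-and-D}(iii) the condition $h(J)=h(A)\cap(A+\Mult{A,J})$ is equivalent to $J_1$ being $(D,\sigma)$-semi-invariant and having $(D,\sigma)$-cancellation. Hence the standing assumption says precisely that \emph{every} $(D,\sigma)$-semi-invariant closed ideal of $D$ has $(D,\sigma)$-cancellation, which is exactly the hypothesis of Corollary \ref{C:ideals}.

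Applying Corollary \ref{C:ideals} then yields that $K\mapsto K\cap D$ is a lattice isomorphism from $\latI{E}$ onto $\latI{D}^\sigma$, with inverse $J_1\mapsto[J_1]_\sigma\rtimes_\sigma\Z$, and that $D$ is regular in $E$. Next I would invoke the fact recorded just after Theorem \ref{T:main}: a full hereditary \Cast-subalgebra $C$ of a \Cast-algebra is regular, and $K\mapsto K\cap C$ is a lattice isomorphism $\latI{E}\to\latI{C}$. By Corollary \ref{C:char-[D]sigma-by-h}(ii) the algebra $G$ is exactly the full hereditary \Cast-subalgebra of $E$ generated by $D$, so $K\mapsto K\cap G$ is a lattice isomorphism $\latI{E}\to\latI{G}$. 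Since $D\subset G$ (the hereditary subalgebra generated by $D$ contains $\overline{DDD}=D$), we have $(K\cap G)\cap D=K\cap D$ for every $K\in\latI{E}$; composing the two correspondences therefore shows that $I\in\latI{G}\mapsto I\cap D$ is a lattice isomorphism onto $\latI{D}^\sigma$. Transporting $\latI{D}^\sigma$ back to $A$ through $h_{1,\infty}^{-1}$ identifies the image with $\{J\in\latI{A}\fdg h(J)A\subset J\}$, and $\latI{G}\cong\latO{\Prim{G}}$ is the standard identification. (As a cross-check, well-definedness of the image and injectivity of the map are independently available from Corollary \ref{C:char-[D]sigma-by-h}(iv) and Corollary \ref{C:inv-ideal}.)

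It remains to record the regularity of $D\cong A$ in $G$. Here I would note that the lattice isomorphism $\latI{G}\cong\latI{E}$ preserves l.u.b.\ (sums) and g.l.b.\ (intersections); thus for $I_1,I_2\in\latI{G}$ corresponding to $K_1,K_2\in\latI{E}$ one gets $(I_1+I_2)\cap D=(K_1+K_2)\cap D=(K_1\cap D)+(K_2\cap D)=(I_1\cap D)+(I_2\cap D)$ by regularity of $D$ in $E$, and the separation property transfers in the same way. The argument is essentially bookkeeping: the two points demanding genuine care are the translations in Lemma \ref{L:h-and-D} (already available) and the verification that the composite of the two correspondences is the naive map $I\mapsto I\cap D$, for which the inclusion $D\subset G$ is the decisive ingredient. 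I do not expect a serious obstacle beyond keeping the three lattices $\latI{A}$, $\latI{E}$, $\latI{G}$ and the two isomorphisms correctly aligned.
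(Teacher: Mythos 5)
Your argument is correct and rests on the same two pillars the paper intends: the crossed-product ideal classification of Corollary \ref{C:ideals} (entered via the translation in Lemma \ref{L:h-and-D}) together with the ideal-lattice isomorphism $K\mapsto K\cap G$ for the full hereditary \Cast-subalgebra $G=\OOO{\Hm{A,h}}$ of $[D]_\sigma\rtimes_\sigma\Z$ from Corollary \ref{C:char-[D]sigma-by-h}(ii). Packaging this as ``apply Corollary \ref{C:ideals} to the crossed product and transport along the hereditary correspondence'' instead of rederiving the bookkeeping ideal-by-ideal through Corollary \ref{C:inv-ideal} (which is how the paper phrases the derivation) is only a cosmetic difference.
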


Corollary \ref{C:ideals-of-O(H(A,h))} derives from 
Corollary \ref{C:inv-ideal} as 
Corollary \ref{C:ideals} derives from 
Proposition \ref{P:ideal}.

In particular, we get the following corollary.
\begin{COR}\Label{C:O(H(A,h))-simple}
Suppose that $h\colon A\to \Mult{A}$
is a non-degenerate *-mono\-mor\-phism
with $h(A)\cap A=\{ 0\}$.
If every closed ideal with $h(J)A\subset J$
is trivial, then $\OOO{\Hm{A,h}}$ is simple.
\end{COR}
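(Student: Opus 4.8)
The plan is to show that every non-zero closed ideal $I$ of $\OOO{\Hm{A,h}}$ must be all of $\OOO{\Hm{A,h}}$, by detecting it inside the $\sigma$-modular subalgebra $D:=h_{1,\infty}(A)$ and then invoking fullness. First I would build $B$, $\sigma\in\Aut{B}$ and $D\subset B$ from $h$ exactly as in \ref{C:char-[D]sigma-by-h}, so that $\OOO{\Hm{A,h}}$ is the full hereditary \Cast-subalgebra of $[D]_\sigma\rtimes_\sigma\Z$ generated by $D$. Given a non-zero closed ideal $I$ of $\OOO{\Hm{A,h}}$, Corollary \ref{C:char-[D]sigma-by-h}(iii) gives that the intersection $D\cap I$ is non-zero. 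Since $h_{1,\infty}|A$ is a \Cast-isomorphism of $A$ onto $D$, the set $J:=(h_{1,\infty})^{-1}(D\cap I)$ is a non-zero closed ideal of $A$, and by Corollary \ref{C:char-[D]sigma-by-h}(iv) it satisfies $h(J)A\subset J$.

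Now the hypothesis enters: by assumption the only closed ideals $J$ of $A$ with $h(J)A\subset J$ are $\{0\}$ and $A$. As $J\neq\{0\}$, we must have $J=A$, hence $D\cap I=h_{1,\infty}(A)=D$, i.e.\ $D\subset I$. It remains to deduce $I=\OOO{\Hm{A,h}}$ from $D\subset I$, for which I would use that $D$ is full in $\OOO{\Hm{A,h}}$. Writing $G_0:=\OOO{\Hm{A,h}}=\overline{D([D]_\sigma\rtimes_\sigma\Z)D}$ (Corollary \ref{C:char-[D]sigma-by-h}(ii)), any approximate unit $(e_\lambda)$ of $D$ satisfies $e_\lambda(d_1gd_2)\to d_1gd_2$ and $(d_1gd_2)e_\lambda\to d_1gd_2$ on a dense subset, so $(e_\lambda)$ is an approximate unit of $G_0$; consequently $\overline{DG_0D}=G_0$. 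Since $D\subset G_0$ we have $DG_0D\subset G_0DG_0$, and therefore $G_0=\overline{DG_0D}\subset\overline{G_0DG_0}\subset G_0$, which shows $\overline{G_0DG_0}=G_0$, i.e.\ $D$ is full in $G_0$. As $I$ is a closed ideal containing $D$, it contains $\overline{G_0DG_0}=G_0$, whence $I=\OOO{\Hm{A,h}}$. Thus $\OOO{\Hm{A,h}}$ has no non-trivial closed ideal and is simple.

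Alternatively, one may read off the result directly from Corollary \ref{C:ideals-of-O(H(A,h))}: its hypothesis requires $h(J)=h(A)\cap(A+\Mult{A,J})$ for every closed ideal $J$ with $h(J)A\subset J$, and in the present situation the only such $J$ are $\{0\}$ and $A$. For $J=A$ this identity holds because $\Mult{A,A}=\Mult{A}$, so $A+\Mult{A,A}=\Mult{A}$ and $h(A)\cap\Mult{A}=h(A)$; for $J=\{0\}$ it holds because $\Mult{A,\{0\}}=\{0\}$ (as $A$ is essential in $\Mult{A}$) and $h(A)\cap A=\{0\}$ by the general-position assumption. Corollary \ref{C:ideals-of-O(H(A,h))} then identifies $\latI{\OOO{\Hm{A,h}}}$ with the two-element lattice $\{\{0\},A\}$, giving simplicity at once.

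The only step that is not entirely routine is the fullness of $D$ in $\OOO{\Hm{A,h}}$, and even that reduces cleanly to the already-proved description of $\OOO{\Hm{A,h}}$ as the hereditary subalgebra generated by $D$; everything else is a direct application of Corollary \ref{C:char-[D]sigma-by-h}. I would present the first route as the main argument, since it avoids the side verifications needed to invoke Corollary \ref{C:ideals-of-O(H(A,h))}, and remark that the statement is likewise the evident specialization of that corollary.
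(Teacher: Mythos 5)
Your proposal is correct. The paper itself gives no separate argument: it presents this corollary as the immediate specialization of Corollary~\ref{C:ideals-of-O(H(A,h))} to the case where the only ideals $J$ with $h(J)A\subset J$ are $\{0\}$ and $A$ --- which is exactly your second, ``alternative'' route, and you correctly supply the small verification the paper leaves implicit, namely that the cancellation identity $h(J)=h(A)\cap\left(A+\Mult{A,J}\right)$ holds automatically for $J=A$ (since $\Mult{A,A}=\Mult{A}$) and for $J=\{0\}$ (since $A$ is essential in $\Mult{A}$ and $h(A)\cap A=\{0\}$). Your primary argument is genuinely different and somewhat leaner: it bypasses the cancellation condition entirely, using only Corollary~\ref{C:char-[D]sigma-by-h}(iii) (every non-zero ideal of $\OOO{\Hm{A,h}}$ meets $D$) and (iv) (the resulting ideal $J$ of $A$ satisfies $h(J)A\subset J$), plus the fullness of $D$ in $\OOO{\Hm{A,h}}$, which you establish correctly via the approximate-unit argument for $\overline{D\left([D]_\sigma\rtimes_\sigma\Z\right)D}$. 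This is in effect the analogue, for the hereditary subalgebra $\OOO{\Hm{A,h}}$, of the paper's Corollary~\ref{C:simple} for the full crossed product. What the direct route buys is independence from the cancellation machinery of Subsection~\ref{ssec:semi-inv-ideals}; what the paper's route buys is that the conclusion drops out of an already-established lattice isomorphism with no extra work beyond checking the hypothesis on the two trivial ideals. Both are sound, and your choice to present the direct argument as primary is reasonable.
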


\section{Proofs of the main results}\Label{sec:main-results}
We give here the proof of Theorem \ref{T:main}.
Further we
derive two corollaries, whose contents we have 
mentioned already in the introduction.

\begin{proof}[Proof of Theorem \ref{T:main}]
Recall that $\latO{X}$ denotes the open subsets of $X$.
By (I)--(IV), $\Psi(\latO{X})$ is a sublattice of $\latO{P}$
that contains $\emptyset$, $P$ and is closed under l.u.b.\
and g.l.b.  Let $A:=\Cont[0]{P,\K}$ and $h\colon A\to\Mult{A}$
the non-degenerate *-monomorphism of Corollary \ref{C:H0-from-Omega}
for $\Omega:=\Psi(\latO{X})$.
Since $h$ is unitarily equivalent to $\delta_\infty\circ h$
by (ii) of Corollary \ref{C:H0-from-Omega} and since
$\delta_\infty(\Mult{A})\cap A=\{0\}$ by Remark \ref{R:delta-infty}(iii),
it follows that $h(A)\cap A=\{0\}$.

By Remark \ref{R:H0-inv-ideal} it holds for every $J\in\latI{A}$ with
$h(J)A\subset J$ that
$$ h(J)=h(A)\cap (A+\Mult{A,J}).$$
Thus Corollary \ref{C:char-[D]sigma-by-h} and 
Corollary \ref{C:ideals-of-O(H(A,h))} apply,
and we get: 
\begin{Aufl}
\item $\OOO{\Hm{A,h}}$ is isomorphic to the crossed product of
  $E:=[D]_\sigma$ by $\Z$ with respect to the shift 
  automorphism $\sigma$ restricted to $E$, where $D:=h_{1,\infty}(A)$.
\item The map
  $$
  I\in \latI{\OOO{\Hm{A,h}}} \mapsto I\cap D \in \latI{D},
  $$
  is a lattice iso\-mor\-phism from
  $\latI{\OOO{\Hm{A,h}}}\cong
  \latO{\Prim{\OOO{\Hm{A,h}}}}$
  onto the Hausdorff lattice of closed ideals 
  $J$ of $A$  that satisfy
  $h(J)A\subset J$.
\end{Aufl}
By construction of $h\colon A\to\Mult{A}$ the Hausdorff lattice
of closed ideals $J$ of $A$ with $h(J)A\subset J$ is naturally
isomorphic to $\Omega$, and $\Omega$ is isomorphic to the lattice
$\latO{X}$ by $\Psi$.  Thus the composition of the isomorphisms
define a lattice isomorphism from $\latO{\Prim{\OOO{\Hm{A,h}}}}$
onto $\latO{X}$.  Since $X$ is point-complete by 
assumption and since $\OOO{\Hm{A,h}}$ is separable, we get
that $X$ and $\Prim{\OOO{\Hm{A,h}}}$ are homeomorphic by
Corollary \ref{C:lat-iso}.

The properties (i)--(iii) follow from Corollary \ref{C:char-[D]sigma-by-h}, 
Corollary \ref{C:ideals-of-O(H(A,h))}, and 
Corollary \ref{C:stable-isomorphism}.
\FRem{check}
\end{proof}

\begin{REM}\Label{R:zuTh-main}
\begin{Aufl}
\item
Since $E=[A]_\sigma$ in 
our proof and since every $\sigma$-invariant ideal
$I$ of $[A]_\sigma$ is determined by its intersection with
$A=\Cont[0]{P,\K}$, we have that 
$\Cont[0]{P}$ is isomorphic
to a regular Abelian \Cast-sub\-al\-ge\-bra of 
$E\rtimes_\sigma\Z$.
\item
If $B$ is a separable \Cast-algebra such that 
$B\otimes\OO{2}$ contains
an Abelian regular sub\-al\-ge\-bra then 
Theorem \ref{T:main} applies to 
$X:=\Prim{B\otimes\OO{2}}\cong\Prim{B}$ by
Lemma \ref{L:reg-abel-psi}.  
Further, in the case where $B$ is
in addition nuclear, we get 
that $B\otimes\OO{2}\otimes\K$ is isomorphic to 
$\left(E\rtimes_\sigma\Z\right)\otimes\OO{2}\otimes\K$
from \cite[chp.~1, cor.~L]{K.book}
(cf.\ Remark \ref{R:iso-of-prim}).
\end{Aufl}
\end{REM}

Theorem \ref{T:main} and Remark \ref{R:zuTh-main} imply:
\begin{COR}\Label{C:reg-abel}
If $B$ is a separable \Cast-algebra such that 
$B\otimes\OO{2}$ 
contains an Abelian regular \Cast-sub\-al\-ge\-bra $C$, 
then the primitive ideal 
space of $B$ is isomorphic to 
the primitive ideal space of the crossed 
product $E\rtimes_\sigma\Z$ of an inductive limit 
$E$ of \Cast-algebras 
of type~I by an auto\-mor\-phism $\sigma$ of $E$.

In particular, if $B$ is in addition nuclear, then
$$B\otimes\OO{2}\otimes\K \cong 
\left( E\rtimes_\sigma\Z \right)  
\otimes \OO{2}\otimes\K.$$
\end{COR}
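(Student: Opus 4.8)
The plan is to assemble the corollary from Theorem \ref{T:main} together with the observations collected in Remark \ref{R:zuTh-main}(ii); essentially all of the genuine work has already been carried out, so what remains is to verify that the hypotheses of Theorem \ref{T:main} are met and, in the nuclear case, to upgrade the resulting homeomorphism of primitive ideal spaces to a \Cast-isomorphism. First I would set $X:=\Prim{B\otimes\OO{2}}$ and recall $\Prim{B}\cong\Prim{B\otimes\OO{2}}$, so that $X\cong\Prim{B}$. Since $X$ is the primitive ideal space of the separable \Cast-algebra $B\otimes\OO{2}$, it is point-complete.

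Next I would extract the map $\Psi$ from the regular Abelian subalgebra. The regular Abelian \Cast-subalgebra $C\subset B\otimes\OO{2}$ yields, by Lemma \ref{L:reg-abel-psi}, the map $\Psi\colon \latO{X}\to\latO{P}$ induced by $J\mapsto C\cap J$, where $P:=\Prim{C}$; this $\Psi$ satisfies (I)--(IV) of Definition \ref{D:l-g-preserv}, and in particular is faithful by (IV). Because $C$ is a separable Abelian \Cast-algebra, $C\cong\Cont[0]{P}$ for the locally compact, second countable Hausdorff space $P=\Prim{C}$, which is therefore Polish. With these verifications in hand, Theorem \ref{T:main} applies to $X$ and $\Psi$ and produces a separable inductive limit $E$ of type~I \Cast-algebras together with an automorphism $\sigma$ of $E$ such that $X$ is homeomorphic to $\Prim{E\rtimes_\sigma\Z}$. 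Composing with $X\cong\Prim{B}$ gives the first assertion, $\Prim{B}\cong\Prim{E\rtimes_\sigma\Z}$.

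For the ``in particular'' clause, suppose in addition that $B$ is nuclear. Then $E\rtimes_\sigma\Z$ is separable and nuclear: the type~I building blocks are nuclear, nuclearity is inherited by the inductive limit $E$, and it is preserved under the crossed product by the amenable group $\Z$. Since $\Prim{E\rtimes_\sigma\Z}\cong\Prim{B}$ and both algebras are separable and nuclear, Remark \ref{R:iso-of-prim} (i.e.\ \cite[chp.~1, cor.~L]{K.book}) promotes this homeomorphism of primitive ideal spaces to a \Cast-isomorphism $\left(E\rtimes_\sigma\Z\right)\otimes\OO{2}\otimes\K\cong B\otimes\OO{2}\otimes\K$, as required.

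The delicate points are not in the corollary but in the machinery it invokes; the genuine bookkeeping is only the verification of the hypotheses of Theorem \ref{T:main}, namely that $X=\Prim{B\otimes\OO{2}}$ is point-complete and that $P=\Prim{C}$ is locally compact Polish, together with the correct reading of $\Psi$ off the regular subalgebra $C$ via Lemma \ref{L:reg-abel-psi}. Granting these, the homeomorphism $X\cong\Prim{E\rtimes_\sigma\Z}$ is already delivered by Theorem \ref{T:main} (where point-completeness is exactly what turns the lattice isomorphism into a homeomorphism), and in the nuclear case the \Cast-isomorphism follows at once from Remark \ref{R:iso-of-prim}, so I expect no serious obstacle in the corollary itself.
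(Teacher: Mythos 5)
Your proposal is correct and follows essentially the same route as the paper: the paper derives the corollary directly from Theorem \ref{T:main} via Remark \ref{R:zuTh-main}(ii) and Lemma \ref{L:reg-abel-psi}, exactly as you do, and handles the nuclear case by \cite[chp.~1, cor.~L]{K.book} (Remark \ref{R:iso-of-prim}). Your explicit verifications (point-completeness of $X$, that $P=\Prim{C}$ is locally compact Polish, and nuclearity of $E\rtimes_\sigma\Z$ via type~I building blocks, inductive limits and crossed products by $\Z$) are all consistent with what the paper leaves implicit.
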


\begin{COR} \Label{C:exist.regular}
A separable \Cast-algebra $B$ has a primitive ideal space 
$X=\Prim{B}$ 
which is the continuous pseudo-open 
and pseudo-epimorphic image 
of a locally compact Polish space $P$ 
if and only if $B\otimes\OO{2}$
contains a ``regular'' Abelian 
\Cast-sub\-al\-ge\-bra in the sense of
Definition \ref{D:regular}. 
\end{COR}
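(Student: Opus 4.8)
The plan is to pass, in both directions, through the existence of a map $\Psi$ from $\latO{X}$ into $\latO{P}$ with the properties (I)--(IV) of Definition \ref{D:l-g-preserv}, where $X:=\Prim{B}$ and $P$ is a suitable locally compact Polish space; recall here that $X\cong\Prim{B\otimes\OO{2}}$ is point-complete, being an open continuous image of the Polish space of pure states of $B$. For the \emph{only if} part, suppose $C\subset B\otimes\OO{2}$ is a regular Abelian \Cast-subalgebra. Since $B\otimes\OO{2}$ is separable, $C\cong\Cont[0]{P}$ for the second countable locally compact Hausdorff---hence locally compact Polish---space $P:=\Prim{C}$. By Lemma \ref{L:reg-abel-psi} the map $J\mapsto C\cap J$ induces a $\Psi\colon\latO{\Prim{B\otimes\OO{2}}}\to\latO{P}$ with (I)--(IV), and identifying $\Prim{B\otimes\OO{2}}$ with the point-complete space $X$, Proposition \ref{P:map-Psi-pi} yields a unique continuous $\pi\colon P\to X$ with $\Psi(U)=\pi^{-1}U$ that is pseudo-open and pseudo-epimorphic.

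For the \emph{if} part, I start from a continuous, pseudo-open and pseudo-epimorphic $\pi\colon P\to X$ with $P$ locally compact Polish. Then $\Psi(U):=\pi^{-1}U$ has properties (I)--(IV) by Proposition \ref{P:map-Psi-pi}, so Theorem \ref{T:main} applies and produces a separable \emph{nuclear} \Cast-algebra $A_X:=E\rtimes_\sigma\Z$ with $\Prim{A_X}\cong X$; by Theorem \ref{T:main}(iii) together with Remark \ref{R:zuTh-main}(i) it contains a regular Abelian \Cast-subalgebra $C_0\cong\Cont[0]{P}$. The task that remains is to transport such a subalgebra into $B\otimes\OO{2}$, and this is the step where the possible non-nuclearity of $B$ forces genuine work.

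First I note that $C':=C_0\otimes 1_{\OO{2}}\otimes e_{11}$ is a regular Abelian \Cast-subalgebra of $A_X\otimes\OO{2}\otimes\K$: tensoring a regular subalgebra by $1_{\OO{2}}$ preserves regularity because $\OO{2}$ is simple (so $\latI{A_X\otimes\OO{2}}\cong\latI{A_X}$), and compressing to the full corner $\,\cdot\otimes e_{11}\,$ of $\,\cdot\otimes\K\,$ preserves it as well. Now $A_X\otimes\K$ is separable, stable and exact (being nuclear), whereas $B\otimes\OO{2}\otimes\K$ is separable, stable and strongly purely infinite, as it absorbs $\OO{2}$. Applying the realization theorem \cite[thm.\ K]{K.book} (the general statement recalled after Theorem \ref{T:main}) to the lattice isomorphism $\Psi'$ induced by $\Prim{A_X}\cong X\cong\Prim{B}$ gives a non-degenerate nuclear *-mono\-mor\-phism $h\colon A_X\otimes\OO{2}\otimes\K\hookrightarrow B\otimes\OO{2}\otimes\K$ with $h^{-1}(J)=\Psi'(J)$ for every $J$. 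Because $h$ is injective and $\Psi'$ is a lattice isomorphism, $h(C')$ is again regular and Abelian: one has $h(C')\cap J=h\!\left(C'\cap\Psi'(J)\right)$, so separation of ideals follows from injectivity of $\Psi'$ and additivity from $\Psi'(J_1+J_2)=\Psi'(J_1)+\Psi'(J_2)$ and the regularity of $C'$.

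Finally one descends from $B\otimes\OO{2}\otimes\K$ to $B\otimes\OO{2}$. Here $h(C')$ sits in the corner $h(p)\,(B\otimes\OO{2}\otimes\K)\,h(p)$ for $p:=1\otimes 1\otimes e_{11}$, and $h(p)$ is full because $p$ is full in $A_X\otimes\OO{2}\otimes\K$ and $h$ realizes the identification of ideal lattices. Since $B\otimes\OO{2}\otimes\K$ is stable and $\OO{2}$-absorbing, $h(p)$ will be Murray--von Neumann equivalent to $1\otimes 1\otimes e_{11}$; conjugating by an implementing partial isometry carries $h(C')$ into the full corner $B\otimes\OO{2}\otimes e_{11}\cong B\otimes\OO{2}$. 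As conjugation by a multiplier partial isometry and passage to a full (hence regular) corner both preserve regularity, the outcome is a regular Abelian \Cast-subalgebra of $B\otimes\OO{2}$. The hard part will be exactly this transport: one cannot identify $B\otimes\OO{2}$ with the model $A_X\otimes\OO{2}$ directly, so one must exploit the strong pure infiniteness of $B\otimes\OO{2}$ to realize the ideal-lattice isomorphism by an embedding and then correct for the stabilization; everything else is a formal consequence of Lemma \ref{L:reg-abel-psi}, Proposition \ref{P:map-Psi-pi} and Theorem \ref{T:main}.
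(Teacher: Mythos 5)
Your ``only if'' direction coincides with the paper's (Lemma \ref{L:reg-abel-psi} followed by Proposition \ref{P:map-Psi-pi}), and the first half of your converse is also the right mechanism: realizing the ideal-lattice morphism by a non-degenerate nuclear *-monomorphism into $B\otimes\OO{2}\otimes\K$ via \cite[thm.~K]{K.book}. One remark before the main point: the detour through Theorem \ref{T:main} is unnecessary, because the paper applies the realization theorem directly to the nuclear (hence exact) source $\Cont[0]{P,\OO{2}\otimes\K}\cong\Cont[0]{P}\otimes\OO{2}\otimes\K$ together with the map $\Psi(U)=\pi^{-1}(U)$; this also removes the need to transport regularity through $h$ a second time.

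The genuine gap is your final descent from $B':=B\otimes\OO{2}\otimes\K$ to $B\otimes\OO{2}$. Write $A':=A_X\otimes\OO{2}\otimes\K$, $p:=1\otimes1\otimes e_{11}\in\Mult{A'}$ and $q:=1\otimes1\otimes e_{11}\in\Mult{B'}$. Your claim that $h(p)$ is Murray--von Neumann equivalent (or even subequivalent) to $q$ is false whenever $B\otimes\OO{2}$ is not stable, e.g.\ for every unital $B$. Indeed, since $h$ is non-degenerate, $h(pA'p)\cong A_X\otimes\OO{2}$ sits as a \emph{non-degenerate} \Cast-subalgebra inside the corner $h(p)B'h(p)$ (use that $h(A')B'$ is dense in $B'$ and that $pA'p\cdot pA'$ is dense in $pA'$ by Cohen factorization); as $A_X\otimes\OO{2}$ is nonzero and stable, hence non-unital, and a unital \Cast-algebra contains no non-unital non-degenerate \Cast-subalgebra (Cohen gives $1=de$ with $d\in D$, so $dd^*\in D$ is invertible and $1=g(dd^*)\in D$ for suitable $g$ vanishing at $0$), the corner $h(p)B'h(p)$ is non-unital; one can even check it is stable and full, so that $h(p)\sim 1_{\Mult{B'}}$ by Remark \ref{R:full-corner}. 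On the other hand, a partial isometry $w\in\Mult{B'}$ with $w^*w=h(p)$ and $ww^*=:r\le q$ would give an isomorphism $x\mapsto wxw^*$ from $h(p)B'h(p)$ onto $rB'r$, where $r\in q\Mult{B'}q\cong\Mult{qB'q}=\Mult{B\otimes\OO{2}}$; for unital $B$ this multiplier algebra equals $B\otimes\OO{2}$ itself, so $rB'r=r\left(qB'q\right)r$ is unital with unit $r$ --- a contradiction. Hence no implementing partial isometry exists. The correct descent --- and this is what the paper does --- is not a compression but an inclusion of algebras: fix any *-monomorphism $\K\hookrightarrow\OO{2}$ and use $\OO{2}\otimes\OO{2}\cong\OO{2}$ to regard $B\otimes\OO{2}\otimes\K$ as a \Cast-subalgebra of $B\otimes\OO{2}$. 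Because $\OO{2}$ is simple and nuclear, the closed ideals of both algebras are parametrized by $\latI{B}$, and this inclusion matches the two parametrizations; it therefore carries the regular Abelian subalgebra $\varphi(\Cont[0]{P})$ of $B\otimes\OO{2}\otimes\K$ to a regular Abelian \Cast-subalgebra of $B\otimes\OO{2}$, finishing the converse without any equivalence of projections.
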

\begin{proof}
If $B\otimes\OO{2}$ contains an Abelian \Cast-subalgebra
$C\cong\Cont[0]{P}$ which is regular in $B\otimes\OO{2}$,
then $X=\Prim{B}\cong\Prim{B\otimes\OO{2}}$ is the pseudo-open 
and pseudo-epimorphic image of $P$ by 
Corollary \ref{C:char-prim-nuc}.

Conversely if $\pi\colon P\to X$ is pseudo-open and pseudo-epimorphic,
then there is a *-monomorphism $\varphi$ from $\Cont[0]{P}$
into $B\otimes\OO{2}$ which is given by the embeddings
$$ \Cont[0]{P}\cong\Cont[0]{P,\C\cdot 1\otimes e_{11}}
\subset \Cont[0]{P,\OO{2}\otimes\K}
\hookrightarrow B\otimes\OO{2}\otimes\K
\subset B\otimes\OO{2} $$
such that $\Psi(U):=\pi^{-1}(U)$ is induced by
$J\mapsto \varphi^{-1}(\varphi(\Cont[0]{P})\cap J)$.
Here the inclusion $\Cont[0]{P,\OO{2}\otimes\K}
\hookrightarrow B\otimes\OO{2}\otimes\K$
comes from \cite[chp.~1, thm.~K]{K.book}
because $B\otimes\OO{2}\otimes\K$ is stable and
strongly purely infinite.
\end{proof}

\begin{REM}
The construction used in the Theorem \ref{T:main} implies,
for $X=\{\text{point}\}$, that 
$E\rtimes_\sigma\Z$ is isomorphic to 
$\OO{\infty}\otimes\K 
=\OOO{\ell _2}\otimes\K$. 
If $X=P$ is \Name{Hausdorff} and
$\Psi =\id{P}$, then 
$E\rtimes_\sigma\Z$ is isomorphic
to $\Cont[0]{P,\OO{\infty}\otimes\K}$.
\end{REM}

\section{\Name{Dini} spaces}\Label{sec:7}
Throughout this section we require that 
$X$ is a point-complete
and second countable $\T{0}$-space. 
(If one of this conditions
is not satisfied, 
then Definitions and results become more complicate.)
\begin{DEF}\Label{D:Dini}
A function $g\colon X\to [0,\infty)$ is a 
\emph{Dini function} on $X$ 
if $g$ is a lower semi-continuous and
$\sup g \left( \bigcap_n F_n \right) =
\inf_n\sup g \left( F_n \right) $
  for every decreasing sequence 
$F_1\supset F_2\supset\dots$
  of closed subsets $F_n$ of $X$.
(Here we use the convention $\sup\emptyset:=0$.)

We call $X$ a \emph{Dini space} if the supports of 
Dini functions build a base of the topology of $X$.
\end{DEF}

It turns out that
\begin{Aufl}
\item[(i)]  If $g$, $h$ are Dini functions on $X$ then 
$g$ is bounded 
  and $\max ( g,h ) $ is a Dini functions.
\item[(ii)]  The set of Dini functions on $X$ is 
closed under uniform convergence.
\item[(iii)]  If $g$ is a Dini function and 
$f\colon \R_+\to\R_+$ 
is an increasing
  continuous function with $f(0)=0$ then 
$x\mapsto f(g(x))$
  is a Dini function.
\end{Aufl}

In \cite{DiniEK1} the second-named author 
has shown that for a 
non-negative lower semi-continuous
function $g$ on $X$
the following three properties (iv)-(vi) are equivalent
(and justify the name ``Dini'' function): 
\begin{Aufl}
\item[(iv)]  $g$ is a Dini function.
\item[(v)] 
Every increasing sequence  
$0\leq f_1\leq f_2\leq \ldots$ of
non-negative lower semi-continuous functions
on $X$ with $g(x)=\sup _n f_n(x)$ for all $x\in X$
converges uniformly to 
$g$ (i.e.~$\lim _n \| g-f_n \|_\infty =0$)
\item[(vi)] 
For every $\gamma >0$
the $G_\delta$-set $\{ y\in X\fdg g(y)\ge \gamma \}$
is quasi-compact and $g$ is bounded.
\end{Aufl}

In general the set of Dini functions is not convex
and is not closed under multiplication or minima,
because of the following result in \cite{DiniEK2}:\\
For  separable \Cast-algebra $A$ the Dini functions $g$
on $\Prim{A}$ are nothing else than 
the generalized Gelfand transforms $g=N(a)$ of elements
$a\in A$ given by $N(a)(J)=\| a+ J\|$ 
for primitive ideals $J$ of $A$.

\medskip

It has been shown in \cite{DiniEK1}
that a $X$ point-complete second-countable $\T{0}$ space is a Dini space 
if and only if $X$
locally quasi-compact. In particular, primitive ideal spaces of separable 
\Cast-algebras are Dini spaces.

All Dini spaces are continuous and open images 
of Polish spaces \cite{DiniEK3}.
Continuous and open images $X$ of Polish spaces have
l.u.b.\ and g.l.b.\ compatible isomorphic embeddings of 
their lattices of
open subsets into lattices of open subsets of 
Polish spaces.  
Unfortunately
a Polish space $P$ is the pseudo-open and pseudo-epimorphic 
image of a locally
compact space $Q$ if and only if 
$P$ is itself locally compact.
Thus there is still an \emph{open question} 
whether every Dini space is isomorphic to 
the primitive ideal space of a separable 
\Cast-algebra or not.

\FRem{below good??}

Our results yield equivalent
descriptions of primitive ideal 
spaces of separable nuclear
\Cast-spaces among the Dini spaces, in fact,
the following properties
(a)--(e) of a Dini space $X$ are equivalent:

\begin{Aufl}
\item[(a)]
$X$ is isomorphic to the primitive ideal space
of a separable \emph{nuclear} \Cast-algebra.
\item[(b)]
$\latF{X}$ is lattice-isomorphic to a 
sub-lattice $\mathcal{G} $
of $\latF{Y}$ which is closed under
forming of l.u.b.\ and g.l.b.\ 
for some \emph{locally compact} 
Polish space $Y$. (Here 
$\latF{Y}$ means the lattice of
closed subsets of $Y$. The g.l.b.\ is just the
intersection, and the l.u.b.\ is the closure
of the union of a family in $\latF{Y}$.)
I.e., there is a map
$\Psi$
from the open subsets $\latO{X}$ of $X$ into 
the open subsets $\latO{Y}$ of $Y$
with properties (I)--(IV) of Definition \ref{D:l-g-preserv}.
%
%

\item[(c)]
$\latF{(0,1]_{lsc}\times X}$ is 
the projective
limit of $\latF{P_n\setminus \{q_n \}}$
for pointed finite one-dimensional polyhedral 
$(P_n,q_n)$ (in a lattice sense). The connecting maps 
$\Phi_n \colon \latF{P_{n+1}\setminus \{q_{n+1} \}}\to 
\latF{P_n\setminus \{q_n \}}$
satisfy (with $Y_n=P_n \setminus \{ q_n\}$):
\begin{Aufl}
\item[(I$_0$')   ] $\Phi_n(Y_{n+1})=Y_n\,$, 
$\,\,\Phi_n (\emptyset)=\emptyset$.
\item[(II')  ] $\Phi_n (\overline{\bigcup_\tau F_\tau})=
\overline{\bigcup_\tau \Phi_n (F_\tau)}$
for every family $\{ F_\tau \}_\tau$ of closed
subsets of $\latF{Y_{n+1}}$,
\item[(III$_0$')] $\Phi_n(\bigcap _k F_k)=
\bigcap _k\Phi_n (F_k)$ for every \emph{decreasing} 
sequence
$F_1\supset F_2\supset \cdots\,$ in $\latF{Y_{n+1}}$, and
\end{Aufl}

\item[(d)]
There are a locally compact Polish space $Y$ and a
continuous map $\pi\colon Y\to X$
such that, for closed subset $F\subset G$ of $X$ 
with $F\not=G$, the set $G\setminus F$ contains
a point of $\pi (Y)$, and that
$$\overline{\bigcup_n \pi ^{-1}(F_n)}=
\pi ^{-1}\left( \, \overline{\bigcup_n F_n}\, \right)$$
for every increasing sequence $F_1\subset F_2\subset \cdots\,$
of closed subsets of $X$. 

\item[(e)]  
$\latO{X}$ is the projective limit
of a sequence of maps 
$$\Psi_n\colon \latO{X_{n+1}} \to \latO{X_n}$$
with properties (I), (II) and (III$_0$)
and $X_n\cong \Prim{A_n}$ for a separable exact \Cast-algebra.
\end{Aufl}

\begin{proof}
\Ad{(b)$\Longleftrightarrow$(d)} by Proposition \ref{P:map-Psi-pi} (note
that we can restrict to increasing sequences of closed subsets
because $X$ is second countable).

\Ad{(a)$\Longleftrightarrow$(d)} by Corollary \ref{C:char-prim-nuc}.

\Ad{(a)$\Longleftrightarrow$(e)}
Clearly, (a) implies (e) with $X_n=X$ and $\Psi_n=\id{X}$.

(e) implies (a)
because for every separable
exact \Cast--algebra $A$ there is a \emph{nuclear}
stable separable  \Cast-algebra
$B$ with the same primitive ideal space and 
$B\cong B\otimes \OO{2}$ (cf. \cite[cor.\ 12.2.20]{K.book}).
Then $\Psi_n$ is induced by a non-de\-gen\-erate
*-mono\-mor\-phism
$h_n\colon B_n\hookrightarrow B_{n+1}$ 
(cf.\ \cite[thm.\ K]{K.book})
and
$B:=\indlim{h_n\colon B_n\to B_{n+1}}$ has 
primitive ideal space $\cong X$. 

\Ad{(a)$\Longrightarrow$(c)}
by \cite[Thm.\ 5.12, Prop. 6.2]{KR03},
because $(0,1]_{lsc}\times X$ is the primitive ideal space of
$\mathcal{A}_{[0,1]}\otimes B$ where $B$ is separable and nuclear and
$X\cong \Prim{B}$.

\Ad{(c)$\Longrightarrow$(d)}
(c) implies that $Z=(0,1]_{lsc}\times X$ satisfies (e) (with $Z$ in place
of $X$).
Thus, there is a separable nuclear \Cast-algebra $A$ with primitive ideal
space isomorphic to $Z$ by the implication (e)$\Rightarrow$(a).
From the implication (a)$\Rightarrow$(d) it follows that $Z$ is a
pseudo-epimorphic and pseudo-open image of a locally compact
Polish space $Y$.  The composition $p_2\circ\pi$ of the 
map $\pi\colon Y\to Z$ with the 
projection $p_2\colon (t,x)\in Z\to x\in X$ is again 
pseudo-epimorphic and pseudo-open.
\end{proof}

\begin{REM}\Label{R:Psi-for-Dini}
For every Dini space $X$ there is a map 
$\Psi\colon \latO{X}\to \latO{\R_+\times \N}$ with
properties (I), (II), (III$_0$) and (IV).

If, in addition, $X$ is isomorphic to the primitive
ideal space of a separable \Cast-algebra, then
there exists a separable \emph{nuclear} \Cast-algebra
$A$ and an epimorphism
$\Phi\colon \latO{\Prim{A}}\to \latO{X}$ 
with (I), (II) and (III$_0$). The latter map $\Phi$ is
unknown for general Dini spaces.
\end{REM}

\appendix

\section{Preliminaries}
\Label{sec:Topol}
\subsection{$\T{0}$-spaces}
\Label{ssec:2.1}
\begin{DEF}\Label{D:MscTopSpc}
Suppose that $X$ is a $\T{0}$-space 
(i.e.\ $\overline{\{ x\}}=\overline{\{ y\}}$ implies $x=y$).
$X$ is called

\noindent
(i)
\emph{second countable} if the topology of $X$ contains a
  countable base,

\noindent
(ii)
\emph{prime} if it is not the union of two closed true subsets
  of $X$ (and a subset $F\subset X$
  is called prime, if it is prime in its relative
  topology, \Name{Hausdorff} \cite[p.~231]{Hausd} 
  calls a non-prime closed subset 
  \emph{decomposable}), and

\noindent
(iii) 
\emph{point-complete} if every closed prime subset of $X$
  is the closure of a singleton
  (the name ``spectral space'' is used in 
   \cite[def.~4.9]{HoffKeim}, others use the terminology
   ``sober space'' for our point-complete spaces).
\FRem{ref}
\end{DEF}

For a topological space 
$X$ to be prime it is of course equivalent
that it does not contain two disjoint open subsets.  
Equivalently
every open subset of $X$ is dense in $X$.  
If a subspace $F$ of $X$ is prime, then so is its closure 
$\overline{F}$, and, by that,
the closure of a singleton is prime.
In the case where 
$X$ is the primitive ideal space of a separable
\Cast-algebra $A$ every prime closed set is 
the closure of a singleton
because there is an open and continuous map from the 
Polish space of pure states on $A$ onto $X$.

\subsection{Maps related to $\Psi$}\Label{ssec:maps}
Here we give some results on the relation between 
lattice maps, point maps, and lower semi-continuous
selections.
We use in this paper Remark \ref{R:sublattice}, 
Lemma \ref{L:char-pseudo-open},
Proposition \ref{P:map-Psi-pi},
Corollary \ref{C:lat-iso},
and Lemma \ref{L:exist-Psi-eqiv.-cp-map}.

We adopt a more general viewpoint and
suppose that $X$, $Y$ are $\T{0}$~spaces, that 
$\Psi\colon \,\latO{X}\to\latO{Y}$ satisfies property 
(III) of Definition \ref{D:l-g-preserv} and 
the following
weaker conditions (I$_0$) and (II$_0$) instead of (I) and (II): 
\begin{Aufl}
\item[(I$_0$)] $\Psi(X)=Y$, $\Psi(\emptyset)=\emptyset$,
\item[(II$_0$)] 
  $\Psi(U\cap V) =\Psi(U)\cap\Psi(V)$ for all open
  subsets $U,V\subset X$.
\end{Aufl}
For example, 
if $\pi\colon Y\to X$ is a continuous map, 
then
the map $\Psi\fdg   \latO{X}\to \latO{Y}$ defined by 
$\Psi(U):=\pi^{-1}(U)$
obviously satisfies 
properties (I$_0$), (II$_0$) and (III). 
Conversely, for every
map $\Psi\fdg   \latO{X}\to \latO{Y}$ with 
(I$_0$), (II$_0$)
and (III) there is a unique continuous map $\pi\colon Y\to X$
with $\Psi(U)=\pi^{-1}(U)$ for open $U\subset X$ 
(cf.\ Proposition \ref{P:map-Psi-pi} below).

On account of properties (I$_0$) and (II$_0$)
one can define a new topology on $Y$ by an interior operation
\begin{eqnarray*}
Z^{\circ\Psi} &:=&\bigcup \{ \Psi(U)\fdg  U\in\latO{X},\, 
  \Psi(U)\subset Z \}\subset Z^\circ 
\end{eqnarray*}
for every subset $Z\subset Y$, 
or the corresponding closure operation:
\begin{eqnarray*}
\overline{Z}^\Psi &:=& 
\bigcap \{ Y\setminus\Psi(U)\fdg U\in\latO{X},\, 
  \Psi(U)\subset Y\setminus Z \} \subset \overline{Z}\,.
\end{eqnarray*}
If $\Psi$ satisfies property (III), then
$\Psi \left( \latO{X} \right) $ is not only a base of this 
$\Psi$-topology but
is the set of all $\Psi$-open sets.
The $\Psi$-topology is coarser than the given 
$\T{0}$-topology of $Y$ 
and is in general not $\T{0}$.

\begin{DEF}\Label{D:pseudo-inv}
Suppose that $X$, $Y$ are $\T{0}$ and 
that $\Psi\colon\latO{X}\to\latO{Y}$ is an arbitrary map.
We call the (order preserving) map
$$ 
\Phi\colon V\in \latO{Y}
\mapsto 
\bigcup 
\left\{ U\in\latO{X}\fdg \Psi(U)\subset V \right\}  
\in \latO{X}
$$
the \emph{pseudo-left-inverse} of $\Psi$.
(It is a left-inverse $\Phi$ of $\Psi$ 
if $\Psi$ 
satisfies the properties (I), (III) and (IV) 
of Definition \ref{D:l-g-preserv}.) 
\end{DEF}
\begin{LEM}\Label{L:Phi}
Suppose that $\Psi:\latO{X}\to\latO{Y}$ satisfies
properties (I$_0$) and (III) of Definition \ref{D:l-g-preserv}. 
Let $\Phi$ denote the pseudo-left-inverse of $\Psi$,  then
\begin{Aufl}
\item[(a)] $\Psi(\Phi(V)) \subset V$ for $V\in \latO{Y}$,
        $U\subset \Phi ( \Psi(U) )$ for $U\in \latO{X}$.
\item[(b)] $\Phi^{-1}(X)=\{ Y\}$ and $\Phi(\emptyset)$
        is the biggest open subset $U_0$ of $X$ with 
$\Psi\left( U_0 \right)=\emptyset$,
\item[(c)] 
$
\Phi\left(\right(\bigcap_\alpha V_\alpha\left)^\circ\right) 
  =\left( \bigcap_\alpha\Phi(V_\alpha)\right) ^\circ
$ 
for every family 
  $\{ V_\alpha \}_\alpha \subset \latO{Y}$.
\item[(d)] $\Phi\circ \Psi\circ \Phi=\Phi$ and
  $\Psi\circ \Phi\circ \Psi=\Psi$.
\item[(e)] $\Psi(\Phi(V))=V^{\circ\Psi}$ and
  $\Phi (V)=\Phi\left(V^{\circ\Psi}\right)$ for any
  $V\in\latO{Y}$
  if $\Psi$ has in addition property (II$_0$).
\end{Aufl}
$\Phi\circ \Psi=\id{\,\,\latO{X}}$ if $\Psi$ is injective.
\end{LEM}
Clearly (e) means 
$\Phi \left( Y\setminus\overline{Z} \right) 
= \Phi \left( Y\setminus\overline{Z}^\Psi \right) $ 
for any subset 
$Z\subset Y$.
\begin{proof}
\Ad{(a)} $U\subset \Phi ( \Psi(U) )$ by Definition
\ref{D:pseudo-inv}.

$\Psi ( \Phi(V)) 
   =\bigcup \{ \Psi(U)\fdg U\in\latO{X},
  \Psi(U)\subset V \}\subset V$ 
by (III) and \ref{D:pseudo-inv}.

\Ad{(b)} If $\Phi(V)=X$, then $Y=\Psi(X)\subset V$.
$\Psi(U)= \emptyset$ implies $U\subset \Phi(\emptyset)$.
 
\Ad{(c)} 
By monotony of $\Phi$ it follows 
for 
$W:=\left(\bigcap_\alpha V_\alpha\right)^\circ $ and
$U:=
\left(\bigcap_\alpha\Phi\left(V_\alpha \right)\right)^\circ$
that
$\Phi(W)
\subset \Phi\left(V_\alpha\right)$ 
and 
$\Phi(W) \subset U$,
because $\Phi(W)$ is open.
Clearly, $U\subset \Phi\left(V_\alpha\right)$ for all $\alpha$ 
and $U\in\latO{X}$. 

Then $\Psi(U)\subset 
\Psi \left( \Phi\left(V_\alpha\right)\right) \subset V_\alpha$
for all $\alpha$, 
because $\Psi$ is increasing by property (III).
Since $\Psi(U)$ is open, this implies
$\Psi(U)\subset W$. Thus  $U\subset\Phi(W)$
by definition of $\Phi$. Hence, $\Phi(W)=U$.

\Ad{(d)} Part (a) implies
$\Phi(V)\subset 
\Phi \left( \Psi \left( \Phi(V) \right)\right) \subset \Phi(V)$,
for $U=\Phi(V)$, because $\Phi$ is increasing.
In the same way:
$\Psi(U)\subset 
\Psi\left(\Phi\left(\Psi(U)\right)\right)\subset \Psi (V)$
with $V:=\Psi(U)$, because $\Psi$ is increasing by 
property (III).

\Ad{(e)}
$\Psi \left( \Phi(V)\right) =V^{\circ\Psi}$ by property (III).
Thus $\Phi(V)=\Phi\left(V^{\circ\Psi}\right)$ by (d).

If $\Psi$ is injective, then 
$\Psi(U)=\Psi\left(\Phi\circ \Psi(U)\right)$
implies $U=\Phi\circ \Psi(U)$, i.e.~$\Phi\circ \Psi=\id{}$ 
by (d).
\end{proof}

\begin{REM}\Label{R:sublattice}
Suppose that $\mathcal{Z}$ is a sub-lattice of $\latO{Y}$
that contains $Y$, $\emptyset$ and
l.u.b.~and g.l.b.~of families $\{ U_\alpha\}$  in $\mathcal{Z}$.
Then Lemma \ref{L:Phi} implies the existence
of an order-preserving map $\Theta\colon \latO{Y}\to \latO{Y}$
with $\Theta|\mathcal{Z}=\id{\mathcal{Z}}$, 
$\Theta(\latO{Y})=\mathcal{Z}$,
$\Theta\circ \Theta= \Theta$, 
$\Theta(V)\subset V$ for $V\in \latO{Y}$ and
$\Theta\left(\left(\bigcap_\alpha V_\alpha\right)^\circ \right) 
= 
\left(\bigcap_\alpha \Theta\left(V_\alpha\right)\right)^\circ$ 
for every family 
$\{ V_\alpha \}_\alpha \subset \latO{Y}$.

Indeed, $\mathcal{Z}$ is the set 
of all open sets of a coarser topology on $Y$.
Let $R$ denote the equivalence relation 
$x\sim _{\mathcal{Z}}y $
given by 
$ \overline{\{ x\}}^{\mathcal{Z}}
=\overline{\{ y\}}^{\mathcal{Z}}$.
The quotient space $X:=Y/R$ is a $\T{0}$-space
and the 
map $\pi\colon y\in Y\to [y]_R\in Y/R$ to the
equivalence classes is continuous. Moreover 
$\Psi(U):=\pi^{-1}U$
defines a lattice iso\-mor\-phism $\Psi$ from
$\latO{X}$ onto $\mathcal{Z}\subset \latO{Y}$.
$\Psi\colon \latO{X}\to \latO{Y}$ satisfies (I)--(IV)
of Definition \ref{D:l-g-preserv}, because
$\mathcal{Z}$ is closed under unions and 
$\latO{Y}$-interiors of
intersections of families in $\mathcal{Z}$.
Let $\Phi\colon \latO{Y}\to \latO{X}$ the left-inverse
defined by Lemma \ref{L:Phi} and let $\Theta:=\Phi\circ \Psi$.
Then $\Theta\colon \latO{Y}\to \latO{Y}$ has the above listed
properties by Lemma \ref{L:Phi}.
\end{REM}

\begin{LEM} \Label{L:on-pi-def}
Suppose that $X$ and $Y$ are $\T{0}$ spaces and that 
$\Psi\colon\latO{X}\to\latO{Y}$ 
satisfies properties (III) of
Definition \ref{D:l-g-preserv},
\begin{Aufl}
\item[(I$_0$)] $\Psi(X)=Y$, $\Psi(\emptyset)=\emptyset$, and
\item[(II$_0$)] 
  $\Psi ( U\cap V ) 
  =\Psi(U)\cap\Psi(V)$ for all open
  subsets $U,V\subset X$.
\end{Aufl}
Let $\Phi$ denote the pseudo-left-inverse of $\Psi$.  
Then
for any prime closed set $F\subset Y$ the set
$X\setminus\Phi ( Y\setminus F ) $ is a prime closed
subset of $X$.
 
In particular, if $X$ is point-complete, then 
the complement $F_y$ of
$$\Phi\left(Y\setminus  \overline{\{y\}}\right)= 
\bigcup  \left\{  U\in\latO{X}\fdg y\not\in\Psi(U) \right\}$$
is the closure of a singleton $x=:\pi(y)$ for every $y\in Y$.
\end{LEM}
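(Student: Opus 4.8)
The plan is to reduce the primeness of $G:=X\setminus\Phi(Y\setminus F)$ to that of $F$ by transporting the relative topology of $G$ across $\Psi$. First I would note that $\Phi(Y\setminus F)$ is a union of open sets, hence open, so $G$ is closed. The crucial step is the equivalence, valid for every open $U\subset X$,
\[
U\cap G=\emptyset \quad\Longleftrightarrow\quad \Psi(U)\cap F=\emptyset .
\]
Indeed, $U\cap G=\emptyset$ means $U\subset\Phi(Y\setminus F)$; if so, then by monotonicity of $\Psi$ (property (III)) together with the inclusion $\Psi(\Phi(V))\subset V$ from Lemma~\ref{L:Phi}(a) one gets $\Psi(U)\subset\Psi(\Phi(Y\setminus F))\subset Y\setminus F$, i.e.\ $\Psi(U)\cap F=\emptyset$. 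Conversely, $\Psi(U)\subset Y\setminus F$ places $U$ among the sets defining $\Phi(Y\setminus F)$, so $U\subset\Phi(Y\setminus F)$.

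Given this translation I would argue primeness directly. A closed set $G$ is prime exactly when any two nonempty relatively open subsets meet; since the relatively open subsets of $G$ are precisely the $U\cap G$ with $U\in\latO{X}$, this amounts to: whenever $U_1\cap G\neq\emptyset$ and $U_2\cap G\neq\emptyset$, then $(U_1\cap U_2)\cap G\neq\emptyset$. By the displayed equivalence this is the same as: whenever $\Psi(U_1)\cap F\neq\emptyset$ and $\Psi(U_2)\cap F\neq\emptyset$, then $\Psi(U_1\cap U_2)\cap F\neq\emptyset$. Property (II$_0$) gives $\Psi(U_1\cap U_2)=\Psi(U_1)\cap\Psi(U_2)$, so the needed conclusion $\Psi(U_1)\cap\Psi(U_2)\cap F\neq\emptyset$ is exactly primeness of $F$ applied to the two open subsets $\Psi(U_1),\Psi(U_2)$ of $Y$. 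This settles the first assertion.

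For the second assertion I would take $F=\overline{\{y\}}$, which is prime closed in $Y$ (the closure of a singleton is prime). To match $F_y=X\setminus\Phi(Y\setminus\overline{\{y\}})$ with the displayed formula, I observe that for open $\Psi(U)$ one has $\Psi(U)\cap\overline{\{y\}}=\emptyset$ iff $y\notin\Psi(U)$, since an open set meets $\overline{\{y\}}$ precisely when it contains $y$; hence $\Phi(Y\setminus\overline{\{y\}})=\bigcup\{U\in\latO{X}\fdg y\notin\Psi(U)\}$ by the definition of $\Phi$. By the first part $F_y$ is prime and closed, so point-completeness of $X$ yields $F_y=\overline{\{x\}}$ for some $x$, and the $\T{0}$-axiom makes $x$ unique; setting $\pi(y):=x$ completes the argument.

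I do not anticipate a serious obstacle: the proof is the translation equivalence plus primeness bookkeeping. The point demanding care is the direction $U\subset\Phi(Y\setminus F)\Rightarrow\Psi(U)\cap F=\emptyset$, which genuinely uses both monotonicity from (III) and $\Psi(\Phi(V))\subset V$ from Lemma~\ref{L:Phi}(a); and throughout one repeatedly uses that each $\Psi(U)$ is \emph{open}, so that ``meets $\overline{\{y\}}$'' collapses to ``contains $y$'' and that (II$_0$) applies to images of intersections.
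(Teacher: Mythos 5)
Your proof is correct and follows essentially the same route as the paper's: both rest on the equivalence $U\subset\Phi(Y\setminus F)\Leftrightarrow\Psi(U)\subset Y\setminus F$ (monotonicity from (III) plus $\Psi(\Phi(V))\subset V$ from Lemma~\ref{L:Phi}(a)) and then invoke (II$_0$) to transfer primeness. The paper merely phrases the same argument contrapositively, decomposing $F$ into two proper closed subsets when $\Phi(Y\setminus F)$ is a proper intersection of two open sets, whereas you argue directly that nonempty relatively open subsets of $X\setminus\Phi(Y\setminus F)$ must meet.
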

\begin{proof}
Suppose $X\setminus\Phi(Y\setminus F)$ is not prime, 
i.e.\ $\Phi(Y\setminus F)$ is the intersection of two open
subsets $U_1,U_2\subset X$
both different from $\Phi(Y\setminus F)$.  Then 
$\Psi\left( \Phi(Y\setminus F) \right) 
\subset \Psi\left(U_j\right)$ 
for $j=1,2$ and
$\Psi \left( \Phi(Y\setminus F) \right) 
=\Psi\left(U_1\cap U_2\right)
=\Psi\left(U_1\right)\cap \Psi\left(U_2\right)$ 
by property (II$_0$) and monotony of $\Psi$.
Further 
$\Psi\left(U_1\right)\cap\Psi\left(U_2\right)=
\Psi \left( \Phi(Y\setminus F)\right) 
\subset Y\setminus F$.  This is equivalent to
$F\subset Y\setminus \left( \Psi(U_1\cap\Psi(U_2) \right)$ 
and implies 
$F= \left( F\cap \left( Y \setminus \Psi(U_1) \right) \right) 
\cup \left( F\cap \left( Y\setminus\Psi(U_2)\right)\right)$,
i.e.~$F$ is the union of two closed sets.  But both sets
$F\cap \left( Y\setminus\Psi(U_j)\right)$, 
$j=1,2$ are different from $F$,
since $U_j\not\subset\Phi(Y\setminus F)$ implies
$\Psi(U_j)\not\subset Y\setminus F$ for $j=1,2$.
So $F$ is not prime.
\end{proof}
By Lemma \ref{L:on-pi-def} we get a well-defined  map 
$$ 
\Phi'\colon F\in Y^c:=\Prime{\latF{Y}}\mapsto 
X\setminus\Phi ( Y\setminus F )  \in X^c:=\Prime{\latF{X}}
$$
One can define maps $\eta$ 
that map every point to its closure, i.e.
\begin{eqnarray*}
\eta_Y \colon\; y\in Y &\mapsto& 
\overline{\{y\}}\in\Prime{\latF{Y}}\\
\eta_X \colon\; x\in X &\mapsto& 
\overline{\{x\}}\in\Prime{\latF{X}},
\end{eqnarray*}
which are injective because $X$, $Y$ are $\T{0}$-spaces.
If 
$\eta_X(X)=\Prime{\latF{X}}$), i.e.~if $X$ is point-complete,
then the map $\pi\colon Y\to X$ of 
Lemma \ref{L:on-pi-def} is given by 
$\pi:=\left(\eta_X \right)^{-1}\circ\Phi'\circ\eta_Y$ and
satisfies 
$\overline{\{\pi(y)\}}=\Phi'(\overline{\{y\}})$ for 
$y\in Y$.

\begin{LEM}\Label{L:pi-formula}
Suppose that 
$\Psi\colon \, \latO{X}\to \latO{Y}$ satisfies the
assumptions of Lemma \ref{L:on-pi-def} 
and that $X$ is point-complete.
Let $\pi\colon\, Y\to X$  as above. Then
$$\pi^{-1}U=\Psi(U)\qquad\text{for}\quad U\in\latO{X}.$$
In particular, $\pi\colon Y\to X$ is continuous.
\end{LEM}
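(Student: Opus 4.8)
The plan is to unwind the definition of $\pi$ into a single chain of equivalences comparing the membership $y\in\pi^{-1}U$ with $y\in\Psi(U)$, using throughout two elementary facts about $\T{0}$-spaces together with one adjunction between $\Psi$ and its pseudo-left-inverse $\Phi$. Recall that $\pi$ is defined (via Lemma \ref{L:on-pi-def} and point-completeness of $X$) by the requirement $\overline{\{\pi(y)\}}=X\setminus\Phi\bigl(Y\setminus\overline{\{y\}}\bigr)$.

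First I would record the adjunction: for $U\in\latO{X}$ and $V\in\latO{Y}$ one has $U\subset\Phi(V)$ if and only if $\Psi(U)\subset V$. The implication ``$\Leftarrow$'' is immediate from the definition of $\Phi$ as $\bigcup\{W\in\latO{X}\fdg\Psi(W)\subset V\}$, while ``$\Rightarrow$'' follows from the monotonicity of $\Psi$ (a consequence of property (III)) together with $\Psi(\Phi(V))\subset V$ from Lemma \ref{L:Phi}(a). Second, I would recall the standard point--open incidence valid in any topological space: for a point $z$ and an open set $O$, one has $z\in O$ if and only if $\overline{\{z\}}\cap O\neq\emptyset$; equivalently $\overline{\{z\}}\cap O=\emptyset$ iff $z\notin O$. (The nontrivial direction uses that $O$ is then an open neighbourhood of some $w\in\overline{\{z\}}$, forcing $z\in O$.)

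The heart of the argument is then, for fixed $y\in Y$ and $U\in\latO{X}$, the chain
\[
y\in\pi^{-1}U
\;\Longleftrightarrow\; \pi(y)\in U
\;\Longleftrightarrow\; \overline{\{\pi(y)\}}\cap U\neq\emptyset
\;\Longleftrightarrow\; U\not\subset\Phi\bigl(Y\setminus\overline{\{y\}}\bigr),
\]
where the last step uses $\overline{\{\pi(y)\}}=X\setminus\Phi\bigl(Y\setminus\overline{\{y\}}\bigr)$, so that $\overline{\{\pi(y)\}}\cap U=U\setminus\Phi\bigl(Y\setminus\overline{\{y\}}\bigr)$. Applying the (negated) adjunction with $V=Y\setminus\overline{\{y\}}$ turns the right-hand side into $\Psi(U)\not\subset Y\setminus\overline{\{y\}}$, i.e.\ $\Psi(U)\cap\overline{\{y\}}\neq\emptyset$, which by the point--open incidence applied in $Y$ is exactly $y\in\Psi(U)$. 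Hence $\pi^{-1}U=\Psi(U)$, and since each $\Psi(U)$ is open, $\pi$ is continuous.

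I do not expect a genuine obstacle: every step is a definitional unwinding, the Galois-type adjunction above, or the elementary closure/open-set incidence. The only places to be careful are bookkeeping the directions of the inclusions and the complementations across the chain, and ensuring point-completeness is invoked exactly where it is needed---namely to legitimise the very definition of $\pi(y)$ as \emph{the} point whose closure equals the prime closed set $X\setminus\Phi\bigl(Y\setminus\overline{\{y\}}\bigr)$ (Lemma \ref{L:on-pi-def}), with $\T{0}$-separation of $X$ guaranteeing uniqueness of that point.
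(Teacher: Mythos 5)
Your proof is correct and is essentially the paper's own argument: the paper runs the same chain of equivalences (the adjunction $U\subset\Phi(V)\Leftrightarrow\Psi(U)\subset V$ from Definition \ref{D:pseudo-inv} and property (III), combined with $\overline{\{\pi(y)\}}=X\setminus\Phi\bigl(Y\setminus\overline{\{y\}}\bigr)$), only phrased contrapositively in terms of containments of closed sets rather than nonempty intersections with open sets. No substantive difference.
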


\begin{proof} $\pi^{-1} ( X\setminus F ) 
=Y\setminus\pi^{-1}(F)$ for any subset $F\subset X$,
$\pi^{-1}(X)=Y$, and $\pi^{-1}(\emptyset)=\emptyset$ hold
for any map $\pi\colon Y\to X$.

Consider a prime set $F\in\Prime{\latF{Y}}$. Then, for 
$U\in\latO{X}$,
\begin{eqnarray*}
\lefteqn{\Phi'(F)=
X\setminus\Phi(Y\setminus F)\subset X\setminus U} \\
&\Longleftrightarrow & U\subset \Phi(Y\setminus F) 
\quad\Longleftrightarrow \quad \Psi(U)\subset Y\setminus F \\
&\Longleftrightarrow & F\subset Y\setminus\Psi(U)\in\latF{Y} 
\quad\Longleftrightarrow \quad \forall y\in F\colon y\in
 Y\setminus\Psi(U),
\end{eqnarray*}
where the second equivalence comes from Definition 
\ref{D:pseudo-inv} of $\Phi$ and property (III) of $\Psi$.
In particular,
$$ \overline{ \{ \pi(y) \} } =
\Phi'\left( \overline{\{y\}} \right) \subset X\setminus U
\quad\Longleftrightarrow \quad y\in Y\setminus\Psi(U), $$
and this shows that $\pi(y)\in X\setminus U\in\latF{X}$ 
if and only if 
$y\in Y\setminus\Psi(U)$. Thus $\pi^{-1} U=\Psi (U)$.
\end{proof}

\bigskip

%
%
%
Let $X$ and $Y$ $\T{0}$-spaces, we let
$p_1\colon Y\times X\to Y$ and $p_2\colon Y\times X\to X$
the maps $p_1(y,x):=y$ and $p_2(y,x):=x$ to the components.
\begin{LEM}\Label{L:lsc-relation}
Suppose that $R\subset Y\times X$ satisfies 
$p_1(R)=Y$. Let $\lambda(y)$ denote
the subset $p_2(p_1^{-1}(y)\cap R)$ of $X$ for  $y\in Y$.
\begin{Aufl}
\item 
        $p_1\colon R\to Y$ is open as a map from $R$ to $Y$,
        if and only if,
        $$
     \lambda(y)\subset \overline{\bigcup _{v\in V} \lambda(v)}
        $$
        for every subset $V\subset Y$ and $y\in \overline{V}$.
\item 
        If $X$ is a compact convex set, 
$\partial X$ the set of its
        extreme points  and if
        $\lambda(y)$ is the closed convex span of 
        $\lambda(y)\cap \partial X$
        for every $y\in Y$, then 
        $p_1\colon R\to Y$ is open if and only if
        $p_1\colon R\cap (Y\times \partial X)\to Y$ is open.
\end{Aufl}
\end{LEM}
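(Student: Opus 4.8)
The plan is to reduce openness of $p_1$ to a lower semicontinuity statement about the fibre map $y\mapsto\lambda(y)$ and to read both parts off from it. For $U\in\latO{X}$ put $O_U:=\{y\in Y\fdg \lambda(y)\cap U\neq\emptyset\}$. The basic open subsets of $R$ are the sets $R\cap(V\times U)$ with $V\in\latO{Y}$ and $U\in\latO{X}$, and $p_1(R\cap(V\times U))=V\cap O_U$; since every open subset of $R$ is a union of such sets and unions of open sets are open, $p_1\colon R\to Y$ is open if and only if $O_U\in\latO{Y}$ for every $U\in\latO{X}$ (for the forward implication take $V=Y$). This last condition is exactly lower semicontinuity of $y\mapsto\lambda(y)$.

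First I would prove (i) by identifying ``$O_U$ open for all $U$'' with the stated closure condition. Write $\Lambda(V):=\bigcup_{v\in V}\lambda(v)$. If all $O_U$ are open, $V\subset Y$, $y\in\overline V$ and $x\in\lambda(y)$ with $x\notin\overline{\Lambda(V)}$, choose $U\in\latO{X}$ with $x\in U$ and $U\cap\Lambda(V)=\emptyset$; then $y\in O_U$ while $O_U\cap V=\emptyset$, contradicting $y\in\overline V$, so $\lambda(y)\subset\overline{\Lambda(V)}$. Conversely, assuming the closure condition, fix $U\in\latO{X}$ and set $V:=Y\setminus O_U=\{y\fdg\lambda(y)\subset X\setminus U\}$; for $y\in\overline V$ one has $\Lambda(V)\subset X\setminus U$, hence $\lambda(y)\subset\overline{\Lambda(V)}\subset X\setminus U$, i.e.\ $y\in V$; thus $V$ is closed and $O_U$ is open.

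For (ii) set $\lambda'(y):=\lambda(y)\cap\partial X$, so that $\lambda(y)=\overline{\mathrm{co}}(\lambda'(y))$ and $y\mapsto\lambda'(y)$ is the fibre map of $R':=R\cap(Y\times\partial X)$; as $p_1(R)=Y$ forces $\lambda(y)\neq\emptyset$ and hence $\lambda'(y)\neq\emptyset$, part (i) applies to both $R$ and $R'$. By the reformulation it suffices to show that $\lambda$ is lower semicontinuous if and only if $\lambda'$ is. For the implication ``$R$ open $\Rightarrow R'$ open'' I would use (i) together with Milman's theorem: if $y\in\overline V$ then $\lambda(y)\subset\overline{\Lambda(V)}$, and since each $\lambda(v)=\overline{\mathrm{co}}(\lambda'(v))$ we get $\overline{\Lambda(V)}\subset\overline{\mathrm{co}}(\Lambda'(V))$ with $\Lambda'(V):=\bigcup_{v\in V}\lambda'(v)$; a point of $\lambda'(y)=\lambda(y)\cap\partial X$ is extreme in $X$, hence extreme in $\overline{\mathrm{co}}(\Lambda'(V))$, so lies in $\overline{\Lambda'(V)}$ by Milman. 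Thus $\lambda'(y)\subset\overline{\Lambda'(V)}$, which is the condition of (i) for $R'$.

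The main obstacle is the reverse implication ``$R'$ open $\Rightarrow R$ open'', because the closure condition for $\lambda'$ only yields $\lambda(y)\subset\overline{\mathrm{co}}(\Lambda(V))$, not $\lambda(y)\subset\overline{\Lambda(V)}$. I would circumvent this convex-hull gap by treating convex combinations finitely. Fix $U\in\latO{X}$. Since $U$ is open, $\overline{\mathrm{co}}(\lambda'(y))\cap U\neq\emptyset$ iff $\mathrm{co}(\lambda'(y))\cap U\neq\emptyset$, i.e.\ iff $\lambda'(y)^n\cap W_n\neq\emptyset$ for some $n$, where, with $\Delta_n:=\{t\in\R_+^{\,n}\fdg\sum_i t_i=1\}$ and $c_n\colon X^n\times\Delta_n\to X$, $c_n(x,t):=\sum_i t_ix_i$, the set $W_n:=\{x\in X^n\fdg \mathrm{co}\{x_1,\dots,x_n\}\cap U\neq\emptyset\}$ is the image of the open set $c_n^{-1}(U)$ under the (always open) projection $X^n\times\Delta_n\to X^n$ and is therefore open. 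Now $\lambda'$ lower semicontinuous makes $y\mapsto\lambda'(y)^n$ lower semicontinuous into $X^n$: for a box $U_1\times\cdots\times U_n$ one has $\{y\fdg\lambda'(y)^n\cap(U_1\times\cdots\times U_n)\neq\emptyset\}=\bigcap_i O'_{U_i}$ (with $O'_{U_i}$ the sets formed from $\lambda'$), a finite intersection of open sets, and a general open $W_n$ is a union of boxes. Hence $\{y\fdg\lambda'(y)^n\cap W_n\neq\emptyset\}$ is open, and $O_U$ is the union of these over $n$, so $O_U$ is open and $\lambda$ is lower semicontinuous. This gives ``$R'$ open $\Rightarrow R$ open'' and completes (ii).
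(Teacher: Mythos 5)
Your proof is correct, and while part (i) and the forward half of part (ii) follow essentially the paper's route, your proof of the direction ``$p_1\colon R\cap(Y\times\partial X)\to Y$ open $\Rightarrow$ $p_1\colon R\to Y$ open'' is genuinely different. The paper first establishes, by induction on $k$, a ``uniform'' lower semi-continuity statement: for $y\in\overline{V}$, finitely many $x_1,\dots,x_n\in\lambda(y)\cap\partial X$ and neighborhoods $U_j\ni x_j$, the set of $v\in V$ with $U_j\cap\lambda(v)\cap\partial X\neq\emptyset$ for \emph{all} $j$ still has $y$ in its closure; it then approximates a given point of $\lambda(y)$ by a convex combination $\sum_k\mu_kz_k$ with $z_k\in U_k\cap\lambda(v)\cap\partial X$ and uses convexity of $\lambda(v)$ to land in the prescribed neighborhood. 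You instead lift lower semi-continuity to the finite powers $X^n$: you observe that $\lambda(y)\cap U\neq\emptyset$ iff $\lambda'(y)^n$ meets the set $W_n$ of $n$-tuples whose convex hull meets $U$ for some $n$, that $W_n$ is open (as the image of $c_n^{-1}(U)$ under the open projection $X^n\times\Delta_n\to X^n$), and that lower semi-continuity of $y\mapsto\lambda'(y)$ passes to $y\mapsto\lambda'(y)^n$ by taking finite intersections of the sets $O'_{U_i}$. This is cleaner and avoids the inductive selection argument entirely; what it gives up is precisely the ``uniform'' simultaneous-hitting statement, which the paper's proof produces as a by-product and which is reusable elsewhere (it is essentially a quantitative Hausdorff-type lower semi-continuity of $y\mapsto\lambda(y)\cap\partial X$). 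Both arguments rely on the same implicit hypothesis that $X$ sits in a topological vector space in which $(x,t)\mapsto\sum_it_ix_i$ is continuous, so you are on the same footing as the paper there.
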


\begin{proof} 
\Ad{(i)} Let $\alpha$ denote the restriction 
of $p_1$ to $R$. 
Note that 
$\bigcup _{v\in V} \lambda(v)$
is equal to 
$p_2\left(\alpha^{-1}(V)\right)=
p_2\left(p_1^{-1}(V)\cap R\right)$ 
for every subset $V$ of $Y$,
because $\alpha^{-1}(V)$ is the union of the
sets $\{ v\} \times \lambda(v)$ with $v\in V$.

It is easy to see that the continuous
map $\alpha \colon R\to Y$ is open if and only if
$\left(\alpha ^{-1}(W)\right)^\circ
\subset \alpha ^{-1}(\, W^\circ\, )$
for every subset $W$ of $Y$
(cf.~eg.~\cite[lem.~6.9]{DiniEK1}). 
And this is equivalent to
$\alpha ^{-1}\left(\, \overline{V}\, )\right) 
\subset \overline{\alpha^{-1}(V)}$ for every subset 
$V$ of $Y$
(by $W:=X\setminus V$).
On the other hand, 
$p_2 \left( \overline{S} \right)\subset \overline{p_2(S)}$ 
for every subset $S$ of $R$, because
$p_2\colon R\to X$ is continuous.
Thus $\lambda(y)$ is contained in the closure of
$p_2\left(\alpha^{-1}(V)\right)$  
for every subset $V\subset Y$ and every
$y\in \overline{V}$
if $\alpha\colon R\to Y$ is open.

Conversely, suppose that 
$p_2\left(\alpha^{-1}(\,\overline{V}\,)\right)$ 
is contained in $\overline{p_2\left(\alpha^{-1}(V)\right)}$  
for every subset $V\subset Y$.

The complement of $\overline{\alpha^{-1}(V)}$ is the union  
of Cartesian products $U_1\times U_2$ for $U_1\subset Y$
and $U_2\subset X$ open with 
$(U_1\times U_2)\cap \alpha^{-1}(V)=\emptyset$.

Then $\alpha^{-1}(V\cap U_1)\subset Y\times (X\setminus U_2)$,
thus 
$p_2\left(\alpha^{-1}(V\cap U_1)\right)\subset X\setminus U_2$.
By assumptions, $\lambda(y)\subset X\setminus U_2$ for
$y\in \overline{V}\cap U_1$ because $X\setminus U_2$ is
closed and $\overline{V}\cap U_1\subset \overline{V\cap U_1}$.
It  implies 
$p_2(\alpha^{-1}(\overline{V})\cap (U_1\times X))
\subset X\setminus U_2$ and, finally,
$\alpha^{-1}(\overline{V})\cap (U_1\times U_2)=\emptyset$.
It shows $\alpha^{-1}(\overline{V})\subset 
\overline{\alpha^{-1}(V)}$ for every subset of $V$ of $Y$,
i.e.~ $\alpha$ is open.

\Ad{(ii)} Recall that a compact convex set
$C$ is the closed convex hull of
the set $\partial C$ of the extreme points of $C$
(Krein-Milman theorem), and that $\partial C$ is contained
in the closure $\overline{S}$ of every subset $S\subset C$
such that $C$ is the closed convex hull of $S$.
Further, 
$p_2\left(p_1^{-1}(y)\cap R\cap (Y\times \partial X)\right)$
is the same as $\lambda (y)\cap \partial X$ for $y\in Y$.

$\bigcup _{v\in V} \lambda(v)$ is
contained in the closed convex hull
$C$ of $\bigcup _{v\in V} (\lambda(v)\cap \partial X)$,
because $\lambda(v)$ is the closed convex hull of
$\lambda(v)\cap \partial X$. 

Thus $\lambda(y)\cap \partial X\subset \partial C$
is contained in the closure of 
$\bigcup _{v\in V} (\lambda(v)\cap \partial X)$,
if $\lambda(y)$ is contained in the closure of
$\bigcup _{v\in V} \lambda(v)$. 
Hence, $p_1\colon R\cap (Y\times \partial X)\to Y$
is open if $p_1\colon R\to Y$ is open by part(i).

We need a sort of ``uniform'' lower 
semi-continuity of  
$y\mapsto \lambda(y)\cap \partial X$
for the proof of the other direction:
Suppose that $p_1\colon R\cap (Y\times \partial X)\to Y$
is open, $V\subset Y$ and $y\in \overline{V}$,
$x_1, \ldots, x_n\in \lambda (y)\cap \partial X$ and that
$U_1,\ldots, U_n$ are
open subsets of $X$ with $x_j\in U_j$ for $j=1,\ldots,n$.
We show that the closure of the set 
$$
V_k:=\{ v\in V\fdg   
U_j\cap \lambda(v)\cap \partial X\not=\emptyset
\quad for\quad j=1,\ldots,k \}
$$
contains $y$ for $k\leq n$. 
We proceed by induction over $k=1,\ldots,n$.
(The case $k=1$ goes with $V_0:=V$ as the induction step.)

Let $U_0\subset Y$ denote an open neighborhood of $y$.
Since 
$\overline{V_k}\cap U_0\subset \overline{V_k\cap U_0}$,
$y$ is in the closure of $V_k\cap U_0$, thus,
$x_{k+1}\in \lambda (y)\cap \partial X$ is contained in the
closure of 
$\bigcup _{v\in V_k\cap U_0} (\lambda (v)\cap \partial X)$.
Hence, $V_{k+1}\cap U_0\not=\emptyset$. It shows that
$y$ is in the closure of $V_{k+1}$. 

Now, if $y\in \overline{V}$ and $x\in \lambda (y)$,
$W$ a neighborhood of $x$, then there
are $x_1,\ldots,x_n\in \lambda (y)\cap \partial X$,
$\mu_1,\ldots,\mu_n\in [0,1]$ with $\sum_k \mu_k=1$,
and open neighborhoods $U_k$ of $x_k$ in $X$
such that $\sum_k \mu_k z_k\in W$ whenever
$z_k\in U_k$ for $k=1,\ldots,n$, because $\lambda (y)$
is the closed convex span of $\lambda(y)\cap \partial X$
and $(z_1,\ldots,z_n)\in X^n\to \sum_k \mu_k z_k \in X$
is continuous.

There are $v\in V$ and  
$z_j\in U_j\cap \lambda (v)\cap \partial X$
by the ``uniform'' lower semi-continuity  of
$y\mapsto \lambda(y)\cap \partial X$.
$\sum_k \mu_k z_k$ is in  $\lambda(v) \cap W$.
Thus $\lambda(y)$ is contained in the
closure of $\bigcup _{v\in V} \lambda(v)$,
if $y\in \overline{V}$, and $p_1\colon R\to X$
is open by part (i).
\end{proof}

\begin{LEM}\Label{L:char-pseudo-open}
Let $\pi:Y\to X$ be a continuous and surjective map.  
Then the following are equivalent:
\begin{Aufl}
\item $\overline{\bigcup_\alpha \pi^{-1} F_\alpha}=
\pi^{-1}\left( \overline{\bigcup_\alpha F_\alpha}\right)$
holds for every increasing family 
$ \{ F_\alpha \} _{\alpha\in I}$
of closed subsets of $X$.
\item 
$\left( \bigcap_\alpha \pi^{-1} U_\alpha \right)^\circ
=
\pi^{-1}\left(
\left(\bigcap_\alpha U_\alpha \right)^\circ
\right)$
holds for every decreasing family $\left\{U_\alpha\right\}$
of open subsets of $X$.
\item Every $R_\pi$-invariant open subset of $Y$ is in 
  $\pi^{-1}\latO{X}$, and
  the map 
  $(y,z)\in R_\pi\mapsto y\in Y$ 
  is an open and surjective map.
\item For every closed subset $G$ of $Y$, the set
  $$F_G:= 
        \left\{ x\in X\fdg 
        \pi^{-1}\left(\, \overline{ \{x\} }\,\right) 
  \subset G  \right\} 
        $$ 
        is closed in $X$.
\end{Aufl}
\end{LEM}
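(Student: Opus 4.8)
The plan is to establish the cycle (i)$\Leftrightarrow$(ii)$\Leftrightarrow$(iv) together with (i)$\Rightarrow$(iii)$\Rightarrow$(ii). The equivalence (i)$\Leftrightarrow$(ii) is pure complementation: the assignment $U_\alpha:=X\setminus F_\alpha$ is a bijection between increasing families of closed sets and decreasing families of open sets, and since $Y\setminus\overline{Z}=(Y\setminus Z)^\circ$ while $\pi^{-1}$ commutes with complements, (ii) is literally the complement of (i). I also record at the outset the harmless reduction that (i) for increasing families already yields (i) for an \emph{arbitrary} family $\{F_\alpha\}$: one passes to the upward directed family of finite unions $F_{\alpha_1}\cup\dots\cup F_{\alpha_n}$, which changes neither $\overline{\bigcup_\alpha\pi^{-1}F_\alpha}$ nor $\pi^{-1}(\overline{\bigcup_\alpha F_\alpha})$.

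Next I treat (i)$\Leftrightarrow$(iv), which should be the cleanest part, the inclusion $\overline{\bigcup_\alpha\pi^{-1}F_\alpha}\subset\pi^{-1}(\overline{\bigcup_\alpha F_\alpha})$ being automatic by continuity. For (i)$\Rightarrow$(iv), given closed $G\subset Y$ I apply the arbitrary-family form of (i) to $\{\overline{\{x'\}}:x'\in F_G\}$, obtaining $\pi^{-1}(\overline{F_G})=\overline{\bigcup_{x'\in F_G}\pi^{-1}(\overline{\{x'\}})}\subset G$; then $x\in\overline{F_G}$ gives $\pi^{-1}(\overline{\{x\}})\subset\pi^{-1}(\overline{F_G})\subset G$, i.e.\ $\overline{F_G}\subset F_G$. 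For (iv)$\Rightarrow$(i), given increasing $\{F_\alpha\}$ I put $G:=\overline{\bigcup_\alpha\pi^{-1}F_\alpha}$; each $F_\alpha\subset F_G$ since $x\in F_\alpha$ forces $\pi^{-1}(\overline{\{x\}})\subset\pi^{-1}(F_\alpha)\subset G$, so $\overline{\bigcup_\alpha F_\alpha}\subset F_G$ by (iv), and any $y$ with $\pi(y)\in\overline{\bigcup_\alpha F_\alpha}$ satisfies $y\in\pi^{-1}(\overline{\{\pi(y)\}})\subset G$.

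For (i)$\Rightarrow$(iii) I argue the two assertions separately, writing $p_1\colon R_\pi\to Y$ for the projection $(y,z)\mapsto y$; surjectivity is trivial as $(y,y)\in R_\pi$. For any $R_\pi$-invariant open $V$ one first checks $V=\pi^{-1}(\pi(V))$ (lifting points of $\pi(V)$ through surjectivity and using invariance), so it suffices to see $\pi(V)$ is open; applying (iv) to $G:=Y\setminus V$ and using invariance together with surjectivity to identify $F_G=X\setminus\pi(V)$ yields exactly this. For openness of $p_1$ I invoke Lemma \ref{L:lsc-relation}(i) applied to $R_\pi\subset Y\times Y$, where $\lambda(y)=p_2(p_1^{-1}(y)\cap R_\pi)=\pi^{-1}(\overline{\{\pi(y)\}})$. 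The criterion $\lambda(y)\subset\overline{\bigcup_{v\in S}\lambda(v)}$ for $S\subset Y$, $y\in\overline S$, becomes $\pi^{-1}(\overline{\{\pi(y)\}})\subset\pi^{-1}(\overline{\pi(S)})$ after rewriting $\overline{\bigcup_{v\in S}\lambda(v)}$ by the arbitrary-family form of (i) applied to $\{\overline{\{s\}}:s\in\pi(S)\}$, and this holds because $y\in\overline S$ forces $\pi(y)\in\overline{\pi(S)}$ by continuity.

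Finally (iii)$\Rightarrow$(ii). The decisive auxiliary step, which I expect to be the main obstacle, is that openness of $p_1\colon R_\pi\to Y$ forces the interior of every $R_\pi$-invariant set $Z$ to be $R_\pi$-invariant: if $q\in Z^\circ$ and $(p,q)\in R_\pi$, then $p_1(R_\pi\cap(Y\times Z^\circ))$ is an open neighbourhood of $p$ which, by invariance of $Z$, lies in $Z$ and hence in $Z^\circ$. Granting this, for a decreasing family $\{U_\alpha\}$ of open sets the intersection $\bigcap_\alpha\pi^{-1}(U_\alpha)$ is $R_\pi$-invariant, so $V:=(\bigcap_\alpha\pi^{-1}U_\alpha)^\circ$ is an $R_\pi$-invariant open set; by (iii) it equals $\pi^{-1}(W)$ with $W:=\pi(V)$ open, and surjectivity upgrades $V\subset\pi^{-1}(U_\alpha)$ to $W\subset U_\alpha$, whence $W\subset(\bigcap_\alpha U_\alpha)^\circ$ and $V\subset\pi^{-1}((\bigcap_\alpha U_\alpha)^\circ)$; the reverse inclusion is immediate, which is (ii). Apart from this interior-invariance point, the remaining work is the routine bookkeeping of passing between the projection $R_\pi\to Y$ and the lattice conditions through Lemma \ref{L:lsc-relation} and the identity $\lambda(y)=\pi^{-1}(\overline{\{\pi(y)\}})$.
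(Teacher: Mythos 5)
Your argument is correct, and it overlaps with the paper's proof in its core components: the complementation argument for (i)$\Leftrightarrow$(ii), the reduction of (i) to an arbitrary-family (in particular singleton-closure) form, the direct verification of (iv)$\Rightarrow$(i), and the use of Lemma \ref{L:lsc-relation}(i) with $\lambda(y)=\pi^{-1}(\overline{\{\pi(y)\}})$ to get openness of $p_1$ in (i)$\Rightarrow$(iii). Where you genuinely diverge is in how the cycle is closed. The paper proves (iii)$\Rightarrow$(iv) (showing $W:=\{y:\lambda(y)\subset G\}$ is closed and $R_\pi$-saturated, so $F_G=\pi(W)$ is the complement of a set in $\pi^{-1}\latO{X}$) and then returns to (i) via (iv); you instead prove (i)$\Rightarrow$(iv) directly and close the loop with (iii)$\Rightarrow$(ii), whose engine is the observation that openness of $p_1\colon R_\pi\to Y$ forces the \emph{interior} of any $R_\pi$-invariant set to be $R_\pi$-invariant, so that $\bigl(\bigcap_\alpha\pi^{-1}U_\alpha\bigr)^\circ$ is an invariant open set to which the first half of (iii) applies. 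That interior-invariance lemma is not in the paper and is a nice reusable fact; the price is that your proof establishes five implications rather than the paper's four. A minor point in your favour: your reduction of (i) to arbitrary families via the directed family of finite unions is cleaner than the paper's, which applies (i) to the family $\{\overline{\{z\}}\}_{z\in Z}$ without noting that this family need not be increasing.
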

Here $R_\pi\subset Y\times Y$ denotes the partial order on $Y$
defined by $\pi$.
Recall that
$$ R_\pi:= \left\{ (y,z)\in Y\times Y\fdg
  \overline{\{\pi(y)\}}\ni\pi(z) \right\}  $$
and that $V\subset Y$ is \emph{$R_\pi$-invariant} if
$z\in V$ and $(y,z)\in R_\pi$ implies $y\in V$, cf.~Definition
\ref{D:pseudo}. It means that $y\in Y\setminus V$ 
if and only if 
$\pi^{-1}\left(\overline{\{ y\}}\right) \subset Y\setminus V$.

Note that 
$\pi^{-1}\left(\overline{\{ y\}}\right) 
= p_2(p_1^{-1}(y)\cap R_\pi)$
where  $p_i\colon Y\times Y\to Y$ is the projection onto the 
$i$-th component.

\begin{proof}
The equivalence of (i) and (ii)
can be seen easily by passing to the 
complements.

\smallskip

We establish another equivalence for later use:
(i) holds, if and only if,
\begin{equation}\Label{Eq:(i)-equi}
\overline{
\bigcup_{z\in Z} 
\pi^{-1} ( \, \overline{ \{ z \} }\, )
}
  = \pi^{-1}\left( \overline{Z} \right)  
 \qquad \text{for all }\quad Z\subset X\,.
\end{equation}

Indeed: (\ref{Eq:(i)-equi}) is a special case of (i)
with $F_z:=\overline{ \{z\} }$ for $z\in Z$,
because $\overline{Z}=
\overline{\bigcup_{z\in Z} \overline{ \{z\} } }$.
Conversely,
(\ref{Eq:(i)-equi}) implies (i) because
$\pi^{-1} \left( F \right) =
\bigcup_{z\in F} \pi^{-1}\left(\,\overline{\{z\}}\,\right)$
for closed $F\subset X$.

\Ad{(i)$\,\Rightarrow$(iii)} We show that
(\ref{Eq:(i)-equi}) implies (iii).
Let $V$ a subset of $Y$, $Z:=\pi(V)$ in (\ref{Eq:(i)-equi}), 
and let 
$\lambda(y):= p_2\left(p_1^{-1}(y)\cap R_\pi\right) 
=\pi^{-1}(\pi(y))$
for $y\in Y$. (\ref{Eq:(i)-equi}) implies
that $\lambda(y)$ is in the closure
of $\bigcup _{v\in V} \lambda(v)=
\bigcup_{z\in Z} \overline{ \{z\} } $
if $y$ is in the closure of $V$, because 
$\overline{\{\pi(y)\} }\subset \overline{Z}$.
Thus $p_1\colon R_\pi \to Y$ is 
open by Lemma \ref{L:lsc-relation}(i).

Suppose that $V$ is an $R_\pi$-invariant open
subset of $Y$, and let $F:=Y\setminus V\,$, 
$\, Z:=\pi(F)$.
Then $F$ is closed, and $y\in F$ implies 
$\pi^{-1}\left( \overline{ \{ \pi (y) \}}\right)\subset F$.
By (\ref{Eq:(i)-equi}), $\pi^{-1}(\overline{Z})\subset F$.
Since $F\subset \pi^{-1}(\pi(F))$, we get $Z=\overline{Z}$
$F=\pi^{-1}(Z)$, and $V=\pi^{-1}(U)$ for 
the open set $U=X\setminus Z$.

\Ad{(iii)$\,\Rightarrow$(iv)} Let 
$\lambda(y):=p_2(p_1^{-1}(y)\cap R_\pi)
=\pi^{-1}\left(\overline{\{\pi(y)\}}\right)$
for $y\in Y$.
If $G$ is a closed subset of $Y$ let
$W:=\{ y\in Y\,\colon \,\, \lambda(y)\subset G\}$.
Then $F_G=\pi(W)$, and $W$ is closed,
because $\lambda(y)$ is in the closure
of $\,\bigcup _{w\in W}\lambda(w)$ and $G$ is closed.
If $y\in W$ and $z\in \lambda(y)$ then 
$\pi(z)\in \overline{\{ \pi(y)\}}$. Hence
$\lambda(z)\subset \lambda (y)$ and $z\in W$.
It means that $V:=Y\setminus W$ is an
$R_\pi$-invariant open subset of $Y$. By (iii),
$V=\pi^{-1}(U)$ for an open subset $U$ of $X$,
i.e.~$\pi^{-1}(X\setminus U)=W$ and 
$F_G=X\setminus U$ is closed.

%
%
%
%

\Ad{(iv)$\,\Rightarrow$(i)}
Let $Z\subset X$ and $G:=\overline{\bigcup_{z\in Z}
\pi^{-1}( \overline{ \{ z \} }) }$.  Then
$G\subset \pi^{-1}(\overline{Z})$, and
$Z\subset F_G$ and $\pi^{-1}(F_G)\subset G$ by
definition of $F_G$ in (iv).
Since $F_G$ is closed by (iv), $\overline{Z}\subset F_G$.
Thus $\pi^{-1}(\overline{Z})= G$. I.e. (iv) implies
(\ref{Eq:(i)-equi}).
%
%
%
\end{proof}

\begin{REM}
The property (iv) of Lemma 
\ref{L:char-pseudo-open} equivalently means that
$\widehat{f}$ 
is lower semi-continuous for every lower semi-continuous
function $f\colon Y\to [0,\infty)$.  
Here $\widehat{f}$ is defined as
$$
\widehat{f}(x):=
\sup f \left(\pi^{-1}\left( \overline{\{x\}} \right)\right).
$$
(Indeed: If $t\in [0,\infty)$, then
$\widehat{f}^{-1}[0,t]=F_G$ for $G:=f^{-1}[0,t]$.
One can take in place of $f$ the characteristic 
function of any open subset $U=Y\setminus G$ of $Y$.)
\end{REM}

\begin{LEM}\Label{L:pseudo-epic}
A continuous map $\pi\colon Y\to X$ is 
pseudo-epimorphic if and only if 
the map 
$\Theta \colon U\mapsto U\cap\pi(Y)$ 
defines a lattice iso\-mor\-phism
from $\latO{X}$ onto $\latO{\pi(Y)}$. 
\end{LEM}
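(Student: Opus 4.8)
The plan is to reduce the whole equivalence to a single injectivity statement. First I would note that the map $\Theta$ is, for \emph{any} continuous $\pi$, a surjective lattice homomorphism: by definition of the relative topology on $\pi(Y)\subset X$ the open sets of $\pi(Y)$ are exactly the sets $U\cap\pi(Y)$ with $U\in\latO{X}$, so $\Theta(\latO{X})=\latO{\pi(Y)}$; and since $\Theta(U)=U\cap\pi(Y)$ is intersection with the fixed set $\pi(Y)$, it preserves arbitrary unions and finite intersections, i.e.\ the joins and the binary meets of both lattices. A bijective lattice homomorphism is automatically a lattice isomorphism, so $\Theta$ is a lattice isomorphism onto $\latO{\pi(Y)}$ if and only if $\Theta$ is injective. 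Hence it suffices to prove that $\pi$ is pseudo-epimorphic (Definition~\ref{D:pseudo}) if and only if $\Theta$ is injective.

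For the implication ``pseudo-epimorphic $\Rightarrow$ $\Theta$ injective'' I would argue as follows. Suppose $\Theta(U_1)=\Theta(U_2)$, that is $U_1\cap\pi(Y)=U_2\cap\pi(Y)$, for $U_1,U_2\in\latO{X}$. Applying the pseudo-epimorphism condition to the pair $U_1\cap U_2\subset U_1$: were $U_1\cap U_2\neq U_1$, the set $U_1\setminus(U_1\cap U_2)=U_1\setminus U_2$ would contain a point $p\in\pi(Y)$, so that $p\in U_1\cap\pi(Y)$ but $p\notin U_2\cap\pi(Y)$, contradicting the hypothesis. Thus $U_1\subset U_2$, and by symmetry $U_1=U_2$, so $\Theta$ is injective.

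For the converse I would take $\Theta$ injective and let $V\subset U$ be open subsets of $X$ with $V\neq U$. Injectivity forces $\Theta(V)\neq\Theta(U)$, while $V\subset U$ gives $V\cap\pi(Y)\subset U\cap\pi(Y)$; hence the inclusion is proper and there is a point $p\in(U\cap\pi(Y))\setminus(V\cap\pi(Y))$. As $p\in\pi(Y)$ this means precisely $p\in(U\setminus V)\cap\pi(Y)$, so $U\setminus V$ meets $\pi(Y)$. Since $V\subsetneq U$ were arbitrary, this is exactly the defining condition of pseudo-epimorphism in Definition~\ref{D:pseudo}.

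There is no real obstacle in the argument; the one point needing care is the opening reduction, namely the observation that surjectivity onto $\latO{\pi(Y)}$ and the homomorphism property of $\Theta$ hold for every continuous $\pi$ and express only the definition of the relative topology. Once this is granted, the entire content of the lemma is the elementary equivalence between injectivity of $\Theta$ and the ``$U\setminus V$ meets $\pi(Y)$'' form of pseudo-epimorphism, each direction being a one-line set-theoretic manipulation.
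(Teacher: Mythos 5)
Your proof is correct and follows essentially the same route as the paper: both observe that $\Theta$ is automatically a surjective lattice homomorphism (the relative topology), so the lemma reduces to the equivalence of injectivity of $\Theta$ with the open-set form of pseudo-epimorphism, and both directions are then the same short set-theoretic manipulations (the paper tests the pair $U_1\cap U_2\subset U_1\cup U_2$ where you test $U_1\cap U_2\subset U_1$, an immaterial difference). No gaps.
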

\FRem{next shorter?}

\begin{proof}
$\pi$ is pseudo-epimorphic, if and only if, 
$U\setminus V$
contains a point of $\pi(Y)$ for 
all open subsets
$V\subset U$ of $X$ with $V\neq U$. 
(Indeed,
let $F:=X\setminus V$, $G:=X\setminus U$, then 
$G\subset F$ are closed, $G\neq F$ and  
$F\setminus G=U\setminus V$. 
If $\pi$ is  pseudo-epimorphic, then
$F$ is the closure of $F\cap \pi(Y)$ and 
$F\setminus G$ must
contain a point of $\pi(Y)$. 
Conversely, if $\pi$ is not pseudo-epimorphic, then there
$F$ is a closed subset of $X$ such that 
$G:=\overline{F\cap \pi(Y)}\subset F$
is not equal to $F$. 
Then $U\setminus V=F\setminus G$ does not contain
a point of $\pi(Y)$.)

Clearly $\Theta$ is a lattice epi\-mor\-phism.

Let $\Theta (U_1)=\Theta (U_2)$ then $\Theta (U_1\cap U_2)=
\Theta (U_1\cup U_2)$.  
Suppose that $U_1\neq U_2$.  Then 
$ ( U_1\cup U_2 )  \setminus  ( U_1\cap U_2 ) $ is not empty
and does not contain a point of $\pi(Y)$.  
Thus 
$\Theta$ must be an iso\-mor\-phism if 
$\pi$ is pseudo-epimorphic.

Conversely suppose that $\Theta$ is a lattice iso\-mor\-phism 
and 
$V\subset U$
with $V\neq U$ then 
$V\cap\pi(Y)=\Theta(V)\neq \Theta(U)=U\cap\pi(Y)$.
It yields that $U\setminus V$ contains a point of $\pi(Y)$.
Hence, $\pi$ is pseudo-epimorphic.
\end{proof}

\FRem{Change $P,p$ to $Y,y$ below (and above?)}

\begin{PRP}\Label{P:map-Psi-pi}
Suppose that $X$ and $P$ are point-complete $\T{0}$-spaces.
Then there is a one-to-one correspondence between maps 
$\Psi\colon\latO{X}\to\latO{P}$ with properties 
\begin{Aufl}
\item[(I$_0$)] 
  $\Psi(X)=P$ and 
  $\Psi(\emptyset)=\emptyset$,
\item[(II$_0$)] $\Psi ( U\cap V ) 
  =\Psi(U)\cap\Psi(V)$ for all open
  subsets $U,V\subset X$, 
\item[(III)] 
  $\Psi \left( \left(\bigcup_\alpha U_\alpha\right) \right) 
  =\bigcup_\alpha\Psi\left(U_\alpha\right)$ 
  for every family of open subsets $U_\alpha\subset X$
\end{Aufl}
 and  continuous maps $\pi\colon P\to X$ given by 
$$\Psi(U):=\pi^ {-1}(U)$$
and
$$\pi(p):=x$$
where $x$ is defined by Lemma \ref{L:on-pi-def}.

$\pi$ is pseudo-open and pseudo-epimorphic
if and only if $\Psi$ satisfies in addition 
properties (II) and (IV) of Definition \ref{D:l-g-preserv},
i.e.~if and only if $\Psi$ is a
lattice mono\-mor\-phism from $\latO{X}$ into $\latO{P}$
that respects l.u.b.\ and g.l.b.\ and satisfies (I$_0$).
\end{PRP}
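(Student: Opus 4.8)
The plan is to treat the two assertions of the proposition separately: first the bijection between the maps $\Psi$ with (I$_0$), (II$_0$), (III) and the continuous maps $\pi$, and then the characterisation of when the associated $\pi$ is pseudo-open and pseudo-epimorphic. For a continuous $\pi\colon P\to X$ the assignment $\Psi(U):=\pi^{-1}(U)$ obviously satisfies (I$_0$), (II$_0$) and (III), since preimages commute with arbitrary unions and finite intersections and send $X,\emptyset$ to $P,\emptyset$. Conversely, given $\Psi$ with (I$_0$), (II$_0$), (III), I would invoke Lemma \ref{L:on-pi-def} to obtain, using point-completeness of $X$, a well-defined point $\pi(p)\in X$ with $\overline{\{\pi(p)\}}=X\setminus\Phi(P\setminus\overline{\{p\}})$, where $\Phi$ is the pseudo-left-inverse of Definition \ref{D:pseudo-inv}; Lemma \ref{L:pi-formula} then shows that $\pi$ is continuous and that $\pi^{-1}(U)=\Psi(U)$. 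This gives the round trip $\Psi\mapsto\pi\mapsto\pi^{-1}=\Psi$ immediately. For the other round trip I would start from continuous $\pi$, put $\Psi=\pi^{-1}$, and compute $\Phi(P\setminus\overline{\{p\}})=\bigcup\{U\in\latO{X}\colon p\notin\pi^{-1}(U)\}=X\setminus\overline{\{\pi(p)\}}$ (an open $\pi^{-1}(U)$ misses $\overline{\{p\}}$ exactly when it misses $p$); hence the point produced by Lemma \ref{L:on-pi-def} has closure $\overline{\{\pi(p)\}}$, and since $X$ is \T{0} it must equal $\pi(p)$. Thus the two assignments are mutually inverse.

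\emph{Pseudo-epimorphy versus (IV).} By Lemma \ref{L:pseudo-epic}, $\pi$ is pseudo-epimorphic if and only if $\Theta\colon U\mapsto U\cap\pi(P)$ is a lattice isomorphism of $\latO{X}$ onto $\latO{\pi(P)}$. Since $\Theta$ is always a surjective lattice homomorphism, it is an isomorphism precisely when it is injective. Because $\pi^{-1}(U)=\pi^{-1}(U\cap\pi(P))$ and $\pi(\pi^{-1}(U))=U\cap\pi(P)$, one has $\Theta(U_1)=\Theta(U_2)$ if and only if $\Psi(U_1)=\Psi(U_2)$, so injectivity of $\Theta$ is exactly property (IV) of $\Psi$. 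Hence pseudo-epimorphy is equivalent to (IV).

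\emph{Pseudo-openness versus (II).} I would match Definition \ref{D:pseudo} with condition (iii) of Lemma \ref{L:char-pseudo-open}. The openness of the projection $(p,q)\in R_\pi\mapsto p$ is common to both (its surjectivity onto $P$ is automatic from the diagonal $(p,p)\in R_\pi$). The second clause of pseudo-openness, that $\pi(V)$ be open in $\pi(P)$ for every $R_\pi$-invariant open $V$, is equivalent to $V\in\pi^{-1}\latO{X}$: any $R_\pi$-invariant $V$ satisfies $V=\pi^{-1}(\pi(V))$ (if $\pi(p)=\pi(q)$ with $q\in V$ then $(p,q)\in R_\pi$, forcing $p\in V$), while conversely $\pi(\pi^{-1}(U))=U\cap\pi(P)$ is open in $\pi(P)$. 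Lemma \ref{L:char-pseudo-open} then identifies this with its condition (ii), namely $(\bigcap_\alpha\pi^{-1}U_\alpha)^\circ=\pi^{-1}((\bigcap_\alpha U_\alpha)^\circ)$ for decreasing families; rewriting $\pi^{-1}=\Psi$, this is property (II) for downward-directed families, which by (II$_0$) and passage to finite intersections is equivalent to (II) for arbitrary families. Hence pseudo-openness is equivalent to (II).

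\emph{Conclusion and main difficulty.} Combining the last two equivalences with the bijection shows that $\pi$ is pseudo-open and pseudo-epimorphic if and only if $\Psi$ satisfies (II) and (IV); together with (I$_0$) these imply (I) (injectivity plus $\Psi(X)=P$ give $\Psi^{-1}(P)=\{X\}$) and (II$_0$) (the two-element case of (II)), i.e.\ $\Psi$ is an l.u.b.- and g.l.b.-preserving lattice monomorphism in the sense of Definition \ref{D:l-g-preserv}. I expect the pseudo-openness step to be the main obstacle: one must reconcile the two geometric clauses of Definition \ref{D:pseudo} with the purely lattice-theoretic interior-of-intersection identity (II), and one must be careful that $\pi$ need not be surjective, so that Lemma \ref{L:char-pseudo-open} is applied to the corestriction $P\to\pi(P)$ (equivalently, one checks that the equivalence (ii)$\Leftrightarrow$(iii) there does not use surjectivity, only $p_1(R_\pi)=P$).
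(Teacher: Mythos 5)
Your treatment of the bijection $\Psi\leftrightarrow\pi$ and of the equivalence ``pseudo-epimorphic $\Leftrightarrow$ (IV)'' matches the paper's argument (via Lemmas \ref{L:on-pi-def}, \ref{L:pi-formula} and \ref{L:pseudo-epic}). The gap is in the step ``pseudo-openness versus (II)'': the standalone equivalence you assert there is false. Take $X=\{0\}\cup\{1/n \fdg n\in\N\}$ with its usual compact metric (hence point-complete) topology, $P=\{p\}$ a one-point space, and $\pi(p)=0$. Then $R_\pi$ is the diagonal, the projection $R_\pi\to P$ is open, and the only $R_\pi$-invariant open sets $\emptyset$ and $P$ have images $\emptyset$ and $\{0\}=\pi(P)$, which are open \emph{in $\pi(P)$}; so $\pi$ is pseudo-open in the sense of Definition \ref{D:pseudo}. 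But for the decreasing family $U_n:=X\setminus\{1,1/2,\dots,1/(n-1)\}$ one has $\bigl(\bigcap_n\Psi(U_n)\bigr)^\circ=P$ while $\Psi\bigl(\bigl(\bigcap_n U_n\bigr)^\circ\bigr)=\Psi(\emptyset)=\emptyset$, so (II) fails. (Here $\pi$ is not pseudo-epimorphic and (IV) also fails, so the proposition itself is not contradicted.)

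The source of the error is the translation of condition (ii) of Lemma \ref{L:char-pseudo-open}, applied to the corestriction $\pi\colon P\to\pi(P)$, back into property (II) for $\Psi$. Condition (ii) there concerns interiors computed \emph{in} $\pi(P)$, so to rewrite it as property (II) on $\latO{X}$ you must know that $\Theta\colon U\mapsto U\cap\pi(P)$ carries $\bigl(\bigcap_\alpha U_\alpha\bigr)^\circ$ to the $\pi(P)$-interior of $\bigcap_\alpha\bigl(U_\alpha\cap\pi(P)\bigr)$, i.e.\ that $\Theta$ is a g.l.b.-preserving lattice isomorphism --- which is exactly what pseudo-epimorphy (equivalently (IV)) supplies via Lemma \ref{L:pseudo-epic}. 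In the counterexample the $\pi(P)$-interior of $\bigcap_n U_n\cap\pi(P)=\{0\}$ is $\{0\}$, not $\bigl(\bigcap_n U_n\bigr)^\circ\cap\pi(P)=\emptyset$. Hence the two equivalences cannot be proved independently and then conjoined. The repair is the ordering the paper uses: in either direction of the combined statement, first establish that $\Theta$ is a lattice isomorphism (from pseudo-epimorphy, respectively from (IV) together with the factorization of $\Psi$ through $\Theta$), and only then use that isomorphism to identify Lemma \ref{L:char-pseudo-open}(ii) for $P\to\pi(P)$ with property (II) for $\Psi$. Since in both directions the needed hypothesis is available, your argument goes through once reordered, but as written the intermediate claim is wrong.
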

\begin{proof}
Suppose that $\pi\colon P\to X$ is continuous. 
Then $\Psi(U):=
\pi^{-1}U$ clearly satisfies (I$_0$), (II$_0$) and (III).

Suppose that $\pi_1$ is another map with 
$\Psi(U)=\pi_1^{-1}(U)$.
$P\setminus \pi_1^{-1}(U)=P\setminus \pi^{-1}(U)$ for 
$U:=X\setminus \overline{ \{ \pi(p)\} }$, implies
$$
\pi_1 \left( 
\pi^{-1}\left(\overline{ \{ \pi(p) \} }\right)
\right) 
=
\pi_1(P)\cap \overline{ \{ \pi(p) \} }.
$$
In particular, $\pi_1(p)$ is in $\overline{ \{ \pi(p) \} }$.
Similarly, $\pi(p)\in \overline{ \{ \pi_1(p) \} }$.
Since $X$ is $\T{0}$, $\pi_1(p)=\pi(p)$.

Suppose $\Psi$ with (I$_0$), 
(II$_0$) and (III) is given, and let 
$\pi$ be
as in Lemma \ref{L:on-pi-def}. 

By Lemma \ref{L:pi-formula}
$\pi^{-1}U=\Psi(U)$ holds for $U\in\latO{X}$. In particular
$\pi\colon P\to X$ is continuous.

Suppose that $\pi$ is in addition a
pseudo-open and pseudo-epimorphic map. Then
the inclusion map from $\pi(P)$ into 
$X$ induces an iso\-mor\-phism from
$\latO{X}$ onto $\latO{\pi(P)}$ by Lemma \ref{L:pseudo-epic}.  
On the other hand, the continuous 
epi\-mor\-phism from $P$ onto 
$\pi(P)$ is still a pseudo-open map, 
because Definition \ref{D:pseudo}(ii)
refers only to the pseudo-graph $R_\pi$
and the open subsets of $\pi(P)$.
Let $V$ be an $R_\pi$-invariant open subset of $P$.
Then $\pi(V)$ is an open subset of $\pi(P)$ by Definition
\ref{D:pseudo}. 
For $p\in \pi^{-1}(\pi(V))$ there is $q\in V$ such that 
$\pi(q)=\pi(p)$,
in particular, 
$(p,q)\in R_\pi$, thus $p\in V$ and  
$V=\pi^{-1}(\pi(V)$. 
This shows that 
$\pi\colon P\to \pi(P)$ satisfies
the conditions of Lemma \ref{L:char-pseudo-open}(iii).
$\Psi$ satisfies (I)--(IV) of 
Definition \ref{D:l-g-preserv}
by Lemma \ref{L:char-pseudo-open}(ii),
because 
$\Psi(U)=\pi^{-1}(U)=\pi^{-1}\left(U\cap\pi(P)\right)$, 
and $U\mapsto U\cap\pi(P)$ is a lattice isomorphism.

Conversely, if $\Psi$ satisfies (I)--(IV) of 
Definition \ref{D:l-g-preserv},
then the lattice mono\-mor\-phism 
$\Psi$ factorizes through
the lattice epimorphism 
$\Psi_0\colon U\mapsto U\cap \pi(P)$.
Thus $\Psi_0$ is a lattice isomorphism, and $\pi$ is
a pseudo-epimorphism by Lemma \ref{L:pseudo-epic}.
It follows that the continuous epimorphism $\pi$ from
$P$ onto $\pi(P)$ satisfies (ii) of 
Lemma \ref{L:pseudo-epic}
(by condition (II) of $\Psi$). 
Thus $\pi$ is  also pseudo-open.
\end{proof}
\begin{COR}\Label{C:lat-iso}
Suppose that $X$ and $Y$ are point-complete $\T{0}$-spaces. 
If $\Psi$ is an iso\-mor\-phism from
$\latO{X}$ onto $\latO{Y}$, 
then there is a unique 
homeo\-mor\-phism $\pi$ from $Y$ onto $X$ such that
$\Psi(U)=\pi^{-1}U$ for $U\in \latO{X}$.
\end{COR}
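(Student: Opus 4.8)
The plan is to deduce the corollary directly from Proposition \ref{P:map-Psi-pi}, applied in both directions. First I would observe that any lattice isomorphism $\Psi\colon \latO{X}\to\latO{Y}$ automatically satisfies the hypotheses (I$_0$), (II$_0$) and (III) of Proposition \ref{P:map-Psi-pi}: being an order isomorphism of complete lattices it carries the top element $X$ to the top element $Y$ and the bottom element $\emptyset$ to $\emptyset$, it preserves binary meets (which in $\latO{X}$ are intersections of open sets), and it preserves arbitrary joins (which are unions). Since $X$ and $Y$ are point-complete $\T{0}$-spaces, Proposition \ref{P:map-Psi-pi} then provides a unique continuous map $\pi\colon Y\to X$ with $\Psi(U)=\pi^{-1}(U)$ for all $U\in\latO{X}$.

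Next I would apply the same reasoning to the inverse lattice isomorphism $\Psi^{-1}\colon \latO{Y}\to\latO{X}$, obtaining a unique continuous map $\pi'\colon X\to Y$ with $\Psi^{-1}(V)=(\pi')^{-1}(V)$ for all $V\in\latO{Y}$. The heart of the argument is to show that $\pi$ and $\pi'$ are mutually inverse. For $V\in\latO{Y}$ put $U:=\Psi^{-1}(V)$; then $\Psi(U)=V$ gives $\pi^{-1}(U)=V$, while $U=(\pi')^{-1}(V)$, so that $(\pi'\circ\pi)^{-1}(V)=\pi^{-1}\bigl((\pi')^{-1}(V)\bigr)=\pi^{-1}(U)=V$. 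Hence the continuous self-map $\pi'\circ\pi$ of $Y$ induces the identity on $\latO{Y}$, exactly as $\id{Y}$ does, and the uniqueness clause of Proposition \ref{P:map-Psi-pi} (which rests on $Y$ being point-complete and $\T{0}$) forces $\pi'\circ\pi=\id{Y}$. The symmetric computation, using $\Psi(U)=V$ and $\Psi^{-1}(V)=U$, shows $(\pi\circ\pi')^{-1}(U)=U$ for every $U\in\latO{X}$, whence $\pi\circ\pi'=\id{X}$.

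Consequently $\pi$ is a bijection whose inverse $\pi'$ is continuous, i.e.\ a homeomorphism, and $\pi^{-1}=\pi'$ satisfies $\Psi(U)=\pi^{-1}(U)$. Uniqueness of $\pi$ is then immediate: any homeomorphism $\pi_1$ with $\Psi(U)=\pi_1^{-1}(U)$ induces the same lattice map $\Psi$ as $\pi$, so $\pi_1=\pi$ by the uniqueness part of Proposition \ref{P:map-Psi-pi}. I do not expect a serious obstacle here; the one point requiring care is to invoke the uniqueness of the point-map associated to a given lattice map for the \emph{composite} continuous maps $\pi'\circ\pi$ and $\pi\circ\pi'$, rather than attempting to verify the bijectivity of $\pi$ pointwise — this is precisely where both point-completeness hypotheses genuinely enter.
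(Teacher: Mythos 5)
Your proposal is correct and is precisely the argument the paper intends: Corollary \ref{C:lat-iso} is stated without proof as an immediate consequence of Proposition \ref{P:map-Psi-pi}, obtained by applying the proposition to $\Psi$ and to $\Psi^{-1}$ and using the uniqueness of the point-map to identify the two composites with the identities. Your observation that a lattice isomorphism automatically satisfies (I$_0$), (II$_0$) and (III), and that the uniqueness clause (resting on the $\T{0}$ property of the target) is what forces $\pi'\circ\pi=\id{Y}$ and $\pi\circ\pi'=\id{X}$, is exactly the right way to fill in the details.
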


\medskip 

\begin{EXA}\Label{Ex:[0,1]lsc}
Let $(0,1]_{\mathrm{lsc}}$ 
denote the half-open interval $(0,1]$
with topology 
$$\latO{(0,1]_{\mathrm{lsc}}}:= 
\left\{ \emptyset, (t,1]\fdg   t\in [0,1) \right\}$$
The continuous epimorphism
$$t\in (0,1]\mapsto t\in (0,1]_{\mathrm{lsc}}$$ 
is pseudo-open but is not open.
\end{EXA}
\begin{LEM}\Label{L:reg-abel-psi}
If $C$ is a regular \Cast-sub\-al\-ge\-bra of $B$
(cf.~Definition \ref{D:regular}), then the map
$$ J\in\latI{B}\mapsto J\cap C\in\latI{C} $$
defines naturally a map $\Psi\colon \latO{\Prim{B}}\to
\latO{\Prim{C}}$ that satisfies properties (I)--(IV) 
of Definition \ref{D:l-g-preserv}.
\end{LEM}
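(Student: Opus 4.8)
The plan is to transport everything to the lattices of closed ideals via the canonical isomorphisms $\latI{B}\cong\latO{\Prim{B}}$ and $\latI{C}\cong\latO{\Prim{C}}$ recalled in the introduction, under which $\Psi$ becomes the order-preserving map $\Phi\colon J\mapsto J\cap C$ from $\latI{B}$ to $\latI{C}$. This $\Phi$ is well-defined, since $J\cap C$ is a closed two-sided ideal of $C$ whenever $J\in\latI{B}$. Under this dictionary, unions of open sets correspond to closed sums $\overline{\sum_\alpha J_\alpha}$ of ideals and interiors of intersections correspond to intersections $\bigcap_\alpha J_\alpha$, so the four conditions to be checked read: (I) $\{0\}\cap C=\{0\}$ and $J\cap C=C\Rightarrow J=B$; (II) $(\bigcap_\alpha J_\alpha)\cap C=\bigcap_\alpha(J_\alpha\cap C)$; (III) $(\overline{\sum_\alpha J_\alpha})\cap C=\overline{\sum_\alpha(J_\alpha\cap C)}$; and (IV) $J_1\cap C=J_2\cap C\Rightarrow J_1=J_2$.

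Three of these are immediate from Definition \ref{D:regular}. Property (IV) is literally condition (i) there (that $C$ separates the ideals of $B$), and the nontrivial half of (I), namely $J\cap C=C=B\cap C\Rightarrow J=B$, is the same condition applied to the pair $J,B$; the other half $\Psi(\emptyset)=\emptyset$ is trivial. Property (II) is the purely set-theoretic identity $(\bigcap_\alpha J_\alpha)\cap C=\bigcap_\alpha(J_\alpha\cap C)$ and requires nothing at all.

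The real content is property (III), which I expect to be the main obstacle, since regularity furnishes only condition (ii), i.e. the \emph{finite} statement $(I+J)\cap C=(I\cap C)+(J\cap C)$. The plan is to bridge the gap between finite and infinite sums by a directed-limit argument. The inclusion $\supseteq$ is trivial. For $\subseteq$ I would write $\overline{\sum_\alpha J_\alpha}$ as the supremum of the upward directed family of finite partial sums $J_F:=\sum_{\alpha\in F}J_\alpha$, each of which is closed (a finite sum of closed ideals is closed), so that $\overline{\sum_\alpha J_\alpha}=\overline{\bigcup_F J_F}$, and, by iterating condition (ii), $J_F\cap C=\sum_{\alpha\in F}(J_\alpha\cap C)\subseteq\overline{\sum_\alpha(J_\alpha\cap C)}$. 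It then suffices to show that $J\mapsto J\cap C$ commutes with this directed union, i.e. $(\overline{\bigcup_F J_F})\cap C=\overline{\bigcup_F(J_F\cap C)}$.

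For the directed-union step I would argue with distances. Since $\bigcup_F J_F$ is dense in its closure and the family $\{J_F\}$ is directed, for $c\in C$ one has $\mathrm{dist}(c,\overline{\bigcup_F J_F})=\inf_F\mathrm{dist}(c,J_F)$. The key point is that the distance from $c$ to $J_F$ is unchanged if it is measured inside $C$: the canonical isometric $*$-isomorphism $C/(C\cap J_F)\cong(C+J_F)/J_F\subseteq B/J_F$ gives $\mathrm{dist}(c,C\cap J_F)=\mathrm{dist}(c,J_F)$ for every $F$. Hence if $c\in C\cap\overline{\bigcup_F J_F}$, then $\inf_F\mathrm{dist}(c,C\cap J_F)=\inf_F\mathrm{dist}(c,J_F)=0$, so $c\in\overline{\bigcup_F(C\cap J_F)}\subseteq\overline{\sum_\alpha(J_\alpha\cap C)}$. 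This establishes (III) and completes the verification. Note that this last distance argument uses no regularity whatsoever; regularity enters (III) solely through condition (ii) for the finite partial sums.
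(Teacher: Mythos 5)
Your proof is correct and follows essentially the same route as the paper's: translate to ideal lattices, dispose of (I), (II), (IV) via set theory and the separation property, and establish (III) by combining the finite additivity condition with the distance identities $\mathrm{dist}(c,\bigcup_\alpha J_\alpha)=\inf_\alpha\mathrm{dist}(c,J_\alpha)$ and $\mathrm{dist}(c,J_\alpha)=\mathrm{dist}(c,J_\alpha\cap C)$ for $c\in C$ over an upward directed family. You merely spell out a few details the paper leaves implicit (closedness of finite sums of ideals, the isometry $C/(C\cap J)\cong (C+J)/J$), so no further comment is needed.
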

\begin{proof} 
By the correspondence between closed ideals
and open subsets of primitive ideal spaces, 
(II) is equivalent to
$\left(\bigcap_\alpha J_\alpha\right)\cap C
=\bigcap _\alpha \left(J_\alpha\cap C\right)$, and
(IV) means that $J_1\cap C=J_2\cap C$ implies $J_1=J_2$ 
(and follows from (ii) of Definition \ref{D:regular}).
(I) holds for $\Psi$, because $\{ 0\}\cap C=\{ 0\}$ and 
$J\cap C=C$ implies $J=B$.
$\Psi$ satisfies (III), because 
$(J_1+J_2)\cap C=(J_1\cap C)+(J_2\cap C)$
for all $J_1,J_2$ in $\latI{B}$ 
by (i) of Definition \ref{D:regular},  
and because 
$\left(\overline{\bigcup_\alpha J_\alpha}\right)\cap C=
\overline{\bigcup_\alpha \left(J_\alpha\cap C\right)}$ 
holds for
every upward directed family 
$\{ J_\alpha \}_\alpha $ 
in $\latI{B}$.
The latter follows from 
$\textrm{dist}\left(c,\bigcup _\alpha J_\alpha\right)=
\inf_\alpha \textrm{dist}\left(c, J_\alpha\right)$ 
and 
$\textrm{dist}\left(c,J_\alpha\right)=
\textrm{dist}\left(c,J_\alpha\cap C\right)$ for $c\in C$.
\end{proof}

\FRem{replace $p,q$ by $y,z$ ?}
\begin{LEM}\Label{L:exist-Psi-eqiv.-cp-map}
Suppose that $A$ is a separable \Cast-algebra and 
that $Y$ is a locally compact Hausdorff space.  
Moreover suppose that
$$
\Psi\colon\latO{Y}\to\latI{A}\cong\latO{\Prim{A}}
$$
is an order preserving map.
Let $K_p\subset \mathcal{Q}(A)$ 
denote the convex set of quasi-states 
$\xi$ on $A$ with
$\,\,
\xi\left(\Psi\left(Y\setminus\{p\}\right)\right) 
=\left\{ 0 \right\}
$.

Then, 
        $\Psi$ satisfies property (II) of 
        Definition \ref{D:l-g-preserv}, 
if and only if,
  the map
  $( p,\xi ) \mapsto p\in Y$
  is an open map from 
  $R:= \left\{ (p,\xi)\fdg \xi\in K_p \right\} 
        \subset Y\times \mathcal{Q}(A)$ 
        onto $Y$.

If $\Psi$ satisfies (II) of Definition \ref{D:l-g-preserv}, 
then 
\begin{Aufl}
\item every continuous selection 
        $p\mapsto \xi_p\in K_p$ defines
  a completely positive contraction 
  $T\colon A\to \Cont[b]{Y}$
  with $T \left( \Psi(U) \right)\Cont[0]{Y}
  \subset \Cont[0]{U}\subset \Cont[0]{Y}$ for all
  $U\in\latO{Y}$, and,
\item
  for every $W\in \latO{Y}$ and every 
        $a\in A\setminus \Psi(W)$,
  there exists a completely positive contraction
  $T\colon A\to \Cont[0]{Y}$ 
        with $T(a)\not\in \Cont[0]{W}$
  and $T(\Psi(U))\subset \Cont[0]{U}$ for all $U\in \latO{Y}$.
\end{Aufl}
\end{LEM}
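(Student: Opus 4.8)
The plan is to run the projection $p_1\colon R\to Y$ through Lemma~\ref{L:lsc-relation} and to match the resulting semicontinuity with property~(II). Write $I_p:=\Psi(Y\setminus\{p\})$, so that $K_p$ is precisely the quasi-state space of $A/I_p$. Since $A$ is separable, $\mathcal{Q}(A)$ is a metrizable weak-$*$ compact convex set whose extreme points are $\{0\}\cup P(A)$, and by Krein--Milman each $K_p$ is the closed convex hull of $K_p\cap(\{0\}\cup P(A))$. With $\lambda(p)=K_p$ in the notation of Lemma~\ref{L:lsc-relation}, part~(ii) of that lemma reduces openness of $p_1$ to openness of $(p,\xi)\mapsto p$ on $R\cap(Y\times\partial\mathcal{Q}(A))$, and then part~(i) reduces it to the requirement that $S_p\subset\overline{\bigcup_{v\in V}S_v}$ whenever $p\in\overline V$, where $S_p:=K_p\cap P(A)=\{\xi\in P(A)\fdg I_p\subset\ker\pi_\xi\}$.

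I would then push this condition down to $\Prim{A}$ by means of the kernel map $q\colon P(A)\to\Prim{A}$, $\xi\mapsto\ker\pi_\xi$, which is the canonical continuous open surjection. For open continuous surjections one has $\overline{q^{-1}(E)}=q^{-1}(\overline E)$, and $S_p=q^{-1}(\mathrm{hull}(I_p))$ with $\mathrm{hull}(I_p)=\Prim{A}\setminus\widehat{I_p}$ the complement of the support $\widehat{I_p}$ of $I_p$. Hence the condition becomes $\mathrm{hull}(I_p)\subset\overline{\bigcup_{v\in V}\mathrm{hull}(I_v)}=\Prim{A}\setminus\bigl(\bigcap_{v\in V}\widehat{I_v}\bigr)^\circ$; as $\bigl(\bigcap_{v}\widehat{I_v}\bigr)^\circ$ is the support of $\bigcap_{v}I_v$, this says exactly $\bigcap_{v\in V}I_v\subset I_p$ for all $p\in\overline V$. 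Applying (II) to the families $\{Y\setminus\{v\}\}_{v\in V}$ (whose interior-of-intersection is $Y\setminus\overline V$) gives $\bigcap_{v\in V}I_v=\Psi(Y\setminus\overline V)\subset\Psi(Y\setminus\{p\})=I_p$, so (II) forces openness; conversely, openness returns this inclusion for every $V$ and $p\in\overline V$, and (II) is recovered by testing it on the point-complement families $\{Y\setminus\{p\}\}_{p\notin U}$ (interior-of-intersection $U$). \emph{The main obstacle is precisely this passage between the convex-geometric openness of $p_1$ and the lattice identity (II);} the device that makes it work is the extreme-point reduction of Lemma~\ref{L:lsc-relation}(ii), which lets the open surjection $q$ convert the set-theoretic closure into the interior operation in (II), the inclusion $\Psi(U)\subset\bigcap_{p\notin U}I_p$ being automatic from order preservation.

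For consequence~(i), given a continuous selection $p\mapsto\xi_p\in K_p$ I would put $T(a)(p):=\xi_p(a)$. Each $\xi_p$ is a quasi-state, so $T$ is a completely positive contraction into $\Cont[b]{Y}$. Equivariance uses only order preservation: if $a\in\Psi(U)$ and $p\notin U$ then $U\subset Y\setminus\{p\}$, hence $\Psi(U)\subset I_p$ and $T(a)(p)=\xi_p(a)=0$; thus $T(a)$ vanishes off $U$, giving $T(\Psi(U))\Cont[0]{Y}\subset\Cont[0]{U}$.

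For consequence~(ii), let $W\in\latO{Y}$ and $a\notin\Psi(W)$. Property~(II) gives $\Psi(W)=\bigcap_{p\notin W}I_p$, so there is $p_0\notin W$ with $a\notin I_{p_0}$; since the states of $A/I_{p_0}$ separate its points, choose $\xi_0\in K_{p_0}$ with $\xi_0(a)\neq0$. By the equivalence above, $p\mapsto K_p$ is lower hemicontinuous, so on a compact neighbourhood $N$ of $p_0$ Michael's selection theorem, applied to the lower hemicontinuous compact-convex-valued map obtained by resetting the value at $p_0$ to the singleton $\{\xi_0\}$ (with the $K_p$ regarded as closed convex subsets of a Banach space into which the metrizable compact convex set $\mathcal{Q}(A)$ embeds affinely), yields a continuous selection $p\mapsto\xi_p\in K_p$ with $\xi_{p_0}=\xi_0$. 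Fixing $h\in\Cont[0]{Y}$ with $0\le h\le 1$, $h(p_0)=1$ and compact support inside $N$, I would set $T(a)(p):=h(p)\,\xi_p(a)$ (and $0$ off the support of $h$). Then $T\colon A\to\Cont[0]{Y}$ is a completely positive contraction, the equivariance $T(\Psi(U))\subset\Cont[0]{U}$ holds as in~(i), and $T(a)(p_0)=\xi_0(a)\neq0$ with $p_0\notin W$ shows $T(a)\notin\Cont[0]{W}$. Here the selection theorem together with the cutoff $h$ (which secures values in $\Cont[0]{Y}$ rather than merely $\Cont[b]{Y}$) is the crux, the remaining verifications being formal.
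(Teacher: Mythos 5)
Your overall route coincides with the paper's: both reduce the openness of $p_1\colon R\to Y$ to its restriction over the extreme points via Lemma~\ref{L:lsc-relation}(ii), transport the resulting lower semi-continuity condition of Lemma~\ref{L:lsc-relation}(i) through the canonical open continuous surjection $P(A)\to\Prim{A}$, and arrive at the inclusions $\bigcap_{v\in V}I_v\subset I_p$ for $p\in\overline{V}$, where $I_p:=\Psi(Y\setminus\{p\})$. Parts (i) and (ii) (pointwise evaluation of a weak-$*$ continuous selection, Michael's theorem with a prescribed value at one point, a cutoff in $\Cont[0]{Y}$) are exactly the paper's argument; your restriction of the selection problem to a compact neighbourhood even handles the paracompactness hypothesis of Michael's theorem more carefully than the paper does. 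The implication (II) $\Rightarrow$ openness and the two consequences are therefore fine.

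The gap is in the converse. After your reduction, openness of $p_1$ is equivalent only to the family of inclusions $\bigcap_{v\in V}I_v\subset I_p$ for $V\subset Y$ and $p\in\overline{V}$ --- a condition involving exclusively the singleton ideals $I_p$. Property (II) additionally demands $\bigcap_{p\notin U}I_p\subset\Psi(U)$ for every open $U$ (this is (II) applied to the family $\{Y\setminus\{p\}\}_{p\notin U}$), and your sentence ``(II) is recovered by testing it on the point-complement families'' \emph{assumes} precisely this instance rather than deriving it; order preservation gives only the opposite inclusion $\Psi(U)\subset\bigcap_{p\notin U}I_p$. The implication genuinely fails: take $Y=\{a,b\}$ discrete, $A=\C$, $\Psi(\emptyset)=\{0\}$ and $\Psi(\{a\})=\Psi(\{b\})=\Psi(Y)=\C$. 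Then $K_a=K_b=\{0\}$, so $R=Y\times\{0\}$ and $p_1$ is a homeomorphism onto $Y$, yet (II) fails for the family $\{\{a\},\{b\}\}$ because $\Psi(\emptyset)\neq\Psi(\{a\})\cap\Psi(\{b\})$. To be fair, the paper's own proof of this direction has the same defect (its final chain of equivalences replaces the singleton lower semi-continuity delivered by Lemma~\ref{L:lsc-relation}(i) by the formula $\Psi'(F)=\overline{\bigcup_{p\in F}\Psi'(\{p\})}$ for \emph{arbitrary} closed $F$, which is strictly stronger); the ``only if'' should carry the extra hypothesis $\Psi(U)=\bigcap_{p\notin U}\Psi(Y\setminus\{p\})$ for all $U\in\latO{Y}$. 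Since only the implication (II) $\Rightarrow$ openness together with parts (i)--(ii) is used later (in the proof of Proposition~\ref{P:H0}), nothing downstream is affected.
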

\begin{proof} 
Recall that the boundary $\partial \mathcal{Q}(A)$
of $\mathcal{Q}(A)$ is just $\{ 0\} \cup P(A)$,
where $P(A)$ denotes the pure states on $A$. 
Further recall
that $\pi\colon \xi\in P(A)\mapsto J_\xi \in \Prim{A}$
is an open and continuous epimorphism from
$P(A)$ onto $\Prim{A}$ 
(cf.\ \cite[thm.~3.4.11]{Dix}, \cite[thm.~4.3.3]{Ped}). 
Here $J_\xi$ is the kernel
of the GNS representation corresponding to $\xi$,
i.e.~$b\in J_\xi$ if and only if $\xi(AbA)=\{ 0\}$.

Let 
$\Psi'(F):=
\{ 0\} \cup 
\pi^{-1}\left(\hu \left(\Psi(Y\setminus F)\right)\right)$
for $F\in \latF{Y}$, then it is straight forward to check
that
$\Psi\colon \latO{Y}\to \latI{A}$ fulfills
condition (II) of Definition \ref{D:l-g-preserv} if and
only if $\Psi'\colon \latF{Y}\to \latF{\{0\}\cup P(A)}$
satisfies
\begin{Aufl}
\item[(II$\,'$)] 
        $\Psi' \left(\overline{\bigcup_\alpha F_\alpha} \right)
     =  \overline{\bigcup_\alpha \Psi' \left(F_\alpha\right)}$
     for every (non-empty) family 
        $\{ F_\alpha \}$ of closed subsets of $Y$.  
\end{Aufl}
It is easy to see that (II$\,'$) is equivalent to
$\Psi'(F)=\overline{\bigcup_{p\in F} \Psi'(\{ p \})}$
for every closed subset $F\not=\emptyset$ of $Y$. 
Since $K_p\cong
\mathcal{Q}\left(A/\Psi(Y\setminus \{ p\})\right)$ 
is a split face of $\mathcal{Q}(A)$ for $p\in Y$, 
it holds:
$$
\Psi'(\{p\})= K_p\cap (\{ 0\} \cup P(A))
=K_p\cap \partial \mathcal{Q}(A)
=\partial K_p\,.
$$

By Lemma \ref{L:lsc-relation}, 
$(p,\xi)\in R\mapsto p\in Y$
is open, if and only if, 
$$
(p,\xi)\in 
R\cap \left(Y\times (\{0\}\cup P(A))\right)
\to p\in Y
$$ 
is open, if and only if, 
$\Psi'(F)=\overline{\bigcup_{p\in F} \Psi'(\{ p \})}$
for every closed subset $F$ of $Y$.

\Ad{(i)} 
The function 
$p\mapsto T(a)(p):= \xi_p(a)$ is continuous
on $Y$ for every $a\in A$, because 
the selection is weakly continuous.
$T\colon A\to \Cont[b]{Y}$ is completely
positive, because $a\mapsto T(a)(p)=\xi_p(a)$ 
is completely positive for every $p\in Y$.

If $a\in \Psi(U)$ and $q\in Y\setminus U$ then
$\xi_q(a)=0$, because 
$\Psi(U)\subset \Psi(Y\setminus \{ q\})$
and $\xi_q\in K_q$.
Thus $p\mapsto T(a)(p)f(p)$ is in $\Cont[0]{U}$ for
$a\in \Psi(U)$ and $f\in \Cont[0]{Y}\cong C$.

\Ad{(ii)} By property (II),
$\Psi(W)=
\bigcap _{p\in Y\setminus W} \Psi(Y\setminus \{ p\})$.
Thus, there is $q\in Y\setminus W$ with 
$a\not\in \Psi(Y\setminus \{q\})$, and there is a
(pure) state $\chi\in K_q$ with $\chi(a)\not=0$.

Since $(p, \xi)\mapsto p$ is an open map from
$R$ onto $Y$,
$$p\in Y\mapsto K_p\subset \mathcal{Q}(A)$$
  is a lower semi-continuous family (with respect to the 
  $\sigma ( A^*,A ) $-topology on $\mathcal{Q}(A)$).
It follows by a selection theorem of Michael \cite{Michael}
that there is a continuous selection $p\mapsto \xi_p\in K_p$
with $\xi_q=\chi$.
Let $f\in \Cont[0]{Y}\cong C$ a function  with 
$0\leq f \leq 1$ and $f(q)=1$. 
Then 
$T\colon A\to \Cont[0]{Y}$ with $T(a)(p)=f(p)\xi_p(a)$
satisfies $T(a)(q)=\chi(a)\not=0$, i.e.\
$T(a)\not\in \Cont[0]{W}$. 
Hence, by (i), $T$ is as desired in (ii).
\end{proof}

\subsection{Hilbert \Cast-modules}
Now we recall the definitions of 
\Name{Hilbert} \Cast-modules,
\Name{Hilbert} $A$-bi-modules, its tensor products
and some basic results on its
module homo\-mor\-phisms  (cf.~\cite{La95} for details).

\begin{DEF} 
A \emph{pre-\Name{Hilbert} $A$-module} is 
a right $A$-module $E$ over a \Cast-algebra $A$
equipped with an $A$-valued sesqui\-linear
form
$\SP{\cdot}{\cdot}\colon E\times E\to A$ that is 
$A$--linear in the second
variable such that $\SP{e}{e}\ge0$ 
for all $e\in E$ and $\SP{e}{e}=0$ implies $e=0$.
%
%

$E$ becomes a normed vector space with norm
$\|e\|:=\sqrt{\|\SP{e}{e}\|}$. 
A pre-\Name{Hilbert} 
$A$-module which is complete with respect to the norm   
induced by $\SP{.}{.}$ is called 
\emph{\Name{Hilbert} $A$-module}.

$E$ is \emph{full} if the span of 
$\SP{E}{E}$ is dense in $A$,
i.e., if
$A$ is the closure of the linear span of
$\{ \SP{e}{f}\,\colon \,\, e,f\in E \}$.

A \Name{Hilbert} $A$-module $E$ 
that is the closure of the $A$-linear span
(i.e.~of the set of finite sums $\sum e_j a_j$)
of a countable subset $\{ e_1,e_2,\ldots  \}$
of $E$ is called \emph{countably generated}
over $E$.
\end{DEF}

A map $\iota \colon E\to F$ from $E$ onto
another \Name{Hilbert} $A$-module $F$
is an \emph{isomorphism} of \Name{Hilbert} modules,
if $\iota$ is an epimorphism of $A$-modules
and satisfies $\SP{\iota(e_1)}{\iota(e_2)}=\SP{e_1}{e_2}$
for $e_2,e_2\in E$.

To keep notation simple, it is useful to note that
an \emph{isometric} $A$-module map $\iota$ from $E$ to $F$
automatically satisfies 
$\SP{\iota(e_1)}{\iota(e_2)}=\SP{e_1}{e_2}$. This follows
by polar decomposition from the fact that
for $a,b\in A_+$ holds $a\leq b$ if and only if
$\| (b+t)^{-1/2}a(b+t)^{-1/2} \| \leq 1$ for
$t\in (0,\infty)$.

\begin{EXA}\Label{Ex:H-module}\mbox{ }

\noindent
(i)     A \Cast-algebra $A$ itself becomes a (right)
  \Name{Hilbert} $A$-module $\HM[A]{A}$ with the 
  algebra-multiplication from the right
  and the scalar-product $\SP{a}{b}:=a^*b$.
  More generally, every closed right ideal 
  $R$ of $A$ becomes a
  \Name{Hilbert} $A$-module $\HM[R]{A}$ with this 
  $A$-valued sesqui\-linear form.

\smallskip
\noindent
(ii)    Sequences $( a_n ) \in A^\N$ build a right $A$-module
  with multiplication of the $a_n$ from the right by
  elements of $A$.  
  By restricting to sequences where the series
  $\sum_{n=1}^n a_n^*a_n$ converges in $A$ 
  and taking the scalar-product
  $\SP{(a_n)}{(b_n)}:=\sum_{n=1}^\infty a_n^*b_n$ we get a
  \Name{Hilbert} $A$-module, which we denote by $\HM{A}$.
  If $A$ is stable, then there is an isometric 
  $A$-module iso\-mor\-phism from $\HM{A}$ onto
  the \Name{Hilbert} $A$-module $A$ of (i) 
  (cf.~Remark \ref{R:A-cong-HA}).

\smallskip
\noindent
(iii)  More generally if $\{E_\xi \}_{\xi\in X}$ is any family
  of \Name{Hilbert} $A$-modules, then one can define a 
        Hilbert $A$-module
  $\bigoplus _{\xi\in X} E_\xi$ as the set of all maps 
  $e\colon X\to \bigcup_{\xi\in X} E_{\xi}$ with 
        $e(\xi)\in E_\xi$ and
  $\sum_\xi  e(\xi )^*e(\xi )$ convergent in $A$, 
        together with
  the obvious right $A$-module structure 
        and obvious sesqui\-linear form.

\smallskip
\noindent
(iv)    By a result of Kasparov \cite{Kas} 
        (cf.~\cite[thm.~13.6.3]{Black}, \cite[thm.~6.2]{La95})
        for every countably generated
  \Name{Hilbert} $A$-module $E$ holds 
        $E\oplus \HM{A}\cong \HM{A}$
  (by an isometric $A$-module iso\-mor\-phism), 
  i.e.,~every countably generated
  Hilbert $A$-module is the range of a self-adjoint projection 
  on $\HM{A}$.
\end{EXA}

\begin{DEF}\Label{D:LOp/K}
If $E$ is a \Name{Hilbert} A-module then 
$\LOp{E}$ denotes the set of adjoint-able maps over $E$,
i.e.~maps $T\colon E\to E$ for which there is a map 
$T^*\colon E\to E$ with
$$ \SP{e}{Tf}=\SP{T^*e}{f} \quad \text{for all } e,f\in E.$$

$\KOp{E}$ is defined as the closed linear span of the set of 
maps over $E$
given for $x,y \in E$ by
$$ \theta_{xy}\colon E\to E, e\mapsto x\SP{y}{e} .$$
\end{DEF}

Adjoint-able maps from (all of) $E$ into $E$ 
(with adjoint maps defined
on all of $E$)
are automatically $A$-module homo\-mor\-phisms, and
are bounded (by an application of the
\Name{Banach-Steinhaus} theorem).  Furthermore,
$\LOp{E}$ is a unital \Cast-algebra.  
It holds $\theta_{xy}^*=\theta_{yx}$, 
and $\KOp{E}$ is an essential closed ideal of $\LOp{E}$.
The natural *-mono\-mor\-phism from $\LOp{E}$
to $\Mult{\KOp{E}}$ is an isomorphism from $\LOp{E}$
onto $\Mult{\KOp{E}}$ that turns strong* topology
on (bounded parts of) $\LOp{E}$ to the strict topology
on (bounded parts of) $\Mult{\KOp{E}}$. Moreover,
$\KOp{E}$ is strongly Morita equivalent
to $A$ in a natural way.

\begin{EXA}\Label{Ex:M(A)}
For Examples \ref{Ex:H-module}(i) the sets are well-known 
\Cast-algebras:
$\theta_{xy}(a)=xy^*a$ and by that $\KOp{\HM[A]{A}}=A$.
The adjoint-able operators are the multipliers of $A$, 
$\LOp{\HM[A]{A}}=\Mult{A}$.

In the case of a closed right ideal $R$ of $A$,
$\KOp{\HM[R]{A}}= R^*\cap R=RR^*$ 
(a hereditary \Cast-subalgebra
of $A$),
and $\LOp{\HM[R]{A}}=\Mult{R^*\cap R}$.
\end{EXA}

Given a \Name{Hilbert} 
$B$-module $E$ and a *-representation 
$h \colon B\to\LOp{F}$ of $B$ on a 
\Name{Hilbert} $A$-module $F$, 
one can define a \Name{Hilbert} $A$-module
$E\otimes_h  F$, the \emph{(interior) tensor product}:

A right $A$-module structure $x\cdot b$ and
an $A$-valued sesqui\-linear form $\left<x,y\right>$ 
on the \emph{algebraic} vector space tensor product 
$E\odot F$ of $E$ and $F$ are given by 
$$\left<e_1\otimes f_1, e_2\otimes f_2\right>
:=\left<f_1,h\left( \left<e_1,e_2\right>\right)f_2\right>$$
and $(e\otimes f)b:=e\otimes (fb)$.
It can be shown that the linear subspace 
$L$ generated by elements of
the form $ea\otimes f - e\otimes h(a)f$ is equal to the set
$\left\{ x\in E\odot  F\fdg  <x,x>=0 \right\}$
(cf.\ proof of \cite[prop.~4.5]{La95}).
$E\otimes_h  F$ is defined as the completion of the quotient
$(E\odot F)/L $. 
Below we use the notation $e\otimes_h f$ 
(or $e\otimes _B f$ if no confusion can arise) 
for the element $(e\otimes f)+L$ in the quotient 
$(E\odot F)/L \subset E\otimes _h  F $.

\begin{REM}\Label{R:L-e}
(i)
There is a natural unital *-homo\-mor\-phism $\eta$ from 
$\LOp{E}$ into
$\LOp{E\otimes _h  F}$ given 
on elementary tensors by 
$\eta(S)\left(e\otimes _h  f\right):= S(e)\otimes _h  f$ for
$S\in \LOp{E}$, because 
%
%
%
%
%
%
%
\FRem{next necessary? or cite?}
the map $T\mapsto T\otimes  \id{}\in \textrm{Lin}(E\odot F)$ 
is multiplicative and
satisfies 
$\left<x,T\otimes \id{}(y)\right>=
\left<T^*\otimes \id{}(x),y\right>$.
(It gives  that $\| T\otimes \id{}(y)\| \leq \| T\|$ if
applied to $S:=\left(\| T\|^2-T^*T\right)^{1/2}$, thus
$T\otimes \id{}(L)\subset L$, and 
$[T]_L(x+L):=T\otimes \id{}(x)+L$ extends to 
an adjoint-able operator $\eta(T)$ with norm $\leq \| T\|$.)

(ii)
Thus, if the \Name{Hilbert} $B$-module has a left $C$-module
structure given by a *-homo\-mor\-phism 
$k\colon C\to \LOp{E}$
then the \Name{Hilbert} $A$-module
$E\otimes _h  F$ has a natural left $C$-module
structure given by 
$\eta\circ k \colon C\to \LOp{E\otimes_h  F}$.

(iii)
Another map is given by 
$L_e\colon f\in F\to e\otimes f\in E\otimes_h  F$
for $e\in F$. It is easy to see that $L_e$
is a $A$-module map, $\| L_e\|\leq \| e\|$
and has the adjoint $(L_e)^*\colon E\otimes_h F\to F$
given elementary tensors by 
$(L_e)^*(e'\otimes_h f)=h(\SP{e}{e'})f$.
\end{REM}

\begin{EXA}\Label{Ex:tensor}
        If $h\colon B\to \Mult{A}$ is a *-homo\-mor\-phism, 
        then there is
  a natural isometric \Name{Hilbert} $A$-module 
        iso\-mor\-phism $I$ from
  $B\otimes_h A$ onto the closed right ideal 
  $R_h:=\overline{\Span{h(B)A}}$. 
  $I$ is given by $I\left(b\otimes_h a\right):=h(b)a$.

$\LOp{B\otimes_h A}\cong \Mult{R_h^*\cap R_h}$
(cf.\ Example \ref{Ex:M(A)}), and $I\eta (\,\cdot\,)I^{-1}$
is the unital strictly continuous
*-homo\-mor\-phism from $\Mult{B}$ into $\Mult{R_h^*\cap R_h}$
that extends the non-degenerate 
*-homo\-mor\-phism 
$\iota\circ h\colon B\to \Mult{R_h^*\cap R_h}$,
where $\iota$ is the restriction *-homo\-mor\-phism
from 
$\{ t\in \Mult{A}\fdg tA+t^*A\subset R_h \}$
into $\Mult{R_h^*\cap R_h}$.

$R_h=A$ if and only if 
$h\colon B\to \Mult{A}$ is non-degenerate.
Then $B\otimes _h A\cong A$, and
under this isomorphism 
$L_e$ becomes $h(e)$ and $(L_e)^*=h(e^*)$,
and $\eta\colon B\to \LOp{B\otimes_h A}$
becomes the natural extension of
$h$ to a unital strictly continuous map
$\Mult{h}\colon \Mult{B}\to \Mult{A}$.
(We denote 
$\Mult{h}$ also by $h$ to keep notation simple.)

%
%
\end{EXA}
\begin{DEF}\Label{D:H-bi-module}
A \emph{\Name{Hilbert} ($B$,$A$)-bi-module} is
a right \Name{Hilbert} $A$-module $E$ together with a left
$B$-module structure given by a *-homo\-mor\-phism 
$h\colon B\to \LOp{E}$.
I.e.\ let $h \colon B\to\LOp{E}$ be a 
*-homo\-mor\-phism, then $E$ is an 
 ($B$,$A$)-bi-module
with the left multiplication
given by $a\cdot e:=h (a)e$.

$E$ is \emph{full} if $E$ is full as (right) 
\Name{Hilbert} $A$-module.
\FRem{Is this kind of "full" needed?}

$E$ is \emph{non-degenerate} if 
the linear span of $h(B)E$ is dense in $E$,
i.e.\ if $h(B)E=E$ (by Cohen factorization theorem).

We denote by $\Hm{A,h}$ the 
non-degenerate Hilbert $(A,A)$-bi-module that
is given by the Hilbert $A$-module $E=A_A$ of
Example \ref{Ex:H-module}(i) and a 
\emph{non-de\-gener\-ate}
*-homomorphism $h\colon A\to \LOp{E}=\Mult{A}$.
\end{DEF}
\medskip

\begin{REM}\Label{R:tensor} 
Suppose that $h_i\colon A\to \Mult{A}$ ($i=1,2$)
are  \emph{non-de\-gener\-ate}
*-homo\-mor\-phisms. 
Then $h_i$ uniquely extends to a strictly continuous
unital *-homo\-mor\-phism 
$h_i\colon \Mult{A}\to \Mult{A}$
(cf.\ Example \ref{Ex:tensor}).
Since $h_2\circ h_1$ is strictly continuous and unital, 
we get that 
$\left(h_2\circ h_1|A\right) \colon A\to \Mult{A}$ 
is non-degenerate.
Thus we can
define a non-degenerate *-homo\-mor\-phism
$h_2\circ h_1\colon A\to \Mult{A}$ as the 
restriction
of the strictly continuous unital *-homo\-mor\-phism 
$h_2\circ h_1$ from $\Mult{A}$ into $\Mult{A}$.

We have seen in Example \ref{Ex:tensor}, that there is
a \Name{Hilbert} $A$-module iso\-mor\-phism $I$ from 
$A\otimes_{h_2} A$ onto 
the (right) \Name{Hilbert} $A$-module
$A$ of Example \ref{Ex:H-module}(i). $I$ is
given by $I\left(a\otimes _{h_2} b\right):=h_2(a)b$.
The left $A$-module structure on 
$A\otimes_{h_2} A$ is defined by
$a\cdot \left(b\otimes_{h_2} c\right)
:=\eta (h_1(a))(b\otimes_{h_2} c)=
\left(h_1(a)b\right)\otimes_{h_2} c$
for $a,b,c\in A$.

Thus, 
$I\left(a\cdot \left(b\otimes _{h_2} c\right)\right)
=
h_2\left(h_1(a)b\right)c
=
h_2\circ h_1(a)I\left(b\otimes _{h_2} c\right)$, 
and $I$ is also a left $A$-module iso\-mor\-phism
from the left $A$-module $A\otimes_{h_2} A$ onto the 
left $A$-module $A$ given by 
$\left(h_2\circ h_1\right)\colon A\to \Mult{A}$.

The $n$-fold
tensor product 
$E_1\otimes _{h_2} E_2\otimes_{h_3}\dots \otimes_{h_n} E_n$ 
of the \Name{Hilbert} $A$-bi-modules $E_i$ given by 
non-degenerate $h_i\colon A\to \Mult{A}$ 
is defined  inductively by
$$
E_1\otimes _{h_2} E_2\otimes_{h_3}\dots 
\otimes_{h_n} E_n
:= 
\left(
E_1\otimes _{h_2} E_2\otimes_{h_3}\dots 
\otimes_{h_{n-1}} E_{n-1}
\right)
\otimes _{h_n}E_n\,.
$$
By induction, it is
isomorphic to the 
\Name{Hilbert} $A$-module $A$ 
of Example \ref{Ex:H-module}(i) with left $A$-module structure
given by 
$h_n\circ h_{n-1}\circ \dots \circ h_1\colon A\to \Mult{A}$,
and there is a natural \Name{Hilbert} $A$-module isomorphism
$$
E_1\otimes _{h_n\circ\dots\circ h_2} 
(E_2\otimes _{h_3}\dots\otimes _{h_n} E_n)\cong
E_1\otimes _{h_2} E_2\otimes_{h_3}\dots \otimes_{h_n} E_n\,.
$$
\end{REM}

\smallskip

Some \Name{Hilbert} bi-modules 
are related to completely positive maps:
\begin{LEM}\Label{L:bi-mod-from-cp-map}
For every
completely positive map $T\colon A\to B$
there are a \Name{Hilbert} $B$-module $E^T$ and
a  *-homo\-mor\-phism 
$h^T\colon A\to \LOp{E^T}$, such that
$h^T$ is non-degenerate and $(E^T,h^T)$ satisfies:

A completely positive map $V\colon A\to B$
can be approximated in point-norm
by maps
$V_e\colon A\to B$ given by $V_e(a):=\SP{e}{h^T(a)e}$,
if and only if,
$V$ can be approximated by by completely positive
maps 
$T_{r,c}\colon A\to B$ given by
$T_{r,c}(a):=c^*T\otimes \id{n}(r^*ar)c$
for a row-matrix 
$r\in M_{1,n}(A)$ and a column-matrix 
$c\in M_{n,1}(B)$, $n\in \N$.

$E^T$ can be taken countably generated over $B$
if $A$ is separable.
\end{LEM}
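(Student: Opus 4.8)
The plan is to carry out the Kasparov--Stinespring--Gelfand--Naimark--Segal (KSGNS) construction attached to $T$, and then to notice that the maps $V_e$ associated with vectors in the algebraic part of the module are \emph{literally} the maps $T_{r,c}$. This makes the stated equivalence essentially tautological, once the module is set up and a routine continuity estimate is in place.

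First I would form the algebraic tensor product $A\odot B$ and equip it with the $B$-valued sesquilinear form determined by $\SP{a_1\otimes b_1}{a_2\otimes b_2}:=b_1^*T(a_1^*a_2)b_2$. For a finite sum $x=\sum_i a_i\otimes b_i$ one has $\SP{x}{x}=c^*\,T\otimes\id{n}(r^*r)\,c$ with the row $r=(a_1,\dots,a_n)$ and the column $c=(b_1,\dots,b_n)^t$; since $r^*r\ge 0$ in $M_n(A)$ and $T$ is completely positive, this is $\ge 0$, so the form is positive semidefinite. Quotienting by $N:=\{x\fdg\SP{x}{x}=0\}$ and completing yields the \Name{Hilbert} $B$-module $E^T$, with right action $(a\otimes b)b':=a\otimes bb'$. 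I define $h^T\colon A\to\LOp{E^T}$ by $h^T(a)(a'\otimes b):=aa'\otimes b$; the matricial inequality $r^*a^*ar\le\|a\|^2 r^*r$ in $M_n(A)$, followed by the order-preserving maps $T\otimes\id{n}$ and compression by $c$, gives $\SP{h^T(a)x}{h^T(a)x}\le\|a\|^2\SP{x}{x}$, so $h^T$ is a well-defined $*$-homomorphism with $\|h^T(a)\|\le\|a\|$. Non-degeneracy follows by applying $h^T(u_\lambda)$ to $a\otimes b$ for an approximate unit $(u_\lambda)$ of $A$.

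The decisive observation is that for $e=\sum_i r_i\otimes c_i$ in the dense subspace $(A\odot B)/N$,
\[
V_e(a)=\SP{e}{h^T(a)e}=\sum_{i,j}c_i^*T(r_i^*ar_j)c_j=c^*\,T\otimes\id{n}(r^*ar)\,c=T_{r,c}(a),
\]
so the set $\{V_e\fdg e\in(A\odot B)/N\}$ coincides \emph{exactly} with $\{T_{r,c}\}$. It remains only to see that passing from arbitrary $e\in E^T$ to the dense subspace costs nothing in point-norm closure: from $V_e(a)-V_{e'}(a)=\SP{e-e'}{h^T(a)e}+\SP{e'}{h^T(a)(e-e')}$ and $\|h^T(a)\|\le\|a\|$ one gets $\|V_e(a)-V_{e'}(a)\|\le\|a\|\,(\|e\|+\|e'\|)\,\|e-e'\|$, so $e\mapsto V_e$ is point-norm continuous. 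Hence the point-norm closure of $\{V_e\fdg e\in E^T\}$ equals that of $\{T_{r,c}\}$, which is precisely the asserted equivalence.

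Finally, for countable generation when $A$ is separable I would pick $\{a_k\}$ dense in $A$; since $\|a\otimes b-a_k\otimes b\|^2=\|b^*T((a-a_k)^*(a-a_k))b\|\le\|T\|\,\|a-a_k\|^2\,\|b\|^2$, the module $E^T$ is the closed linear span of the submodules $\overline{a_k\otimes B}$. The isometric $B$-module map $a_k\otimes b\mapsto T(a_k^*a_k)^{1/2}b$ identifies $\overline{a_k\otimes B}$ with the closed right ideal $\overline{T(a_k^*a_k)^{1/2}B}$ of $B$, which is singly generated (by $T(a_k^*a_k)^{1/2}$ itself); pulling back a generator shows each $\overline{a_k\otimes B}$ is singly generated, so $E^T$ is generated by countably many vectors. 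I expect the main obstacle to be purely the bookkeeping in these module computations --- positivity of the form, boundedness of $h^T$, and the single generation of $\overline{a_k\otimes B}$ --- all of which rest on the matricial order inequalities above rather than on any new idea.
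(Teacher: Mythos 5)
Your proposal is correct and follows essentially the same route as the paper: the KSGNS construction on $A\odot B$ with the form $b_1^*T(a_1^*a_2)b_2$, the observation that $V_e=T_{r,c}$ for $e$ in the algebraic part, and point-norm continuity of $e\mapsto V_e$ (which the paper leaves as ``immediate from the definition''). The only cosmetic differences are that the paper gets non-degeneracy from Cohen factorization rather than an approximate unit, and handles countable generation with a single strictly positive element $c$ of $C^*(T(A))$ and the generators $a_k\otimes c$ instead of your singly generated submodules $\overline{a_k\otimes B}$; both variants are routine.
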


\begin{proof} The algebraic vector space
tensor product 
$A\odot B$ has natural left $A$-module
and right $B$-module structures.
Define a $B$-valued sesqui\-linear form $\beta(x,y)$
on elementary tensors by
$$\beta(a_1\otimes b_1, a_2\otimes b_2)
:=b_1^*T(a_1^*a_2)b_2\,.$$
It is $B$-linear in the second variable,
satisfies $\beta(x,x)\ge 0$, 
$\beta(a\cdot x, a\cdot x)\leq \| a\|^2\beta(x,x)$,
and $\beta(a^*\cdot x,y)=\beta(x, a\cdot y)$
for all $x,y\in A\odot B$, $a\in A$.

Thus, the subspace 
$L:=\{ x\in A\odot B\fdg \beta(x,x)=0\}$
is $A$- and $B$-invariant, and $\beta$ defines
on $(A\odot B)/L$ a $B$-valued
scalar product $\SP{x}{y}$,which 
defines a pre-\Name{Hilbert} $B$-module. The
completion $E^T$ of $(A\odot B)/L$ and 
$\SP{\cdot }{\cdot }$ is a \Name{Hilbert}
$B$-module with a *-homo\-mor\-phism
$h^T\colon A\to \LOp{E^T}$.

By Cohen factorization, for every
$x\in A\odot B$ there is $y\in A\odot B$ and
$a\in A$ with $(a\otimes 1_{\Mult{B}})y=x$. 
It implies that $h^T(A)E^T$ is dense in $E^T$.

The characterization of c.p.~maps which can be
approximated by maps $V_e$ follows immediately from
the definition of $E^T$.

If $A$ is separable, let $c\in C_+$ a strictly
positive element of the separable \Cast-subalgebra $C$
of $B$ that is generated by $V(A)$, and let
$\{ a_1,a_2,\ldots \}$ a dense sequence in $A$.
Then the $B$-module generated by
$\{ (a_1\otimes c)+L, (a_2\otimes c)+L, \ldots \}$
is dense in $E^T$.
\end{proof}

\subsection{Crossed products by $\Z$}
\Label{ssec:crossed-prod}
\FRem{rewrite subsection?}
\begin{REM}\Label{Rcross-prod}
Suppose that $\varphi\colon E\to F$ a *-homo\-mor\-phism, 
$\alpha\in\Aut{E}$, $\beta\in\Aut{F}$ with 
$\varphi\circ\alpha=\beta\circ\varphi$. Recall the following
elementary properties of crossed products by $\Z\,$:
\begin{Aufl}
\item The universal crossed product $E\rtimes_\alpha\Z$ is the
  same as the reduced crossed product $E\rtimes_{\alpha,r}\Z$.
  
  $E\rtimes_{\alpha,r}\Z$ is the image 
  of the *-representation $d_\textrm{red}$ 
  from $E\rtimes_\alpha \Z$ into the von-Neumann algebra
  $M\overline{ \otimes }\LOp{\ell_2(\Z )}$ given
        by 
$$
        d_\textrm{red}(e):=
        (\ldots, \alpha^{-1}(e),e,\alpha(e),\alpha^2(e),\ldots )
        \in \ell_\infty (E^{**})
        \subset M\overline{ \otimes }\LOp{\ell_2(\Z )}
$$
        for $e\in E$ and
        $\Mult{d_\textrm{red}}(U_0):= 1\otimes S_1$. Here
  $M$ is any W*-algebra that contains $A$ as
  a weakly dense \Cast-subalgebra, and
  $S_1(f)(n):=f(n+1)$ for
        $f\in \ell_2(\Z)$ and $U_0$ is the canonical
        generator of $\Z$ in $\Mult{E\rtimes_\alpha \Z}$
        with $U_0eU_0^*=\alpha(e)$ for $e\in E$ 
        (see (v) below).

\item $E$ is naturally isomorphic to a \Cast-sub\-al\-ge\-bra
  of $E\rtimes_\alpha\Z$.
\item $\varphi$ defines a *-homo\-mor\-phism
  $ \varphi\rtimes\Z\colon E\rtimes_\alpha\Z \rightarrow 
  F\rtimes_\beta\Z$ with 
$$
\varphi(E)=F\cap 
\left(\varphi\rtimes\Z\left(E\rtimes_\alpha \Z\right)\right).
$$
  $\varphi\rtimes \Z$ is a 
        mono\-mor\-phism if $\varphi$ is a mono\-mor\-phism. 
\item If $\varphi(E)$ is an ideal of $F$, then 
        $E\rtimes_\alpha\Z$
  maps onto an ideal of $F\rtimes_\beta\Z$.  
\item The non-degenerate *-representations 
  $\varrho\colon E\rtimes_\alpha\Z\to\LOp{H}$ 
        are in one-to-one
  correspondence to covariant
        representations  $\left( \varrho_E,U, H \right)$ 
(where 
        $\varrho_E\colon E\mapsto\LOp{H}$
        is a non-degenerate *-representation and
        $U\in\LOp{H}$ is a unitary with
        $\varrho_E \left( \alpha(e) \right) 
  =U\varrho_E(e)U^*$ for $e\in E$). 
  The image of $\varrho$ is generated by
  $U\varrho_E(E)$ as \Cast-algebra.
\item The crossed product
  $(E,\alpha)\mapsto E\rtimes_\alpha \Z$ defines
  in a natural manner an exact functor from the category of
  dynamical \Cast-systems into the  category of
  \Cast-algebras (as follows from (i), (iii) and (iv)).
\end{Aufl}
\end{REM}
\begin{LEM}\Label{L:inner-dual-action}
Let $\alpha\in\Aut{B}$, $B$ unital.  
If, for $\theta\in\C$ with
$|\theta|=1$,
there is a unitary $v(\theta)$ in the center of 
$B$ such that
$\theta \alpha( v(\theta)) = v(\theta)$, 
then every ideal $I$ of
$B\rtimes_\alpha\Z$ is invariant under the dual action 
$\hat\alpha$
of $\mathbb{T}=S^1$ on $B\rtimes_\alpha\Z$ 
and, thus, is determined
by its intersection $I\cap B$ with $B$, i.e.\ 
$I$ is the natural image of $(I\cap B)\rtimes _\alpha \Z$
in $B\rtimes _\alpha \Z$.
\end{LEM}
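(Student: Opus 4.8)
The plan is to show that the hypothesis forces each dual-action automorphism $\hat\alpha_\theta$ to be \emph{inner} on $B\rtimes_\alpha\Z$, so that every closed ideal is automatically gauge-invariant, and then to run the standard Fourier-coefficient argument identifying gauge-invariant ideals with crossed products of $\alpha$-invariant ideals of $B$.

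First I would fix the usual description: $B\rtimes_\alpha\Z$ is the unital \Cast-algebra generated by $B$ and a unitary $U_0$ with $U_0 b U_0^*=\alpha(b)$ for $b\in B$ (cf.\ Remark \ref{Rcross-prod}(i)), and the dual $\mathbb{T}$-action is determined by $\hat\alpha_\theta(b)=b$ for $b\in B$ and $\hat\alpha_\theta(U_0)=\theta U_0$ for $\theta\in\mathbb{T}$. The key computation is that conjugation by the central unitary $v(\theta)\in B\subset B\rtimes_\alpha\Z$ realizes $\hat\alpha_\theta$. Since $v(\theta)$ is central in $B$ it fixes every $b\in B$; and rewriting the hypothesis $\theta\alpha(v(\theta))=v(\theta)$ as $\alpha(v(\theta))=\bar\theta\,v(\theta)$, hence $\alpha(v(\theta)^*)=\theta\,v(\theta)^*$, one computes
\[ v(\theta)U_0 v(\theta)^* = v(\theta)\,\alpha(v(\theta)^*)\,U_0 = v(\theta)\,\theta v(\theta)^*\,U_0 = \theta U_0 . \]
Thus $\hat\alpha_\theta=\mathrm{Ad}(v(\theta))$ is inner, so $v(\theta)\,I\,v(\theta)^*=I$ for every closed ideal $I$; that is, every ideal of $B\rtimes_\alpha\Z$ is $\hat\alpha$-invariant.

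The remaining step is the reconstruction of a gauge-invariant ideal from its intersection with $B$. I would use the faithful conditional expectation $P\colon B\rtimes_\alpha\Z\to B$ given by $P(x)=\int_{\mathbb{T}}\hat\alpha_\theta(x)\,d\theta$ (integration over the compact group $\mathbb{T}$), together with the twisted coefficients $P_n(x):=P(x U_0^{-n})\in B$. For $x\in I$, the ideal property gives $xU_0^{-n}\in I$, and invariance gives $\hat\alpha_\theta(xU_0^{-n})\in I$, so $P_n(x)\in I\cap B=:J$. One checks $J$ is $\alpha$-invariant since $\alpha(b)=U_0 b U_0^{*}$, whence $J\rtimes_\alpha\Z$ embeds as an ideal of $B\rtimes_\alpha\Z$ by Remark \ref{Rcross-prod}(iii),(iv), and its image plainly lies in $I$. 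Conversely, the Fej\'er (Ces\`aro) means $\frac{1}{N+1}\sum_{k=0}^{N}\sum_{|n|\le k}P_n(x)U_0^n$ have all coefficients in $J$, so lie in the image of $J\rtimes_\alpha\Z$, and converge in norm to $x$; since that image is closed, $x$ lies in it. Therefore $I$ is exactly the natural image of $(I\cap B)\rtimes_\alpha\Z$.

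The only genuinely delicate point is the Fej\'er approximation: one must invoke that for the crossed product by the amenable group $\Z$ the Ces\`aro means converge in norm to $x$ — equivalently, that $P$ and its twists $P_n$ separate points — which follows from the regular-representation description in Remark \ref{Rcross-prod}(i). The inner-implementation identity $\hat\alpha_\theta=\mathrm{Ad}(v(\theta))$ is the heart of the argument; everything else is routine.
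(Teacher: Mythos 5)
Your proof is correct, and its first (and decisive) half coincides with the paper's: both verify $v(\theta)\,b\,v(\theta)^*=b$ for $b\in B$ and $v(\theta)\,U_0\,v(\theta)^*=\theta U_0$, so that $\hat\alpha_\theta=\mathrm{Ad}\bigl(v(\theta)\bigr)$ is inner and every closed ideal is automatically invariant under the dual action. Where you diverge is the passage from gauge-invariance to the identification of $I$ with the image of $J\rtimes_\alpha\Z$ for $J:=I\cap B$. The paper quotients by that image, identifies the quotient with $(B/J)\rtimes_{[\alpha]}\Z$ via Remark \ref{Rcross-prod}(vi), and kills the image of $I$ there by observing that a gauge-invariant ideal meeting $B/J$ trivially must vanish, because the average over the dual action is a \emph{faithful} conditional expectation onto $B/J$. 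You instead stay in $B\rtimes_\alpha\Z$ and reconstruct $x\in I$ from its Fourier coefficients $P_n(x)=P(xU_0^{-n})\in I\cap B$ by Fej\'er summation. The two finishing moves are close cousins resting on the same harmonic analysis of the $\mathbb{T}$-action: the norm convergence of your Ces\`aro means is most cleanly justified not by the regular-representation picture you cite but by the identity $\Sigma_N(x)=\int_{\mathbb{T}}K_N(\theta)\,\hat\alpha_\theta(x)\,d\theta$ with $K_N$ the Fej\'er kernel together with norm-continuity of $\theta\mapsto\hat\alpha_\theta(x)$, and that same averaging is exactly what makes the paper's expectation faithful. What your route buys is an entirely explicit approximation of $x$ inside the image of $J\rtimes_\alpha\Z$, with no need to check that faithfulness of the canonical expectation persists in the quotient by an invariant ideal; what the paper's buys is brevity, since it quotes faithfulness of the expectation once and lets the quotient do the rest. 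Either way the lemma is established.
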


\begin{proof}
The ideals $I$ of $B\rtimes_\alpha\Z$  are invariant 
under the dual action $\widehat{\alpha}$ of
$\mathbb{T}=S^1$ on $B\rtimes_\alpha\Z$,
because  
$\widehat{\alpha}(\theta)=v(\theta)(\, .\, )v(\theta)^*$
for $\theta\in \mathbb{T}$.

There is no ideal $\not= 0$ on a crossed-product 
$B\rtimes_\alpha \Z$ that is 
invariant under the dual action of 
$\mathbb{T}$ on $B\rtimes_\alpha \Z$
\emph{and} is orthogonal to $B$, because the integral
over the dual action is a \emph{faithful} conditional
expectation from $B\rtimes _\alpha \Z$ onto $B$.
This remains true for every  quotient of $B$ by 
an $\alpha$-invariant  ideal $J$ of $B$. 
Thus, every non-zero closed ideal of
$$(B\rtimes_\alpha \Z)/(J\rtimes_{\alpha|J} \Z)
\cong (B/J)\rtimes _{[\alpha]}\Z$$
intersects $B/J$. 
(See Remark \ref{Rcross-prod}(vi) for the isomorphism.)

If $I$ is a closed ideal of $B\rtimes_\alpha\Z$  
then $J:=I\cap B$ is $\alpha$-invariant,
the natural image of $J\rtimes \Z$ is contained in
$I$ and the natural image of
$I$ in $(B/J)\rtimes \Z$
has zero intersection with $B/J$, thus
$I$ is the natural image of $(I\cap B)\rtimes \Z$.
\end{proof}

{}For the following Lemma \ref{L:B-rtimes-Z}
recall that:
\begin{Aufl}
\item 
$
        \ell_\infty \left( \Mult{A} \right)  
  \subset\Mult{\Mult{A}\otimes\K}
        \subset \Mult{A\otimes\K}
        \cong
        \LOp{\HM{A}}
$
        where $\HM{A}$ is as in Example \ref{Ex:H-module}(ii),
\item 
        $\ell_\infty \left( \Mult{A} \right)\cap 
        \left(\Mult{A}\otimes\K\right)=c_0(\Mult{A})$,  
\item   
        $\TT\in 1\otimes\LOp{\ell _2}
  =1_{\Mult{A}}\otimes \Mult{\K}
        \subset
        \Mult{\Mult{A}\otimes\K}$
        where  $\TT:=1\otimes \TT_0$ with 
$\TT_0\in \LOp{\ell_2}$
        is the Toeplitz operator (= forward shift) on 
        $\ell_2:=\ell_2(\N )$.
\item Let
  $Q^s\left( \Mult{A} \right)
  := \Mult{\Mult{A}\otimes\K}/\left(\Mult{A}\otimes\K\right)$
        the \emph{stable corona} of $\Mult{A}$, and
\item  let $B:= \ell_\infty(\Mult{A})/c_0(\Mult{A})$.
        By (ii), there is a natural unital *-monomorphism
  $\epsilon\colon B\hookrightarrow 
  Q^s\left(\Mult{A}\right)$.
\item
  $\sigma\in \Aut{B}$ denote the automorphism of $B$ 
        that is induced by the forward shift
  on $\ell_\infty \left(\Mult{A}\right):=
        \ell_\infty\left( \N,\Mult{A} \right)$.
\end{Aufl}

\begin{LEM}\Label{L:B-rtimes-Z}
  $$
        \Mult{A}\otimes\K \subset
  \Cast \left(\ell_\infty\left(\Mult{A}\right),\TT\right) 
        \subset \Mult{\Mult{A}\otimes\K},
        $$ 
        and
  $
    \Cast \left(\ell_\infty\left(\Mult{A} \right) ,\TT \right)/
    \left(\Mult{A}\otimes\K\right)
  $
  is naturally isomorphic to $B\rtimes_\sigma\Z$ by the
  *-homo\-mor\-phism that is defined by the covariant
        representation $(\epsilon, U)$ of $(B,\sigma)$ into 
        $Q^s\left( \Mult{A} \right)$.

  Here
  $
    B:= \ell_\infty \left( \Mult{A} \right) /
        c_0 \left( \Mult{A} \right)\, 
  $,
  $\sigma$ is induced by the forward shift on
  $\ell_\infty\left(\Mult{A}\right):=
        \ell_\infty\left( \N,\Mult{A}\right)$, and 
        $U:=\TT + \Mult{A}\otimes\K\,$.
\end{LEM}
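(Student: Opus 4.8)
The plan is to verify the two displayed inclusions directly and then to identify the quotient with the crossed product through the universal property, the injectivity being established by a gauge--invariance argument exactly in the spirit of the proof of Lemma \ref{L:inner-dual-action}.

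First I would settle the inclusions. The inclusion $\Cast(\ell_\infty(\Mult{A}),\TT)\subset\Mult{\Mult{A}\otimes\K}$ is immediate, since both generators lie in $\Mult{\Mult{A}\otimes\K}$ by the facts (i) and (iii) recalled before the lemma. For the reverse inclusion $\Mult{A}\otimes\K\subset\Cast(\ell_\infty(\Mult{A}),\TT)$, I would recover the matrix units: writing $\TT=1\otimes\TT_0$ with $\TT_0$ the forward shift on $\ell_2(\N)$, one has $\TT^*\TT=1$ and $\TT\TT^*=1-1\otimes e_{0,0}$, so that $1\otimes e_{j,k}=\TT^j(1-\TT\TT^*)(\TT^*)^k$ lies in the generated algebra for all $j,k$. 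Multiplying the diagonal multiplier $\sum_n a_n\otimes e_{n,n}$ attached to a sequence $(a_n)\in\ell_\infty(\Mult{A})$ by $1\otimes e_{j,k}$ yields $a_j\otimes e_{j,k}$; choosing sequences supported at a single index shows $a\otimes e_{j,k}\in\Cast(\ell_\infty(\Mult{A}),\TT)$ for every $a\in\Mult{A}$, and the closed linear span of these elements is exactly $\Mult{A}\otimes\K$. Since $\Mult{A}\otimes\K$ is an ideal of $\Mult{\Mult{A}\otimes\K}$ and is contained in $C:=\Cast(\ell_\infty(\Mult{A}),\TT)$, it is a closed ideal of $C$, so the quotient $C/(\Mult{A}\otimes\K)$ is defined and, under the quotient map $q\colon\Mult{\Mult{A}\otimes\K}\to Q^s(\Mult{A})$, is carried isomorphically onto $\Cast(\epsilon(B),U)$ (using fact (ii), $\ell_\infty(\Mult{A})\cap(\Mult{A}\otimes\K)=c_0(\Mult{A})$, so that $q(\ell_\infty(\Mult{A}))=\epsilon(B)$ and $q(\TT)=U$).

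Next I would check that $(\epsilon,U)$ is a covariant representation of $(B,\sigma)$. Since $\TT\TT^*=1-1\otimes e_{0,0}$ and $1\otimes e_{0,0}\in\Mult{A}\otimes\K$, the class $U=\TT+(\Mult{A}\otimes\K)$ is unitary in $Q^s(\Mult{A})$. A direct computation with the diagonal multiplier gives $\TT(\sum_n a_n\otimes e_{n,n})\TT^*=\sum_n a_n\otimes e_{n+1,n+1}$, i.e.\ $\TT$ implements on $\ell_\infty(\Mult{A})$ the forward shift $(a_0,a_1,\dots)\mapsto(0,a_0,a_1,\dots)$; passing to $Q^s(\Mult{A})$ this reads $U\epsilon(b)U^*=\epsilon(\sigma(b))$ for $b\in B$. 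By the universal property of the crossed product (Remark \ref{Rcross-prod}(v)) the pair $(\epsilon,U)$ induces a surjective *-homo\-mor\-phism $\varrho\colon B\rtimes_\sigma\Z\to\Cast(\epsilon(B),U)=C/(\Mult{A}\otimes\K)$.

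The main obstacle is the injectivity of $\varrho$, which I would obtain by gauge invariance. For $\theta\in\mathbb{T}$ set $w(\theta):=\sum_n\theta^n\,1\otimes e_{n,n}\in\Mult{\Mult{A}\otimes\K}$; conjugation by $w(\theta)$ fixes $\ell_\infty(\Mult{A})$ pointwise and sends $\TT\mapsto\theta\TT$, and it preserves the ideal $\Mult{A}\otimes\K$, hence descends to an automorphism $\beta_\theta$ of $Q^s(\Mult{A})$ fixing $\epsilon(B)$ and scaling $U\mapsto\theta U$. Thus $\varrho$ intertwines the dual action $\widehat{\sigma}$ of $\mathbb{T}$ on $B\rtimes_\sigma\Z$ with $\beta$, so $\ker\varrho$ is $\widehat{\sigma}$-invariant and satisfies $\ker\varrho\cap B=\{0\}$ because $\varrho|B=\epsilon$ is faithful. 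Since $B\rtimes_\sigma\Z$ agrees with the reduced crossed product (Remark \ref{Rcross-prod}(i)), averaging over $\widehat{\sigma}$ is a faithful conditional expectation onto $B$ that maps $\ker\varrho$ into $\ker\varrho\cap B=\{0\}$, forcing $\ker\varrho=\{0\}$, exactly as in the proof of Lemma \ref{L:inner-dual-action}. This yields the desired natural isomorphism $C/(\Mult{A}\otimes\K)\cong B\rtimes_\sigma\Z$.
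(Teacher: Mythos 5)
Your proof is correct and follows essentially the same route as the paper's: generate the matrix units to get $\Mult{A}\otimes\K\subset\Cast\left(\ell_\infty\left(\Mult{A}\right),\TT\right)$, check that $(\epsilon,U)$ is covariant, and kill the kernel of the induced *-homomorphism by gauge invariance. The only cosmetic difference is that the paper implements the dual action by the central unitaries $v(\theta)=(\theta,\theta^2,\dots)+c_0\left(\Mult{A}\right)$ of $B$ itself and then quotes Lemma \ref{L:inner-dual-action}, whereas you implement it by the ambient unitaries $w(\theta)\in\Mult{\Mult{A}\otimes\K}$ and run the faithful-conditional-expectation argument directly; these are the same argument.
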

\begin{proof} Since
$ 
1\otimes\K= \left( 1\otimes\LOp{\ell _2} \right) \cap
 \left( \Mult{A}\otimes\K \right)\, 
$ and $1-\TT_0\TT_0^*\in \K\,$,
the image $U:=\TT + \Mult{A}\otimes \K$ 
of $\TT=1\otimes\TT_0$ in
$Q^s\left(\Mult{A}\right)$
is unitary.  
A straight calculation shows that
$a\mapsto \TT a \TT ^*$ is the forward shift on 
$\ell_\infty \left( \Mult{A} \right)$, thus
$\epsilon(\sigma( b))=U\epsilon(b)U^*$
for $b\in B$.

$\Mult{A}\otimes \K$ is contained in
$\Cast \left(\ell_\infty\left(\Mult{A}\right),\TT\right)$,
because $\Cast \left(\TT _0 \cdot c_0\right)=\K$
and
$
\Cast \left(\TT c_0\left(\Mult{A}\right)\right)=
\Mult{A}\otimes \Cast \left(\TT _0 \cdot c_0\right)\,$.
$$ 
\Cast \left(\ell_\infty\left(\Mult{A}\right),\TT\right)/ 
\left(\Mult{A}\otimes\K\right) =\Cast  (U\epsilon(B))
$$
is the image of 
the natural *-homo\-mor\-phism $\rho$ from
$B \rtimes_\sigma \Z $ into $Q^s\left(\Mult{A}\right)$
defined by the
covariant representation $(\epsilon ,U)$.
The restriction $\rho|B=\epsilon$ of $\rho$ to
$B$ is faithful.

{}For $\theta \in\C$ with $|\theta|=1$, the image 
$v(\theta)\in B$ of
$( \theta 1,\theta^2 1,\dots )$ 
is in the center 
of $B$ and satisfies
$\theta \sigma(v(\theta)) = v(\theta)$.
Thus, the kernel of the natural homo\-mor\-phism
$\rho\colon B\rtimes_\sigma\Z \to Q^s\left(\Mult{A}\right)$
must be zero by Lemma \ref{L:inner-dual-action}.
\end{proof}

Let $C$ a \Cast-algebra. We denote by
$\wcl{X}$ the ultra-weak closure of a subspace 
$X$ of 
the $W^\ast$-algebra $C^{**}$.  Recall that $\wcl{X}$ is
naturally isomorphic to $X^{**}$ if $X$ is a closed subspace 
of $C$ (by Hahn-Banach extension).  
We write also $\alpha$ for
the second conjugate $\alpha^{**}\in \Aut{A^{**}}$ 
of $\alpha\in \Aut{A}$.

\begin{LEM}\Label{L:second-dual}
Let $\alpha\in \Aut{A}$. 
There is a natural *-homomorphism $\eta$
from $A^{**}\rtimes _\alpha \Z$ to the second
conjugate $(A\rtimes _\alpha \Z)^{**}$. $\eta$ is faithful
and satisfies
\begin{Aufl}
\item $\eta| A^{**}$ is the second conjugate
of the inclusion map 
$A\hookrightarrow A\rtimes _\alpha \Z$, and
\item $\eta| (A\rtimes _\alpha \Z)$ is the natural inclusion
of $A\rtimes _\alpha \Z$ into it second conjugate.
\end{Aufl}
\end{LEM}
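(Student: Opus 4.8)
The plan is to build $\eta$ from a covariant pair and then force injectivity by comparing canonical conditional expectations. First I would record that the inclusion $\iota\colon A\hookrightarrow A\rtimes_\alpha\Z\subset (A\rtimes_\alpha\Z)^{**}$ is an isometric *-monomorphism (Remark \ref{Rcross-prod}(ii)), so its second conjugate $\iota^{**}\colon A^{**}\to N:=(A\rtimes_\alpha\Z)^{**}$ is an ultra-weakly continuous, isometric (hence injective) *-homomorphism into the $W^\ast$-algebra $N$. Let $U_0\in\Mult{A\rtimes_\alpha\Z}\subset N$ be the canonical unitary with $U_0\iota(a)U_0^*=\iota(\alpha(a))$ for $a\in A$ (Remark \ref{Rcross-prod}(i),(v)). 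Then $(\iota^{**},U_0)$ is a covariant representation of $(A^{**},\alpha)$ in $N$: the relation $\iota^{**}(\alpha(x))=U_0\iota^{**}(x)U_0^*$ holds for $x\in A$, and both sides are ultra-weakly continuous in $x$ (using that $\alpha=\alpha^{**}$ and $\iota^{**}$ are normal and that conjugation by the fixed $U_0$ is normal), hence it holds on all of $A^{**}$ by ultra-weak density of $A$. By the universal property of the crossed product (Remark \ref{Rcross-prod}(v), together with the coincidence of full and reduced crossed products for $\Z$, Remark \ref{Rcross-prod}(i)) this pair induces a *-homomorphism $\eta\colon A^{**}\rtimes_\alpha\Z\to N$ with $\eta|A^{**}=\iota^{**}$ and $\eta(U)=U_0$, where $U$ is the canonical generator of $\Z$ in $\Mult{A^{**}\rtimes_\alpha\Z}$.

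Properties (i) and (ii) then come for free. Indeed $\eta|A^{**}=\iota^{**}$ is by construction the second conjugate of the inclusion $A\hookrightarrow A\rtimes_\alpha\Z$, which is (i). For (ii), the copy of $A\rtimes_\alpha\Z$ inside $A^{**}\rtimes_\alpha\Z$ (coming from $A\subset A^{**}$ via Remark \ref{Rcross-prod}(iii)) is generated by $U$ and $\iota(A)$; on these $\eta$ acts by $U\mapsto U_0$ and $a\mapsto\iota^{**}(a)=\iota(a)$, which are exactly the generators of the canonical copy of $A\rtimes_\alpha\Z$ in $N$, so $\eta$ restricts there to the natural embedding $A\rtimes_\alpha\Z\hookrightarrow(A\rtimes_\alpha\Z)^{**}$.

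The heart of the argument, and the step I expect to be the main obstacle, is faithfulness. My plan is to let $E\colon A^{**}\rtimes_\alpha\Z\to A^{**}$ be the integral over the dual $\mathbb{T}$-action, which is a \emph{faithful} conditional expectation (as used in the proof of Lemma \ref{L:inner-dual-action}, with $A^{**}$ in place of $B$), and to let $E_2\colon A\rtimes_\alpha\Z\to A$ be the analogous faithful conditional expectation downstairs, with second conjugate $E_2^{**}\colon N\to\wcl{\iota(A)}=\iota^{**}(A^{**})$ a normal conditional expectation. I would then prove the intertwining identity
\[
E_2^{**}\circ\eta=\iota^{**}\circ E .
\]
Both sides are completely positive contractions $A^{**}\rtimes_\alpha\Z\to N$, so it suffices to check them on the dense *-subalgebra spanned by the elements $U^n x$ with $n\in\Z$ and $x\in A^{**}$. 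There the right side is $\iota^{**}(E(U^n x))$ and the left side is $E_2^{**}(U_0^n\iota^{**}(x))$; both vanish for $n\neq 0$ and equal $\iota^{**}(x)$ for $n=0$, the left side being computed first for $x\in A$ (where $U_0^n\iota(x)\in A\rtimes_\alpha\Z$ and $E_2$ has the well-known effect) and then extended to $x\in A^{**}$ by normality of $E_2^{**}$ and of $x\mapsto U_0^n\iota^{**}(x)$. Granting the identity, if $\eta(z)=0$ then $0=E_2^{**}(\eta(z^*z))=\iota^{**}(E(z^*z))$; injectivity of $\iota^{**}$ gives $E(z^*z)=0$, and faithfulness of $E$ forces $z^*z=0$, i.e.\ $z=0$, so $\eta$ is faithful. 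The delicate points to nail down are that $E_2^{**}$ is genuinely a normal conditional expectation onto the ultra-weak closure $\wcl{\iota(A)}\cong A^{**}$, and the normality bookkeeping in passing from $A$ to $A^{**}$ in the intertwining identity.
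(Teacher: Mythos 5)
Your construction of $\eta$ is essentially the paper's: both obtain $\eta$ from the covariant pair consisting of the normal extension $\iota^{**}$ of the inclusion $A\hookrightarrow A\rtimes_\alpha\Z$ and the image of the canonical unitary $U_0$ in $(A\rtimes_\alpha\Z)^{**}$, with the covariance relation extended from $A$ to $A^{**}$ by ultra-weak continuity; the paper merely packages the unitary differently, routing it through a unital extension $\gamma\colon \Mult{A}\rtimes_\alpha\Z\to (A\rtimes_\alpha\Z)^{**}$ of the non-degenerate inclusion. Where you genuinely diverge is the proof of faithfulness. The paper composes $\eta$ with the normal extension to $(A\rtimes_\alpha\Z)^{**}$ of the regular representation $d_{\mathrm{red}}$ of $A\rtimes_\alpha\Z$ and identifies the composite as the regular representation of $A^{**}\rtimes_\alpha\Z$, which is faithful because full and reduced crossed products by $\Z$ coincide (Remark \ref{Rcross-prod}(i)). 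You instead establish the intertwining identity $E_2^{**}\circ\eta=\iota^{**}\circ E$ between the canonical conditional expectations and conclude from faithfulness of $E$ upstairs. Both arguments are correct, and both ultimately rest on the same fact (amenability of $\Z$: faithfulness of the canonical expectation on the \emph{full} crossed product is equivalent to full $=$ reduced, which is what the regular-representation argument uses directly). The paper's route is shorter since it needs no intertwining computation; yours isolates the single analytic input more explicitly (the faithful expectation already invoked in the proof of Lemma \ref{L:inner-dual-action}) at the cost of the normality bookkeeping, which you handle correctly: checking the identity on the dense span of the elements $U^nx$, first for $x\in A$ and then for $x\in A^{**}$ using normality of $E_2^{**}$ and of multiplication by the fixed unitary $U_0^n$, and noting that injectivity of $\iota^{**}$ (not faithfulness of $E_2^{**}$) is all that is needed to close the argument.
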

\begin{proof} 
$A$ is a non-degenerate \Cast-subalgebra of
$C:=A\rtimes_\alpha \Z$ by Remark \ref{Rcross-prod}(ii). 
Thus the inclusion map
$\epsilon \colon A\to C$ induces a unital
isometric isomorphism $\epsilon ^{**}$ from  $A^{**}$
onto a W*-subalgebra of $C^{**}$. 

$\alpha$ extends naturally to an automorphism
of $\Mult{A}\subset A^{**}$ 
(that we also denoted by $\alpha$).
$A\rtimes _\alpha \Z$ is an 
ideal of 
$\Mult{A}\rtimes _\alpha \Z$ by Remark
\ref{Rcross-prod}(iii,iv).  The
natural inclusion of $A\rtimes _\alpha \Z$ into the
von-Neumann subalgebra 
$C^{**}\subset \LOp{H}$ (for suitable $H$)
is non-degenerate. 
Thus,
there is a unital *-homomorphism $\gamma$
from $\Mult{A}\rtimes _\alpha \Z$ into
$C^{**}$ that extends the
natural inclusion of $A\rtimes _\alpha \Z$ into the
von-Neumann subalgebra $C^{**}$ of $\LOp{H}$.

Let $U_0\in \Mult{A}\rtimes _\alpha \Z\subset
A^{**}\rtimes _\alpha \Z$ the
unitary that generates the copy of $\Z$
with $\alpha(a)=U_0aU_0^*$, and 
let $U:=\gamma(U_0)\in C^{**}$, then 
$U\epsilon(a)U^*=\epsilon(\alpha(a))$
for $a\in A$.
Thus $U\epsilon^{**}(a)U^*=\epsilon^{**}(\alpha(a))$
for all $a\in A^{**}$, and there is a unique
*-epi\-mor\-phism $\eta$ from $A^{**}\rtimes _\alpha \Z$
onto $\Cast (\epsilon^{**}(UA^{**})) =
\Cast(\epsilon^{**}(A^{**}),U)$
with $\eta(a)=\epsilon^{**}(a)$ and $\eta(U_0)=U$
(cf.\ Remark \ref{Rcross-prod}(v)).
Clearly the restriction of $\eta$ to
$A\rtimes _\alpha \Z$ is the natural inclusion into
$C^{**}$.

If we compose $\eta$ with the normalization
$C^{**}\to A^{**}\overline{\otimes} \LOp{\ell_2(\Z)}$
of the regular representation $d_{\textrm{red}}$ 
of $A\rtimes _\alpha \Z$ into 
$A^{**}\overline{\otimes} \LOp{\ell_2(\Z)}$,
then we get the regular representation of 
$A^{**}\rtimes _\alpha \Z$. The latter is
faithful. Thus $\eta$ is faithful.
\end{proof}

\begin{PRP}\Label{P:e-n-in-second}
Suppose that for $\alpha\in \Aut{A}$, there are
projections
$\{ e_n \fdg   n\in \Z  \}$ 
in the center of $A^{**}$ with 
\begin{Aufl}
\item
        $\alpha(e_n)=e_{n+1}\leq e_n$
        for $n\in \Z$, and
\item
        $\lim _{n\to\infty} \left\| a (e_{-n}-e_{n})\right\|
        =\| a\|$
        for $a$ in a dense subset $S$ of $A$.
\end{Aufl}

Then every *-representation 
$\rho\colon A\rtimes _\alpha \Z\to \LOp{\mathcal{H}}$
with faithful restriction $\rho| A$ is
a faithful representation of $A\rtimes _\alpha \Z$.
\end{PRP}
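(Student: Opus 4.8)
The plan is to deduce faithfulness of $\rho$ from faithfulness of the canonical conditional expectation $E\colon A\rtimes_\alpha\Z\to A$ attached to the dual action (cf.\ Remark \ref{Rcross-prod} and Lemma \ref{L:inner-dual-action}), by \emph{recovering $E$ inside the representation} from the central projections $e_n$. Since $\ker\rho$ is a closed ideal meeting $A$ trivially, it is enough to prove the representation-independent statement that every closed ideal $I$ of $A\rtimes_\alpha\Z$ with $I\cap A=\{0\}$ vanishes; equivalently, I would aim at the estimate $\|\rho(E(x^*x))\|\le\|\rho(x)\|^2$, because then $\rho(x)=0$ forces $E(x^*x)=0$, hence $x=0$ by faithfulness of $E$ together with faithfulness of $\rho|A$.

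First I would pass to the second dual. Writing $(\pi,U)$ for the covariant pair with $\pi=\rho|A$ and $\pi(\alpha(a))=U\pi(a)U^*$, I extend $\pi$ to a normal *-homomorphism $\bar\pi$ on $A^{**}$ and set $E_n:=\bar\pi(e_n)$; by Lemma \ref{L:second-dual} these projections together with $U$ sit naturally in one von Neumann algebra, and $\alpha(e_n)=e_{n+1}\le e_n$ yields decreasing projections with $UE_nU^*=E_{n+1}$. Put $g_m:=e_m-e_{m+1}$, a family of \emph{mutually orthogonal} central projections of $A^{**}$ with $\alpha(g_m)=g_{m+1}$ and $\sum_m g_m=p-q$, where $p:=\sup_m e_{-m}$ and $q:=\inf_m e_m$. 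The key computation, carried out in $A^{**}\rtimes_\alpha\Z$ and transported by Lemma \ref{L:second-dual}, is the layer–pinch identity
\[
 g_m\,x\,g_m=E(x)\,g_m\qquad(x\in A\rtimes_\alpha\Z,\ m\in\Z),
\]
valid because for $x=\sum_k a_kU_0^k$ one has $g_m a_kU_0^k g_m=a_k\,g_mg_{m+k}\,U_0^k$ and $g_mg_{m+k}=0$ unless $k=0$. Thus compression to a single layer kills all off-diagonal terms and reproduces the conditional expectation.

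Summing the orthogonal pinchings is contractive, so it gives $\|\rho(E(x))(P-Q)\|\le\|\rho(x)\|$ with $P-Q:=\bar\pi(p-q)$; the same averaging is implemented by the corner unitaries $W_\theta:=\sum_m\theta^m g_m$, which satisfy $\theta\,\alpha(W_\theta)=W_\theta$, exactly the relation of Lemma \ref{L:inner-dual-action}, so that $\int_{\mathbb T}\mathrm{Ad}(W_\theta)(\cdot)\,d\theta$ plays the role of the dual-action conditional expectation on the corner $p-q$. What remains is to see that this corner does not shrink the norm of $E(x)$, and here hypothesis (ii) enters: reformulated it says precisely that $\sup_m\|a\,g_m\|=\|a\|$ for all $a\in A$, i.e.\ $p-q$ is \emph{faithful} on $A$ and the compression $\kappa\colon x\mapsto(p-q)x(p-q)$ restricts to an isometry on $A$. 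Because $\kappa$ intertwines $E$ with the faithful conditional expectation obtained by integrating $\mathrm{Ad}(W_\theta)$, one arrives at $\|E(x)\|\le\|\rho(x)\|$, which is what is wanted.

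The hard part will be exactly this last point: controlling the part of $A$ that the layers $g_m$ do \emph{not} register, namely the behaviour near $q=\inf_m e_n$ (and, inside a fixed representation, near the support projection of $\rho|A$), since in general the product of two faithful central projections need not be faithful. This is precisely the feature that condition (ii) is designed to exclude — it forces the norm of every element of $A$ to be attained across the layers $g_m$, so that detection of $E(x)$ on the corner $p-q$ is faithful and descends back to $A$. Once $\|E(x^*x)\|\le\|\rho(x)\|^2$ is secured in this way, faithfulness of $E$ (Remark \ref{Rcross-prod}, Lemma \ref{L:inner-dual-action}) together with faithfulness of $\rho|A$ completes the argument.
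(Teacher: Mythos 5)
Your proposal is correct and follows essentially the same route as the paper: both cut down by the $\alpha$-invariant central projection $p=\sum_m(e_m-e_{m+1})$, use hypothesis (ii) to see that $a\mapsto ap$ is isometric on $A$, transfer to the second dual via Lemma \ref{L:second-dual}, and exploit the gauge unitary $\sum_m\theta^m(e_m-e_{m+1})$ satisfying $\theta\,\alpha(v(\theta))=v(\theta)$. Your layer-pinch identity $g_mxg_m=E(x)g_m$ is just the explicit, inner form of the faithful conditional expectation that the paper obtains by citing Lemma \ref{L:inner-dual-action} for $(A^{**}p)\rtimes_\alpha\Z$, so the two arguments coincide in substance.
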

The latter equivalently means 
that every non-zero closed ideal
$I$ of $A\rtimes _\alpha \Z$ has non-zero intersection
with $A$.

\begin{proof}
Let $p:=\bigvee_{n\in\Z}e_n-\bigwedge_{n\in\Z}e_n$.
$\alpha \left( \bigwedge e_n \right) =\bigwedge e_n$ and
$\alpha \left( \bigvee e_n \right) =\bigvee e_n$ in $A^{**}$
by (i).  Clearly, $p$ is in the center of 
$ \left(  A \right) ^{**}$ and
$p=\sum_{n\in\Z} e_n-e_{n+1}$.  
Thus $\alpha(p)=p$ and 
$\epsilon \colon a\in A\to ap$ is a
*-homo\-mor\-phism which commutes with $\alpha$.  
By (ii), $\|a\|= \lim _{n\to\infty} \left\| a (e_{-n}-e_{n})\right\|
        \leq \| ap\|$ 
for $a\in S$.

The kernel of $\epsilon$ is an 
$\alpha$-invariant closed ideal 
$I$ of $ A$.  Let $d\in I$, i.e.~ $dp=0$. Since $S$
is dense in $A$, there is a sequence  
$(b_1, b_2, \ldots) $ in $S$
with $\lim _k \| b_k-d\| \to 0$.  Then
$\|b_k\|=\| b_k p \|=\| (b_k-d)p \|\leq \| b_k- d\|$
implies $\lim _k \| b_k \| =0$ and
$\| d | \leq \lim _k (\| b_k\| +\| b_k-d \|)=0$.
Thus $d=0$, and $\epsilon$ is faithful.

Let $\theta\in \C$ with $| \theta |=1$. Then
$v(\theta ):=\sum_{n\in\Z} (e_n-e_{n+1})\theta^n$
is a unitary in the center of $A^{**}p$ with
$\theta \alpha(v(\theta ))= v(\theta )$.
Hence, every 
*-homo\-mor\-phism $h$
from $(A^{**}p)\rtimes \Z$ into a \Cast--algebra $G$
with faithful restriction $h|(A^{**}p)$ is itself faithful
by Lemma \ref{L:inner-dual-action}.

Let $\varrho \colon A \rtimes_\alpha\Z \to\LOp{H}$ a
*-representation of $A \rtimes_\alpha\Z$ such that
the restriction $\varrho|A$ is faithful,
and let 
$\varphi \colon (A \rtimes_\alpha\Z)^{**} \to\LOp{H}^{**}$
denote the second conjugate *-homomorphism $\varrho^{**}$.

Then $\varphi | A^{**}$ is also faithful, in particular
$\varphi | (A^{**}p$ is faithful.

By Lemma \ref{L:second-dual} there is a *-monomorphism $\eta$
from $A^{**}\rtimes _\alpha \Z$ into the second
conjugate of $A\rtimes _\alpha \Z$ such that
$\eta| A^{**}$ is the second conjugate of the
inclusion map $A\hookrightarrow A\rtimes_\alpha \Z$
and $\eta| (A\rtimes _\alpha \Z)$ is the natural
embedding from $A\rtimes _\alpha \Z$ in its
second dual (the universal W*-crossed product 
$A^{**}\rtimes _{(\alpha,W^*)} \Z$).

Since $\alpha(p)=p$ and $p$ is in the center of
$A^{**}$ we get that $p$ is in the center of 
$A^{**}\rtimes _\alpha \Z$ and that
$(A^{**}p)\rtimes _\alpha \Z$ is naturally
isomorphic to $(A^{**}\rtimes _\alpha \Z)p$.
It follows that the *-homomorphism
$d\mapsto dp$ from $A\rtimes _\alpha \Z$
into $(A^{**}\rtimes _\alpha \Z)p$ is faithful,
because
$a\mapsto ap$  is faithful 
on $A$ and is $\alpha$-equivariant and
Remark \ref{Rcross-prod}(iii) applies.

Let $\lambda :=\varphi\circ \eta$.
Then 
$\lambda\colon A^{**}\rtimes_\alpha \Z\to \LOp{H}^{**}$
satisfies
$\lambda (bp)=\varrho (b)\varphi(\eta(p))$
for $b\in A\rtimes _\alpha \Z$, and
$\lambda| (A^{**}p)$ is faithful. 
Thus $\lambda| (A^{**}\rtimes _\alpha \Z)p$ is faithful.
It follows that $\varrho$ must be faithful.
\end{proof}

\TEMP{ 
\section{Additional remarks}

\begin{REM}
We outline the idea of our proof:  
The relation $R_\pi$
for a pseudo-open map $\pi$ defines a map 
$\Gamma_\pi\colon p\in P\mapsto K_p\subset \Cont[0]{P}^*$ 
from $P$ into
closed faces $K_p$ of the simplex of probability-measures on 
$P$ by
letting $K_p$ be the set of 
probability-measures with support in
$\pi^{-1}\left(\overline{\{\pi(p)\}} \right)$.  
This is a lower semi-continuous
set-valued map, so, by the Michael selection theorem 
\cite{Michael},
for each
$p\in P$ and $(p,q)\in R_\pi$ there exists a 
$\sigma \left(\Cont[0]{P}^*,\Cont[0]{P}\right)$-continuous map 
$\varrho$ from
$P$ into $\Cont[0]{P}^*$ with $\varrho (p)=\delta_{q}$ and
$\varrho (r)\in K_r$ for all $r\in P$
(i.e.\ a continuous selection through $p$ and $q$).
$\varrho$ defines a positive (and hence completely positive) 
contraction 
$T\colon \Cont[0]{P}\to \Cont[0]{P}$ by 
$T(f)(p):=\varrho (p)(f)$.  
It follows that the
set $\mathcal{C}(R_\pi)$ of completely 
positive contractions $T$ of $\Cont[0]{P}$ with 
$T(I)\subset I$ for every closed ideal $I$ of $\Cont[0]{P}$ 
with $R_\pi$-invariant support (i.e.\ for every
$I$ that has support 
$\pi^ {-1}U$ for some open 
subset $U$ of $X$) determines the family of 
$R_\pi$-invariant
open subsets of $P$.  
Since $\Cont[0]{P}$ is separable, there is a 
countable sequence 
$T_1, T_2,\dots$ in $\mathcal{C}(R_\pi)$ 
which is dense in 
$\mathcal{C}(R_\pi)$ with respect to the point-norm topology.  
If we take Kasparov-Stinespring dilations of the $T_n$ and
take direct sums and infinite repeats 
and stabilize by the compact
operators we get a 
*-homo\-mor\-phism $\, h\,$ from $\Cont[0]{P,\K}$ into
the multiplier algebra $\Mult{\Cont[0]{P,\K}}$ of 
$\Cont[0]{P,\K}$ with 
the property that $\, h\,$ 
is unitarily equivalent to its infinite repeat, and that,
for closed ideals $I$ of $A:=\Cont[0]{P,\K}$ and $b\in A$,
\begin{Aufl}
\item $h(I)A\subset I$ and $h(b)A\subset I$
  imply $b\in I$ (i.e.\ $h(I)A\subset I$ implies
  $h(A)\cap \Mult{A,I}=h(I)$ in the notation of
        Definition \ref{D:Mult}),
\item $h(I)A \subset I$ 
  if and only if 
  the support of $I$ in
  $\Prim{A}\cong P$ is an $R_\pi$-invariant 
  open subset of $P$.
\end{Aufl}
If $\pi$ is pseudo-epimorphic then one can manage that 
$h\colon A\to \Mult{A}$ is
injective and non-degenerate.  It 
defines a \Name{Hilbert} $A$-bi-module which we use
to construct our crossed product $A_X:= E\rtimes_\sigma \Z$.
$A_X$ is isomorphic to the 
\Name{Cuntz--Pimsner} algebra of the bi-module.  
We calculate its
Hausdorff lattice of closed ideals by rather elementary means
in Section \ref{sec:ideal-of-cross}.  
It turns out that this lattice is isomorphic to the lattice of
$R_\pi$-invariant open subsets of $P$. This completes the
proof of our main Theorem \ref{T:main}.

\medskip

More generally we show that,
if $A$ and  $B$ are separable and stable and if $B$
contains an Abelian regular
\Cast-sub\-al\-ge\-bra $C$,
then every map $\Psi$ from $\latO{\Prim{B}}$ to 
$\latO{\Prim{A}}$ with
the properties (I) and (II) 
of Definition \ref{D:l-g-preserv} 
can by realized by a non-degenerate
*-mono\-mor\-phism 
$h\colon A\to \Mult{B}$ in a way that the ideal 
$I:=\ke \left(\Prim{A}\setminus\Psi(U)\right)$ 
for an open subset $U$ of 
$\Prim{B}$ is given by the formula
\begin{equation}
  I=h^ {-1} \left(h(A)\cap \Mult{B,J}\right) 
\end{equation}
where 
$J:=\ke \left( \Prim{B}\setminus U \right)$ 
is the closed ideal of $B$ corresponding to $U$. 
Moreover we construct $\, h\,$ such that 
$h(A)$ is contained in the
multiplier algebra $\Mult{D}$ of a non-degenerate 
\Cast-sub\-al\-ge\-bra  
$D\subset B$ with
$D\cong C\otimes\K$ and that 
$\, h\,$ is unitarily equivalent to its
infinite repeat $\delta_\infty\circ h$
(cf.\ Remark \ref{R:delta-infty}).  
With this properties $\, h\, $ is uniquely determined up to
unitary homotopy (cf.\ Definition \ref{D:unitary-htpy} and 
\cite[cor.~5.1.6]{K.book}).
\end{REM}

\begin{REM}
More general:
If $X$ is $\T{0}$ we call a subset $Z$ of 
$X$ a pseudo-G$_\delta$
set if $Z$ is intersection of a countable sequence 
$Z_n=F_n\cup U_n$
where $F_n$ is closed and $U_n$ is open in $X$.
If $X$ is the image of a Polish space by a
continuous and open epi\-mor\-phism, 
then $X$ is point-complete and
second countable and every pseudo-G$_\delta$ subset of 
$X$ has the \Name{Baire} property 
(i.e.~an intersection of a countable 
family of dense open subsets is dense).

\FRem{replace by \cite{Ped}/\cite{DiniEK1}?}

The point-completeness comes as follows:
Let $\{U_n\}_{n\in \N}$ be a countable base 
of the topology of a prime space
$X$ with Baire property, then $U_n$ is dense
in $X$ for $n\in \N$.
Hence there is a point
$x$ in the intersection $\bigcap_{n\in \N} U_n$. 
Necessarily, $X=\overline{\{x\}}$.
\end{REM}

\begin{REM}
By duality of open and closed sets of a topology 
one can define
a map
$$\Psi'\colon F\in \latF{X}\mapsto 
F\mapsto Y\setminus\Psi(X \setminus F)\in \latF{Y}.$$ 
This map satisfies the following properties 
(I$'$), (II$_0'$) and (III$'$):
\begin{Aufl}
\item[(I$'$)] 
  $\Psi'(X)=Y$ and 
  $\Psi'(\emptyset)=\emptyset$.
\item[(II$_0'$)] $\Psi' ( F\cup G ) 
  =\Psi(F)\cup\Psi(G)$ for all closed
  subsets $F,G\subset X$
\item[(III$'$)] $\Psi'\left( \bigcap_\alpha F_\alpha \right) 
  =\bigcap_\alpha\Psi'\left(F_\alpha\right)$ 
  for every family of closed subsets $F_\alpha\subset X$.
\end{Aufl}
\end{REM}

\begin{REM}
Pre-\Name{Hilbert} 
$A$-modules are always non-degenerate $A$-modules, 
thus
\Name{Hilbert} $A$-modules $E$ satisfy $E=E\cdot A$ by the
Cohen factorization theorem (cf.\ \cite{BoDu}).
It follows that \Name{Hilbert} $A$-modules are also
$\Mult{A}$-modules and, thus, carry a compatible
vector space structure over $\C$.
There are examples of (degenerate) $A$-modules $E$ 
over a \Cast-algebra $A$ that do not carry a 
compatible vector space structure.

Thus on the algebraic level we need a definition of
a right $A$-module $E$ that $E$ is
a vector space over $\C$
with an algebra homomorphism $\rho\colon A\to L(E)$
that we write as $e\cdot a:=\rho(a)(e)$, i.e.\ the 
$A$-module
structure is
compatible with the vector addition scalar multiplication, 
i.e.\
$\forall (\lambda\in\C, a\in A, e,f\in E)\colon\quad 
\lambda(ea)=(\lambda e)a
=
e(\lambda a)\quad \textrm{and} \quad (e+f)\cdot a
=e\cdot a +f\cdot a$.
On non-degenerate complete normed $A$-modules 
$E$ the compatible
$\C$-vector space structure exists and is unique.
\end{REM}

\begin{REM}
Our \Name{Hilbert} $A$-bi-modules $\HM[E]{A}$ 
should not be mixed up
e.g.~with linking bi-modules (for ideals)
of $A$, because we consider mainly
$A$-bi-module structures 
that admit 
no non-trivial (opposite) sesqui\-linear form on $E$
which  is is 
$A$-linear with respect to the left action of  $A$ on $E$.
\end{REM}

\begin{EXA} 
$B:=\ell_\infty (\Z)$ , $\sigma\in \Aut{B}$
(forward) shift, $p\in B$ the projection with
$p(n)=1$ for $n\ge 1$, $p(n)=0$ for $n\leq 0$,
and $D:=\C\cdot p\subset B$. $D$ satisfies
($\alpha$) and ($\gamma$) but not ($\beta$).
$[D]_\sigma$ is the set of bounded sequences
$(f_n)$ with $\lim _{n\to -\infty} f_n=0$
and $\lim _{n\to +\infty} f_n$ exist in $\C$.

$c_0(\Z)$ is a $\sigma$-invariant ideal of
$[D]_\sigma =D_{-\infty,\infty}$,
because $p-\sigma(p)=\delta_{1,1}$.
$c_0\cap D_{1,2}=\C \cdot \delta_{1,1}$ and
$c_0\cap D=\{ 0\}$, 
because $\lim _{n\to +\infty} f_n\not=0$ for 
$f\in D_{-k,\infty}\setminus \{ 0\}$ and $k\in \N$.
$[D]_\sigma$ is not unital
and 
$0\to c_0\to [D]_\sigma \to \C \to 0$ is an
exact sequence defined by 
$f\in [D]_\sigma\mapsto \lim_n f_n\in \C$.
\end{EXA}

\begin{REM}\Label{R:uniform-lsc-maps}
Note that the ``uniform lower semi-continuity'' 
established in the proof of  
Lemma \ref{L:lsc-relation}(ii) also shows:\\
{\it Suppose that $X$ and $Y$ are metric spaces 
where $X$ is compact, and 
$R$ is a subset of $Y\times X$ with $p_1(R)=Y$.
Let $\lambda (y):=p_2(p_1^{-1}(y))\subset X$.
Then the restriction of $p_1\colon (y,x)\mapsto y$
to $R$ is an open map from $R$ onto $Y$,
if and only if,
for every $y\in Y$ and $\varepsilon >0$ there is 
$\delta>0$ with 
$\lambda(y)\subset U_\epsilon (\lambda (z))$
for all $z\in Y$ with  
$\textrm{dist} (z,y)<\delta$.}
(Here: $U_\varepsilon (W):=
\{ x\in X\fdg   
\textrm{dist} (x,W)<\varepsilon \}$.)
\end{REM}

\begin{REM}
$W$ is a $R_\pi$-invariant set 
if and only if $Y\setminus W$
is an $R_\pi^t$-invariant set.
\end{REM}

\begin{REM}

\noindent
(i)
$p_k\LOp{\Fock{E}}p_k$ is naturally isomorphic
to $\LOp{\bigoplus_{n=0}^k E^{(\otimes_A)n}}$.

\noindent
(ii)
The following relations hold (with $e_n\in E$):
\begin{eqnarray*}
(T_e)^*T_f ( a,f_1,f_2,\dots ) &=&
   ( \SP{e}{f}a,\SP{e}{f}f_1,\SP{e}{f}f_2,\dots ) , \\
T_f(T_e)^* ( a,e_1,e_2\otimes_A f_1,\dots ) 
  &=& 
( 0,\theta_{fe}(e_1),\theta_{fe}(e_2)\otimes_A f_1,\dots ).
\end{eqnarray*}

\noindent
(ii)
Relations for images $S_e$ of $T_e$:
\begin{eqnarray*}
S^*_eS_f &=& \SP{e}{f} \\
S_fS^*_e &=& \theta_{fe^*} \\
S_ea = S_{ea} &,& aS_e=S_{ae}
\end{eqnarray*}
Relations require to
specify the natural images of 
$\K (E)$ and $A$.
\end{REM}

\medskip

\FRem{erase next?}
\begin{REM}
        The Kasparov trivialization theorem 
(cf.\ (iv) of Example\ \ref{Ex:H-module}) 
extends long known results of
  Dixmier and Douady in \cite{DixDoua}) 
        on the sub-triviality of
  continuous fields of Hilbert spaces. 
        In principle we need only
        the old results of \cite{DixDoua}.
\end{REM}

\FRem{next useful?}
\begin{REM}
The natural unital and strictly continuous
extension 
$$\Mult{\pi_J}\colon \Mult{A}\to \Mult{A/J}$$  
of the 
epi\-mor\-phism $A\to A/J\subset \Mult{A/J}$
has kernel $\Mult{A,J}$,
i.e., there is always a canonical embedding of 
$\Mult{A}/\Mult{A,J}$
in $\Mult{A/J}$ and $A\cap \Mult{A,J}=J$.

A non-commutative version of the
\Name{Tietze}
extension theorem was proved by Pedersen, it says that
$\Mult{\pi_J}\left(\Mult{A}\right)= \Mult{A/J}$ 
if $A$ contains
a strictly positive element, i.e.\ 
$\Mult{\pi_J}$ is also surjective if $A$ is a 
$\sigma$-unital \Cast-algebra
(cf.\ \cite[prop.\ 3.12.10]{Ped} for the separable
case. The  non-separable case immediately follows,
because in the non-separable
case for every $t\in \Mult{A/J}$ there
is a separable non-degenerate \Cast-subalgebra
$B$ of $A$ such that $t\in \Mult{\pi_J(B)}$.)

Every separable $A$ is $\sigma$-unital.
\end{REM}

\FRem{next useful/interesting?}
\begin{REM}\Label{R:ideals-of-cross-prod}
The proof of Proposition \ref{P:e-n-in-second} can be modified 
such that it shows also the following:

Let $E$ be a \Cast-algebra, $\sigma\in\Aut{E}$. 
Suppose that for
every $\sigma^{**}$-fixed \emph{closed} projection $p$ in 
the center of the
second conjugate $E^{**}$ of $E$ 
(i.e.~$1-p$ is a support-projection of an 
$\left(\sigma,\sigma^{-1}\right)$-invariant closed ideal of 
$E$)
there are mutually orthogonal central projections 
$p_n\le p$, $n\in\Z$
such that 
$\sigma^{**} \left( p_n \right) =p_{n+1}$ and
$p$ is the closure of $\sum_{n\in\Z} p_n$, i.e., for $a\in E$,
$ap=0$ if $ap_n=0$ for all $n\in\N$).

Then $\latI{E\times_\sigma\Z}$ 
is naturally lattice-isomorphic
to $\latI{E}^\sigma$, i.e.,\\ 
$\latO{\Prim{E\times_\sigma\Z}}$ is
nothing else than $\latO{\Prim{E}/\sim_\sigma}\cong
\latO{\Prim{E}}^\sigma$, 
where $\sim _\sigma $ denotes the equivalence relation
of the action of $\Z$ on $\Prim{E}$ induced by $\sigma$
and $\latO{\Prim{E}/\sim_\sigma}$ is the lattice of
open subsets of $\Prim{E}$ invariant under the $\Z$-action
induced by $\sigma$.
\end{REM}

\FRem{next useful/interesting?}
\begin{REM}\Label{L:rep-rp-faith}
Suppose that 
$M$ is a unital \Cast-algebra, 
$\alpha$ is the shift 
auto\-mor\-phism of $\ell_\infty ( \Z,M ) $.
If $\varrho$ is a *-representation of
$\ell_\infty ( \Z,M ) \rtimes_\alpha\Z$ such that 
the restriction $\varrho|\ell_\infty ( \Z,M ) $ is faithful,
then $\varrho$ is faithful.
%
Indeed: For $\theta\in\C$ with
$|\theta|=1$, let 
$v(\theta):= 
(\ldots, \theta^{-1},1, \theta, \theta^2, \ldots)$.
It is a unitary in the center of $B:=\ell_\infty (M)$ 
such that
$\theta \alpha \left( v(\theta) \right) = v(\theta)$, thus,
Lemma \ref{L:inner-dual-action} applies.
\end{REM}

\FRem{what is really needed from the following lemma?}
\begin{LEM}
Suppose that \Cast-algebras $D\subset B$ and 
$\sigma\in\Aut{B}$
satisfy $D\sigma^k(D)\subset\sigma^k(D)$ for $k\in\N$.
\begin{Aufl}
\item  
  $D_{1,n} \left( \sigma^n \right) ^k\left( D_{1,n} \right) 
  \subset  \left( \sigma^n \right) ^k \left( D_{1,n}\right)$
  holds for every $n,k\in\N$,
  and $[D]_\sigma$ is also the smallest 
        $\left(\sigma^n\right)$-invariant 
  \Cast-sub\-al\-ge\-bra 
        $\left[D_{1,n}\right]_{\sigma^n}$ of $B$ that
  contains $D_{1,n}$.
\item If $n\in\N$ and $J$ is a closed ideal of $D_{1,n}$ 
        with
  $J\sigma^k \left( D_{1,n} \right) \subset\sigma^k (J)$
  for $k\in\N$ then $J_{-\infty,\infty}$ equals 
        $[J]_\sigma$ and 
  is a $\sigma$-invariant closed ideal of $[D]_\sigma$.
\item If $I$ is a $\sigma$-invariant closed ideal of 
  $D_{-\infty,\infty}$
  then $J_n:=I\cap D_{1,n}$ satisfies 
        $J_n\sigma^k \left( D_{1,n} \right) 
  \subset\sigma^k \left( J_n \right) $ for $k\in\N$ and
  $\sigma \left( J_n \right) \subset J_{n+1}$.
\item Every $\sigma$-invariant closed ideal $I$ of 
  $D_{-\infty,\infty}$ is
  the closure of the union of 
        $\left( J_n \right) _{-\infty,\infty}$ where
  $J_n:=I\cap D_{1,n}$.
\end{Aufl}
\end{LEM}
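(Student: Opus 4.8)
The plan is to treat the four assertions in turn, reducing each to the bookkeeping Lemmas \ref{L:D-sigma-k-E} and \ref{L:a4.3} and to the opening computation in the proof of Lemma \ref{L:invIdeal}. For (i) I would argue directly: since $(\sigma^n)^k=\sigma^{nk}$, Lemma \ref{L:D-sigma-k-E}(i) gives $(\sigma^n)^k(D_{1,n})=\sigma^{nk}(D_{1,n})=D_{1+nk,\,n+nk}$, and then Lemma \ref{L:D-sigma-k-E}(ii) yields $D_{1,n}D_{1+nk,\,n+nk}\subset D_{\max(1,1+nk),\,\max(n,n+nk)}=D_{1+nk,\,n+nk}$ because $nk\ge 1$; this is the asserted inclusion $D_{1,n}(\sigma^n)^k(D_{1,n})\subset(\sigma^n)^k(D_{1,n})$. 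In particular $D_{1,n}$ satisfies assumption (\ref{Eq:alpha-subset}) relative to $\sigma^n$, so Lemma \ref{L:a4.3}(ii) applied to the pair $(D_{1,n},\sigma^n)$ identifies $[D_{1,n}]_{\sigma^n}$ with the closure of $\sum_{j\in\Z}\sigma^{nj}(D_{1,n})=\sum_{j\in\Z}D_{1+nj,\,n+nj}$. As $j$ runs over $\Z$ the blocks of exponents $\{nj,\dots,nj+n-1\}$ sweep out all of $\Z$, so this sum contains every $\sigma^m(D)$ and its closure is exactly $D_{-\infty,\infty}=[D]_\sigma$.

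For (ii) the key remark is that, again by Lemma \ref{L:D-sigma-k-E}(ii), the algebra $D_{1,n}$ already satisfies $D_{1,n}\sigma^k(D_{1,n})\subset\sigma^k(D_{1,n})$ for every $k\in\N$, i.e.\ assumption (\ref{Eq:alpha-subset}) relative to the full $\sigma$. Hence the opening paragraph of the proof of Lemma \ref{L:invIdeal} applies word for word with $D_{1,n}$ in place of $D$: from $J\subset D_{1,n}$ and $J\sigma^k(D_{1,n})\subset\sigma^k(J)$ one obtains $J\sigma^k(J)\subset\sigma^k(J)$, so Lemma \ref{L:a4.3}(ii) gives $J_{-\infty,\infty}=[J]_\sigma$, while the identity $(\sigma^i(D_{1,n})\sigma^j(J))^*=\sigma^j(J)\sigma^i(D_{1,n})\subset\sigma^{\max(i,j)}(J)$ shows that $[J]_\sigma=J_{-\infty,\infty}$ is a $\sigma$-invariant closed ideal of $(D_{1,n})_{-\infty,\infty}=D_{-\infty,\infty}=[D]_\sigma$.

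For (iii) I would write $J_n:=I\cap D_{1,n}$, a closed ideal of $D_{1,n}$, and first record, using $\sigma$-invariance of $I$, that $\sigma^k(J_n)=\sigma^k(I)\cap\sigma^k(D_{1,n})=I\cap\sigma^k(D_{1,n})$. Then $J_n\sigma^k(D_{1,n})\subset I\sigma^k(D_{1,n})\subset I$ since $I$ is an ideal, while $J_n\sigma^k(D_{1,n})=J_nD_{1+k,\,n+k}\subset D_{1,n}D_{1+k,\,n+k}\subset\sigma^k(D_{1,n})$ by Lemma \ref{L:D-sigma-k-E}(ii); combining these gives $J_n\sigma^k(D_{1,n})\subset I\cap\sigma^k(D_{1,n})=\sigma^k(J_n)$. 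Finally $\sigma(J_n)=I\cap\sigma(D_{1,n})=I\cap D_{2,n+1}\subset I\cap D_{1,n+1}=J_{n+1}$.

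For (iv) the goal is $I=\overline{\bigcup_n (J_n)_{-\infty,\infty}}$. The inclusion $\supset$ is immediate: $J_n\subset I$ and $I$ is $\sigma$-invariant and closed, so $(J_n)_{-\infty,\infty}=\overline{\sum_{j}\sigma^j(J_n)}\subset I$. For $\subset$ I would invoke the standard fact that, for the inductive limit $D_{-\infty,\infty}=\overline{\bigcup_k D_{-k,k}}$ of Lemma \ref{L:a4.3}(ii), every closed ideal satisfies $I=\overline{\bigcup_k (I\cap D_{-k,k})}$; this follows from the isometric embeddings $D_{-k,k}/(I\cap D_{-k,k})\hookrightarrow D_{-\infty,\infty}/I$ together with the definition of the quotient norm. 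Now Lemma \ref{L:D-sigma-k-E}(i) gives $D_{-k,k}=\sigma^{-(k+1)}(D_{1,2k+1})$, so by $\sigma$-invariance of $I$ one has $I\cap D_{-k,k}=\sigma^{-(k+1)}(I\cap D_{1,2k+1})=\sigma^{-(k+1)}(J_{2k+1})\subset (J_{2k+1})_{-\infty,\infty}$, whence $\bigcup_k(I\cap D_{-k,k})\subset\bigcup_n (J_n)_{-\infty,\infty}$ and $I\subset\overline{\bigcup_n (J_n)_{-\infty,\infty}}$, giving equality. The only step beyond bookkeeping is the inductive-limit ideal identity $I=\overline{\bigcup_k(I\cap D_{-k,k})}$; I expect this routine but not wholly trivial fact about closed ideals of inductive limits of \Cast-algebras to be the main point requiring care, everything else being an application of Lemmas \ref{L:D-sigma-k-E} and \ref{L:a4.3} and of the computation already carried out for Lemma \ref{L:invIdeal}.
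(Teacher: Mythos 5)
Your proof is correct. Parts (i), (iii) and (iv) follow essentially the paper's own route: in (i) your block decomposition $\sigma^{nj}(D_{1,n})=D_{1+nj,\,n+nj}$ is the same content as the paper's sandwich $D_{-nk,nk}\subset \left(D_{1,n}\right)_{-k,k}\subset D_{-n(k+1),n(k+1)}$; (iii) is the identical computation; and (iv) is the paper's one-line argument $I\cap D_{-k,k}=\sigma^{-(k+1)}\left(J_{2k+1}\right)$, except that you correctly supply the inductive-limit ideal identity $I=\overline{\bigcup_k\left(I\cap D_{-k,k}\right)}$ (via the isometric embeddings $D_{-k,k}/(I\cap D_{-k,k})\hookrightarrow D_{-\infty,\infty}/I$), which the paper leaves implicit. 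The genuine divergence is in (ii): the paper invokes part (i) and verifies by hand, in two cases ($l-1\le n(m-1)$ and $l-1>n(m-1)$), that the generators $\sigma^{n(m-1)}\left(D_{1,n}\right)$ of $\left[D_{1,n}\right]_{\sigma^n}=[D]_\sigma$ multiply $\sigma^{l-1}(J)$ into $J_{-\infty,\infty}$, whereas you observe that $D_{1,n}$ itself satisfies hypothesis (\ref{Eq:alpha-subset}) with respect to $\sigma$ (by Lemma \ref{L:D-sigma-k-E}(ii)), so the general computation at the opening of the proof of Lemma \ref{L:invIdeal} --- which uses only (\ref{Eq:alpha-subset}), Cohen factorization $JD_{1,n}=J$, and the semi-invariance of $J$, not the full conditions ($\alpha$), ($\beta$), ($\gamma$) --- applies verbatim to the pair $\left(D_{1,n},\sigma\right)$ and yields $\sigma^i\left(D_{1,n}\right)\sigma^j(J)\subset\sigma^{\max(i,j)}(J)$ for all $i,j\in\Z$, hence the ideal property in $\left(D_{1,n}\right)_{-\infty,\infty}=D_{-\infty,\infty}$. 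Your route buys two things: it avoids the case analysis, and it makes (ii) independent of (i); what it costs is the need to check (as you implicitly do, and as is indeed true) that the quoted computation in Lemma \ref{L:invIdeal} is established under exactly these weaker hypotheses, so that no circularity enters.
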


\begin{proof}
\Ad{(i)} $D_{1,n}\sigma^{nk} \left( D_{1,n} \right) 
\subset
  \sigma^{nk} \left( D_{1,n} \right)$ follows from 
  Lemma \ref{L:D-sigma-k-E}.
  Thus, we can apply Lemma \ref{L:a4.3}(ii) 
        for $\sigma^n$ and 
  $D_{1,n}$ in place of $\sigma$ and $D$, i.e.\
  $\left[D_{1,n}\right]_{\sigma^n}= 
        \left( D_{1,n} \right) _{-\infty,\infty}
  =\overline{\bigcup  \left( D_{1,n} \right) _{-k,k}}$.
  $$D_{-nk,nk}\subset \left( D_{1,n} \right) _{-k,k}
        =D_{-nk-n+1,nk}
  \subset D_{-n(k+1),n(k+1)}\,,$$
  which implies 
  $\left[D_{1,n}\right]_{\sigma^n}=[D]_\sigma$.

\Ad{(ii)}
  $J\sigma^k \left( D_{1,n} \right) \subset\sigma^k(J)$ 
        implies
  $J\sigma^k(J)\subset J$, which shows 
  $[J]_\sigma=J_{-\infty,\infty}\subset[D]_\sigma$ 
  by Lemma \ref{L:a4.3}~(ii). 

  If $l-1\le n(m-1)$ then, with $k:=n(m-1)-(l-1)$,
  $$ \sigma^{n(m-1)}\left( D_{1,n} \right)\sigma^{l-1}(J)=
  \sigma^{l-1}\left(
        \left(J\sigma^{k}\left(D_{1,n}\right)\right)^*
        \right) 
  \subset\sigma^{l-1} \left( \sigma^{k}(J) \right) 
  \subset J_{-\infty,\infty}. $$
  Otherwise $n(m-1)<l-1$ and, with $k:=l-1-n(m-1)$,
  \begin{eqnarray*}
  \sigma^{n(m-1)} \left( D_{1,n}\sigma^{k}(J) \right) &=&
  \sigma^{n(m-1)} \left( D_{1,n}\sigma^{k}(D_{1,n}J) \right)\\ 
  \subset\sigma^{n(m-1)}
    \left(\sigma^{k}\left(D_{1,n}\right)\sigma^{k}(J)\right)
    &\subset&\sigma^{n(m-1)}\left(\sigma^{k}(J)\right) 
    \subset J_{-\infty,\infty},
  \end{eqnarray*}
  because $J=D_{1,n}J$ and 
  $D_{1,n}\sigma^k \left( D_{1,n} \right) \subset\sigma^k
   \left( D_{1,n} \right)$, by Lemma \ref{L:D-sigma-k-E}.

  Thus 
        $\left[D_{1,n}\right]_{\sigma^n}\cdot J_{-\infty,\infty}
        \subset J_{-\infty,\infty}$ and 
        $J_{-\infty,\infty}$ is a closed ideal of 
        $[D]_\sigma=\left[D_{1,n}\right]_{\sigma^n}$ by (i).

\Ad{(iii)} 
  By Lemma \ref{L:D-sigma-k-E},
  \begin{eqnarray*}
  J_n\sigma^k \left( D_{1,n} \right) &\subset&
   \left( I D_{1+k,n+k} \right) 
        \cap \left( D_{1,n}D_{1+k,n+k} \right) 
  \subset I\cap D_{1+k,n+k}\\
  &=&\sigma^k \left( I\cap D_{1,n} \right) 
  =\sigma^k \left( J_n \right) .
  \end{eqnarray*}
  $\sigma\left(J_n\right)\subset\sigma\left(I\right)\cap
  \sigma \left( D_{1,n} \right) \subset 
        I\cap D_{1,n+1}=J_{n+1}$.

\Ad{(iv)} 
  $I$ is the inductive limit of 
  $I\cap D_{-n,n}=\sigma^{-n-1} \left( J_{2n+1} \right) $ by
  Lemma \ref{L:a4.3}~(ii). 
\end{proof}

}  


\end{document}